\makeatletter \@addtoreset{equation}{section} \makeatother
\newtheorem{theorem}{Theorem}[section]
\newtheorem{lemma}[theorem]{Lemma}
\newtheorem{assumption}[theorem]{Assumption}
\newtheorem{proposition}[theorem]{Proposition}
\newtheorem{definition}[theorem]{Definition}
\newtheorem{remark}[theorem]{Remark}
\newcommand{\ajj}{{\alpha_{j+1}}}
\newcommand{\astar}{\alpha^*} %{\delta/\varrho}
\newcommand{\brd}{d}
\newcommand{\bra}{{d_\alpha}}
\newcommand{\br}[1]{{d_{#1}}}
\newcommand{\Bb}[2]{{\mathcal{B}_{#1} (#2)}}
\newcommand{\Bbra}{{\mathcal{B}_{d_\alpha} (\xad)}}
\newcommand{\cra}{r_\alpha}
\newcommand{\crr}[1]{{r_{#1}}}
\newcommand{\js}{{j^*}}
\newcommand{\ksj}{{k^*(j)}}
\newcommand{\ksjs}{{k^*}} %{{k^*(\js)}}
\newcommand{\nrho}{s}
\newcommand{\qa}{\bar q}
\newcommand{\Rg}{\mathop{\mathrm{Rg}}}
\newcommand{\x}[2]{{x_{#1, #2}}}
\newcommand{\xj}[1]{{x_{j,#1}}}
\newcommand{\xjk}{{x_{j,k}}}
\newcommand{\xjks}{{x_{j, k^*}}}
\newcommand{\xjsks}{{x_{\js, \ksjs}}} %{{x_{\js^*}}
\newcommand{\xad}{{x_\alpha^\delta}}
\newcommand{\xajd}{{x_{\alpha_j}^\delta}}
\newcommand{\xajjd}{{x_{\ajj}^\delta}}
\newcommand{\xbd}{x_{\bar \alpha}^\delta}
\def\endproof{\hspace*{\fill} \qed \vspace{5mm}}
\def\eps{\varepsilon}
\def\geq{\geqslant}
\def\IN{\mathbb{N}}
\def\IR{\mathbb{R}}
\def\l{\left}
\def\leq{\leqslant}
\def\r{\right}
\def\st{~ : ~}
\def\xd{{x^\dag}}
\def\xs{{\bar x_0}}
\def\yd{y^\delta}
\newcommand{\start}{1}
\title[Global minimization of Tikhonov functionals]{A global minimization algorithm for Tikhonov functionals with sparsity constraints}
\author{Wei Wang}
\address[W.~Wang]{College of Mathematics, Physics and Information Engineering, Jiaxing University, Zhejiang 314001, People's Republic of China}
\address{Department of Mathematics, Harbin Institute of Technology, Harbin, Heilongjiang, 150001,People's Republic of China}
\email{\tt weiwangmath@gmail.com}
\author{Stephan W. Anzengruber}
\address[S.W.~Anzengruber]{Department of Mathematics, Technische Universit\"{a}t Chemnitz,
  09107 Chemnitz,  Germany}
\email[Corresponding author]{\tt stephan.anzengruber@mathematik.tu-chemnitz.de}
\author{Ronny Ramlau}
\address[R.~Ramlau]{Johannes Kepler University, Industrial Mathematics Institute, Altenbergerstra{\ss}e 69, 4040 Linz, Austria}
\address{Johann Radon Institute for Computational and Appied Mathematics, Altenbergerstra{\ss}e 69, 4040 Linz, Austria}
\email{\tt ronny.ramlau@jku.at}
\author{Bo Han}
\address[B.~Han]{Department of Mathematics, Harbin Institute of Technology, Harbin, Heilongjiang, 150001,People's Republic of China}
\email{\tt bohan@hit.edu.cn}
\keywords{Inverse problems, Tikhonov regularization, global optimization, gradient descent method, TIGRA, parameter choice rules}
\subjclass[2010]{65J20, 47J06, 47A52, 49J40}
\date{}
\begin{document}

\begin{abstract}
  In this paper we present a globally convergent algorithm for the computation of a minimizer of the Tikhonov functional with sparsity promoting penalty term for nonlinear forward operators in Banach space. The \emph{dual TIGRA} method uses a gradient descent iteration in the dual space at decreasing values of the regularization parameter $\alpha_j$, where the approximation obtained with $\alpha_j$ serves as the starting value for the dual iteration with parameter $\ajj$. With the discrepancy principle as a global stopping rule the method further yields an automatic parameter choice.
  We prove convergence of the algorithm under suitable step-size selection and stopping rules and illustrate our theoretic results with numerical experiments for the nonlinear autoconvolution problem.
\end{abstract}

\maketitle

\section{Introduction}

  Tikhonov regularization has become a well-established and widely used method for the stable solution of ill-posed problems.
  Its regularizing properties as well as convergence and rates of convergence of the approximate solutions have been studied in great detail. In particular, the theory for the classical formulation, where the penalty term is the square of the norm in a Hilbert space, is well developed (see, e.g.~\cite{ehn96}). Over the last decade, there has been a growing interest in more general convex penalty terms in Banach spaces as well. Using a total variation type penalty \cite{av94, cl97, m01, rof92}, for example, allows for the reconstruction of sharp edges in images, whereas $\ell^p$-norms of basis coefficients with $p<2$ \cite{ddd04, rt06, r08} are known to promote sparsity in the solution -- a desireable effect in many applications.  We refer the interested reader to the monographs \cite{sgghl09, skhk12} and to the incomplete list \cite{ahm13, ar10, ar11, bo04, ghs08, hkps07, hm12, rs06} for further information in this direction.

  Open questions remain, however, concerning the computational aspects of finding a minimizer of the Tikhonov functional. Especially, if the forward operator is non-linear many optimization routines will only converge locally and it may not be possible to obtain a global minimizer. Several attempts have been made to tackle this problem in the classical Hilbert space setting. In \cite{r02b, r03} the TIGRA (TIkhonov-GRAdient) algorithm was introduced which applies a gradient descent method to the Tikhonov functional at a decreasing sequence of regularization parameters $\alpha_j = \qa^j \alpha_0$. This algorithm was further studied in \cite{k10-1, k10-2, k11}. Another approach to obtain a global minimizer are multi-level methods \cite{b06, b08, hqs12}, which minimize at different discretization levels using a gradient descent method for the least-squares functional.

  In this paper, we will present a globally convergent minimization routine to obtain sparse reconstructions.  To promote sparsity in the solution with respect to a given (Schauder) basis, we represent Banach space elements in terms of their coefficents and consider a nonlinear operator equation
    \[ F(x) = y, \qquad y \in \Rg(F), \]
  between $X = \ell^p$ and a Hilbert space $Y$. Suppose that we are given only noisy data $\yd$ satisfying $\| y - \yd \|_Y \leq \delta$, then as regularized solutions $\xad$ we choose the minimizers of the generalized Tikhonov functional with $\ell^p$-penalty,
  \begin{equation}\label{Phi}
    \Phi_{\alpha}(x):=\frac{1}{2}\|F(x)-\yd\|^{2} + \frac{\alpha}{p} \|x\|_{\ell^p}^p, \qquad 1 < p \leq 2,
  \end{equation}
  where
    \[ \|x\|_{\ell^p}^p = \sum_{i \in \IN} |x_i|^p. \]

  The algorithm we propose to obtain a minimizer is a generalization of the TIGRA algorithm to Banach spaces, which uses a gradient descent iteration in the dual space and will thus be called \emph{dual TIGRA} or simply \emph{d-TIGRA}. To the best of our knowledge, this is the first time a globally convergent algorithm is presented for the Tikhonov functional \eqref{Phi} with non-linear forward operator. Gradient descent methods in the dual space with linear $F$ have already been considered in \cite{bkmss08}, for example. 
  The global convergence of d-TIGRA results from a directional convexity of the Tikhonov functional $\Phi_\alpha (x)$ in a ball around a global minimizer $\xad$ and the size of this {\it region of convexity} grows unboundedly as $\alpha \to \infty$. Conceptually, the d-TIGRA algorithm goes as follows.

  \medskip

  \begin{itemize}
    \item Fix the starting value $\xs \in X$ and find $\alpha_0$ large enough such that $\xs$ belongs to the region of convexity of $\Phi_{\alpha_0} (x)$.
    \item The dual gradient descent iterates converge to a minimizer of $\Phi_{\alpha_0} (x)$ provided that the step-sizes and stopping rule are suitably chosen.
    \item Once the iteration stops, reduce the value of $\alpha$ by a factor $\qa < 1$ such that the last iterate again belongs to the region of convexity for $\alpha_{j+1} = \qa \alpha_j$.
  \end{itemize}

  \medskip

   \noindent In this fashion, we continue with the dual gradient descent method as an inner iteration and to reduce $\alpha_j$ by a factor $\qa$ in the outer iteration until the discrepancy principle $\| F(\xjsks) - \yd \| \leq \tau \delta$ is satisfied, where $\xjsks$ denotes the last iterate with regularization parameter $\alpha_\js$.
  This algorithm generates a sequence $\{ x_{j, k} \}_{j \leq \js, k \leq \ksj}$ that converges towards a global minimizer of $\Phi_{\alpha_\js} (x)$ and thus also provides an automatic choice of the regularization parameter, which turns out to give near optimal estimates with respect to the Bregman distance and performs favorably in numerical experiments.

 The paper is organized as follows: In Section \ref{sec:prelim} we introduce basic concepts and notations, formulate our standing assumptions and give general regularization results.
 Then the main theoretical results are presented in Section \ref{sec:duallwb}, where we obtain the region of convexity of the Tikhonov functional, we ensure that any starting value $\xs$ belongs to this region of convexity for sufficiently large $\alpha$, we give a convergence analysis for the dual gradient descent iteration, and we show that the final iterate $\xjks$ at level $j$ can be used as a starting point for the dual gradient descent method at level $j+1$ with $\ajj = \qa \alpha_j$. Based on these findings, we summarize our assumptions on the parameters and prove the global convergence result for the dual TIGRA method in Section \ref{sec:globalmin}, which we illustrate in Section \ref{sec:numerics} with a numerical example. Several technical proofs are collected in Section \ref{sec:proofs} and, finally, we have included an overview over several constants that appear in the analysis at the end, for the convenience of the reader.

\section{Preliminaries} \label{sec:prelim}

  Let $(Y, \|.\|_Y)$ be a real Hilbert space and $F : \ell^p (\IN) \to Y$ be a non-linear operator, $1<p\leq 2$. We assume that $F$ is sequentially closed in the weak topologies of $\ell^p$ and $Y$.  Whenever there is no risk of confusion, we will denote the norms in $\ell^p$, $\ell^q = (\ell^p)^*$ (where $1/p+1/q=1$) and $Y$ simply by $\|.\|$. Similarly, the duality product $\langle ., . \rangle_{\ell^q \times \ell^p}$ as well as the scalar product in $Y$ are denoted by $\langle ., . \rangle$.

  The Bregman distance is a powerful tool for regularization theory in Banach spaces and it will also play a key role in our analysis.

\begin{definition} \label{dfn:bregman}
  Let $X$ be a Banach space, $f: X \to \IR$ be a convex functional and denote by $\partial f (x)$ the subdifferential of $f$ at $x \in X$. The Bregman distance $D_f^\xi (z,x)$ of two elements $x,z\in X$ with respect to $\xi \in \partial f (x)$ is defined by
    $$D_f^\xi (z,x) := f(z) - f(x) - \langle \xi, z - x \rangle_{X^* \times X}.$$
\end{definition}

Clearly, $D_f^\xi (x,x)=0$ and the convexity of $f$ further implies that $D_f^\xi (z, x) \geq 0$ for all $\xi \in \partial f (x)$. For strictly convex functionals, we have that $D_f^\xi (x,z)=0$ if and only if $x=z$.
If the functional $f$ is Fr\'echet differentiable at $x \in X$, then $\partial f (x) = \{ \nabla f(x) \}$ and we will write $D_f (z,x)$ omitting the dependence of the Bregman distance on $\xi = \nabla f(x)$.

All of the above is, indeed, the case for the $\ell^p$ functionals
    $$ f_p(x) := \frac{1}{p}\|x\|_{\ell^p}^p=\frac{1}{p}\sum_{i \in \IN} |x_{i}|^{p}, \qquad 1 < p \leq 2,$$
which are of main interest to us here. The Bregman distance then reads
    $$D_{f_p}(z,x):=\frac{1}{p} \|z\|^p - \frac{1}{p} \|x\|^p - \left\langle \nabla f_p(x), z - x \right\rangle, \quad x,z\in \ell^p,$$
where
    \[ \nabla f_p(x) = \{ |x_i|^{p-1}{\rm sgn}(x_i) \}_{i \in \IN} \in \ell^q, \]
and the latter is closely linked to the duality mapping in $\ell^p$ with gauge function $t \mapsto t^p$.

\begin{definition}
  The set-valued duality mapping $J_p$ with $p>1$ of a Banach space $X$ into $X^*$ is defined by
  \begin{align*}
    J_p(x) := \{ x^*\in X^* \st \langle x^*,x\rangle_{X^* \times X} = \|x\|_X \cdot \|x^*\|_{X^*}, \|x^*\|_{X^*} = \|x\|_X^{p-1} \}.
  \end{align*}
\end{definition}

  From here on, we consider $X = \ell^p (\IN)$ with fixed $1<p\leq 2$, where the duality mappings are single valued,
    $$J_p (x) = \left(\nabla\frac{1}{p}\| \cdot \|_{\ell^p}^p\right)(x) = \nabla f_p (x),$$
  and we slightly abuse notation identifying the set $J_p(x)$ with its unique element. Hence,
  \begin{equation} \label{eq:duality}
     J_q(J_p(x)) = x \qquad \mbox{and} \qquad \| J_p (x) \|_{X^*} = \|x\|_X^{p-1}
  \end{equation}
  hold, and the Bregman distance $D_{f_p}(z, x)$ can be equivalently expressed as
  \begin{equation} \label{eq:Bregd}
  \begin{aligned}
    D_{f_p}(z, x) %& = \frac{1}{p} \|z\|^{p}-\frac{1}{p} \|x\|^{p}-\langle J_{p}(x),z-x\rangle,\\
      & = \frac{1}{q} \|x\|^{p} + \frac{1}{p} \|z\|^{p} -\langle J_{p}(x),z\rangle,\\
      & = \frac{1}{q} \|J_p(x)\|^{q} - \frac{1}{q} \|J_p (z)\|^{q} -\langle J_{p}(x)-J_p(z),z\rangle\\
      & = D_{f_q} (J_p(x), J_p(z))
  \end{aligned}
  \end{equation}
for all $x, z \in \ell^p$.

  In addition, the following important inequalities hold in $\ell^p$ which is $2$-convex and $p$-smooth. They are based on results in \cite{xr91, bi00} and were also used in \cite{l08, rr10}, for example.

\begin{lemma}\label{lemma1}
  Let $1<p\leq 2$, then there exists a constant $\tilde{c}_p > 0$ such that
  \begin{equation} \label{Dfsp}
      D_{f_p}(z,x)\leq \tilde{c}_{p}\|x-z\|^{p},
  \end{equation}
  holds for all $x,z \in \ell^p$. For $c_1, c_2>0$ and $x, z \in \ell^p$ satisfying $\|x\|\leq c_{1}$ and $\|x-z\|\leq c_{2}$ it holds that
    \begin{equation} \label{Dfl}
      D_{f_p}(z,x)\geq c_{p}\|x-z\|^{2}, %\quad \forall x,z\in \ell^p,
    \end{equation}
  with $c_{p}=\frac{(p-1)}{2}(c_{1}+c_{2})^{p-2}$.
\end{lemma}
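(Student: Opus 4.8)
The plan is to derive both inequalities by writing the Bregman distance $D_{f_p}(z,x)$ as an integral of a derivative of $f_p$ along the segment from $x$ to $z$ and then estimating the integrand coordinatewise; at bottom these are the manifestations in $\ell^p$ of its $p$-smoothness and $2$-convexity for $1<p\le 2$, so one could also simply quote \cite{xr91, bi00}, but I would prefer a self-contained argument. Throughout, write $w(t):=x+t(z-x)$ for $t\in[0,1]$, so that $\|w(t)\|\le\max\{\|x\|,\|z\|\}$ by convexity of the norm.

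For \eqref{Dfsp}, start from $D_{f_p}(z,x)=\int_0^1\langle\nabla f_p(w(t))-\nabla f_p(x),\,z-x\rangle\,dt$. Since $\nabla f_p=J_p$ acts coordinatewise through the scalar map $t\mapsto|t|^{p-1}{\rm sgn}(t)$, which is H\"older continuous of exponent $p-1\in(0,1]$ (an elementary one-variable inequality, e.g.\ by scaling to $|a-b|=1$ followed by a short case distinction), raising to the $q$-th power and summing over the coordinates — using $(p-1)q=p$ — gives $\|\nabla f_p(u)-\nabla f_p(v)\|\le c_p\|u-v\|^{p-1}$ on all of $\ell^p$. Inserting $u=w(t)$, $v=x$, $\|w(t)-x\|=t\|z-x\|$, and applying the duality inequality under the integral then yields $D_{f_p}(z,x)\le c_p\|z-x\|^p\int_0^1 t^{p-1}\,dt=\tilde c_p\|z-x\|^p$ with $\tilde c_p=c_p/p$.

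For \eqref{Dfl}, I would use the second-order version of the same identity: with $\phi(t):=f_p(w(t))$ one has $D_{f_p}(z,x)=\phi(1)-\phi(0)-\phi'(0)=\int_0^1(1-t)\,\phi''(t)\,dt$, where
$$ \phi''(t)=(p-1)\sum_{i\in\IN}|w_i(t)|^{p-2}(z_i-x_i)^2\ \ge\ 0 . $$
Under the assumptions $\|x\|\le c_1$ and $\|z\|\le\|x\|+\|x-z\|\le c_1+c_2$, hence $\|w(t)\|\le c_1+c_2=:R$ for all $t$; the crucial step is to bound the sum from below by the \emph{$\ell^p$}-norm of $z-x$, since the naive pointwise estimate $|w_i(t)|^{p-2}\ge R^{p-2}$ only produces the weaker $\ell^2$-norm. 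Writing $|z_i-x_i|^p=(|w_i(t)|^{p-2}(z_i-x_i)^2)^{p/2}|w_i(t)|^{(2-p)p/2}$ and applying H\"older's inequality with the conjugate exponents $2/p$ and $2/(2-p)$ gives
$$ \|z-x\|^p\le\Big(\sum_i|w_i(t)|^{p-2}(z_i-x_i)^2\Big)^{p/2}\|w(t)\|^{p(2-p)/2}\le R^{p(2-p)/2}\Big(\sum_i|w_i(t)|^{p-2}(z_i-x_i)^2\Big)^{p/2}, $$
hence $\phi''(t)\ge(p-1)R^{p-2}\|z-x\|^2$ for every $t$, and integration against $(1-t)$ yields exactly $D_{f_p}(z,x)\ge\frac{p-1}{2}(c_1+c_2)^{p-2}\|z-x\|^2$.

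I expect the main obstacle to be justifying the two integral representations, since $\nabla f_p$ is only H\"older (not Lipschitz) and $\phi''$ blows up at the at most countably many times where a coordinate $w_i(t)$ vanishes, so that $\phi'$ is merely absolutely continuous. I would dispose of this by first proving \eqref{Dfsp} and \eqref{Dfl} for finitely supported $x,z$ — where all manipulations reduce to ordinary one-variable calculus away from the finitely many bad times — and then passing to the limit, using that both sides of each inequality are continuous with respect to convergence in $\ell^p$ and that finitely supported elements are dense; alternatively one checks directly that $\phi'\in W^{1,1}(0,1)$ and invokes Fubini, all integrands being nonnegative.
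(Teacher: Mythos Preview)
Your argument is correct, but the paper takes a genuinely different route for the lower bound \eqref{Dfl}. Rather than differentiating $f_p$ twice along the segment and then using the H\"older trick to convert the weighted sum $\sum_i |w_i(t)|^{p-2}(z_i-x_i)^2$ into an $\ell^p$-norm, the paper invokes a norm inequality from \cite[Lemma~1.4.6]{bi00},
\[
\frac{1}{p}\big(\|x-z\|+\|x\|\big)^p-\frac{1}{p}\|z\|^p\le\|z-x\|\,\|x\|^{p-1}-\langle J_p(x),z-x\rangle,
\]
which after rearranging bounds $D_{f_p}(z,x)$ from below by $\frac{1}{p}(\|x-z\|+\|x\|)^p-\frac{1}{p}\|x\|^p-\|x\|^{p-1}\|x-z\|$; this is a purely one-variable expression in $\|x\|$ and $\|x-z\|$, and the exact constant $c_p$ drops out of the Taylor expansion with integral remainder of $t\mapsto(t+\|x\|)^p$. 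The paper's route is shorter and entirely avoids the coordinate singularities you worry about, since it never looks inside the $\ell^p$-norm, but it relies on the cited black-box estimate. Your approach is more self-contained and makes the mechanism visible; the density/absolute-continuity patch you describe is adequate to close the gap. For the upper bound \eqref{Dfsp} the paper simply cites $p$-smoothness without argument, so your direct computation via H\"older continuity of $J_p$ supplies what the paper omits.
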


\begin{proof}
  We first note, that \eqref{Dfsp} holds true in any $p$-smooth Banach space.

  The proof of \eqref{Dfl} is based on the estimate from \cite[Lemma 1.4.6]{bi00}
  \begin{equation}\label{ADf1}
    \frac{1}{p} ( \|x-z\| + \|z\| )^p - \frac{1}{p} \|z\|^p \leq \|z-x\| \|x\|^{p-1} - \langle J_p (x),z-x\rangle,
  \end{equation}
  as well as on the mean value theorem for $\phi(t)=(t+\|x\|)^{p}$ in the form, %around $t_0 = 0$,
    \[ \phi(t) = \|x\|^p + p\|x\|^{p-1} t + \int_{0}^{t} p(p-1) (\tau+\|x\|)^{p-2} (t-\tau) d\tau. \]
  Indeed, for $\|x\|\leq c_1$ and $t = \|x-z\| \leq c_2$, we obtain
  \begin{align*}
    D_{f_{p}}(z,x) & \geq \frac{1}{p} \phi (\|x-z\|) - \frac{1}{p} \|x\|^p - \|x\|^{p-1} \|x-z\|%\\ &
      \geq \frac{p-1}{2} (c_1 + c_2)^{p-2} \|x-z\|^{2}.
  \end{align*}
  The assertion follows with $c_{p}=\frac{(p-1)}{2}(c_{1}+c_{2})^{p-2}$.
\end{proof}

  On the other hand, in $X^* = \ell^q$ with $q \geq 2$ which is $q$-convex and $2$-smooth, the following inequalities hold. The proof is again based on \cite[Lemma 1.4.6]{bi00} and goes as the proof of Lemma \ref{lemma1} with the inequality signs reversed.

\begin{lemma} \label{lemma2}
  For $q \geq 2$, there exists a  constant $c_q>0$ such that
    \begin{equation} %\label{Dfs}
      D_{f_q}(z,x)\geq c_{q} \|x-z\|^{q},
    \end{equation}
  holds for all $x,z\in \ell^q$. For $c_1, c_2>0$ and $x, z \in \ell^p$ satisfying $\|x\|\leq c_{1}$ and $\|x-z\|\leq c_{2}$ it holds that
    \begin{equation} \label{Dfsq}
      D_{f_q}(z,x)\leq \tilde{c}_{q} \|x-z\|^{2},
    \end{equation}
  with $\tilde{c}_{q}=\frac{(q-1)}{2}(c_{1}+c_{2})^{q-2}$.
\end{lemma}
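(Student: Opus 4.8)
The plan is to run through the proof of Lemma~\ref{lemma1} line by line, trading the $2$-convexity and $p$-smoothness of $\ell^p$ for the $q$-convexity and $2$-smoothness of $\ell^q$, $q\ge2$, and reversing every inequality accordingly.

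For the first estimate $D_{f_q}(z,x)\ge c_q\|x-z\|^q$ I would argue exactly as for \eqref{Dfsp}: that estimate holds in any $p$-smooth space, and dually the lower bound by $\|x-z\|^q$ is precisely the inequality characterising a $q$-convex space, i.e.\ one that is uniformly convex of power type $q$. Since $\ell^q$ with $q\ge2$ has this property, the claim is the classical result of \cite{xr91, bi00}. One might instead try to dualise \eqref{Dfl} via the identity $D_{f_q}(z,x)=D_{f_p}(J_q(x),J_q(z))$ from \eqref{eq:Bregd} together with the H\"older continuity of $J_q$ on bounded sets, but this produces a bound with exponent $2(q-1)$ rather than $q$ and only on bounded sets; since $2(q-1)\ge q$, such a bound is too weak for small $\|x-z\|$, so I would keep the direct reference.

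For \eqref{Dfsq} I would copy the mean-value-theorem part of the proof of Lemma~\ref{lemma1} with all inequality signs flipped. In the $2$-smooth space $\ell^q$ the counterpart of \eqref{ADf1} from \cite[Lemma 1.4.6]{bi00} has the form
\[
  \frac{1}{q}\bigl(\|x-z\|+\|z\|\bigr)^q-\frac{1}{q}\|z\|^q \ge \|z-x\|\,\|x\|^{q-1}-\langle J_q(x),z-x\rangle ,
\]
and inserting it into $D_{f_q}(z,x)=\frac{1}{q}\|z\|^q-\frac{1}{q}\|x\|^q-\langle J_q(x),z-x\rangle$ yields an upper bound of the shape $\frac{1}{q}\psi(\|x-z\|)-\frac{1}{q}\|x\|^q+\|x\|^{q-1}\|x-z\|$ with $\psi(t)=(t+\|x\|)^q$. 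Expanding $\psi$ by the mean value theorem,
\[
  \psi(t)=\|x\|^q+q\|x\|^{q-1}t+\int_0^t q(q-1)(\tau+\|x\|)^{q-2}(t-\tau)\,d\tau ,
\]
the first two terms cancel and, since now $q-2\ge0$, the integrand satisfies $(\tau+\|x\|)^{q-2}\le(c_1+c_2)^{q-2}$ whenever $\|x\|\le c_1$ and $0\le\tau\le t=\|x-z\|\le c_2$; hence the integral is at most $q(q-1)(c_1+c_2)^{q-2}\|x-z\|^2/2$. Dividing by $q$ gives $D_{f_q}(z,x)\le\frac{q-1}{2}(c_1+c_2)^{q-2}\|x-z\|^2$, i.e.\ \eqref{Dfsq} with $\tilde c_q=\frac{q-1}{2}(c_1+c_2)^{q-2}$.

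The step I expect to cost the most effort is fixing the exact reversed form of \eqref{ADf1} valid in $\ell^q$, together with the triangle-inequality bookkeeping that distinguishes $\|x\|$ from $\|z\|$ on its left-hand side --- the same delicate point already implicit in the proof of Lemma~\ref{lemma1}. Everything after that is the routine sign-reversal of the earlier computation, and it is precisely the reversed monotonicity of $t\mapsto t^{q-2}$ for $q\ge2$ (now increasing, where $t\mapsto t^{p-2}$ was decreasing for $p\le2$) that turns the lower bound of Lemma~\ref{lemma1} into the upper bound required here.
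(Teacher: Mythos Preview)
Your proposal is correct and follows precisely the route the paper indicates: mirror the proof of Lemma~\ref{lemma1}, invoking \cite[Lemma~1.4.6]{bi00} with the inequality reversed and using that $t\mapsto t^{q-2}$ is now increasing. One small slip: in your displayed upper bound the term $\|x\|^{q-1}\|x-z\|$ should carry a minus sign (exactly as in the paper's $-\|x\|^{p-1}\|x-z\|$), so that it cancels the linear term of the Taylor expansion of $\psi$; with that fix your computation goes through verbatim.
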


Throughout this paper, we denote by $\xad$ a minimizer of the Tikhonov functional $\Phi_\alpha (x)$ in \eqref{Phi} with noisy data $\yd$ satisfying $\| y - \yd \| \leq \delta$ and regularization parameter $\alpha > 0$. Here, the given data $\yd$ as well as the noise level $\delta$ are assumed to be fixed. Existence of minimizers as well as stability results for the regularized problems are well-known and we refer to \cite{sgghl09, skhk12} for details. By $\xd$ we denote a $f_p$-minimizing solution of the original problem $F(x) = y$, in the sense that $F(\xd)=y$ and
    $$f_p(\xd)=\min_{ x\in D(F)} \{ f_p(x) \st F(x)=y\}.$$

Let us now summarize our standing assumptions on the forward operator $F$ and a source condition on the $f_p$-minimizing solution $\xd$.

\begin{assumption} \label{assp_main} The operator $F(x)$ is Fr\'echet differentiable on $\ell^p$ and the following holds:
  \begin{enumerate}
    \item \label{ass:lip}  $F'$ is Lipschitz continuous, i.e.$~$there exists $L > 0$ such that
      \begin{equation*}
	\|F'(x)-F'(z)\|_{\mathcal{L}(\ell^p, Y)} \leq L\|x-z\|
      \end{equation*}
      holds for all $x,z\in \ell^p$. % D(F)$.
    \item \label{ass:nl} There exists $c>0$ such that
      \begin{equation*}
	  \|F(x)-F(z)-F'(z)(x-z)\|\leq c D_{f_p}(x,z)
      \end{equation*}
      holds for all $x,z\in \ell^p$. % D(F)$.
  \item \label{ass:src} There exist $\nrho > 0$, $\varrho < \frac{1}{\nrho c}$, a $f_p$-minimizing solution $\xd \in X$ and $\omega\in Y$ such that $\|\omega\|\leq \varrho$ and
      \begin{equation*}
	  J_p (\xd) = F'(\xd)^{*} \omega.
      \end{equation*}
  \end{enumerate}
\end{assumption}

\begin{remark} \label{rem:nl}
  It is well known that Lipschitz continuity of the Fr\'echet derivative yields
    \begin{equation} \label{eq:lipnl}
      \|F(x)-F(z)-F'(z)(x-z)\| \leq \frac{L}{2} \| x- z \|^2
    \end{equation}
  and thus, on bounded domains, \ref{ass:lip} and \ref{ass:nl} in Assumption \ref{assp_main} are also related via Lemma~\ref{lemma1}.
\end{remark}

The {\it nonlinearity condition} \ref{ass:nl} in combination with the {\it source condition} \ref{ass:src} was first introduced by Resmerita and Scherzer in \cite{rs06} to obtain convergence rates for Tikhonov regularization with sparsity constraints; we also refer to \cite{sgghl09, hh09}. Here we derive slightly different estimates, which are better suited for our purposes, but result in the same rates of convergence.
Similar results for an a-posteriori parameter choice rule, namely Morozov's discrepancy principle, have been established in \cite{ar10, ar11, hm12}.

\begin{theorem}\label{rate0}
 Let $\yd \in Y$ with $\| \yd - y \| \leq \delta$ and $\xad$ be a minimizer of $\Phi_\alpha (x)$ given in \eqref{Phi} with $1<p \leq 2$. If Assumption \ref{assp_main} \ref{ass:nl}, \ref{ass:src} hold true for a $f_p$-minimizing solution $\xd \in X$, then we have
  \begin{equation}\label{L:EE}
  \begin{aligned}
    \|F(\xad)-\yd\| & \leq 2 \alpha \|\omega\|+\delta,\\
    D_{f_p}(\xad,\xd) & \leq \frac{(\delta+\alpha\|\omega\|)^{2}}{2(1-c\|\omega\|)\alpha}.
  \end{aligned}
  \end{equation}
\end{theorem}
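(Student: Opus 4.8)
The plan is to exploit the minimality of $\xad$. Since $F(\xd)=y$ and $\|y-\yd\|\le\delta$, the inequality $\Phi_\alpha(\xad)\le\Phi_\alpha(\xd)$ together with $\frac12\|F(\xd)-\yd\|^2\le\frac12\delta^2$ gives
\[
  \tfrac12\|F(\xad)-\yd\|^2 + \alpha f_p(\xad) \;\le\; \tfrac12\delta^2 + \alpha f_p(\xd).
\]
I would then use the definition of the Bregman distance, $f_p(\xad)=f_p(\xd)+D_{f_p}(\xad,\xd)+\langle J_p(\xd),\xad-\xd\rangle$ (recall $\nabla f_p=J_p$), so that the $f_p(\xd)$ terms cancel and one is left with
\[
  \tfrac12\|F(\xad)-\yd\|^2 + \alpha D_{f_p}(\xad,\xd) \;\le\; \tfrac12\delta^2 - \alpha\langle J_p(\xd),\xad-\xd\rangle.
\]

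The decisive step is to estimate the last term by combining Assumption \ref{assp_main} \ref{ass:src} and \ref{ass:nl}. Inserting $J_p(\xd)=F'(\xd)^*\omega$ and writing
\[
  F'(\xd)(\xad-\xd) = (F(\xad)-\yd) + (\yd-y) - r, \qquad r := F(\xad)-F(\xd)-F'(\xd)(\xad-\xd),
\]
I would bound $|\langle\omega,F(\xad)-\yd\rangle|\le\|\omega\|\,\|F(\xad)-\yd\|$, $|\langle\omega,\yd-y\rangle|\le\|\omega\|\delta$, and $|\langle\omega,r\rangle|\le c\|\omega\|\,D_{f_p}(\xad,\xd)$ by the nonlinearity condition. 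Absorbing the last term on the left and using $\|\omega\|\le\varrho$ yields
\[
  \tfrac12\|F(\xad)-\yd\|^2 + \alpha(1-c\|\omega\|)D_{f_p}(\xad,\xd) \;\le\; \tfrac12\delta^2 + \alpha\|\omega\|\,\|F(\xad)-\yd\| + \alpha\|\omega\|\delta,
\]
where $1-c\|\omega\|>0$ by Assumption \ref{assp_main} \ref{ass:src}.

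Both claims now follow from this single inequality by elementary one-variable estimates. Setting $A:=\|F(\xad)-\yd\|$ and dropping the nonnegative Bregman term leaves $\frac12 A^2-\alpha\|\omega\|A-\alpha\|\omega\|\delta-\frac12\delta^2\le 0$, i.e. $(A-\alpha\|\omega\|)^2\le(\delta+\alpha\|\omega\|)^2$, which gives the first assertion $\|F(\xad)-\yd\|\le 2\alpha\|\omega\|+\delta$. For the second, I would instead keep the Bregman term and observe that the right-hand side, as a concave quadratic in $A$, attains its maximum at $A=\alpha\|\omega\|$ with value $\frac12(\delta+\alpha\|\omega\|)^2$; dividing by $\alpha(1-c\|\omega\|)$ gives $D_{f_p}(\xad,\xd)\le(\delta+\alpha\|\omega\|)^2/(2(1-c\|\omega\|)\alpha)$. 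I expect the only delicate point to be the bookkeeping in the decisive step — in particular invoking Assumption \ref{assp_main} \ref{ass:nl} with the correct sign for $\langle\omega,r\rangle$ and absorbing $c\|\omega\|D_{f_p}(\xad,\xd)$ onto the left-hand side, which is exactly where $\varrho<1/(sc)$ is used.
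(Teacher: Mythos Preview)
Your proposal is correct and follows essentially the same route as the paper: use minimality of $\xad$, rewrite via the Bregman distance, invoke the source condition $J_p(\xd)=F'(\xd)^*\omega$, split $F'(\xd)(\xad-\xd)$ into the three pieces you list, and apply the nonlinearity condition to the Taylor remainder. The only cosmetic difference is in the final step: the paper completes the square once to obtain $(\|F(\xad)-\yd\|-\alpha\|\omega\|)^2+2(1-c\|\omega\|)\alpha D_{f_p}(\xad,\xd)\le(\delta+\alpha\|\omega\|)^2$ and reads both bounds off directly, whereas you extract them separately (dropping the Bregman term and completing the square for the first, then maximizing the concave quadratic in $A$ for the second)---these are equivalent manipulations.
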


\begin{proof}
 Since $\xad$ is a minimizer of $\Phi_{\alpha}(x)$, it follows that
  \begin{align*}
    \frac{1}{2}\|F(\xad)-\yd \|^{2} + \alpha f_p(\xad) - \alpha f_p(\xd) & \leq \frac{1}{2}\|F(\xd)-\yd\|^{2}.
    %&\leq\frac{1}{2}\delta^{2}+\alpha f_p(\xd)-\alpha f_p(\xad).
  \end{align*}
  The source condition in Assumption \ref{assp_main} \ref{ass:src} thus yields
  \begin{multline*}
    \frac{1}{2} \|F(\xad)- \yd\|^{2} + \alpha D_{f_p}(\xad,\xd) \leq \frac{1}{2}\|F(\xd)-\yd\|^{2} - \alpha \langle J_p (\xd),\xad-\xd\rangle\\
      \begin{aligned}
      & \leq \frac{1}{2}\delta^{2}-\alpha\langle\omega,F'(\xd)(\xad-\xd)\rangle\\
      & = \frac{1}{2}\delta^{2} + \alpha \left\langle \omega,y-\yd+\yd-F(\xad)+F(\xad)-y-F'(\xd)(\xad-\xd)\right\rangle\\
      & \leq \frac{1}{2}\delta^{2}+\alpha\delta\|\omega\| +\alpha\|\omega\|\|F(\xad)-\yd\|+\alpha\|\omega\|c D_{f_p}(\xad,\xd).
      \end{aligned}
  \end{multline*}
  Therefore
  \begin{equation*}
    \|F(\xad)-\yd\|^{2}-2\alpha\|\omega\|\|F(\xad)-\yd\|+2(1-c\|\omega\|)\alpha D_{f_p}(\xad,\xd)\leq\delta^{2}+2\alpha\delta\|\omega\|\\
  \end{equation*}
  holds and finally
    $$\left(\|F(\xad)-\yd\|-\alpha\|\omega\|\right)^{2}+2(1-c\|\omega\|)\alpha D_{f_p}(\xad,\xd)\leq(\delta+\alpha\|\omega\|)^{2}$$
  implies both error estimates in \eqref{L:EE}.
\end{proof}

  One easily verifies that the right hand side in the estimate for $D_{f_p}(\xad,\xd)$ in \eqref{L:EE} is minimized if the regularization parameter is chosen as $\alpha = \delta / \| \omega \|$. This a-priori parameter choice rule would thus give optimal convergence rates estimates in terms of the Bregman distance. Since the source element $\omega$ itself will usually not be at hand, we might use the estimate $\| \omega \| \leq \varrho$ from Assumption~\ref{assp_main}~\ref{ass:src} and choose $\alpha = \delta / \varrho$ instead. This is indeed feasible, if the scaling parameter $\nrho = 3$ and the d-TIGRA algorithm will approximate $\alpha = \delta / \varrho$ from above (cf. Proposition~\ref{pro:outk}). In what follows, we will consider values $\alpha \geq \astar$, where
    \begin{equation} \label{eq:astar}
      \astar := \frac{\delta}{(\nrho - 2) \varrho}
    \end{equation}
  and for such $\alpha$ \eqref{L:EE} yields
    \begin{equation} \label{eq:dpalpha}
      \|F(\xad)-\yd\| \leq 2 \alpha \|\omega\| + \delta \leq \nrho \varrho \alpha %< \frac{\alpha}{c},
    \end{equation}
  which will turn out to be of great use. Thus, the scaling parameter $\nrho > 2$ in Assumption~\ref{assp_main}~\ref{ass:src} can be regarded as a trade off between the smallness condition  $\varrho < \frac{1}{\nrho c}$ and the choice of the regularization parameter $\astar$.

  We conclude this section with two important consequences of Assumption \ref{assp_main}. It is worth noting, that the bounds obtained below are not necessarily optimal. Similar yet more elaborate estimates were already used in \cite{r03}. For our purposes, however, it is the existence of uniform bounds in $\alpha$ which is primarily important.

\begin{lemma} \label{lem:normbds}
  If Assumption \ref{assp_main} holds, then
    \begin{align}
      \| \xad \| & \leq A := \l( \frac{p}{2 \astar} \r)^{1/p} \| F(0) - \yd \|^{2/p} \label{eq:normxad}\\
      \|F'(\xad)\| & \leq K := L A + \|F'(0)\|, \label{eq:K}
    \end{align}
  for all $\alpha \geq \astar$.
\end{lemma}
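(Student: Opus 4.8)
The plan is to bound $\|\xad\|$ first and then use Lipschitz continuity of $F'$ to bound $\|F'(\xad)\|$. For the first bound, the key observation is that $\xad$ minimizes $\Phi_\alpha$, so in particular $\Phi_\alpha(\xad) \leq \Phi_\alpha(0)$. Since the data fidelity term is nonnegative, this immediately yields
\[
  \frac{\alpha}{p} \|\xad\|^p \leq \Phi_\alpha(\xad) \leq \Phi_\alpha(0) = \frac{1}{2}\|F(0) - \yd\|^2,
\]
so that $\|\xad\|^p \leq \frac{p}{2\alpha}\|F(0)-\yd\|^2$. For $\alpha \geq \astar$ this gives $\|\xad\| \leq \l(\frac{p}{2\astar}\r)^{1/p}\|F(0)-\yd\|^{2/p} = A$, which is exactly \eqref{eq:normxad}. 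The uniformity in $\alpha$ is precisely the point: the bound only gets better for larger $\alpha$, and $\astar$ from \eqref{eq:astar} is the worst case.

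For the second bound, I would write $F'(\xad) = F'(\xad) - F'(0) + F'(0)$ and apply the triangle inequality in $\mathcal{L}(\ell^p,Y)$. Assumption~\ref{assp_main}~\ref{ass:lip} gives $\|F'(\xad) - F'(0)\| \leq L\|\xad\| \leq LA$ by the bound just established, hence
\[
  \|F'(\xad)\| \leq L\|\xad\| + \|F'(0)\| \leq LA + \|F'(0)\| = K,
\]
which is \eqref{eq:K}. This too holds for all $\alpha \geq \astar$ since the estimate $\|\xad\| \leq A$ does.

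I do not anticipate a genuine obstacle here; the statement is essentially an unwinding of the minimizing property together with Assumption~\ref{assp_main}~\ref{ass:lip}. The only mild subtlety is making sure one uses $\alpha \geq \astar$ (rather than an arbitrary $\alpha > 0$) at the step where $\frac{p}{2\alpha}$ is bounded by $\frac{p}{2\astar}$, so that the constant $A$ is independent of $\alpha$; for $\alpha < \astar$ the same argument still works but produces an $\alpha$-dependent bound, which is not what is claimed. Everything else is a one-line application of a stated hypothesis.
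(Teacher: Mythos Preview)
Your proof is correct and follows essentially the same approach as the paper: use $\Phi_\alpha(\xad)\le\Phi_\alpha(0)$ to bound $\|\xad\|$, then combine with the Lipschitz condition on $F'$ for the second estimate. The paper's argument is line-for-line the same.
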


\begin{proof}
  From the minimizing property of $\xad$ we obtain
    \begin{align*}
      \alpha f_p (\xad) & \leq \Phi_\alpha (\xad) \leq \Phi_\alpha (0) \leq \frac{1}{2} \| F(0) - \yd \|^2.
    \end{align*}
  Thus, for $\alpha \geq \astar$,
    \[ \| \xad \|^p \leq \frac{p}{2 \alpha} \| F(0) - \yd \|^2 \leq \frac{p}{2 \astar} \| F(0) - \yd \|^2. \]
  Now, \eqref{eq:normxad} readily follows by taking the $p$-th root.

  For the estimate on $\|F'(\xad)\|$ we observe that by Assumption \ref{assp_main} \ref{ass:lip}
    \begin{align*}
      \|F'(\xad)\| & \leq \|F'(\xad) - F'(0) \| + \|F'(0)\| \leq L \| \xad \| + \|F'(0)\|.
    \end{align*}
  Thus \eqref{eq:K} follows using \eqref{eq:normxad}.
\end{proof}

  Under our standing assumptions, the distance of minimizers $\xad$ and $\xbd$ corresponding to nearby values $\alpha$ and $\bar \alpha$ turns out to be of the order of $|\alpha - \bar \alpha|$. This result has essential implications regarding the outer update step where we decrease the regularization parameter $\alpha$ by a factor $\qa < 1$ (cf. Lemma~\ref{lem:qa} and Pro\-position~\ref{pro:ink}). The rather technical proof is postponed to Section \ref{ssec:upd}.

\begin{proposition} \label{pro:upd}
  Let Assumption \ref{assp_main} hold and let $\bar \alpha \geq \astar$. Then both $\xad$ and $F(\xad)$ are continuous from the right with respect to $\bar \alpha$, in the sense that
  \begin{equation*}
    \lim_{\alpha \to +\bar \alpha} \| \xad - \xbd \| = 0, \quad  \lim_{\alpha \to +\bar \alpha} \| F(\xad) - F(\xbd) \|= 0.
  \end{equation*}
  In particular, for $\bar \alpha \leq \alpha \leq \frac{\bar \alpha}{\qa_0}$ with
    \begin{equation} \label{eq:qa0}
      \qa_0 := \frac{2 \nrho c \varrho}{1 + \nrho c \varrho} < 1,
    \end{equation}
  we have
  \begin{equation} \label{eq:xadcont}
    \| \xad - \xbd \| \leq \sigma (\alpha - \bar \alpha),
  \end{equation}
  where $\sigma = \tfrac{2 \nrho \varrho K} {c_A (1- \nrho c \varrho) \astar}$ and $c_A : = \frac{p-1}{2} (3 A)^{p-2}$ with $A$ from Lemma~\ref{lem:normbds}.
\end{proposition}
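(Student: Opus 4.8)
The plan is to play the two minimizing properties against each other. Since $F$ is Fr\'echet differentiable and $f_p$ is continuously differentiable for $1<p\le2$, both $\Phi_\alpha$ and $\Phi_{\bar\alpha}$ are differentiable, so their global minimizers are critical points: $F'(\xad)^*(F(\xad)-\yd)+\alpha J_p(\xad)=0$ and $F'(\xbd)^*(F(\xbd)-\yd)+\bar\alpha J_p(\xbd)=0$. Starting from $\Phi_\alpha(\xad)\le\Phi_\alpha(\xbd)$, I would rewrite $\alpha(f_p(\xad)-f_p(\xbd))=\alpha D_{f_p}(\xad,\xbd)+\alpha\langle J_p(\xbd),\xad-\xbd\rangle$ and substitute $J_p(\xbd)=-\tfrac{1}{\bar\alpha}F'(\xbd)^*(F(\xbd)-\yd)$ from the second Euler equation, which turns the minimality inequality into
\begin{equation*}
  \tfrac{1}{2}\|F(\xad)-\yd\|^2+\alpha D_{f_p}(\xad,\xbd)
  -\tfrac{\alpha}{\bar\alpha}\bigl\langle F(\xbd)-\yd,\ F'(\xbd)(\xad-\xbd)\bigr\rangle
  \le\tfrac{1}{2}\|F(\xbd)-\yd\|^2 .
\end{equation*}

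The algebraic core is then to insert $F'(\xbd)(\xad-\xbd)=(F(\xad)-F(\xbd))-r$ with $\|r\|\le c\,D_{f_p}(\xad,\xbd)$ from Assumption~\ref{assp_main}~\ref{ass:nl}, to expand $\langle F(\xbd)-\yd,\ F(\xad)-F(\xbd)\rangle$ by the polarization identity, and to collect terms. Writing $\Delta:=\|F(\xad)-F(\xbd)\|$ and using the a priori residual bounds $\|F(\xad)-\yd\|\le\nrho\varrho\alpha$, $\|F(\xbd)-\yd\|\le\nrho\varrho\bar\alpha$ from~\eqref{eq:dpalpha} (legitimate since $\alpha,\bar\alpha\ge\astar$), the monotonicity $\|F(\xbd)-\yd\|\le\|F(\xad)-\yd\|$ obtained by adding the two minimality inequalities (this is where $\alpha\ge\bar\alpha$ enters), and the smallness $\nrho c\varrho<1$, I expect this to reduce to an inequality of the form
\begin{equation*}
  \alpha\,(1-\nrho c\varrho)\,D_{f_p}(\xad,\xbd)+\tfrac{\alpha}{2\bar\alpha}\,\Delta^2
  \ \le\ \tfrac{\alpha^2-\bar\alpha^2}{2\bar\alpha}\,\nrho\varrho\,\Delta .
\end{equation*}

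Both assertions then follow from this single inequality. Dropping the nonnegative Bregman term yields $\Delta\le2\nrho\varrho(\alpha-\bar\alpha)$, hence right-continuity of $\alpha\mapsto F(\xad)$. Dropping instead the $\Delta^2$-term, estimating $\Delta\le K\|\xad-\xbd\|$ by the uniform derivative bound~\eqref{eq:K} (applicable since $\xad$, $\xbd$ and the segment joining them lie in the ball of radius $A$ around $0$ by Lemma~\ref{lem:normbds}) and invoking the lower bound $D_{f_p}(\xad,\xbd)\ge c_A\|\xad-\xbd\|^2$ from Lemma~\ref{lemma1} with $c_1=A$, $c_2=2A$, one divides by $\|\xad-\xbd\|$ (the case $\xad=\xbd$ being trivial since $f_p$ is strictly convex) and absorbs $\tfrac{\alpha+\bar\alpha}{2\alpha}\le1$ and $\tfrac{1}{\bar\alpha}\le\tfrac{1}{\astar}$ to reach $\|\xad-\xbd\|\le\sigma(\alpha-\bar\alpha)$. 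Since minimizers of $\Phi_{\bar\alpha}$ need not be unique, it is precisely this quantitative estimate that identifies the limit with the prescribed $\xbd$, so that the $\alpha\to\bar\alpha^+$ limits follow from it as well; the range $\bar\alpha\le\alpha\le\bar\alpha/\qa_0$ is the one relevant for the outer update step, where the regularization parameter is decreased by a factor $\qa\in[\qa_0,1)$.

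The main obstacle is the bookkeeping in the algebraic step: every error term — the nonlinearity remainder $r$ and the difference $\|F(\xad)-\yd\|^2-\|F(\xbd)-\yd\|^2$ — must be matched in order to a controllable quantity, and only $\nrho c\varrho<1$ keeps the coefficient of $D_{f_p}(\xad,\xbd)$ from degenerating; carrying all the constants through so that they coalesce into $\sigma$, $c_A$ and $\qa_0$ is the delicate part, which explains why the authors relegate the proof to Section~\ref{ssec:upd}.
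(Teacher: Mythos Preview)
Your argument is correct and close in spirit to the paper's, but you swap the roles of the two minimizers: the paper uses the minimality of $\Phi_{\bar\alpha}$ at $\xbd$ together with the Euler equation for $\Phi_\alpha$ at $\xad$, whereas you use the minimality of $\Phi_\alpha$ at $\xad$ together with the Euler equation for $\Phi_{\bar\alpha}$ at $\xbd$. Consequently the paper arrives at
\[
\tfrac{1}{2}\|F(\xbd)-F(\xad)\|^{2}+(\bar\alpha-\nrho c\varrho\,\alpha)\,D_{f_p}(\xbd,\xad)\le \nrho\varrho K\,(\alpha-\bar\alpha)\,\|\xad-\xbd\|,
\]
and it is precisely in order to keep the coefficient $\bar\alpha-\nrho c\varrho\,\alpha$ bounded below by $\tfrac{1-\nrho c\varrho}{2}\bar\alpha$ that the restriction $\alpha\le\bar\alpha/\qa_0$ enters. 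In your symmetric version the Bregman coefficient is $\alpha(1-\nrho c\varrho)$, which is positive without any upper bound on $\alpha$; you therefore never actually use the $\qa_0$-range, and your final constant is in fact $\sigma/2$, which implies~\eqref{eq:xadcont} a fortiori.

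The price you pay for this is that the paper obtains $K\|\xad-\xbd\|$ directly on the right (the derivative $F'(\xad)$ appears, and Lemma~\ref{lem:normbds} bounds it at the minimizer), whereas your right-hand side carries $\Delta=\|F(\xad)-F(\xbd)\|$ and you need a mean-value bound on the segment between $\xad$ and $\xbd$. This is harmless---the estimate $\|F'(x)\|\le L\|x\|+\|F'(0)\|\le K$ from the proof of Lemma~\ref{lem:normbds} holds for every $x$ with $\|x\|\le A$, hence on the whole segment---but strictly speaking Lemma~\ref{lem:normbds} as stated covers only the minimizers, so you should note the trivial extension explicitly. The residual monotonicity you derive from adding the two minimality inequalities is correct but not essential: the reverse triangle inequality already gives $\|F(\xad)-\yd\|^2-\|F(\xbd)-\yd\|^2\le\nrho\varrho(\alpha+\bar\alpha)\Delta$ regardless of sign.
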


\begin{remark}
  Note, that Proposition~\ref{pro:upd} implies uniqueness of the minimizer $\xad$ for $\alpha \geq \astar$. Indeed, if we let $\bar \alpha = \alpha$ with arbitrarily chosen minimizers $\xad$, $\xbd$, then \eqref{eq:xadcont} yields $\| \xbd - \xad\| = 0$.
\end{remark}

\smallskip

\section{The iterated dual gradient descent method}  \label{sec:duallwb}

  In this section we  analyze a dual gradient descent method for the computation of a minimizer of the Tikhonov functional with sparsity constraints using the discrepancy principle as a global stopping rule. Several technical proofs are collected in Section \ref{sec:proofs} for improved readability. The algorithm can be summarized as follows:

\begin{algorithm}
  \caption{The dual TIGRA algorithm}
  \SetAlgoNoLine
  \SetKwInOut{Input}{input}
  \SetKwInOut{Output}{output}
  \SetKwInOut{Init}{init}
  \SetKw{MyEnd}{end}

  \Input{$\qa < 1$, $\alpha_{0}$  and $\xs$.}

  \Init{$j=0$ and $\x{0}{0} = \xs$.}

  \BlankLine

  \While{$\| F(\xjks) - \yd\|\leq \tau \delta$}{
      \If{$j \geq 1$}{
	$\ajj = \qa \alpha_j$

	$\x{j+1}{0} = \xjk$
      }

      $j = j+1$, $k = 0$

      \While{$\|\nabla\Phi_{\alpha_j}(\xjk)\|<C_j$}{
	$J_{p}(\xj{k+1}) = J_{p}(\xjk) - \beta_{j,k} \nabla\Phi_{\alpha_j}(\xjk)$

	$\xj{k+1} =J_{q}(J_{p}(\xj{k+1}))$

	$k = k+1$
      }
  }
\end{algorithm}

  There are several requirements we need to verify in order to prove that the above algorithm is well-defined and globally convergent. First of all, for each $j$ the dual gradient descent algorithm in lines {\bf {\scriptsize 7-11}} ~ starting from $\xj{0}$ should converge to a minimizer $\xajd$ of $\Phi_{\alpha_j} (x)$ and, secondly, $\|\nabla\Phi_{\alpha_j}(\xjk)\|$ should tend to zero as $k \to \infty$, so that each inner iteration terminates after a finite number of steps. Additionally we need to ensure that the discrepancy principle can be used as a stopping rule for the outer iteration.

\subsection{A convexity property of the Tikhonov functional} \label{ssec:convexity}

  In the following, we would like to investigate the \emph{local directional convexity} of the Tikhonov functional $\Phi_\alpha (x)$ near a minimizer $\xad$, i.e.~the convexity of the functions
    \begin{equation} \label{eq:phi}
      \varphi_{\alpha,h}(t) : = \Phi_{\alpha}(\xad+th), \qquad t\in \IR
    \end{equation}
  for $h\in X$ with $\|h\|=1$. To this end, recall the following characterization of convexity on intervals.

\begin{proposition}\label{pro:convex}
  A continuously differentiable function $\varphi : \IR \to \IR$ is convex on an interval $I$ if and only if
  \begin{equation*}
    \varphi(t_2)-\varphi(t_1)-\varphi'(t_1)(t_2-t_1)\geq 0
  \end{equation*}
  for all $t_{1},t_{2} \in I$ and $\varphi$ is strictly convex if equality only holds for $t_1 = t_2$.
\end{proposition}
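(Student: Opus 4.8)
The statement to prove is the elementary characterization of convexity for a $C^1$ function on an interval, namely Proposition~\ref{pro:convex}. This is a completely standard real-analysis fact, so the plan is to give the usual two-direction argument.

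\medskip

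\emph{Plan of proof.} First I would prove the ``only if'' direction. Assume $\varphi$ is convex on $I$ and fix $t_1, t_2 \in I$. For $\lambda \in (0,1)$ convexity gives $\varphi(t_1 + \lambda(t_2 - t_1)) \le (1-\lambda)\varphi(t_1) + \lambda \varphi(t_2)$, hence
\[
  \frac{\varphi(t_1 + \lambda(t_2-t_1)) - \varphi(t_1)}{\lambda} \le \varphi(t_2) - \varphi(t_1).
\]
Letting $\lambda \to 0^+$, the left-hand side converges to $\varphi'(t_1)(t_2 - t_1)$ by differentiability, which yields $\varphi(t_2) - \varphi(t_1) - \varphi'(t_1)(t_2 - t_1) \ge 0$.

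\medskip

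For the ``if'' direction, assume the stated inequality holds for all $t_1, t_2 \in I$. Take any $a, b \in I$ with $a < b$ and $\lambda \in (0,1)$, and set $c := (1-\lambda) a + \lambda b \in I$. Applying the hypothesis with $t_1 = c$ and $t_2 = a$, and then with $t_1 = c$ and $t_2 = b$, gives
\[
  \varphi(a) \ge \varphi(c) + \varphi'(c)(a - c), \qquad \varphi(b) \ge \varphi(c) + \varphi'(c)(b - c).
\]
Multiplying the first inequality by $(1-\lambda)$ and the second by $\lambda$, then adding, the terms involving $\varphi'(c)$ cancel because $(1-\lambda)(a-c) + \lambda(b-c) = 0$, and we obtain $(1-\lambda)\varphi(a) + \lambda\varphi(b) \ge \varphi(c)$, i.e.\ convexity of $\varphi$ on $I$.

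\medskip

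Finally, for the strictness claim, I would trace through the same arguments under the assumption that the tangent inequality is strict whenever $t_1 \ne t_2$: in the ``if'' direction the two displayed inequalities become strict (since $a \ne c$ and $b \ne c$ for $\lambda \in (0,1)$), so the convex combination is strict as well, giving strict convexity; conversely, if $\varphi$ is strictly convex, equality $\varphi(t_2) = \varphi(t_1) + \varphi'(t_1)(t_2 - t_1)$ with $t_1 \ne t_2$ would contradict the strict inequality in the secant estimate used above (taking the midpoint), so equality forces $t_1 = t_2$. There is no real obstacle here; the only point requiring a little care is handling the degenerate cases $t_1 = t_2$ and the endpoints of $I$, and making sure the limit $\lambda \to 0^+$ is justified purely by differentiability of $\varphi$ at $t_1$ without assuming anything stronger.
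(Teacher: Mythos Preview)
Your proof is correct and entirely standard. Note, however, that the paper does not actually prove Proposition~\ref{pro:convex}: it is introduced with ``recall the following characterization of convexity on intervals'' and then used without further justification, so there is nothing to compare against.
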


  Thus, $\varphi_{\alpha,h}(t)$ from \eqref{eq:phi} is convex on $[-r,r]$, if
  \begin{align*}
    0 & \leq \varphi_{\alpha,h}(t_{2}) -\varphi_{\alpha,h}(t_{1})-(t_{2}-t_{1})\varphi'_{\alpha,h}(t_{1})  = R_{\Phi_\alpha}(\xad + t_2 h, \xad + t_1 h) % \\
%       & = \Phi_{\alpha}(\xad+t_{2}h)-\Phi_{\alpha}(\xad+t_1 h)-(t_2-t_1) \langle \nabla \Phi_{\alpha}(\xad + t_1 h), h \rangle\\
  \end{align*}
  holds for all $t_1,t_2 \in [-r,r]$, where
  \begin{equation} \label{eq:RPhi}
    R_{\Phi_\alpha} (z, x) : = \Phi_\alpha (z) - \Phi_\alpha (x) - \langle \nabla \Phi_\alpha (x), z-x \rangle
  \end{equation}
  for $x, z \in X$.  The following Theorem asserts the existence of such $r$ as a function of the regularization parameter $\alpha$. The proof can be found in Section \ref{ssec:convex}.

\begin{theorem}\label{thm:convex}
  Let Assumption \ref{assp_main} hold and let $\gamma = \frac{1 - \nrho c \varrho}{2}$. Then, for all $\alpha \geq \astar$ and $h \in X$ with $\|h\|=1$,
    \[ R_{\Phi_\alpha} (\xad + t_2 h, \xad + t_1 h) \geq \gamma \alpha D_{f_p}(\xad + t_2 h,\xad + t_1 h) \]
  holds for $|t_1|,|t_2| \leq \cra$, where $R_{\Phi_\alpha}$ is as defined in \eqref{eq:RPhi} and
    \begin{equation} \label{eq:ralpha}
      \cra := \frac{2}{1+\sqrt{2}} \min\left\{ \frac{\gamma \alpha}{c \nrho K}, \sqrt{\frac{\gamma  \alpha}{10 c L}} \right\}.
    \end{equation}
  In particual, $\varphi_{\alpha,h}(t)$ in \eqref{eq:phi} is thus convex on $[-\cra, \cra]$.
\end{theorem}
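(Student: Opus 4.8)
The plan is to unfold the definition of $R_{\Phi_\alpha}$ and split the Tikhonov functional into its data-fitting part $\tfrac12\|F(\cdot)-\yd\|^2$ and its penalty part $\alpha f_p(\cdot)$. For the penalty part the second-order remainder is exactly $\alpha D_{f_p}(\xad+t_2h,\xad+t_1h)$, which appears on the right-hand side. The task therefore reduces to controlling the data-fitting remainder
\[
  R(z,x) := \tfrac12\|F(z)-\yd\|^2 - \tfrac12\|F(x)-\yd\|^2 - \langle F'(x)^*(F(x)-\yd), z-x\rangle
\]
from below by $-(1-\gamma)\alpha D_{f_p}(z,x) = -\tfrac{1+\nrho c\varrho}{2}\,\alpha D_{f_p}(z,x)$, evaluated at $x=\xad+t_1h$, $z=\xad+t_2h$. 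First I would expand $R(z,x)$ by adding and subtracting the linearization $F(x)+F'(x)(z-x)$ inside the first square, obtaining a ``curvature'' term $\tfrac12\|F(z)-F(x)-F'(x)(z-x) + F'(x)(z-x)\|^2$-type decomposition; the clean identity is
\[
  R(z,x) = \tfrac12\|F'(x)(z-x)\|^2 + \langle F(x)-\yd,\, F(z)-F(x)-F'(x)(z-x)\rangle + \tfrac12\|F(z)-F(x)-F'(x)(z-x)\|^2.
\]
The first and third terms are nonnegative, so only the middle term can be negative, and I would bound it in absolute value by $\|F(x)-\yd\|\cdot\|F(z)-F(x)-F'(x)(z-x)\|$.

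Next I would estimate each of the two factors. For the nonlinearity remainder I would use Assumption~\ref{assp_main}~\ref{ass:nl}, $\|F(z)-F(x)-F'(x)(z-x)\|\le c\,D_{f_p}(z,x)$, but this alone is not enough because we need a bound proportional to $\alpha D_{f_p}(z,x)$ with the right constant, and $\|F(x)-\yd\|$ must be turned into something of size $O(\alpha)$. Here is where $x=\xad+t_1h$ with $|t_1|\le\cra$ enters: writing $\|F(x)-\yd\|\le\|F(\xad)-\yd\| + \|F(\xad+t_1h)-F(\xad)\|$, the first summand is bounded by $\nrho\varrho\alpha$ via \eqref{eq:dpalpha} (using $\alpha\ge\astar$ and $\|\omega\|\le\varrho$), and the second by $\|F'(\xad)\|\,|t_1| + \tfrac{L}{2}t_1^2 \le K\cra + \tfrac{L}{2}\cra^2$ using \eqref{eq:lipnl} and the norm bound $\|F'(\xad)\|\le K$ from Lemma~\ref{lem:normbds}. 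The definition \eqref{eq:ralpha} of $\cra$ is precisely engineered so that $K\cra \le \tfrac{1+\sqrt2}{2}\cdot\tfrac{\gamma\alpha}{c\nrho}\cdot\tfrac{2}{1+\sqrt2}\cdot\tfrac1{\,}$... more carefully, $K\cra$ and $\tfrac{L}{2}\cra^2$ each get absorbed into a fraction of $\tfrac{\gamma\alpha}{c}$; combining, $c\,\|F(x)-\yd\| \le c\nrho\varrho\alpha + cK\cra + \tfrac{cL}{2}\cra^2 \le \nrho c\varrho\,\alpha + (1-\gamma-\nrho c\varrho)\alpha = (1-\gamma)\alpha$, where the middle inequality is where the two terms in the $\min$ defining $\cra$ and the factor $\tfrac{2}{1+\sqrt2}$ are calibrated (the $\sqrt{\cdot}$ branch handles the quadratic term $\tfrac{cL}{2}\cra^2$, the linear branch handles $cK\cra$). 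Then $|\langle F(x)-\yd, F(z)-F(x)-F'(x)(z-x)\rangle| \le \|F(x)-\yd\|\cdot c\,D_{f_p}(z,x) \le (1-\gamma)\alpha\,D_{f_p}(z,x)$.

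Putting it together: $R_{\Phi_\alpha}(z,x) = R(z,x) + \alpha D_{f_p}(z,x) \ge -(1-\gamma)\alpha D_{f_p}(z,x) + \alpha D_{f_p}(z,x) = \gamma\alpha D_{f_p}(z,x)$, which is the claim with $z=\xad+t_2h$, $x=\xad+t_1h$; restricting to $|t_1|,|t_2|\le\cra$ is exactly what the above estimates required. Finally, since $D_{f_p}\ge 0$ (convexity of $f_p$), Proposition~\ref{pro:convex} applied to $\varphi_{\alpha,h}$ gives convexity on $[-\cra,\cra]$, because $R_{\Phi_\alpha}(\xad+t_2h,\xad+t_1h) = \varphi_{\alpha,h}(t_2)-\varphi_{\alpha,h}(t_1)-\varphi'_{\alpha,h}(t_1)(t_2-t_1)\ge 0$. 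I expect the main obstacle to be the bookkeeping in the last display of the previous paragraph: verifying that the explicit constant $\cra$ in \eqref{eq:ralpha}, with its particular factor $\tfrac{2}{1+\sqrt2}$ and the two-term minimum, makes $cK\cra + \tfrac{cL}{2}\cra^2$ fit inside the budget $(1-\gamma-\nrho c\varrho)\alpha = \gamma\alpha$ (using $\gamma=\tfrac{1-\nrho c\varrho}{2}$, so $1-\gamma-\nrho c\varrho=\gamma$); one must check $cK\cra\le\tfrac{\gamma\alpha}{?}$ and $\tfrac{cL}{2}\cra^2\le\tfrac{\gamma\alpha}{?}$ split the budget so their sum is $\le\gamma\alpha$, and the constant $10$ and the $\tfrac{2}{1+\sqrt2}$ are what make the two halves balance. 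Everything else is either a standard algebraic identity or a direct invocation of Assumption~\ref{assp_main}, \eqref{eq:dpalpha}, \eqref{eq:lipnl}, and Lemma~\ref{lem:normbds}.
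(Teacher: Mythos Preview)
Your overall strategy coincides with the paper's, but there is a genuine algebraic gap: your ``clean identity'' for the data-fitting remainder is missing a term. With $a=F(x)-\yd$, $b=F'(x)(z-x)$ and $c=R_F(z,x)=F(z)-F(x)-F'(x)(z-x)$, the correct expansion is
\[
  R(z,x)=\tfrac12\|b\|^2+\langle a,c\rangle+\langle b,c\rangle+\tfrac12\|c\|^2,
\]
and you have dropped the cross term $\langle b,c\rangle=\langle F'(x)(z-x),\,R_F(z,x)\rangle$. This term can also be negative and must be controlled. With $x=\xad+t_1h$ and $z=\xad+t_2h$ one has $b=(t_2-t_1)F'(x)h$, and the paper bounds $\langle b,c\rangle \ge -|t_2-t_1|\,(K+L|t_1|)\,c\,D_{f_p}(z,x)$ via $\|F'(x)\|\le K+L|t_1|$ (Lipschitz continuity plus Lemma~\ref{lem:normbds}) and Assumption~\ref{assp_main}~\ref{ass:nl}; since $|t_2-t_1|\le 2\cra$, this contributes an additional $-2\cra(K+L\cra)c$ to the coefficient in front of $D_{f_p}$.

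Consequently, the budget inequality you must verify is not $cK\cra+\tfrac{cL}{2}\cra^2\le\gamma\alpha$ but the stronger $3cK\cra+\tfrac{5cL}{2}\cra^2\le\gamma\alpha$; the paper arrives at the quadratic $p(r)=-\tfrac{5cL}{2}r^2-3cKr+\gamma\alpha$ and bounds its positive root from below by the expression~\eqref{eq:ralpha}. Your claim that ``only the middle term can be negative'' is thus false, and the constants your incomplete identity would produce do not match the stated $\cra$. Once the missing cross term is restored and estimated as above, the remainder of your outline---the use of \eqref{eq:dpalpha}, \eqref{eq:lipnl}, Lemma~\ref{lem:normbds}, and the final appeal to Proposition~\ref{pro:convex}---is correct and mirrors the paper's argument.
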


Of course, Theorem~\ref{thm:convex} does not ensure that $\Phi_\alpha(x)$ is itself convex in any neighbourhood of $\xad$. We will say that $\Phi_\alpha (x)$ is \emph{directionally convex} on $B_{\cra} (\xad)$, tacitly assuming that only lines through $\xad$ are considered. Here $B_r(x)$ denotes the ball of radius $r$ centered at $x\in X$, i.e.
  \[ B_{\cra} (\xad) = \{ x \in X \st \| x- \xad \| \leq \cra \}. \]
Note that $\cra$ does not depend on the direction $h$ and that $\cra \to \infty$ as $\alpha \to \infty$.

As a first consequence of Theorem \ref{thm:convex}, we obtain an estimate on the directional derivative of $\Phi_\alpha(x)$ in direction $\xad$.

\begin{proposition} \label{pro:vphi}
  Let Assumption \ref{assp_main} hold and let $x \in B_{\crr{\alpha}}(\xad)$ with $\cra$ from Theorem~\ref{thm:convex} for some $\alpha \geq \astar$, then
    \begin{equation}\label{Phi:Df}
      \langle \nabla \Phi_{\alpha}(x), x - \xad \rangle \geq \gamma \alpha D_{f_p}(\xad, x)
    \end{equation}
  holds with $\gamma = \frac{1 - \nrho c \varrho}{2}$.
\end{proposition}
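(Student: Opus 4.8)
The plan is to deduce Proposition~\ref{pro:vphi} from Theorem~\ref{thm:convex} by applying it along the single line through $x$ and $\xad$. Concretely, set $h := \frac{\xad - x}{\|\xad - x\|}$ (if $x = \xad$ the inequality is trivial, since both sides vanish), so that $\|h\| = 1$. With $t_0 := -\|\xad - x\|$ we have $x = \xad + t_0 h$, and clearly $|t_0| = \|x - \xad\| \leq \cra$ by hypothesis. The point $\xad$ itself corresponds to the parameter value $t = 0$, which also satisfies $|0| \leq \cra$. Hence Theorem~\ref{thm:convex} applies with the choice $t_1 = t_0$, $t_2 = 0$.

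With these substitutions, $R_{\Phi_\alpha}(\xad + t_2 h, \xad + t_1 h) = R_{\Phi_\alpha}(\xad, x)$ and $D_{f_p}(\xad + t_2 h, \xad + t_1 h) = D_{f_p}(\xad, x)$, so Theorem~\ref{thm:convex} gives
\[
  R_{\Phi_\alpha}(\xad, x) \geq \gamma \alpha D_{f_p}(\xad, x).
\]
Unwinding the definition \eqref{eq:RPhi}, $R_{\Phi_\alpha}(\xad, x) = \Phi_\alpha(\xad) - \Phi_\alpha(x) - \langle \nabla \Phi_\alpha(x), \xad - x \rangle$. The key additional observation is that $\xad$ is the (global) minimizer of $\Phi_\alpha$, hence $\Phi_\alpha(\xad) - \Phi_\alpha(x) \leq 0$. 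Therefore
\[
  - \langle \nabla \Phi_\alpha(x), \xad - x \rangle \geq R_{\Phi_\alpha}(\xad, x) \geq \gamma \alpha D_{f_p}(\xad, x),
\]
and multiplying by $-1$ yields $\langle \nabla \Phi_\alpha(x), x - \xad \rangle \geq \gamma \alpha D_{f_p}(\xad, x)$, which is exactly \eqref{Phi:Df}.

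There is essentially no obstacle here — the proposition is a direct corollary. The only small points requiring care are: first, verifying that the function $\varphi_{\alpha, h}$ is continuously differentiable so that Proposition~\ref{pro:convex} (and hence Theorem~\ref{thm:convex}) genuinely applies along this line, which follows from the Fr\'echet differentiability of $F$ (and of $f_p$) assumed throughout; second, that Theorem~\ref{thm:convex} is stated for $x = \xad + t_1 h$ ranging over all $h$ with $\|h\| = 1$ and $|t_1|, |t_2| \leq \cra$, so one must simply pick the right unit direction $h$ and the right pair of parameters, as above. One could alternatively avoid choosing a sign for $h$ by noting that $R_{\Phi_\alpha}$ and $D_{f_p}$ are computed directly at the two points $\xad$ and $x$, independently of how the connecting line is parametrized. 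Either way, the argument is a one-line application of the directional convexity together with the global minimality of $\xad$.
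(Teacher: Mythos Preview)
Your proof is correct and is essentially the same as the paper's: both apply Theorem~\ref{thm:convex} with $t_1$ corresponding to $x$ and $t_2=0$ corresponding to $\xad$, then use the minimizing property $\Phi_\alpha(\xad)\leq\Phi_\alpha(x)$ to drop the nonpositive term from $R_{\Phi_\alpha}(\xad,x)$. The paper simply suppresses the explicit choice of $h$ and parametrization, writing the argument in two lines.
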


\begin{proof}
    Using the minimizing property of $\xad$ and Theorem~\ref{thm:convex}, we obtain
    \begin{align*}
      -\langle\nabla\Phi_{\alpha}(x), x - \xad \rangle & = R_{\Phi_{\alpha}} (\xad, x) -\Phi_{\alpha}(\xad)+\Phi_{\alpha}(x)\\
	& \geq R_{\Phi_{\alpha}}(\xad,x) \geq \gamma\alpha  D_{f_p}(\xad,x),
    \end{align*}
  and the proof is complete.
\end{proof}
 
 \begin{remark} \label{rem:critpts}
    Note that, due to the strict convexity of $f_p(x)$, Proposition \ref{pro:vphi} in particular implies that there are no critical points other than $\xad$ inside $B_{\cra} (\xad)$. The size $\cra$ of this region increases as $\alpha \to \infty$ and due to Proposition \ref{pro:upd} there exists $\alpha$ such that $\xad \neq 0$, but $0 \in B_{\cra} (\xad)$. Consequently,  $F'(0)^*$ cannot be the zero mapping as otherwise $\nabla \Phi_\alpha (0) = 0$ would hold and thus $x=0$ would be a critical point of $\Phi_\alpha (x)$.
 \end{remark}

\subsection{Initial values} \label{ssec:updstop}

  Our convergence analysis below relies primarily on the Bregman distance and for the strictly convex functionals $f_p (x)$ it is well known that
	\[ D_{f_p} (\xajd, \xjk) \to 0 \]
  also yields $\| \xajd - \xjk \| \to 0$. But at the same time it will be important to ensure that the iterates $\xjk$ remain inside the region of directional convexity $B_{\crr {\alpha_j} } (\xajd)$ with radius $\crr {\alpha_j} $ from Theorem~\ref{thm:convex} throughout the iteration (compare the proof of Theorem~\ref{thm:convg}, for example). The following general result asserts that indeed both can be achieved simultaneously. The proof is given in Section \ref{ssec:Dfnorm}.

  \begin{lemma} \label{lem:Dfnorm}
    To every $\alpha \geq \astar$ there exists $\bra > 0$ such that
      \[ \Bbra \subset B_{\cra} (\xad), \]
    where $\Bb{d}{z}$ denotes the ball of radius $d$ around $z \in X$ with respect to the Bregman distance, i.e.
      \begin{equation} \label{eq:Bbra}
        \Bbra : = \{ x \in X \st D_{f_p} (\xad, x) \leq \bra \}.
      \end{equation}
    Moreover, the numbers $\bra$ may be chosen such that they depend on $\alpha$ continuously, they are strictly monotonically increasing and that $\bra \to \infty$ as $\alpha \to \infty$.
  \end{lemma}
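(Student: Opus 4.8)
The plan is to reduce the containment $\Bbra \subset B_{\cra}(\xad)$ to a metric estimate, by turning a bound on $D_{f_p}(\xad, x)$ into a bound on $\|x - \xad\|$, and then to transfer the stated regularity in $\alpha$ directly from the corresponding properties of $\cra$. First I would record the elementary facts about $\cra$ from \eqref{eq:ralpha}: as a fixed positive multiple of the minimum of the two continuous, strictly increasing functions $\alpha \mapsto \gamma\alpha/(c\nrho K)$ and $\alpha \mapsto \sqrt{\gamma\alpha/(10cL)}$, the map $\alpha \mapsto \cra$ is continuous, strictly increasing, strictly positive on $[\astar,\infty)$ (here $\gamma>0$ by Assumption~\ref{assp_main}\ref{ass:src}), and $\cra \to \infty$ as already noted. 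Hence it suffices to construct $\bra$ that depends continuously and strictly increasingly on $\cra$ (equivalently, on $\alpha$) and satisfies $\bra \to \infty$.

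The central estimate is the lower bound \eqref{Dfl} of Lemma~\ref{lemma1}, which controls $\|x-\xad\|^2$ by $D_{f_p}(\xad, x)$ but requires an a priori bound on $\|x\|$. I would obtain such a bound from coercivity of $x \mapsto D_{f_p}(\xad, x)$: using the representation in \eqref{eq:Bregd}, the identity $\|J_p(x)\| = \|x\|^{p-1}$, and $\|\xad\| \leq A$ from Lemma~\ref{lem:normbds},
\[
  D_{f_p}(\xad, x) \;\geq\; \|x\|^{p-1}\Bigl(\tfrac1q\|x\| - A\Bigr) \;\geq\; \tfrac{1}{2q}\|x\|^p \qquad \text{whenever } \|x\| \geq 2qA,
\]
so that $D_{f_p}(\xad, x) \leq \bra$ forces $\|x\| \leq \rho(\bra) := \max\{2qA, (2q\bra)^{1/p}\}$. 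Applying then \eqref{Dfl} with $c_1 = \rho(\bra)$ and $c_2 = \rho(\bra) + A$ (which dominates $\|x - \xad\|$) gives
\[
  D_{f_p}(\xad, x) \;\geq\; \tfrac{p-1}{2}\bigl(2\rho(\bra) + A\bigr)^{p-2}\,\|x-\xad\|^2 ,
\]
whence $D_{f_p}(\xad, x) \leq \bra$ implies $\|x - \xad\|^2 \leq g(\bra) := \tfrac{2}{p-1}\,\bra\,(2\rho(\bra)+A)^{2-p}$. Therefore $\Bbra \subset B_{\cra}(\xad)$ holds as soon as $g(\bra) \leq \cra^2$.

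To finish I would analyse $g$: it is continuous with $g(0)=0$, and since $\rho$ is continuous and non-decreasing and $2-p\geq 0$, the factor $(2\rho(\bra)+A)^{2-p}$ is non-decreasing and positive, so $g$ is strictly increasing; moreover $\rho(\bra)=(2q\bra)^{1/p}$ for large $\bra$ yields $g(\bra)\sim \mathrm{const}\cdot\bra^{2/p}\to\infty$. Thus $g:[0,\infty)\to[0,\infty)$ is a continuous strictly increasing bijection, and setting $\bra := \bra(\alpha) := g^{-1}(\cra^2)$ gives the largest admissible value; it is strictly positive for $\alpha\geq\astar$, inherits continuity and strict monotonicity in $\alpha$ from $\cra^2$ through $g^{-1}$, and tends to $\infty$ because $\cra\to\infty$. (If an explicit choice is preferred, any continuous strictly increasing minorant of $g^{-1}(\cra^2)$ tending to infinity, e.g.\ one reflecting the asymptotics $\bra\sim\mathrm{const}\cdot\cra^{p}$, works equally well.)

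The main obstacle is the balance built into the choice of $\bra$: \eqref{Dfl} can only be invoked after fixing a bound on $\|x\|$, yet any \emph{fixed} bound would cap $\bra$ and obstruct $\bra\to\infty$. This is circumvented by letting the coercivity bound $\rho(\bra)$ grow with $\bra$ — legitimate precisely because $\|\xad\|\leq A$ stays bounded uniformly in $\alpha$ — and the resulting superlinear growth $g(\bra)\sim\bra^{2/p}$ is what allows $\bra(\alpha)$ to absorb arbitrarily large $\cra$. A minor bookkeeping point is the kink of $\rho$ at $\bra=(2q)^{p-1}A^p$, which is harmless since $\rho$ remains continuous and non-decreasing there.
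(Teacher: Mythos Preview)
Your proof is correct and follows essentially the same route as the paper: both arguments first extract a coercivity bound $\|x\|\leq C(\bra)$ from $D_{f_p}(\xad,x)\leq\bra$, then feed it into the lower estimate \eqref{Dfl} of Lemma~\ref{lemma1} to obtain $\|x-\xad\|^2\leq g(\bra)$, and finally invert the continuous increasing function $g$ against $\cra^2$. The only difference is cosmetic: the paper isolates the coercivity step as a separate lemma with an implicitly defined $C_d$, whereas you give the explicit bound $\rho(\bra)=\max\{2qA,(2q\bra)^{1/p}\}$ and carry out the inversion more explicitly; both arrive at $\bra\sim\cra^{\,p}$ for large $\alpha$.
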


  In view of Theorem~\ref{thm:convg} below, we refer to $\Bbra$ as the \emph{region of convergence} of the dual gradient descent method.
  For the initiation of the algorithm it will be important that the starting point $\xs$ belongs to $\Bb {d_{\alpha_0}} {x_{\alpha_0}^\delta}$
  for the regularization parameter $\alpha_0$. It is our next objective to show that the latter is indeed the case for sufficiently large $\alpha_0$.

\begin{proposition} \label{pro:start}
  Let Assumption \ref{assp_main} hold, then to each starting value $\xs \in X$ there exists % a regularization parameter
  $\alpha_0 > \astar$ large enough such that $\xs \in \Bb {d_{\alpha_0}} {x_{\alpha_0}^\delta}$ as defined in \eqref{eq:Bbra}.
\end{proposition}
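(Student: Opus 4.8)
The plan is to show that for any fixed starting value $\xs$, we can make $D_{f_p}(x_{\alpha_0}^\delta, \xs)$ as small as we like by taking $\alpha_0$ large, and since $d_{\alpha_0} \to \infty$ by Lemma~\ref{lem:Dfnorm}, eventually $D_{f_p}(x_{\alpha_0}^\delta, \xs) \leq d_{\alpha_0}$. The key is therefore to understand the behavior of the minimizer $\xad$ as $\alpha \to \infty$. First I would observe that from Lemma~\ref{lem:normbds} the norms $\|\xad\|$ are uniformly bounded by $A$ for all $\alpha \geq \astar$; more importantly, revisiting the proof of that lemma, the minimizing property gives $\alpha f_p(\xad) \leq \frac{1}{2}\|F(0) - \yd\|^2$, so in fact $\|\xad\|^p \leq \frac{p}{2\alpha}\|F(0)-\yd\|^2 \to 0$ as $\alpha \to \infty$. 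Hence $\xad \to 0$ in $\ell^p$, and consequently $J_p(\xad) \to 0$ in $\ell^q$ as well (using $\|J_p(\xad)\| = \|\xad\|^{p-1}$ from \eqref{eq:duality}).

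Next I would estimate $D_{f_p}(\xad, \xs)$ directly from the definition,
\[
  D_{f_p}(\xad, \xs) = f_p(\xad) - f_p(\xs) - \langle J_p(\xs), \xad - \xs \rangle.
\]
As $\alpha \to \infty$ we have $f_p(\xad) \to 0$ and $\langle J_p(\xs), \xad - \xs\rangle \to -\langle J_p(\xs), \xs \rangle = -\|\xs\|^p$, so $D_{f_p}(\xad, \xs) \to f_p(\xs) - f_p(\xs) \cdot 0 + \|\xs\|^p$... more carefully, the limit is $0 - f_p(\xs) + \|\xs\|^p = (1 - \tfrac{1}{p})\|\xs\|^p = \tfrac{1}{q}\|\xs\|^p$, which is a finite constant (and zero only if $\xs = 0$, which is the trivial case). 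So $D_{f_p}(\xad, \xs)$ does not go to zero in general — but it stays bounded, say $D_{f_p}(\xad, \xs) \leq M$ for all $\alpha$ large, where $M$ can be taken close to $\tfrac{1}{q}\|\xs\|^p + 1$. On the other hand, Lemma~\ref{lem:Dfnorm} guarantees $d_\alpha \to \infty$, so there exists $\alpha_0 > \astar$ with $d_{\alpha_0} > M \geq D_{f_p}(x_{\alpha_0}^\delta, \xs)$, which is exactly the assertion $\xs \in \mathcal{B}_{d_{\alpha_0}}(x_{\alpha_0}^\delta)$.

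The main obstacle I anticipate is making the convergence $\xad \to 0$ and the continuity of the relevant quantities fully rigorous: one must ensure $F(\xad) \to F(0)$ (which follows from Fréchet differentiability and the norm bound, via \eqref{eq:lipnl}), and verify that the bound on $D_{f_p}(\xad,\xs)$ is genuinely uniform for $\alpha$ beyond some threshold rather than merely a limsup statement — this requires only the monotone decay $\|\xad\|^p \leq \frac{p}{2\alpha}\|F(0)-\yd\|^2$, which is clean. A secondary subtlety is that Lemma~\ref{lem:Dfnorm} only asserts $d_\alpha \to \infty$ along the particular choice of $d_\alpha$ made there, so one should phrase the final step as: pick $\alpha_0$ large enough that simultaneously $D_{f_p}(x_{\alpha_0}^\delta, \xs) \leq \tfrac{1}{q}\|\xs\|^p + 1$ and $d_{\alpha_0} \geq \tfrac{1}{q}\|\xs\|^p + 1$, both of which are possible since the first holds for all large $\alpha$ and the second by the divergence of $d_\alpha$.
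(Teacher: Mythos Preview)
Your proof is correct and follows the same overall strategy as the paper: show that $D_{f_p}(\xad,\xs)$ remains bounded in $\alpha$, then invoke $\bra\to\infty$ from Lemma~\ref{lem:Dfnorm}. The difference is only in how you obtain the bound. You establish $\xad\to 0$ and compute the explicit limit $D_{f_p}(\xad,\xs)\to\tfrac{1}{q}\|\xs\|^p$, whereas the paper simply applies the upper estimate \eqref{Dfsp} from Lemma~\ref{lemma1} together with $\|\xad\|\leq A$ from Lemma~\ref{lem:normbds} to get the uniform bound
\[
  D_{f_p}(\xad,\xs)\leq \tilde c_p\,\|\xad-\xs\|^p\leq \tilde c_p\,(A+\|\xs\|)^p
\]
in a single line, valid for all $\alpha\geq\astar$. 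Your route gives more information (the actual limiting value) but also introduces unnecessary work: the convergence $F(\xad)\to F(0)$ you flag as a potential obstacle is irrelevant to the argument, and the ``uniformity'' worry disappears once you notice that a convergent sequence is bounded. The paper's version is shorter precisely because it avoids analyzing the limit at all.
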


\begin{proof}
  Observe that by Lemma~\ref{lemma1} and Lemma~\ref{lem:normbds},
    \[ D_{f_p} (\xad, \xs) \leq \tilde c_p \| \xad - \xs \| \leq \tilde c_p (A + \| \xs \|) \]
  remains bounded, whereas $\bra \to \infty$ as $\alpha \to \infty$ according to Lemma~\ref{lem:Dfnorm}. Thus, there exists $\alpha_0$ such that $D_{f_p} (x_{\alpha_0}^{\delta}, \xs) \leq \br {\alpha_0}$ and the proof is complete.
\end{proof}

\subsection{Convergence of the dual gradient descent method} \label{ssec:convg}

  Let us now analyze the dual gradient descent iteration for fixed regularization parameter $\alpha_j \geq \astar$, which reads as follows:

  \begin{equation}\label{eq:lwb}
  	\begin{aligned}
    J_p ( \xj{k+1} ) & = J_p (\xjk) - \beta_{j,k} \nabla \Phi_{\alpha_j} (\xjk)\\
      \xj{k+1} & = J_q (J_p ( \xj{k+1} )).
      \end{aligned}
  \end{equation}

  In particular, we will show that the sequence of iterates $\{ \xjk \}_{k \in \IN}$ approaches the minimizer $\xajd$ in the Bregman distance. Then, for the strictly convex functionals $f_p (x)$ with $1<p\leq 2$, this even implies convergence in norm. 

  For an appropriate starting value $\xj 0$ and suitably chosen step-sizes $\beta_{j,k} $ the Bregman distance between the iterates $\xjk$ and the exact minimizer $\xajd$ does not increase throughout the algorithm. We prove this result in Section \ref{ssec:convg_breg}.

\begin{theorem} \label{thm:convg_breg}
  Let Assumption \ref{assp_main} hold and suppose that $D_{f_p} (\xajd,\xjk) \leq \bra$ with $\bra$ as in Lemma~\ref{lem:Dfnorm} and that $\nabla \Phi_{\alpha_j} (\xjk) \neq 0$.
  Then $ \xj{k+1} $ from \eqref{eq:lwb} is at least as close to $\xajd$ as $\xjk$ with respect to the Bregman distance, i.e.
    $$D_{f_p}(\xajd, \xj{k+1} )\leq D_{f_p}(\xajd,\xjk),$$
  if the step-size $\beta_{j,k} $ is chosen such that
    \begin{equation} \label{eq:beta1}
	\beta_{j,k}  \leq \frac{\gamma \; \bar c_{j,k} \; \alpha_j}{\|\nabla\Phi_{\alpha_j}(\xjk)\|^2},
    \end{equation}
  holds, where %$\bar{c}$ is defined in \eqref{cbar}
  \begin{equation} \label{eq:cbar}
    \bar c_{j,k}  :=  \min \l\{ 1, \frac{8 \bar c_A (\Phi_{\alpha_j} (\xjk) - \phi_{j,k})}{4 K^2 + 4 L \nrho \varrho \alpha_j + 4 L K \crr{\alpha_j} + L^2 \crr{\alpha_j}^2 + 8 \alpha_j \tilde{c}_p \bar c_A^{\frac{2-p}{2}}}  \r\}
  \end{equation}
  with $\bar c_A:= \frac{p-1}{2} (2 \cra + A)^{p-2}$ and
    \[ \phi_{j,k} : =\min\{\Phi_{\alpha_j}(J_q ( J_p (\xjk) + t \nabla\Phi_{\alpha_j} (\xjk)) ):t \in \IR^{+}\}. \]
\end{theorem}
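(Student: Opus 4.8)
The plan is to compare $\Phi_{\alpha_j}$ at the new iterate $\xj{k+1}$ with its value at $\xjk$ along the line joining $J_p(\xjk)$ to $J_p(\xj{k+1})$ in the dual space, and simultaneously estimate the Bregman distance increment, exploiting the directional convexity from Theorem~\ref{thm:convex} and Proposition~\ref{pro:vphi}. Write $g_{j,k} := \nabla \Phi_{\alpha_j}(\xjk)$, so that $J_p(\xj{k+1}) = J_p(\xjk) - \beta_{j,k} g_{j,k}$. Using the identity~\eqref{eq:Bregd}, which expresses $D_{f_p}(z,x) = D_{f_q}(J_p(x), J_p(z))$, I would compute
\[
  D_{f_p}(\xajd, \xj{k+1}) - D_{f_p}(\xajd, \xjk) = D_{f_q}(J_p(\xajd), J_p(\xj{k+1})) - D_{f_q}(J_p(\xajd), J_p(\xjk)).
\]
Expanding the right-hand side using the definition of $D_{f_q}$ and the update rule, the difference splits into a linear term $-\beta_{j,k} \langle g_{j,k}, \xajd - \xjk\rangle$ (after using $J_q(J_p(\xjk)) = \xjk$) plus a remainder coming from the nonlinearity of $J_q$, which by Lemma~\ref{lemma2}, in particular the smoothness estimate~\eqref{Dfsq}, is bounded by $\tilde c_q \beta_{j,k}^2 \|g_{j,k}\|^2$ for an appropriate constant — but I would instead route this through the $p$-smoothness estimate~\eqref{Dfsp} applied in the primal space after establishing that $\xj{k+1}$ stays in a bounded region, so the constant $\bar c_A$ and $\tilde c_p$ in~\eqref{eq:cbar} appear naturally.

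Next, the linear term $-\beta_{j,k} \langle g_{j,k}, \xajd - \xjk\rangle = \beta_{j,k}\langle g_{j,k}, \xjk - \xajd\rangle$ is controlled from below by Proposition~\ref{pro:vphi}: since $D_{f_p}(\xajd, \xjk) \leq \bra$ implies $\xjk \in \Bbra[\alpha_j] \subset B_{\crr{\alpha_j}}(\xajd)$ by Lemma~\ref{lem:Dfnorm}, we have $\langle g_{j,k}, \xjk - \xajd\rangle \geq \gamma \alpha_j D_{f_p}(\xajd, \xjk) \geq 0$. Actually, to get the sharper bound encoded in $\bar c_{j,k}$ I would not use $D_{f_p}$ on the right but rather the descent in the functional value: combining the convexity inequality $R_{\Phi_{\alpha_j}}(\xajd, \xjk) \geq \gamma\alpha_j D_{f_p}(\xajd,\xjk)$ with $\Phi_{\alpha_j}(\xajd) \leq \Phi_{\alpha_j}(\xjk)$ gives $\langle g_{j,k}, \xjk - \xajd\rangle \geq \Phi_{\alpha_j}(\xjk) - \Phi_{\alpha_j}(\xajd) + \gamma\alpha_j D_{f_p}(\xajd,\xjk)$, and the quantity $\Phi_{\alpha_j}(\xjk) - \phi_{j,k}$ appearing in~\eqref{eq:cbar} is precisely the maximal possible one-step decrease of $\Phi_{\alpha_j}$ along the descent direction, so I expect the argument to relate the per-step gain to this decrease. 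The quadratic remainder term must be dominated by the linear gain, which forces the step-size bound; the factor $\|g_{j,k}\|^2$ in the denominator of~\eqref{eq:beta1} comes from the $\beta_{j,k}^2\|g_{j,k}\|^2$ scaling of the remainder against the $\beta_{j,k}$ scaling of the gain.

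The key technical point — and the main obstacle — is keeping $\xj{k+1}$ inside the region $B_{\crr{\alpha_j}}(\xajd) + $ ball of radius $A$, so that the constants $K$ (bound on $\|F'\|$ from Lemma~\ref{lem:normbds}), $L$ (Lipschitz constant), $\crr{\alpha_j}$, $\nrho\varrho\alpha_j$ (from~\eqref{eq:dpalpha}, bounding $\|F(\xjk) - \yd\|$), and $\bar c_A$ all apply to $\xj{k+1}$; this is a bootstrap/induction where one must verify $D_{f_p}(\xajd, \xj{k+1}) \leq \bra$ using the step-size restriction, and only then close the estimate. The specific combination $4K^2 + 4L\nrho\varrho\alpha_j + 4LK\crr{\alpha_j} + L^2\crr{\alpha_j}^2 + 8\alpha_j \tilde c_p \bar c_A^{(2-p)/2}$ in~\eqref{eq:cbar} should emerge from expanding $\|\nabla\Phi_{\alpha_j}\|$-type bounds: writing $\nabla\Phi_{\alpha_j}(x) = F'(x)^*(F(x) - \yd) + \alpha_j J_p(x)$, using Assumption~\ref{assp_main}\ref{ass:lip} and~\ref{ass:nl} together with~\eqref{eq:lipnl} to Taylor-expand $\Phi_{\alpha_j}(\xj{k+1})$ around $\xjk$, and collecting the second-order terms (the $F'$-contribution gives $K^2$ and $LK\crr{\alpha_j}$, $L^2\crr{\alpha_j}^2$ pieces, the data-misfit cross term gives $L\nrho\varrho\alpha_j$, and the penalty's $p$-smoothness gives the $\alpha_j\tilde c_p\bar c_A^{(2-p)/2}$ term). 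I would carry out these expansions carefully, track the constants, and conclude that with $\beta_{j,k}$ satisfying~\eqref{eq:beta1} the quadratic term is at most half the linear gain, yielding $D_{f_p}(\xajd, \xj{k+1}) \leq D_{f_p}(\xajd, \xjk)$.
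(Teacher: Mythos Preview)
Your overall decomposition is right and matches the paper: one writes
\[
  D_{f_p}(\xajd, \xj{k+1}) - D_{f_p}(\xajd,\xjk) = D_{f_p}(\xjk,\xj{k+1}) - \beta_{j,k}\langle g_{j,k}, \xjk - \xajd\rangle,
\]
bounds the first term quadratically in $\beta_{j,k}\|g_{j,k}\|$ (the paper does this in the dual via Lemma~\ref{lemma2}, getting the constant $\tilde c_q(\alpha)$ of Proposition~\ref{closer:0}, not via primal $p$-smoothness as you suggest), and bounds the second from below via Proposition~\ref{pro:vphi}. That much is Proposition~\ref{closer:0}.

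Where your plan goes off track is the origin of $\bar c_{j,k}$. You expect the combination $4K^2 + 4L\nrho\varrho\alpha_j + \dots$ to arise from a Taylor expansion of $\Phi_{\alpha_j}(\xj{k+1})$ around $\xjk$, i.e.\ from the \emph{step} itself, and you frame a bootstrap to keep $\xj{k+1}$ in the admissible region. Neither is what happens. The constant $\bar c_{j,k}$ is a \emph{lower bound on $D_{f_p}(\xajd,\xjk)$}, obtained by going in the opposite direction: one expands $\Phi_{\alpha_j}(\xjk) - \Phi_{\alpha_j}(\xajd) = R_{\Phi_{\alpha_j}}(\xajd,\xjk)$ (Taylor around $\xajd$, using $\nabla\Phi_{\alpha_j}(\xajd)=0$) explicitly in terms of $\|F'(\xajd)(\xjk-\xajd)\|$, $R_F(\xjk,\xajd)$, and $D_{f_p}(\xjk,\xajd)$, then bounds each piece using Assumption~\ref{assp_main}, Lemma~\ref{lem:normbds}, \eqref{eq:dpalpha}, and Lemma~\ref{lemma1} (this is where $\bar c_A$ and $\tilde c_p$ enter, converting $D_{f_p}(\xjk,\xajd)$ and $\|\xjk-\xajd\|$ into $D_{f_p}(\xajd,\xjk)$). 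The outcome is an inequality of the form
\[
  \Phi_{\alpha_j}(\xjk) - \phi_{j,k} \;\le\; \Phi_{\alpha_j}(\xjk) - \Phi_{\alpha_j}(\xajd) \;\le\; C_{\alpha_j}\, D_{f_p}(\xajd,\xjk)^{1/2}
\]
for $D_{f_p}(\xajd,\xjk) < 1$, where $C_{\alpha_j}$ is exactly the denominator in \eqref{eq:cbar} divided by $8\bar c_A$. Inverting gives $D_{f_p}(\xajd,\xjk)\ge \bar c_{j,k}^2$, and together with Proposition~\ref{pro:vphi} this makes the upper bound $\bar\beta_k$ of Proposition~\ref{closer:0} at least as large as the right-hand side of \eqref{eq:beta1}. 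No control of $\xj{k+1}$ is required; everything uses only $\xjk\in B_{\crr{\alpha_j}}(\xajd)$, and the monotonicity of the Bregman distance is the conclusion, not a hypothesis to be bootstrapped.
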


  Using the decay with respect to the Bregman distance, we also obtain monotonicity in the Tikhonov functional values $\Phi_{\alpha_j} (\xjk)$ and that $\| \nabla \Phi_{\alpha_j} (\xjk) \| \to 0$ as $k \to \infty$. It is worth noting that the latter also ensures that the inner stopping rule $\| \nabla \Phi_{\alpha_j} (\xjk) \| \leq C_{\alpha_j}$ is well-defined for any value $C_{\alpha_j} > 0$. For the proof, which goes along the lines of Theorem 2.9 in \cite{r02b}, we refer to Section \ref{ssec:stop}, where the constants $M_{\alpha_j}$ are explicitely given as well.

\begin{proposition} \label{pro:stop}
  Let Assumption \ref{assp_main} hold and suppose that $D_{f_p} (\xajd,x_0) \leq \bra$ with $\bra$ as in Lemma~\ref{lem:Dfnorm}.
  Then there exists a constant $M_{\alpha_j} > 0$ such that the sequence $\{ \xjk \}_{k \in \IN_0}$ generated by \eqref{eq:lwb} with step-sizes
  \begin{equation} \label{eq:betak}
    \beta_{j,k}  := \min \left\{\frac{\gamma \; \bar c_{j,k} \; \alpha_j}{\|\nabla\Phi_{\alpha_j}(\xjk)\|^2}, \frac{1}{2 M_{\alpha_j}} \right\}
  \end{equation}
  satisfies
  \begin{align*}
    \Phi_{\alpha_j} ( \xj{k+1} ) \leq \Phi_{\alpha_j} (\xjk) \qquad \mbox{and} \qquad  R_{\Phi_{\alpha_j}} ( \xj{k+1} , \xjk) \leq M_{\alpha_j} D_{f_p} ( \xj{k+1} , \xjk)
  \end{align*}
  for all $k \geq 0$, as well as
    \[ \| \nabla \Phi_{\alpha_j} (\xjk) \| \to 0 \qquad \mbox{as } k \to \infty. \]
\end{proposition}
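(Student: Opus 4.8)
The plan is to follow the strategy of Theorem 2.9 in \cite{r02b}, adapted to the dual/Banach space setting. First I would establish the existence of the constant $M_{\alpha_j}$: working inside the region of convexity $B_{\crr{\alpha_j}}(\xajd)$, the second-order remainder $R_{\Phi_{\alpha_j}}(z,x)$ of the Tikhonov functional can be bounded by the Bregman distance $D_{f_p}(z,x)$ uniformly. Indeed, $\Phi_{\alpha_j}(x) = \tfrac12\|F(x)-\yd\|^2 + \alpha_j f_p(x)$, and the remainder of the quadratic data term is controlled using Assumption~\ref{assp_main}~\ref{ass:lip} together with the norm bounds $\|F'(\xad)\| \leq K$ from Lemma~\ref{lem:normbds} and $\|F(\xad)-\yd\| \leq \nrho\varrho\alpha$ from \eqref{eq:dpalpha}, while the remainder of $\alpha_j f_p$ is exactly $\alpha_j D_{f_p}(z,x)$; applying Lemma~\ref{lemma1} then lets one absorb the quadratic terms in $\|z-x\|$ into a multiple of $D_{f_p}(z,x)$ on the bounded set $B_{\crr{\alpha_j}}(\xajd)$. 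This yields $R_{\Phi_{\alpha_j}}(z,x) \leq M_{\alpha_j} D_{f_p}(z,x)$ for all $x,z$ in that ball, with $M_{\alpha_j}$ given explicitly in terms of $K$, $L$, $\crr{\alpha_j}$, $\nrho\varrho\alpha_j$, $\tilde c_p$ (this is the constant promised in Section~\ref{ssec:stop}).

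Next I would prove the monotonicity $\Phi_{\alpha_j}(\xj{k+1}) \leq \Phi_{\alpha_j}(\xjk)$ by induction on $k$. The key point is that the induction hypothesis $D_{f_p}(\xajd,\xjk) \leq \bra$, together with Lemma~\ref{lem:Dfnorm} and Theorem~\ref{thm:convg_breg}, guarantees both $\xjk$ and $\xj{k+1}$ lie in $\Bbra \subset B_{\crr{\alpha_j}}(\xajd)$, so that the remainder estimate above applies to the pair $(\xj{k+1},\xjk)$. Then, writing $\Phi_{\alpha_j}(\xj{k+1}) = \Phi_{\alpha_j}(\xjk) + \langle \nabla\Phi_{\alpha_j}(\xjk), \xj{k+1}-\xjk\rangle + R_{\Phi_{\alpha_j}}(\xj{k+1},\xjk)$, I would bound the linear term using the dual update rule \eqref{eq:lwb}: since $J_p(\xj{k+1}) - J_p(\xjk) = -\beta_{j,k}\nabla\Phi_{\alpha_j}(\xjk)$, the quantity $\langle \nabla\Phi_{\alpha_j}(\xjk), \xj{k+1}-\xjk\rangle$ can be related (via \eqref{eq:Bregd} and the identity $J_q J_p = \mathrm{id}$) to $-\tfrac{1}{\beta_{j,k}}\bigl(D_{f_q}(J_p(\xjk),J_p(\xj{k+1})) + D_{f_p}(\xj{k+1},\xjk)\bigr)$, i.e.\ a genuinely negative contribution of size comparable to $\beta_{j,k}\|\nabla\Phi_{\alpha_j}(\xjk)\|^2$. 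The remainder term is bounded by $M_{\alpha_j}D_{f_p}(\xj{k+1},\xjk)$, and the choice $\beta_{j,k} \leq \tfrac{1}{2M_{\alpha_j}}$ ensures this is at most half the magnitude of the negative linear part, giving the net decrease; along the way this also produces the second claimed inequality $R_{\Phi_{\alpha_j}}(\xj{k+1},\xjk) \leq M_{\alpha_j}D_{f_p}(\xj{k+1},\xjk)$, which holds because the pair lies in the convexity ball.

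Finally, for $\|\nabla\Phi_{\alpha_j}(\xjk)\| \to 0$, I would use that $\{\Phi_{\alpha_j}(\xjk)\}_k$ is monotonically decreasing and bounded below (by $\Phi_{\alpha_j}(\xajd) \geq 0$), hence convergent, so the per-step decrements are summable and in particular tend to zero. From the decrease estimate derived above, each step decrement dominates a positive multiple of $\min\{\beta_{j,k}, \text{something}\}\cdot\|\nabla\Phi_{\alpha_j}(\xjk)\|^2$; unpacking the definition \eqref{eq:betak} of $\beta_{j,k}$ and the definition \eqref{eq:cbar} of $\bar c_{j,k}$ — which itself involves the decrement-like quantity $\Phi_{\alpha_j}(\xjk) - \phi_{j,k}$ — one concludes that $\|\nabla\Phi_{\alpha_j}(\xjk)\|^2$ must go to zero, arguing by contradiction: if $\|\nabla\Phi_{\alpha_j}(\xjk)\|$ stayed bounded away from zero along a subsequence, the corresponding decrements would be bounded below by a positive constant, contradicting summability. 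I expect the main obstacle to be the bookkeeping in this last step, namely correctly tracking how $\bar c_{j,k}$, $\phi_{j,k}$, and the lower bound for the one-dimensional line-search minimum interact so that the contradiction argument actually closes; the potential degeneracy is the case where the $\min$ in \eqref{eq:cbar} is attained at the second argument, where one must show the factor $\Phi_{\alpha_j}(\xjk) - \phi_{j,k}$ cannot be too small relative to $\|\nabla\Phi_{\alpha_j}(\xjk)\|^2$ without forcing $\|\nabla\Phi_{\alpha_j}(\xjk)\|\to 0$ anyway.
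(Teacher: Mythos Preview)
Your plan matches the paper's proof closely: both establish the $R_{\Phi_{\alpha_j}}$ bound, derive monotonicity from the step-size constraint $\beta_{j,k} \leq 1/(2M_{\alpha_j})$, and conclude $\|\nabla\Phi_{\alpha_j}(\xjk)\| \to 0$ by a contradiction on the decrements. Two technical points are worth sharpening.

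First, your estimate that the linear term has ``size comparable to $\beta_{j,k}\|\nabla\Phi_{\alpha_j}(\xjk)\|^2$'' is Hilbert-space intuition. In $\ell^p$ the correct quantitative bound comes from the $q$-convexity of $\ell^q$ (Lemma~\ref{lemma2}): via \eqref{eq:Bregd} one has $D_{f_p}(\xj{k+1},\xjk) = D_{f_q}(J_p(\xjk),J_p(\xj{k+1})) \geq c_q\,\beta_{j,k}^q\,\|\nabla\Phi_{\alpha_j}(\xjk)\|^q$, and it is this $q$-th power that drives the contradiction. The paper's decrease estimate is $\Phi_{\alpha_j}(\xj{k+1}) - \Phi_{\alpha_j}(\xjk) \leq -M_{\alpha_j}\,D_{f_p}(\xj{k+1},\xjk) \leq -M_{\alpha_j}\,c_q\,\beta_{j,k}^q\,\|\nabla\Phi_{\alpha_j}(\xjk)\|^q$.

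Second, the paper does not bound $R_{\Phi_{\alpha_j}}(z,x)$ for arbitrary pairs in $B_{\crr{\alpha_j}}(\xajd)$ as you propose; instead (Proposition~\ref{pro:xkt}) it bounds the remainder only along the dual curve $x_k(\beta)$ for $\beta \in [0,T_{\alpha_j}]$, which avoids needing $\xj{k+1}$ in the ball a priori. Your version also works, since Theorem~\ref{thm:convg_breg} (which uses only the first branch of the $\min$ in \eqref{eq:betak}) already forces $\xj{k+1} \in \Bb{\br{\alpha_j}}{\xajd}$. Regarding your anticipated obstacle with $\bar c_{j,k}$: the paper handles this by first obtaining a uniform gradient bound $\|\nabla\Phi_{\alpha_j}(\xjk)\| \leq \kappa_{\alpha_j}$ (Lemma~\ref{lem:gradbd}), so that on any subsequence with $\|\nabla\Phi_{\alpha_j}(\xjk)\| \geq \eps$ the step-sizes $\beta_{j,k}$ stay bounded below, giving a per-step decrement of at least $M_{\alpha_j}\,c_q\,\bar\beta^q\,\eps^q$ and the contradiction.
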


  Now we can give the convergence result for the dual gradient descent method with fixed $\alpha_j \geq \astar$.

\begin{theorem}\label{thm:convg}
  Suppose that Assumption \ref{assp_main} holds and that $\xj{0} \in \Bb{\br{\alpha_j}}{\xajd}$ for $\alpha_j \geq \astar$. Let $\{ \xjk \}_{k \in \IN_0}$ be the sequence generated by the dual iteration \eqref{eq:lwb} with step-sizes $\beta_{j,k}$ from \eqref{eq:betak}, then $\xjk$ converges to the global minimizer $\xajd$ of $\Phi_{\alpha_j}$, i.e.
    \[ \xjk \to \xajd \qquad \mbox{as } k \to \infty. \]
\end{theorem}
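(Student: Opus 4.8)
The convergence proof is the natural synthesis of the preceding results: Proposition~\ref{pro:stop} gives monotonicity of $\{\Phi_{\alpha_j}(\xjk)\}_k$ together with $\|\nabla\Phi_{\alpha_j}(\xjk)\|\to 0$, and Theorem~\ref{thm:convg_breg} guarantees that the Bregman distances $D_{f_p}(\xajd,\xjk)$ are nonincreasing, hence all iterates stay in $\Bb{\br{\alpha_j}}{\xajd}\subset B_{\crr{\alpha_j}}(\xajd)$ by Lemma~\ref{lem:Dfnorm}. So the first step is to record that $d_k:=D_{f_p}(\xajd,\xjk)$ is monotonically decreasing and bounded, hence convergent to some limit $d_\infty\geq 0$; I must then show $d_\infty=0$, which by strict convexity of $f_p$ and \eqref{Dfl} (applied with $c_1=\|\xajd\|\le A$ and $c_2=\crr{\alpha_j}$, both uniformly bounded on $B_{\crr{\alpha_j}}(\xajd)$) yields $\|\xajd-\xjk\|^2\le c_p^{-1}d_k\to 0$, i.e. norm convergence.

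**Getting $d_\infty = 0$.** The key is Proposition~\ref{pro:vphi}: since every $\xjk\in B_{\crr{\alpha_j}}(\xajd)$, we have
\[
  \langle \nabla\Phi_{\alpha_j}(\xjk),\, \xjk - \xajd\rangle \;\geq\; \gamma\alpha_j\, D_{f_p}(\xajd,\xjk) \;=\; \gamma\alpha_j\, d_k.
\]
On the other hand, Cauchy--Schwarz bounds the left side by $\|\nabla\Phi_{\alpha_j}(\xjk)\|\cdot\|\xjk-\xajd\|$, and on $B_{\crr{\alpha_j}}(\xajd)$ the factor $\|\xjk-\xajd\|$ is bounded by $\crr{\alpha_j}$ (alternatively by $\sqrt{d_k/c_p}$, which is even better). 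Combining,
\[
  \gamma\alpha_j\, d_k \;\leq\; \crr{\alpha_j}\,\|\nabla\Phi_{\alpha_j}(\xjk)\|.
\]
Since $\|\nabla\Phi_{\alpha_j}(\xjk)\|\to 0$ by Proposition~\ref{pro:stop} and $\gamma\alpha_j>0$, $\crr{\alpha_j}<\infty$ are fixed constants, we conclude $d_k\to 0$. Feeding this back through \eqref{Dfl} gives $\xjk\to\xajd$ in norm, which is the assertion. That $\xajd$ is the \emph{global} minimizer of $\Phi_{\alpha_j}$ is already built into the notation (it is defined as a minimizer, and by the remark after Proposition~\ref{pro:upd} it is the unique one for $\alpha_j\ge\astar$), so no further work is needed there.

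**The main obstacle.** There is essentially no deep obstacle left: the heavy lifting has been done in Theorems~\ref{thm:convex}, \ref{thm:convg_breg} and Proposition~\ref{pro:stop}. The one point requiring a little care is the \emph{simultaneous} bookkeeping — I must verify at the outset that the hypothesis $\xj 0\in\Bb{\br{\alpha_j}}{\xajd}$ propagates to all $k$, so that Theorem~\ref{thm:convg_breg} and Proposition~\ref{pro:stop} (which both require $D_{f_p}(\xajd,\xjk)\le\br{\alpha_j}$) remain applicable at every step; this is an immediate induction using the non-increase of $d_k$ from Theorem~\ref{thm:convg_breg}, but it is the logical linchpin and should be stated explicitly. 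After that, the chain ``$d_k$ decreasing $\Rightarrow$ bounded $\Rightarrow$ iterates stay in the convexity region $\Rightarrow$ apply Proposition~\ref{pro:vphi} and Cauchy--Schwarz $\Rightarrow$ $d_k\to 0$ via $\|\nabla\Phi_{\alpha_j}(\xjk)\|\to 0$ $\Rightarrow$ norm convergence via \eqref{Dfl}'' is routine.
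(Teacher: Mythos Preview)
Your proposal is correct and follows essentially the same approach as the paper: use Theorem~\ref{thm:convg_breg} (via induction from $\xj 0\in\Bb{\br{\alpha_j}}{\xajd}$) to keep all iterates in $B_{\crr{\alpha_j}}(\xajd)$, then combine Proposition~\ref{pro:vphi} with Cauchy--Schwarz and the bound $\|\xjk-\xajd\|\le\crr{\alpha_j}$ to get $\gamma\alpha_j\,D_{f_p}(\xajd,\xjk)\le\crr{\alpha_j}\,\|\nabla\Phi_{\alpha_j}(\xjk)\|\to 0$ by Proposition~\ref{pro:stop}, and finally pass from Bregman to norm convergence. The paper's write-up is terser (it invokes strict convexity of $f_p$ rather than \eqref{Dfl} explicitly for the last step), but the argument is the same.
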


\begin{proof}
  We show $D_{f_p} (\xajd, \xjk) \to 0$, then $\xjk \to \xajd$ follows due to the strict convexity of the functionals $f_p (x)$. From Proposition~\ref{pro:vphi}, we obtain
	\begin{align*}
	   D_{f_p} (\xajd, \xjk) & \leq \frac{1}{\gamma \alpha_j} \langle \nabla\Phi_{\alpha_j}(\xjk), \xjk-\xajd \rangle \leq \frac{1}{\gamma \alpha_j} \| \nabla\Phi_{\alpha_j}(\xjk) \| \cdot \| \xjk-\xajd \|.
	\end{align*}
  Thus, the result follows as, on the one hand, $\| \xjk-\xajd \| \leq \crr{\alpha_j}$ remains bounded for all $k$ throughout the iteration due to $\xj{0} \in \Bb{\br{\alpha_j}}{\xajd}$ and Theorem~\ref{thm:convg_breg}, and, on the other hand, $\| \nabla\Phi_{\alpha_j}(\xjk) \| \to 0$ as $k \to \infty$ according to Proposition \ref{pro:stop}.
\end{proof}

\subsection{Updating the regularization parameter}

  According to Theorem~\ref{thm:convg}, the dual gradient descent method with regularization parameter $\alpha_j$ converges to the minimizer $\xajd$ of the Tikhonov functional $\Phi_{\alpha_j} (x)$, if the starting value $\x j 0$ belongs to the region of convergence, $\x j 0 \in \Bb {\br{\alpha_j}} {\xajd}$. Based on Proposition~\ref{pro:upd}, we will now show that is the case for the choice $\x {j+1} 0 = \xj {k^*(j)}$ if the update $\ajj = \qa \alpha_j$ is not too large and if $C_j$ in the stopping rule $\| \nabla \Phi_{\alpha_j} (\xjk) \| \leq C_j$ is chosen suitably. To this end, we first specify the requirements for the update factor $\qa$.

\begin{lemma} \label{lem:qa}
  Let Assumption \ref{assp_main} hold and let $\alpha_0$ be chosen as in Proposition~\ref{pro:start}. If $\qa \in (\qa_0, 1)$ with $\qa_0$ from \eqref{eq:qa0} is chosen such that
    \begin{equation} \label{eq:qa}
      \rho \, \sigma \, \alpha_0 \, (1-\qa) \leq \min \big\{ \br{\astar} / 3, 1 \big\}
    \end{equation}
  holds, where $\sigma$ is as in Proposition \ref{pro:upd} and $\rho : = \max \big\{ \tilde c_p^{1/p}, ~ 2 A^{p-1} + \crr{\alpha_0}^{p-1} \big\}$
  with $\tilde c_p$ and $\bra$ as in Lemma~\ref{lemma1} and Lemma~\ref{lem:Dfnorm}, respectively, then for all $\alpha \in (\astar, \alpha_0]$
    \begin{align}
       \label{eq:upd} D_{f_p} (\xbd, \xad) & \leq \frac{\br{\bar \alpha}}{3} \qquad \mbox{and} \qquad   \| \xbd - \xad \| \leq \frac{\br{\bar \alpha}}{3 \rho}
    \end{align}
  holds, where $\bar \alpha := \max \{ \qa \alpha, \astar \}$.
\end{lemma}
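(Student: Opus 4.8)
The plan is to read off both inequalities in \eqref{eq:upd} from the quantitative right-continuity estimate \eqref{eq:xadcont} of Proposition~\ref{pro:upd}, applied to the parameter pair $(\bar\alpha,\alpha)$; so the first job is to check that that proposition is applicable, i.e.\ that $\bar\alpha\geq\astar$ and $\bar\alpha\leq\alpha\leq\bar\alpha/\qa_0$. The first is immediate from $\bar\alpha=\max\{\qa\alpha,\astar\}$; since $\qa<1$ and $\alpha>\astar$ we have $\qa\alpha<\alpha$ and $\astar<\alpha$, hence $\bar\alpha\leq\alpha$. For the upper bound I split according to the maximum: if $\bar\alpha=\qa\alpha$, then $\qa_0\alpha\leq\qa\alpha=\bar\alpha$ because $\qa>\qa_0$; if instead $\bar\alpha=\astar>\qa\alpha$, then $\alpha<\astar/\qa$ and so $\qa_0\alpha<(\qa_0/\qa)\astar<\astar=\bar\alpha$. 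In both cases Proposition~\ref{pro:upd} yields $\|\xbd-\xad\|\leq\sigma(\alpha-\bar\alpha)$.

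Next I bound $\alpha-\bar\alpha$ uniformly in $\alpha$. If $\bar\alpha=\qa\alpha$ then $\alpha-\bar\alpha=(1-\qa)\alpha\leq(1-\qa)\alpha_0$ because $\alpha\leq\alpha_0$, and if $\bar\alpha=\astar\geq\qa\alpha$ then $\alpha-\bar\alpha=\alpha-\astar\leq\alpha-\qa\alpha\leq(1-\qa)\alpha_0$. Combining this with the estimate from the previous paragraph and the parameter condition \eqref{eq:qa},
\[ \|\xbd-\xad\|\ \leq\ \sigma(1-\qa)\alpha_0\ \leq\ \frac1\rho\min\big\{\tfrac{\br{\astar}}{3},\,1\big\}. \]

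From this single estimate both conclusions follow. The norm bound is already contained in it, since $\|\xbd-\xad\|\leq\br{\astar}/(3\rho)\leq\br{\bar\alpha}/(3\rho)$, where the last step uses $\bar\alpha\geq\astar$ together with the fact that the radii $\br{\cdot}$ of Lemma~\ref{lem:Dfnorm} are monotonically increasing. For the Bregman-distance bound I pass from the norm via Lemma~\ref{lemma1}, using both $\|\xbd-\xad\|\leq1/\rho$ and $\|\xbd-\xad\|\leq\br{\astar}/(3\rho)$:
\[ D_{f_p}(\xbd,\xad)\ \leq\ \tilde c_p\|\xbd-\xad\|^{p}\ =\ \tilde c_p\|\xbd-\xad\|^{p-1}\|\xbd-\xad\|\ \leq\ \tilde c_p\,\rho^{-(p-1)}\cdot\frac{\br{\astar}}{3\rho}\ =\ \frac{\tilde c_p}{\rho^{p}}\cdot\frac{\br{\astar}}{3}\ \leq\ \frac{\br{\astar}}{3}\ \leq\ \frac{\br{\bar\alpha}}{3}, \]
the penultimate inequality being exactly what the choice $\rho\geq\tilde c_p^{1/p}$ is for. (One could equally well bypass Lemma~\ref{lemma1} and use the elementary estimate $D_{f_p}(\xbd,\xad)\leq 2A^{p-1}\|\xbd-\xad\|$, valid since $\|\xad\|,\|\xbd\|\leq A$ by Lemma~\ref{lem:normbds}, together with $\rho\geq 2A^{p-1}+\crr{\alpha_0}^{p-1}$.)

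The one genuinely delicate point is the applicability check in the first paragraph: the maximum in $\bar\alpha=\max\{\qa\alpha,\astar\}$ must not break the sandwich $\bar\alpha\leq\alpha\leq\bar\alpha/\qa_0$ needed to invoke \eqref{eq:xadcont}, and this is precisely what forces the lower threshold $\qa>\qa_0$ on the update factor, with $\qa_0$ as in \eqref{eq:qa0}. Everything after that is constant-chasing: the definition of $\rho$ in the statement is engineered so that, after dividing the relevant estimates by $\rho$, the right-hand sides collapse to $\br{\astar}/3$ and $\br{\astar}/(3\rho)$, which are in turn dominated by the claimed $\br{\bar\alpha}/3$ and $\br{\bar\alpha}/(3\rho)$ by monotonicity of the radii.
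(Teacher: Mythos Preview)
Your proof is correct and follows essentially the same route as the paper: verify the hypotheses of Proposition~\ref{pro:upd} for the pair $(\bar\alpha,\alpha)$, use \eqref{eq:xadcont} together with $\alpha-\bar\alpha\leq(1-\qa)\alpha_0$ to get the norm bound, and then convert to the Bregman bound via $D_{f_p}\leq\tilde c_p\|\cdot\|^p$ and the choice $\rho\geq\tilde c_p^{1/p}$. The only differences are presentational---you spell out the case analysis for the applicability check, and your algebra $\tilde c_p\|\cdot\|^{p-1}\|\cdot\|\leq(\tilde c_p/\rho^p)\,\br{\astar}/3$ is arranged slightly differently from the paper's $\tilde c_p\|\cdot\|^p\leq(\rho\sigma\alpha_0(1-\qa))^p\leq\min\{\br{\astar}/3,1\}^p\leq\br{\astar}/3$---but the substance is identical.
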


\begin{proof}
  Note that the existence of $\qa$ as in \eqref{eq:qa} is guaranteed as only the left hand side in \eqref{eq:qa} tends to zero as $\qa \to 1$. Thus, by virtue of Proposition~\ref{pro:upd}, we obtain
    \begin{align*}
      \| \xad - \xbd \| & \leq \sigma \alpha_0 (1-\qa) \leq \frac{\br{\astar}}{3\rho} \leq \frac{\bra}{3\rho}, %  \leq \sigma^p \alpha_0 (1-\qa)
    \end{align*}
  and, due to Lemma~\ref{lemma1}, also
    \begin{align*}
      D_{f_p} (\xad, \xbd) & \leq \tilde c_p \|\xbd-\xad\|^p \leq (\rho \sigma \alpha_0 (1-\qa) )^p \leq \min \big\{ \br{\astar} /3, 1 \big\}^p \leq \frac{\bra}{3},
    \end{align*}
  where we have used $\qa \alpha \leq \bar \alpha \leq \alpha \leq \min \{ \frac{\bar \alpha}{\qa_0}, \alpha_0 \}$ as well as the monotonicity of $\bra$.
\end{proof}

\begin{proposition} \label{pro:ink}
  Let $\qa$ be chosen as in Lemma \ref{lem:qa}. Suppose that $\xj{0} \in \Bb {\br {\alpha_j}}{\xajd}$ and let $\{ \xjk \}_{k \geq 0}$ be the sequence generated by \eqref{eq:lwb} with step-sizes $\beta_{j, k}$ from \eqref{eq:betak}. If $\xjks$ is defined to be the first iterate which satisfies
 \begin{equation}\label{TOL:0}
 \begin{aligned}
    &\|\nabla \Phi_{\alpha_j} (\xjks)\| \leq C_{\alpha_j},
 \end{aligned}
 \end{equation}
  with
      \begin{equation} \label{eq:Cabd}
        C_{\alpha_j} \leq \frac{\gamma \alpha_j \br{\ajj}}{3 \crr{\alpha_j}} ,
      \end{equation}
 then $\xjks \in \Bb {\br {\ajj}}{\xajjd}$, where $\ajj := \max \{ \qa \alpha_j, \astar \}$ and $\qa$ is as in Lemma \ref{lem:qa}.
\end{proposition}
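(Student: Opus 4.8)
The plan is to estimate the Bregman distance $D_{f_p}(\xajjd, \xjks)$ by a triangle-inequality-type decomposition through the previous minimizer $\xajd$, combining the convergence guarantee for the inner iteration at level $j$ with the parameter-update estimate from Lemma~\ref{lem:qa}. First I would split
\[
D_{f_p}(\xajjd, \xjks) \leq D_{f_p}(\xajd, \xjks) + \Big( D_{f_p}(\xajjd, \xjks) - D_{f_p}(\xajd, \xjks) \Big),
\]
and treat the two contributions separately; a clean way to handle the difference term is to write it out using the definition of the Bregman distance and the fact that $\nabla f_p = J_p$, so that
\[
D_{f_p}(\xajjd, \xjks) - D_{f_p}(\xajd, \xjks) = f_p(\xajjd) - f_p(\xajd) - \langle J_p(\xjks), \xajjd - \xajd\rangle,
\]
which in turn can be bounded in terms of $\|\xajjd - \xajd\|$ after adding and subtracting $\langle J_p(\xajd), \xajjd - \xajd\rangle$, recognizing $D_{f_p}(\xajjd,\xajd)$ plus a term $\langle J_p(\xajd) - J_p(\xjks), \xajjd - \xajd\rangle$. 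The norms $\|J_p(\xajd)\|$, $\|J_p(\xjks)\|$ are controlled by $\|x\|^{p-1}$, and since both $\xajd$ and $\xjks$ lie in $B_{\crr{\alpha_j}}(\xajd)$ (the latter by $\xj 0\in\Bb{\br{\alpha_j}}{\xajd}$, Theorem~\ref{thm:convg_breg} and Lemma~\ref{lem:Dfnorm}) their norms are bounded by $A + \crr{\alpha_j}$, whence this difference term is at most $\rho \|\xajjd - \xajd\|$ with the $\rho$ defined in Lemma~\ref{lem:qa}.

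For the first contribution $D_{f_p}(\xajd, \xjks)$ I would use Proposition~\ref{pro:vphi}: since $\xjks \in B_{\crr{\alpha_j}}(\xajd)$, we have
\[
D_{f_p}(\xajd, \xjks) \leq \frac{1}{\gamma \alpha_j}\langle \nabla\Phi_{\alpha_j}(\xjks), \xjks - \xajd\rangle \leq \frac{1}{\gamma \alpha_j}\|\nabla\Phi_{\alpha_j}(\xjks)\| \cdot \|\xjks - \xajd\| \leq \frac{C_{\alpha_j}\crr{\alpha_j}}{\gamma\alpha_j},
\]
using the stopping criterion \eqref{TOL:0} and $\|\xjks - \xajd\|\leq\crr{\alpha_j}$. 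By the choice \eqref{eq:Cabd} of $C_{\alpha_j}$, this is at most $\br{\ajj}/3$. For the second contribution I invoke Lemma~\ref{lem:qa} with $\alpha = \alpha_j$ (which requires $\alpha_j \in (\astar, \alpha_0]$, valid since the outer iteration decreases $\alpha$ starting from $\alpha_0$ and $\ajj = \max\{\qa\alpha_j, \astar\}$ stays $\geq \astar$), giving $\|\xajjd - \xajd\| \leq \br{\bar\alpha}/(3\rho)$ where $\bar\alpha = \max\{\qa\alpha_j, \astar\} = \ajj$; hence $\rho\|\xajjd - \xajd\| \leq \br{\ajj}/3$. Adding the two pieces yields $D_{f_p}(\xajjd, \xjks) \leq \frac{2}{3}\br{\ajj} \leq \br{\ajj}$, i.e.\ $\xjks \in \Bb{\br{\ajj}}{\xajjd}$.

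The main obstacle I anticipate is making the difference-term bound genuinely rigorous: one must be careful that the estimate $D_{f_p}(\xajjd,\xjks) - D_{f_p}(\xajd,\xjks)\leq \rho\|\xajjd - \xajd\|$ does not also pick up an unwanted positive $D_{f_p}(\xajjd,\xajd)$ term — i.e.\ it is cleaner to bound the difference directly as $|f_p(\xajjd) - f_p(\xajd)| + \|J_p(\xjks)\|\,\|\xajjd - \xajd\|$ using the convexity inequality $|f_p(z) - f_p(x)| \leq \max\{\|J_p(x)\|,\|J_p(z)\|\}\|z - x\|$, which for $\|\xajd\|, \|\xajjd\| \leq A$ gives the factor $2A^{p-1} + \crr{\alpha_0}^{p-1} \leq \rho$ as intended (note $\br{\ajj} \leq \br{\alpha_j} \leq \br{\alpha_0}$ and $\crr{\alpha_j}\leq\crr{\alpha_0}$ by monotonicity). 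A secondary point requiring attention is verifying that $\xjks$ is well-defined, i.e.\ that the inner iteration does reach the stopping level $C_{\alpha_j}$ in finitely many steps — but this is exactly guaranteed by Proposition~\ref{pro:stop}, which gives $\|\nabla\Phi_{\alpha_j}(\xjk)\|\to 0$.
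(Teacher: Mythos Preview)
Your proposal is correct and follows essentially the same route as the paper: the identity you derive,
\[
D_{f_p}(\xajjd,\xjks)=D_{f_p}(\xajd,\xjks)+D_{f_p}(\xajjd,\xajd)+\langle J_p(\xajd)-J_p(\xjks),\xajjd-\xajd\rangle,
\]
is exactly the three-point decomposition the paper uses, and your bound on $D_{f_p}(\xajd,\xjks)$ via Proposition~\ref{pro:vphi} and the stopping rule is identical. Your worry about an ``unwanted'' $D_{f_p}(\xajjd,\xajd)$ term is unnecessary: Lemma~\ref{lem:qa} supplies \emph{both} estimates in \eqref{eq:upd}, so the paper simply bounds $D_{f_p}(\xajjd,\xajd)\le \br{\ajj}/3$ directly and the cross term by $\rho\cdot\br{\ajj}/(3\rho)$, yielding $\br{\ajj}/3+\br{\ajj}/3+\br{\ajj}/3=\br{\ajj}$; your alternative direct bound on the difference term also works and in fact gives the slightly sharper $2\br{\ajj}/3$.
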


% \begin{proof}
{\it Proof.}
  Note that
  \begin{multline*}
    D_{f_p} (\xajjd, \xjks) = D_{f_p} (\xajjd, \xajd) + D_{f_p} (\xajd, \xjks)
	    + \langle J_p (\xajd) - J_p (\xjks),\xajjd - \xajd \rangle.
  \end{multline*}
  Due to \eqref{eq:duality} and Lemma \ref{lem:normbds}, we obtain
    \[ \| J_p (\xajd) - J_p (\xjks) \| \leq \| \xajd \|^{p-1} + \| \xjks \|^{p-1} \leq \rho \]
  with $\rho$ from Lemma \ref{lem:qa}. In addition, using Proposition \ref{pro:vphi}
  \begin{align*}
     D_{f_p}(\xajd,  \xjks) & \leq \frac{1}{\gamma \alpha_j} \langle \nabla \Phi_{\alpha_j}(\xjks), \xjks-\xajd\rangle\\
	& \leq \frac{1}{\gamma \alpha_j} \|\nabla \Phi_{\alpha_j}(\xjks)\|  \| \xjks - \xajd\| \leq \frac{C_j \crr{\alpha_j}}{\gamma \alpha_j} \leq \frac{\br{\ajj}}{3}
  \end{align*}
  holds as $\xjks \in \Bb {\br {\alpha_j}}{\xajd}$ due to Theorem \ref{thm:convg_breg}. Combining these estimates with \eqref{eq:upd} in Lemma \ref{lem:qa}, we obtain
    \begin{align*}
      D_{f_p} (\xajjd, \xjks) & \leq \frac{\br{\ajj}}{3} +  \frac{\br{\ajj}}{3} + \rho \frac{\br{\ajj}}{3 \rho} = \br{\ajj}. \qquad \endproof
    \end{align*}
% \end{proof}

\medskip

\section{The global minimization strategy}  \label{sec:globalmin}

  As we have seen, for fixed $\alpha_j \geq \astar$ the dual gradient descent iteration \eqref{eq:lwb} yields a sequence of iterates $\{ \xjk \}_{k \in \IN_0}$ which converges to $\xajd$ and will terminate with an approximation $\xjks$. But we have yet to ensure that the proposed algorithm terminates after a finite number $\js$ of outer iteration steps with a regularization parameter $\alpha_\js \geq \astar$.
 Before we give this main convergence result for the d-TIGRA method, we collect here all the assumptions on the parameters, which determine the behavior of the algorithm. For the values of the various constants appearing in the expressions below, we refer the reader to the list of constants which we have included at the end of this paper.

\begin{assumption} \label{ass:tigra} $~$\\
  \indent Suppose that
  \begin{enumerate}
    \item \label{ass:init}
	the initial parameter $\alpha_0$ is sufficiently large such that $\xs \in \Bb {d_{\alpha_0}} {x_{\alpha_0}^\delta}$, i.e.
	  \[ D_{f_p} (x_{\alpha_0}^{\delta}, \xs) \leq \br {\alpha_0}, \]
    \item \label{ass:qa}
	the factor $\qa \in (\qa_0, 1)$ with $\qa_0$ from \eqref{eq:qa0} satisfies
	  \[ \rho \, \sigma \, \alpha_0 \, (1-\qa) \leq \min \big\{ \br{\astar} / 3, 1 \big\}, \]
    \item \label{ass:stepsize}
	the step-sizes $\beta_{j,k}$ are chosen according to
	  \[ \beta_{j,k}  = \min \left\{\frac{\gamma \; \bar{c}_{j, k} \; \alpha_j}{\|\nabla\Phi_{\alpha_j}(\xjk)\|^2}, \frac{1}{2 M_{\alpha_j}} \right\}, \]
    \item \label{ass:control}
	the constants $C_{\alpha_j}$ such that
	  \[ C_{\alpha_j} \leq \gamma \alpha_j \min \bigg\{ \frac{\br{\ajj}}{3 \crr{\alpha_j}}, \frac{\delta \, \bar c_A}{K + \crr{\alpha_j} (c \, \bar c_A + L)} \bigg\},  \]
    \item \label{ass:tau}
	  and $\tau$ in the discrepancy principle as
	  \[ \tau = 2 + \frac{2}{\qa (s-2)}. \]
  \end{enumerate}
\end{assumption}

 For the following Proposition, compare also Proposition 6.3 and 6.4 in \cite{r03}.

\begin{proposition} \label{pro:outk}
  Let Assumptions \ref{assp_main} and \ref{ass:tigra} hold, %and suppose that $\yd$ satisfies $\|F(0) - \yd\| > 5\delta$.
  then the iterates $\xjk$ generated by the dual TIGRA algorithm remain inside the region of convergence, i.e. $\xjk \in \Bb {\br{\alpha_j}} {\xajd}$ defined in \eqref{eq:Bbra}, and the algorithm terminates after a finite number $\js$ of outer iteration steps with regularization parameter $\alpha_\js \geq \astar$.
\end{proposition}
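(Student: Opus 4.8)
The plan is to prove Proposition~\ref{pro:outk} by induction on the outer iteration index $j$, using the already-established machinery to propagate the invariant $\xjk \in \Bb{\br{\alpha_j}}{\xajd}$ from one level to the next, and then to derive the finite termination from the geometric decay $\alpha_j = \qa^{j-1} \alpha_0$ (truncated at $\astar$) together with the estimate \eqref{eq:dpalpha} on the residual at a minimizer.

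First I would set up the induction. For $j=1$ the starting value is $\x 1 0 = \x 0 0 = \xs$, which lies in $\Bb{\br{\alpha_0}}{x_{\alpha_0}^\delta}$ by Assumption~\ref{ass:tigra}~\ref{ass:init}; since $\alpha_1 = \alpha_0$ at the first step (the outer update $\ajj = \qa\alpha_j$ only fires for $j \geq 1$ inside the loop, so the very first inner run uses $\alpha_0$), Theorem~\ref{thm:convg_breg} guarantees that the whole inner sequence $\{\x 1 k\}_k$ stays in $\Bb{\br{\alpha_1}}{x_{\alpha_1}^\delta}$. For the induction step, assume $\xj 0 \in \Bb{\br{\alpha_j}}{\xajd}$ with $\alpha_j \geq \astar$. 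Then Theorem~\ref{thm:convg_breg} keeps every inner iterate $\xjk$ in that same Bregman ball, so by Lemma~\ref{lem:Dfnorm} also $\xjk \in B_{\crr{\alpha_j}}(\xajd)$, which is exactly the invariant claimed in the statement. Proposition~\ref{pro:stop} ensures $\|\nabla\Phi_{\alpha_j}(\xjk)\| \to 0$, so the inner stopping criterion \eqref{TOL:0} with the positive constant $C_{\alpha_j}$ from Assumption~\ref{ass:tigra}~\ref{ass:control} is met after finitely many steps, producing $\xjks$. Now, \emph{if the outer loop does not terminate}, we set $\ajj = \max\{\qa\alpha_j, \astar\}$ and $\x{j+1}0 = \xjks$; since $C_{\alpha_j} \leq \gamma\alpha_j \br{\ajj}/(3\crr{\alpha_j})$ (the first term in the min of Assumption~\ref{ass:tigra}~\ref{ass:control}), and since $\qa$ satisfies the requirement of Lemma~\ref{lem:qa}, Proposition~\ref{pro:ink} applies and yields $\x{j+1}0 = \xjks \in \Bb{\br{\ajj}}{\xajjd}$, closing the induction. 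One has to note that Proposition~\ref{pro:ink} requires $\alpha_j \in (\astar, \alpha_0]$ (via Lemma~\ref{lem:qa}); this holds by the induction hypothesis combined with the fact that $\alpha_j$ is non-increasing and bounded below by $\astar$ (by construction of the truncated update) and above by $\alpha_0$.

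Next I would establish finite termination. As long as the algorithm has not stopped, either $\alpha_{j+1} = \qa\alpha_j$ (a genuine geometric decrease) or $\alpha_{j+1} = \astar$ (the floor). In the second case the next outer update would again give $\ajj = \max\{\qa\astar, \astar\} = \astar$, so the parameter is frozen at $\astar \geq \astar$ — consistent with the claimed lower bound. To see that the discrepancy principle $\|F(\xjsks) - \yd\| \leq \tau\delta$ must eventually trigger, I would estimate $\|F(\xjks) - \yd\|$ via the triangle inequality: $\|F(\xjks) - \yd\| \leq \|F(\xjks) - F(\xajd)\| + \|F(\xajd) - \yd\|$. The second term is bounded by $\nrho\varrho\alpha_j$ by \eqref{eq:dpalpha}. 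For the first term, one uses that $\xjks \in B_{\crr{\alpha_j}}(\xajd)$ together with $\|\nabla\Phi_{\alpha_j}(\xjks)\| \leq C_{\alpha_j}$ and the second bound in Assumption~\ref{ass:tigra}~\ref{ass:control}, namely $C_{\alpha_j} \leq \gamma\alpha_j \delta\bar c_A/(K + \crr{\alpha_j}(c\bar c_A + L))$; writing $\nabla\Phi_{\alpha_j}(\xjks) = F'(\xjks)^*(F(\xjks)-\yd) + \alpha_j J_p(\xjks)$ and combining with the optimality $\nabla\Phi_{\alpha_j}(\xajd)=0$, Assumptions~\ref{ass:lip}--\ref{ass:nl}, and the strong-convexity estimate \eqref{Dfl}/\eqref{Phi:Df} to control $\|\xjks - \xajd\|$ and hence $\|F(\xjks) - F(\xajd)\|$, one arrives at a bound of the shape $\|F(\xjks) - F(\xajd)\| \lesssim \delta$ plus a term proportional to $\alpha_j$. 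Putting the pieces together and invoking Assumption~\ref{ass:tigra}~\ref{ass:tau}, $\tau = 2 + \tfrac{2}{\qa(s-2)}$, one checks that once $\alpha_j$ has decreased to the order of $\astar = \delta/((s-2)\varrho)$ — which happens after at most $\lceil \log_{1/\qa}(\alpha_0/\astar) \rceil$ steps — the combined bound falls below $\tau\delta$. Hence $\js$ is finite and $\alpha_\js \geq \astar$.

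The main obstacle I expect is the second estimate in the termination argument: obtaining a clean bound of the form $\|F(\xjks) - \yd\| \leq \tau\delta$ once $\alpha_j$ is comparable to $\astar$. This requires carefully converting the gradient bound $\|\nabla\Phi_{\alpha_j}(\xjks)\| \leq C_{\alpha_j}$ into a residual bound. The subtlety is that $\nabla\Phi_{\alpha_j}(\xjks)$ mixes the residual term $F'(\xjks)^*(F(\xjks)-\yd)$ with the penalty gradient $\alpha_j J_p(\xjks)$, so one cannot directly read off $\|F(\xjks)-\yd\|$; instead one must compare with $\xajd$ (where the gradient vanishes), control $\|\xjks - \xajd\|$ through Proposition~\ref{pro:vphi} and \eqref{Dfl}, and then use Assumption~\ref{ass:lip}/\ref{ass:nl} and the uniform bounds $K$, $\crr{\alpha_j}$ to pass from $\|\xjks-\xajd\|$ back to $\|F(\xjks)-F(\xajd)\|$. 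The precise constant in Assumption~\ref{ass:tigra}~\ref{ass:control} has evidently been reverse-engineered so that this chain closes; verifying that it does, and that the resulting inequality combines correctly with \eqref{eq:dpalpha} and the choice of $\tau$, is the delicate bookkeeping step. Everything else — the induction, the geometric count of outer steps, the persistence of the invariant — follows routinely from the results already proved.
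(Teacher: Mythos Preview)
Your plan matches the paper's proof in structure and in all essential steps: induction using Theorem~\ref{thm:convg_breg} and Proposition~\ref{pro:ink} for the invariant, then triangle-inequality splitting of the residual with Proposition~\ref{pro:vphi} and Lemma~\ref{lemma1} to turn the gradient bound into $\|F(\xjks)-F(\xajd)\|\leq\delta$.

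Two small corrections. First, the algorithm has no floor at $\astar$; the definition $\ajj=\max\{\qa\alpha_j,\astar\}$ in Proposition~\ref{pro:ink} is only to guarantee $\ajj\geq\astar$ for the estimates there. The paper instead argues by contradiction: let $j$ be the last index with $\alpha_j\geq\astar$ (so $\qa\alpha_j<\astar$), assume $\js\geq j+1$, and show the discrepancy principle already holds for $\xjks$. This sidesteps the issue of whether the invariant can be propagated past $\astar$. Second, for the residual of the minimizer you should use the sharper bound $\|F(\xajd)-\yd\|\leq 2\varrho\alpha_j+\delta$ from \eqref{L:EE}, not $\nrho\varrho\alpha_j$. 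With the latter the final inequality does not close against $\tau=2+\tfrac{2}{\qa(s-2)}$: one would need $1+\tfrac{s}{\qa(s-2)}\leq 2+\tfrac{2}{\qa(s-2)}$, i.e.\ $\qa\geq 1$. With the sharper bound one gets, using $\alpha_j=\ajj/\qa$ and $\ajj<\astar=\delta/((s-2)\varrho)$,
\[
\|F(\xjks)-\yd\|\leq \delta + 2\varrho\alpha_j+\delta \leq \frac{2\delta}{\qa(s-2)}+2\delta=\tau\delta,
\]
which is exactly the paper's computation. There is also no extra ``term proportional to $\alpha_j$'' in $\|F(\xjks)-F(\xajd)\|$; the second branch of Assumption~\ref{ass:tigra}\,\ref{ass:control} is tuned so that this quantity is bounded by $\delta$ alone.
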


\begin{proof}
  We begin by showing that for $\alpha_0$, $\qa$, $\beta_{j,k}$, and $C_{\alpha_j}$ as in Assumption~\ref{ass:tigra}, we have $\xjk \in \Bb {\br{\alpha_j}} {\xajd}$: This holds true for the starting value $\x 0 0 = \xs$ by definition of $\alpha_0$, and as then the Bregman distance decreases with step-sizes $\beta_{j,k}$ (cf. Theorem~\ref{thm:convg_breg}) also for $\x 0 k$, $k \geq 0$. Finally, for the outer iteration we have $\x {j+1} 0 = \xjks \in \Bb {\br {\ajj}}{\xajjd}$ according to Proposition \ref{pro:ink}, where $\xjks$ denotes the final iterate with index $j$, and using an induction argument the assertion follows for all $j \in \IN_0$ and $0 \leq k \leq \ksj$.  According to Lem\-ma~\ref{lem:Dfnorm} it thus holds that
    \[ D_{f_p} (\xajd, \xjk) \leq \br {\alpha_j} \qquad \mbox{and} \qquad \| \xajd - \xjk \| \leq \crr{\alpha_j}. \]

  To prove that the dual TIGRA algorithm stops with $\alpha_\js \geq \astar$, we proceed by contradiction. Note that the sequence $\alpha_j = \qa^j \alpha_0$ is monotonically decreasing and converges to zero since $\qa < 1$. Let now $j$ denote the unique index such that $\alpha_j \geq \astar$ and $\ajj = \qa \alpha_j < \astar$ and suppose that $\js \geq j+1$.
  Due to Lemma \ref{lemma1} with $c_p = \bar c_A = \frac{p-1}{2} (A + 2 \crr{\alpha_j})^{p-2}$, the inner stopping rule $\| \nabla \Phi_{\alpha_j} (\xjks) \| \leq C_{\alpha_j}$ and Proposition~\ref{pro:vphi} we obtain
  \begin{align*}
    \| \xajd - \xjks \|^2 & \leq \frac{1}{\bar c_A} D_{f_p} (\xajd, \xjks) \leq \frac{1}{\bar c_A \gamma \alpha_j} \langle \nabla \Phi_{\alpha_j} (\xjks), \xjks - \xajd \rangle\\
	& \leq  \frac{C_{\alpha_j}}{\bar c_A \gamma \alpha_j} \| \xajd - \xjks \|.
  \end{align*}
  Using a boot-strap argument on the one hand, and that $\| \xajd - \xjks \| \leq \crr{\alpha_j}$ on the other hand, it follows that
    \[ \| \xajd - \xjks \| \leq \frac{C_{\alpha_j}}{\bar c_A \gamma \alpha_j} \qquad \mbox{and} \qquad D_{f_p} (\xajd, \xjks) \leq \frac{C_{\alpha_j} \crr{\alpha_j}}{\gamma \alpha_j}. \]
  Thus, Assumption~\ref{assp_main}, Lemma \ref{lem:normbds} and Assumption~\ref{ass:tigra}~\ref{ass:control} yield
  \begin{align*}
      \| F( & \xjks) - F(\xajd) \|\\
	  & \leq \| F(\xjks) - F(\xajd) - F'(\xjks) (\xajd - \xjks) \| + \| F'(\xjks) \| \| \xajd - \xjks \|\\
% \| R_F (\xajd, \xjks) \| + \| F'(\xjks) \| \cdot \| \xjks - \xajd \|\\
	  & \leq c D_{f_p} (\xajd, \xjks) + ( L \crr{\alpha_j} + K) \| \xajd - \xjks \|\\
	  & \leq \frac{C_{\alpha_j}}{\bar c_A \gamma \alpha_j} \big( \crr{\alpha_j} (c \, \bar c_A + L) + K \big) \leq \delta.
%     K \| \xjks - \xajd \|  \leq \frac{K C_j}{c_p\gamma \alpha_j} \leq \delta.
  \end{align*}
  In combination with $\|F(\xajd)-\yd\| \leq 2 \varrho \alpha_j + \delta$ from \eqref{eq:dpalpha} and $\ajj < \astar = \frac{\delta}{(s-2) \varrho}$ we obtain
  \begin{equation*}
  \begin{aligned}
	\|F(\xjks)-\yd\| & \leq \|F(\xjks) - F(\xajd)\| + \|F(\xajd)-\yd\| \leq 2 \varrho \frac{\ajj}{\qa} + 2 \delta\\
		& \leq \frac{2 \delta}{\qa (s-2)} + 2  \delta \leq \tau \delta.
  \end{aligned}
  \end{equation*}
  Thus the discrepancy principle with $\tau$ from Assumption~\ref{ass:tigra}~\ref{ass:tau} is satisfied for $\xjks$, which contradicts our assumption $\js \geq j+1$. Hence, the iteration terminates with a regularization parameter $\alpha_\js \geq \astar$.
\end{proof}

\section{Numerical examples}  \label{sec:numerics}

  In this section we compare numerical results for the d-TIGRA method to a dual version of the modified Landweber method \cite{s98}. Both methods are applied to the nonlinear ill-posed auto-convolution problem (compare also \cite{ar10}), where the forward operator is given by
  \begin{equation}\label{E:auto}
      G(f)(s) = \int_0^{s} f(s-t) f(t) dt, \qquad s \in [0,1], f \in L^2[0,1].
  \end{equation}
  It has been shown in \cite{gh94} that the auto-convolution operator is continuous as
  			\[ \| G(f_1) - G(f_2) \|_{C[0,1]} \leq (\| f_1 \|_{L^2[0,1]} + \| f_2 \|_{L^2[0,1]} ) \| f_1 - f_2 \|_{L^2[0,1]}, \]
  with Fr\'echet derivative $G'(f)$ given by
  			\[ \big[ G'(f) h \big] (s) = 2 \, [f * h] (s) = 2 \int_0^{s} f(s-t) h(t) dt, \qquad s \in [0,1], h \in L^2[0,1] \]
  and weakly sequentially closed on the domain
  	 		\[ D(G) = D^+ := \{ f \in L^2[0,1] \st f(t) \geq 0 \mbox{ a.e. } t \in [0,1] \}. \]
  As mentioned in \cite{ar10}, $G'(.)$ is Lipschitz continuous with constant $L=2$ and adjoint
  			\[  G'(f)^* h = 2 \, \big( f * \tilde h \big)^{\widetilde{}}, \]
  where $\tilde h (t) = (h)^{\widetilde{}} (t) = h(1-t)$.

  For the reconstruction of solutions which are sparse with respect to a certain Riesz basis or frame $\{ u_i \}_{i \in \IN}$ of $L^2[0,1]$, we represent functions $f \in L^2[0,1]$ in terms of their coefficients $x$ such that $f = Tx$, where the bounded, linear operator $T : \ell^2 \to L^2 [0,1]$ is given by
      \[ Tx := \sum_{i \in \IN} x_i u_i \]
  with adjoint
        \[ T^*f = \{\langle f, u_i \rangle\}_{i \in \IN} . \]
  In accordance with our theoretic results, we restrict the domain of $T$ to sequences in $\ell^p (\IN)$ with $1 < p \leq 2$ and consider the problem of reconstructing a solution $\xd$ of $F(x) = y$, where
  		\[ F := G \circ T : \quad \ell^p \to Y = L^2[0,1] \]
  from noisy data $\yd$ with $\| y - \yd \|_Y \leq \delta$. Clearly, $T: \ell^p \to L^2[0,1]$ as well as $T^*: L^2[0,1] \to \ell^q$ remain bounded as
  		\[ \| T x \|_{L^2[0,1]} \leq \| T \|_{\ell^2 \to L^2[0,1]} \| x \|_{\ell^2} \leq \| T \|_{\ell^2 \to L^2[0,1]} \| x \|_{\ell^p} \]
   holds for all $x \in \ell^p$ and an analog expression is obtained for $T^*$.
   As above, the Tikhonov-functional with sparsity constraints reads
      $$\Phi_\alpha (x):=\frac{1}{2}\|F(x)-\yd\|^{2}+ \frac{\alpha}{p}\sum |x_i|^p$$
   and we have
      $$\nabla \Phi_\alpha (x) = T^* G'(T x)^* (G(T x) - \yd) + \alpha J_p (x),$$
  where $J_p (x)_i = {\rm sign}(x_i)|x_i|^{p-1}$. 

  In our numerical experiments, we compute approximations of an exact solution $f^\dag$ which is sparse with respect to the Haar wavelet basis and is described by three nonzero coefficients, namely $x^\dag(\{2, 4, 7\}) = \{3, -1, 0.5\}$. Due to the quadratic nature of the autoconvolution problem, whenever $f^\dag$ is a solution of $G(f) = y$, then so is $-f^\dag$.
  The functions are sampled at 512 points equispaced in $[0,1]$ and we use wavelet coefficients up to index level $J=9$.

\subsection{Results for the dual TIGRA method} \label{ssec:numdtigra}

  The Fr\'echet derivative $F'(x)$ of the autoconvolution problem is Lipschitz continuous and thus Assumption~\ref{assp_main}~\ref{ass:init} is satisfied. However, nonlinearity conditions as required in Assumption~\ref{assp_main}~\ref{ass:nl} are not known, and recent results from \cite{bh14} indicate that source conditions as in Assumption~\ref{assp_main}~\ref{ass:src} certainly cannot hold for the autoconvolution operator. This is supported by the fact that $F'(0)^* w = 0$ for all $w \in Y$, which contradicts our assumptions according to Remark \ref{rem:critpts}.

\def\sol{(1, 3.00)(2, 3.00)(3, 3.00)(4, 3.00)(5, 3.00)(6, 3.00)(7, 3.00)(8, 3.00)(9, 3.00)(10, 3.00)(11, 3.00)(12, 3.00)(13, 3.00)(14, 3.00)(15, 3.00)(16, 3.00)(17, 3.00)(18, 3.00)(19, 3.00)(20, 3.00)(21, 3.00)(22, 3.00)(23, 3.00)(24, 3.00)(25, 3.00)(26, 3.00)(27, 3.00)(28, 3.00)(29, 3.00)(30, 3.00)(31, 3.00)(32, 3.00)(33, 3.00)(34, 3.00)(35, 3.00)(36, 3.00)(37, 3.00)(38, 3.00)(39, 3.00)(40, 3.00)(41, 3.00)(42, 3.00)(43, 3.00)(44, 3.00)(45, 3.00)(46, 3.00)(47, 3.00)(48, 3.00)(49, 3.00)(50, 3.00)(51, 3.00)(52, 3.00)(53, 3.00)(54, 3.00)(55, 3.00)(56, 3.00)(57, 3.00)(58, 3.00)(59, 3.00)(60, 3.00)(61, 3.00)(62, 3.00)(63, 3.00)(64, 3.00)(65, 3.00)(66, 3.00)(67, 3.00)(68, 3.00)(69, 3.00)(70, 3.00)(71, 3.00)(72, 3.00)(73, 3.00)(74, 3.00)(75, 3.00)(76, 3.00)(77, 3.00)(78, 3.00)(79, 3.00)(80, 3.00)(81, 3.00)(82, 3.00)(83, 3.00)(84, 3.00)(85, 3.00)(86, 3.00)(87, 3.00)(88, 3.00)(89, 3.00)(90, 3.00)(91, 3.00)(92, 3.00)(93, 3.00)(94, 3.00)(95, 3.00)(96, 3.00)(97, 3.00)(98, 3.00)(99, 3.00)(100, 3.00)(101, 3.00)(102, 3.00)(103, 3.00)(104, 3.00)(105, 3.00)(106, 3.00)(107, 3.00)(108, 3.00)(109, 3.00)(110, 3.00)(111, 3.00)(112, 3.00)(113, 3.00)(114, 3.00)(115, 3.00)(116, 3.00)(117, 3.00)(118, 3.00)(119, 3.00)(120, 3.00)(121, 3.00)(122, 3.00)(123, 3.00)(124, 3.00)(125, 3.00)(126, 3.00)(127, 3.00)(128, 3.00)(129, 3.00)(130, 3.00)(131, 3.00)(132, 3.00)(133, 3.00)(134, 3.00)(135, 3.00)(136, 3.00)(137, 3.00)(138, 3.00)(139, 3.00)(140, 3.00)(141, 3.00)(142, 3.00)(143, 3.00)(144, 3.00)(145, 3.00)(146, 3.00)(147, 3.00)(148, 3.00)(149, 3.00)(150, 3.00)(151, 3.00)(152, 3.00)(153, 3.00)(154, 3.00)(155, 3.00)(156, 3.00)(157, 3.00)(158, 3.00)(159, 3.00)(160, 3.00)(161, 3.00)(162, 3.00)(163, 3.00)(164, 3.00)(165, 3.00)(166, 3.00)(167, 3.00)(168, 3.00)(169, 3.00)(170, 3.00)(171, 3.00)(172, 3.00)(173, 3.00)(174, 3.00)(175, 3.00)(176, 3.00)(177, 3.00)(178, 3.00)(179, 3.00)(180, 3.00)(181, 3.00)(182, 3.00)(183, 3.00)(184, 3.00)(185, 3.00)(186, 3.00)(187, 3.00)(188, 3.00)(189, 3.00)(190, 3.00)(191, 3.00)(192, 3.00)(193, 3.00)(194, 3.00)(195, 3.00)(196, 3.00)(197, 3.00)(198, 3.00)(199, 3.00)(200, 3.00)(201, 3.00)(202, 3.00)(203, 3.00)(204, 3.00)(205, 3.00)(206, 3.00)(207, 3.00)(208, 3.00)(209, 3.00)(210, 3.00)(211, 3.00)(212, 3.00)(213, 3.00)(214, 3.00)(215, 3.00)(216, 3.00)(217, 3.00)(218, 3.00)(219, 3.00)(220, 3.00)(221, 3.00)(222, 3.00)(223, 3.00)(224, 3.00)(225, 3.00)(226, 3.00)(227, 3.00)(228, 3.00)(229, 3.00)(230, 3.00)(231, 3.00)(232, 3.00)(233, 3.00)(234, 3.00)(235, 3.00)(236, 3.00)(237, 3.00)(238, 3.00)(239, 3.00)(240, 3.00)(241, 3.00)(242, 3.00)(243, 3.00)(244, 3.00)(245, 3.00)(246, 3.00)(247, 3.00)(248, 3.00)(249, 3.00)(250, 3.00)(251, 3.00)(252, 3.00)(253, 3.00)(254, 3.00)(255, 3.00)(256, 3.00)(257, -3.41)(258, -3.41)(259, -3.41)(260, -3.41)(261, -3.41)(262, -3.41)(263, -3.41)(264, -3.41)(265, -3.41)(266, -3.41)(267, -3.41)(268, -3.41)(269, -3.41)(270, -3.41)(271, -3.41)(272, -3.41)(273, -3.41)(274, -3.41)(275, -3.41)(276, -3.41)(277, -3.41)(278, -3.41)(279, -3.41)(280, -3.41)(281, -3.41)(282, -3.41)(283, -3.41)(284, -3.41)(285, -3.41)(286, -3.41)(287, -3.41)(288, -3.41)(289, -3.41)(290, -3.41)(291, -3.41)(292, -3.41)(293, -3.41)(294, -3.41)(295, -3.41)(296, -3.41)(297, -3.41)(298, -3.41)(299, -3.41)(300, -3.41)(301, -3.41)(302, -3.41)(303, -3.41)(304, -3.41)(305, -3.41)(306, -3.41)(307, -3.41)(308, -3.41)(309, -3.41)(310, -3.41)(311, -3.41)(312, -3.41)(313, -3.41)(314, -3.41)(315, -3.41)(316, -3.41)(317, -3.41)(318, -3.41)(319, -3.41)(320, -3.41)(321, -5.41)(322, -5.41)(323, -5.41)(324, -5.41)(325, -5.41)(326, -5.41)(327, -5.41)(328, -5.41)(329, -5.41)(330, -5.41)(331, -5.41)(332, -5.41)(333, -5.41)(334, -5.41)(335, -5.41)(336, -5.41)(337, -5.41)(338, -5.41)(339, -5.41)(340, -5.41)(341, -5.41)(342, -5.41)(343, -5.41)(344, -5.41)(345, -5.41)(346, -5.41)(347, -5.41)(348, -5.41)(349, -5.41)(350, -5.41)(351, -5.41)(352, -5.41)(353, -5.41)(354, -5.41)(355, -5.41)(356, -5.41)(357, -5.41)(358, -5.41)(359, -5.41)(360, -5.41)(361, -5.41)(362, -5.41)(363, -5.41)(364, -5.41)(365, -5.41)(366, -5.41)(367, -5.41)(368, -5.41)(369, -5.41)(370, -5.41)(371, -5.41)(372, -5.41)(373, -5.41)(374, -5.41)(375, -5.41)(376, -5.41)(377, -5.41)(378, -5.41)(379, -5.41)(380, -5.41)(381, -5.41)(382, -5.41)(383, -5.41)(384, -5.41)(385, -1.58)(386, -1.58)(387, -1.58)(388, -1.58)(389, -1.58)(390, -1.58)(391, -1.58)(392, -1.58)(393, -1.58)(394, -1.58)(395, -1.58)(396, -1.58)(397, -1.58)(398, -1.58)(399, -1.58)(400, -1.58)(401, -1.58)(402, -1.58)(403, -1.58)(404, -1.58)(405, -1.58)(406, -1.58)(407, -1.58)(408, -1.58)(409, -1.58)(410, -1.58)(411, -1.58)(412, -1.58)(413, -1.58)(414, -1.58)(415, -1.58)(416, -1.58)(417, -1.58)(418, -1.58)(419, -1.58)(420, -1.58)(421, -1.58)(422, -1.58)(423, -1.58)(424, -1.58)(425, -1.58)(426, -1.58)(427, -1.58)(428, -1.58)(429, -1.58)(430, -1.58)(431, -1.58)(432, -1.58)(433, -1.58)(434, -1.58)(435, -1.58)(436, -1.58)(437, -1.58)(438, -1.58)(439, -1.58)(440, -1.58)(441, -1.58)(442, -1.58)(443, -1.58)(444, -1.58)(445, -1.58)(446, -1.58)(447, -1.58)(448, -1.58)(449, -1.58)(450, -1.58)(451, -1.58)(452, -1.58)(453, -1.58)(454, -1.58)(455, -1.58)(456, -1.58)(457, -1.58)(458, -1.58)(459, -1.58)(460, -1.58)(461, -1.58)(462, -1.58)(463, -1.58)(464, -1.58)(465, -1.58)(466, -1.58)(467, -1.58)(468, -1.58)(469, -1.58)(470, -1.58)(471, -1.58)(472, -1.58)(473, -1.58)(474, -1.58)(475, -1.58)(476, -1.58)(477, -1.58)(478, -1.58)(479, -1.58)(480, -1.58)(481, -1.58)(482, -1.58)(483, -1.58)(484, -1.58)(485, -1.58)(486, -1.58)(487, -1.58)(488, -1.58)(489, -1.58)(490, -1.58)(491, -1.58)(492, -1.58)(493, -1.58)(494, -1.58)(495, -1.58)(496, -1.58)(497, -1.58)(498, -1.58)(499, -1.58)(500, -1.58)(501, -1.58)(502, -1.58)(503, -1.58)(504, -1.58)(505, -1.58)(506, -1.58)(507, -1.58)(508, -1.58)(509, -1.58)(510, -1.58)(511, -1.58)(512, -1.58)}

\def\recaaa{(1, 2.66)(2, 2.57)(3, 2.47)(4, 2.56)(5, 2.43)(6, 2.63)(7, 2.73)(8, 2.46)(9, 2.69)(10, 2.60)(11, 2.59)(12, 2.76)(13, 2.73)(14, 2.64)(15, 2.60)(16, 2.75)(17, 2.77)(18, 2.67)(19, 2.68)(20, 2.62)(21, 2.71)(22, 2.71)(23, 2.63)(24, 2.85)(25, 2.84)(26, 2.62)(27, 2.80)(28, 2.51)(29, 2.56)(30, 2.80)(31, 2.91)(32, 2.62)(33, 2.76)(34, 2.75)(35, 2.79)(36, 2.91)(37, 2.84)(38, 2.95)(39, 2.87)(40, 2.86)(41, 2.94)(42, 2.81)(43, 2.68)(44, 2.68)(45, 2.77)(46, 2.84)(47, 2.82)(48, 2.86)(49, 2.76)(50, 2.88)(51, 2.79)(52, 2.80)(53, 2.77)(54, 2.71)(55, 3.02)(56, 2.76)(57, 2.96)(58, 2.89)(59, 2.85)(60, 2.97)(61, 2.86)(62, 2.68)(63, 2.90)(64, 2.95)(65, 2.36)(66, 2.29)(67, 2.48)(68, 2.37)(69, 2.40)(70, 2.32)(71, 2.22)(72, 2.19)(73, 2.32)(74, 2.40)(75, 2.26)(76, 2.25)(77, 2.31)(78, 2.31)(79, 2.34)(80, 2.41)(81, 2.29)(82, 2.39)(83, 2.31)(84, 2.22)(85, 2.23)(86, 2.36)(87, 2.24)(88, 2.23)(89, 2.23)(90, 2.35)(91, 2.30)(92, 2.44)(93, 2.27)(94, 2.33)(95, 2.39)(96, 2.50)(97, 3.13)(98, 2.98)(99, 3.06)(100, 2.98)(101, 2.91)(102, 3.11)(103, 3.05)(104, 2.97)(105, 3.12)(106, 3.16)(107, 3.11)(108, 3.31)(109, 2.92)(110, 2.99)(111, 3.08)(112, 3.29)(113, 3.89)(114, 3.82)(115, 3.95)(116, 4.10)(117, 3.98)(118, 4.03)(119, 3.98)(120, 3.95)(121, 4.46)(122, 4.31)(123, 4.51)(124, 4.63)(125, 4.37)(126, 4.56)(127, 4.52)(128, 4.59)(129, 3.76)(130, 3.93)(131, 3.87)(132, 3.92)(133, 3.82)(134, 3.94)(135, 3.88)(136, 3.76)(137, 3.74)(138, 3.85)(139, 3.97)(140, 3.87)(141, 3.89)(142, 3.91)(143, 3.84)(144, 3.91)(145, 3.96)(146, 3.90)(147, 3.79)(148, 3.85)(149, 3.88)(150, 3.72)(151, 3.93)(152, 4.01)(153, 3.85)(154, 3.84)(155, 3.96)(156, 3.77)(157, 3.91)(158, 3.78)(159, 3.89)(160, 3.84)(161, 3.85)(162, 3.86)(163, 3.93)(164, 3.97)(165, 3.88)(166, 3.69)(167, 3.91)(168, 3.71)(169, 3.90)(170, 3.77)(171, 3.91)(172, 3.76)(173, 3.65)(174, 3.99)(175, 4.02)(176, 3.88)(177, 4.05)(178, 3.99)(179, 3.76)(180, 3.85)(181, 4.04)(182, 3.80)(183, 3.86)(184, 3.88)(185, 3.80)(186, 3.79)(187, 4.00)(188, 3.71)(189, 3.96)(190, 3.79)(191, 3.95)(192, 3.88)(193, 2.29)(194, 2.36)(195, 2.35)(196, 2.47)(197, 2.23)(198, 2.43)(199, 2.35)(200, 2.34)(201, 2.34)(202, 2.43)(203, 2.28)(204, 2.34)(205, 2.36)(206, 2.41)(207, 2.43)(208, 2.29)(209, 2.31)(210, 2.38)(211, 2.27)(212, 2.28)(213, 2.30)(214, 2.20)(215, 2.34)(216, 2.23)(217, 2.42)(218, 2.35)(219, 2.30)(220, 2.39)(221, 2.26)(222, 2.20)(223, 2.46)(224, 2.18)(225, 1.99)(226, 2.09)(227, 2.29)(228, 2.15)(229, 1.88)(230, 2.26)(231, 2.12)(232, 2.05)(233, 2.15)(234, 2.10)(235, 2.06)(236, 2.06)(237, 2.03)(238, 1.94)(239, 2.04)(240, 2.09)(241, 2.15)(242, 2.05)(243, 2.19)(244, 1.96)(245, 2.16)(246, 2.03)(247, 2.05)(248, 2.07)(249, 2.20)(250, 1.98)(251, 2.06)(252, 2.02)(253, 2.10)(254, 2.21)(255, 2.12)(256, 1.97)(257, -3.92)(258, -4.07)(259, -4.06)(260, -3.98)(261, -3.97)(262, -3.97)(263, -4.01)(264, -4.02)(265, -4.08)(266, -4.01)(267, -3.93)(268, -3.86)(269, -3.87)(270, -4.02)(271, -3.95)(272, -3.92)(273, -3.90)(274, -4.01)(275, -4.05)(276, -4.20)(277, -4.15)(278, -3.95)(279, -3.88)(280, -4.06)(281, -3.99)(282, -3.99)(283, -4.02)(284, -4.16)(285, -3.98)(286, -4.11)(287, -4.07)(288, -4.17)(289, -4.75)(290, -4.55)(291, -4.63)(292, -4.52)(293, -4.58)(294, -4.50)(295, -4.49)(296, -4.63)(297, -4.68)(298, -4.54)(299, -4.62)(300, -4.60)(301, -4.60)(302, -4.66)(303, -4.62)(304, -4.63)(305, -5.05)(306, -4.96)(307, -5.04)(308, -4.96)(309, -4.88)(310, -4.86)(311, -4.99)(312, -4.96)(313, -4.95)(314, -4.90)(315, -5.14)(316, -4.92)(317, -4.93)(318, -5.19)(319, -5.00)(320, -4.91)(321, -4.46)(322, -4.34)(323, -4.51)(324, -4.50)(325, -4.42)(326, -4.46)(327, -4.58)(328, -4.47)(329, -4.33)(330, -4.35)(331, -4.40)(332, -4.29)(333, -4.52)(334, -4.51)(335, -4.15)(336, -4.39)(337, -4.54)(338, -4.42)(339, -4.57)(340, -4.41)(341, -4.41)(342, -4.48)(343, -4.39)(344, -4.40)(345, -4.40)(346, -4.40)(347, -4.42)(348, -4.41)(349, -4.28)(350, -4.40)(351, -4.36)(352, -4.41)(353, -4.46)(354, -4.23)(355, -4.44)(356, -4.51)(357, -4.43)(358, -4.46)(359, -4.47)(360, -4.65)(361, -4.35)(362, -4.41)(363, -4.25)(364, -4.53)(365, -4.43)(366, -4.54)(367, -4.47)(368, -4.39)(369, -4.52)(370, -4.22)(371, -4.28)(372, -4.43)(373, -4.42)(374, -4.41)(375, -4.30)(376, -4.40)(377, -4.35)(378, -4.42)(379, -4.47)(380, -4.46)(381, -4.43)(382, -4.50)(383, -4.38)(384, -4.38)(385, -0.63)(386, -0.65)(387, -0.48)(388, -0.56)(389, -0.54)(390, -0.43)(391, -0.49)(392, -0.30)(393, -0.51)(394, -0.17)(395, -0.22)(396, -0.34)(397, -0.36)(398, -0.42)(399, -0.28)(400, -0.38)(401, -0.59)(402, -0.49)(403, -0.40)(404, -0.36)(405, -0.28)(406, -0.33)(407, -0.41)(408, -0.42)(409, -0.53)(410, -0.52)(411, -0.47)(412, -0.59)(413, -0.34)(414, -0.36)(415, -0.36)(416, -0.33)(417, -0.29)(418, -0.23)(419, -0.39)(420, -0.36)(421, -0.27)(422, -0.31)(423, -0.43)(424, -0.31)(425, -0.20)(426, -0.48)(427, -0.34)(428, -0.30)(429, -0.37)(430, -0.37)(431, -0.36)(432, -0.36)(433, -0.25)(434, -0.40)(435, -0.23)(436, -0.38)(437, -0.49)(438, -0.36)(439, -0.33)(440, -0.18)(441, -0.30)(442, -0.37)(443, -0.39)(444, -0.31)(445, -0.40)(446, -0.23)(447, -0.35)(448, -0.47)(449, -0.32)(450, -0.38)(451, -0.30)(452, -0.31)(453, -0.47)(454, -0.40)(455, -0.33)(456, -0.31)(457, -0.26)(458, -0.45)(459, -0.26)(460, -0.42)(461, -0.27)(462, -0.24)(463, -0.14)(464, -0.35)(465, -0.27)(466, -0.39)(467, -0.22)(468, -0.32)(469, -0.51)(470, -0.27)(471, -0.39)(472, -0.44)(473, -0.35)(474, -0.32)(475, -0.33)(476, -0.29)(477, -0.26)(478, -0.34)(479, -0.45)(480, -0.36)(481, -0.33)(482, -0.34)(483, -0.49)(484, -0.48)(485, -0.49)(486, -0.47)(487, -0.26)(488, -0.40)(489, -0.39)(490, -0.51)(491, -0.55)(492, -0.35)(493, -0.38)(494, -0.61)(495, -0.34)(496, -0.42)(497, -0.33)(498, -0.27)(499, -0.46)(500, -0.53)(501, -0.56)(502, -0.51)(503, -0.39)(504, -0.43)(505, -0.39)(506, -0.47)(507, -0.31)(508, -0.47)(509, -0.39)(510, -0.50)(511, -0.32)(512, -0.46)}

\def\recaba{(1, 2.58)(2, 2.42)(3, 2.28)(4, 2.40)(5, 2.22)(6, 2.53)(7, 2.68)(8, 2.26)(9, 2.57)(10, 2.44)(11, 2.43)(12, 2.68)(13, 2.64)(14, 2.49)(15, 2.44)(16, 2.67)(17, 2.68)(18, 2.53)(19, 2.54)(20, 2.44)(21, 2.59)(22, 2.59)(23, 2.46)(24, 2.80)(25, 2.78)(26, 2.44)(27, 2.72)(28, 2.28)(29, 2.36)(30, 2.73)(31, 2.89)(32, 2.44)(33, 2.59)(34, 2.58)(35, 2.64)(36, 2.81)(37, 2.72)(38, 2.89)(39, 2.76)(40, 2.74)(41, 2.88)(42, 2.68)(43, 2.47)(44, 2.47)(45, 2.61)(46, 2.73)(47, 2.69)(48, 2.77)(49, 2.63)(50, 2.81)(51, 2.68)(52, 2.70)(53, 2.64)(54, 2.56)(55, 3.04)(56, 2.64)(57, 2.97)(58, 2.86)(59, 2.80)(60, 2.99)(61, 2.82)(62, 2.55)(63, 2.90)(64, 2.97)(65, 3.01)(66, 2.90)(67, 3.20)(68, 3.04)(69, 3.08)(70, 2.97)(71, 2.82)(72, 2.77)(73, 2.96)(74, 3.09)(75, 2.88)(76, 2.86)(77, 2.95)(78, 2.95)(79, 3.01)(80, 3.12)(81, 2.91)(82, 3.07)(83, 2.96)(84, 2.82)(85, 2.83)(86, 3.03)(87, 2.85)(88, 2.84)(89, 2.84)(90, 3.02)(91, 2.95)(92, 3.17)(93, 2.92)(94, 3.01)(95, 3.10)(96, 3.28)(97, 3.33)(98, 3.10)(99, 3.23)(100, 3.11)(101, 2.99)(102, 3.31)(103, 3.21)(104, 3.09)(105, 3.29)(106, 3.35)(107, 3.27)(108, 3.58)(109, 2.97)(110, 3.07)(111, 3.19)(112, 3.50)(113, 3.34)(114, 3.24)(115, 3.42)(116, 3.65)(117, 3.47)(118, 3.55)(119, 3.47)(120, 3.44)(121, 3.75)(122, 3.52)(123, 3.80)(124, 3.97)(125, 3.60)(126, 3.88)(127, 3.81)(128, 3.92)(129, 3.75)(130, 4.01)(131, 3.91)(132, 3.99)(133, 3.83)(134, 4.00)(135, 3.90)(136, 3.72)(137, 3.68)(138, 3.87)(139, 4.05)(140, 3.90)(141, 3.93)(142, 3.95)(143, 3.84)(144, 3.96)(145, 4.00)(146, 3.92)(147, 3.75)(148, 3.83)(149, 3.87)(150, 3.62)(151, 3.95)(152, 4.08)(153, 3.81)(154, 3.78)(155, 3.97)(156, 3.68)(157, 3.90)(158, 3.70)(159, 3.86)(160, 3.79)(161, 3.71)(162, 3.72)(163, 3.83)(164, 3.89)(165, 3.75)(166, 3.45)(167, 3.78)(168, 3.49)(169, 3.78)(170, 3.58)(171, 3.80)(172, 3.55)(173, 3.38)(174, 3.90)(175, 3.94)(176, 3.72)(177, 4.00)(178, 3.91)(179, 3.54)(180, 3.67)(181, 3.97)(182, 3.60)(183, 3.69)(184, 3.72)(185, 3.58)(186, 3.57)(187, 3.88)(188, 3.43)(189, 3.81)(190, 3.54)(191, 3.78)(192, 3.68)(193, 2.40)(194, 2.52)(195, 2.49)(196, 2.67)(197, 2.29)(198, 2.60)(199, 2.47)(200, 2.45)(201, 2.46)(202, 2.60)(203, 2.37)(204, 2.45)(205, 2.48)(206, 2.55)(207, 2.58)(208, 2.35)(209, 2.27)(210, 2.38)(211, 2.20)(212, 2.21)(213, 2.24)(214, 2.08)(215, 2.29)(216, 2.12)(217, 2.41)(218, 2.30)(219, 2.23)(220, 2.35)(221, 2.13)(222, 2.04)(223, 2.43)(224, 1.98)(225, 1.31)(226, 1.46)(227, 1.75)(228, 1.52)(229, 1.09)(230, 1.67)(231, 1.44)(232, 1.33)(233, 1.44)(234, 1.35)(235, 1.29)(236, 1.28)(237, 1.22)(238, 1.07)(239, 1.21)(240, 1.28)(241, 1.28)(242, 1.13)(243, 1.33)(244, 0.96)(245, 1.26)(246, 1.06)(247, 1.08)(248, 1.09)(249, 1.29)(250, 0.94)(251, 1.04)(252, 0.98)(253, 1.10)(254, 1.25)(255, 1.10)(256, 0.84)(257, -3.39)(258, -3.62)(259, -3.62)(260, -3.50)(261, -3.48)(262, -3.48)(263, -3.55)(264, -3.57)(265, -3.68)(266, -3.57)(267, -3.46)(268, -3.34)(269, -3.37)(270, -3.60)(271, -3.49)(272, -3.46)(273, -3.47)(274, -3.65)(275, -3.73)(276, -3.96)(277, -3.89)(278, -3.59)(279, -3.48)(280, -3.76)(281, -3.67)(282, -3.68)(283, -3.72)(284, -3.95)(285, -3.67)(286, -3.87)(287, -3.82)(288, -3.97)(289, -4.25)(290, -3.96)(291, -4.08)(292, -3.91)(293, -4.00)(294, -3.87)(295, -3.85)(296, -4.07)(297, -4.15)(298, -3.93)(299, -4.05)(300, -4.02)(301, -4.03)(302, -4.13)(303, -4.06)(304, -4.08)(305, -4.36)(306, -4.21)(307, -4.34)(308, -4.22)(309, -4.07)(310, -4.03)(311, -4.22)(312, -4.19)(313, -4.19)(314, -4.11)(315, -4.47)(316, -4.12)(317, -4.16)(318, -4.55)(319, -4.25)(320, -4.10)(321, -4.13)(322, -3.95)(323, -4.20)(324, -4.19)(325, -4.06)(326, -4.12)(327, -4.30)(328, -4.13)(329, -3.91)(330, -3.93)(331, -4.01)(332, -3.84)(333, -4.18)(334, -4.17)(335, -3.60)(336, -3.97)(337, -4.23)(338, -4.03)(339, -4.27)(340, -4.01)(341, -4.01)(342, -4.11)(343, -3.97)(344, -3.97)(345, -3.96)(346, -3.96)(347, -3.99)(348, -3.98)(349, -3.77)(350, -3.95)(351, -3.88)(352, -3.95)(353, -3.92)(354, -3.57)(355, -3.89)(356, -3.98)(357, -3.85)(358, -3.90)(359, -3.90)(360, -4.18)(361, -3.70)(362, -3.78)(363, -3.55)(364, -3.96)(365, -3.80)(366, -3.96)(367, -3.86)(368, -3.72)(369, -3.87)(370, -3.40)(371, -3.48)(372, -3.71)(373, -3.70)(374, -3.67)(375, -3.49)(376, -3.64)(377, -3.55)(378, -3.66)(379, -3.73)(380, -3.71)(381, -3.66)(382, -3.76)(383, -3.56)(384, -3.56)(385, -1.73)(386, -1.76)(387, -1.49)(388, -1.60)(389, -1.58)(390, -1.41)(391, -1.49)(392, -1.20)(393, -1.55)(394, -1.02)(395, -1.10)(396, -1.28)(397, -1.30)(398, -1.40)(399, -1.18)(400, -1.34)(401, -1.64)(402, -1.48)(403, -1.34)(404, -1.29)(405, -1.16)(406, -1.23)(407, -1.37)(408, -1.38)(409, -1.53)(410, -1.51)(411, -1.45)(412, -1.62)(413, -1.25)(414, -1.30)(415, -1.29)(416, -1.25)(417, -1.14)(418, -1.03)(419, -1.28)(420, -1.24)(421, -1.09)(422, -1.16)(423, -1.34)(424, -1.16)(425, -0.99)(426, -1.42)(427, -1.20)(428, -1.14)(429, -1.26)(430, -1.25)(431, -1.24)(432, -1.23)(433, -1.07)(434, -1.29)(435, -1.03)(436, -1.26)(437, -1.43)(438, -1.23)(439, -1.19)(440, -0.96)(441, -1.14)(442, -1.25)(443, -1.29)(444, -1.17)(445, -1.29)(446, -1.02)(447, -1.21)(448, -1.41)(449, -1.17)(450, -1.26)(451, -1.13)(452, -1.15)(453, -1.41)(454, -1.29)(455, -1.18)(456, -1.15)(457, -1.07)(458, -1.37)(459, -1.07)(460, -1.32)(461, -1.09)(462, -1.04)(463, -0.89)(464, -1.22)(465, -1.09)(466, -1.28)(467, -1.02)(468, -1.16)(469, -1.46)(470, -1.09)(471, -1.28)(472, -1.37)(473, -1.22)(474, -1.17)(475, -1.18)(476, -1.12)(477, -1.07)(478, -1.19)(479, -1.37)(480, -1.23)(481, -1.10)(482, -1.12)(483, -1.35)(484, -1.32)(485, -1.35)(486, -1.33)(487, -0.99)(488, -1.21)(489, -1.18)(490, -1.37)(491, -1.44)(492, -1.13)(493, -1.17)(494, -1.53)(495, -1.10)(496, -1.23)(497, -1.13)(498, -1.03)(499, -1.32)(500, -1.42)(501, -1.47)(502, -1.37)(503, -1.18)(504, -1.25)(505, -1.18)(506, -1.30)(507, -1.05)(508, -1.29)(509, -1.17)(510, -1.35)(511, -1.07)(512, -1.27)}

\def\recaca{(1, 2.71)(2, 3.24)(3, 3.74)(4, 3.52)(5, 3.99)(6, 3.02)(7, 2.57)(8, 3.95)(9, 3.04)(10, 3.46)(11, 3.62)(12, 2.91)(13, 2.98)(14, 3.49)(15, 3.80)(16, 3.11)(17, 3.04)(18, 3.59)(19, 3.56)(20, 3.97)(21, 3.58)(22, 3.54)(23, 3.98)(24, 2.95)(25, 2.96)(26, 4.04)(27, 3.17)(28, 4.59)(29, 4.36)(30, 3.23)(31, 2.72)(32, 4.19)(33, 3.92)(34, 3.91)(35, 3.73)(36, 3.24)(37, 3.50)(38, 3.05)(39, 3.38)(40, 3.50)(41, 3.02)(42, 3.68)(43, 4.44)(44, 4.32)(45, 3.99)(46, 3.58)(47, 3.66)(48, 3.47)(49, 3.88)(50, 3.37)(51, 3.66)(52, 3.53)(53, 3.70)(54, 3.94)(55, 2.40)(56, 3.75)(57, 2.67)(58, 3.06)(59, 3.21)(60, 2.59)(61, 3.17)(62, 4.02)(63, 2.91)(64, 2.66)(65, 3.10)(66, 3.55)(67, 2.55)(68, 3.07)(69, 3.00)(70, 3.30)(71, 3.85)(72, 4.03)(73, 3.31)(74, 2.78)(75, 3.51)(76, 3.57)(77, 3.35)(78, 3.37)(79, 3.18)(80, 2.82)(81, 3.50)(82, 2.98)(83, 3.40)(84, 3.81)(85, 3.77)(86, 3.21)(87, 3.69)(88, 3.79)(89, 3.77)(90, 3.22)(91, 3.49)(92, 2.72)(93, 3.53)(94, 3.31)(95, 3.04)(96, 2.36)(97, 2.72)(98, 3.42)(99, 2.96)(100, 3.41)(101, 3.77)(102, 2.82)(103, 3.25)(104, 3.64)(105, 3.01)(106, 2.82)(107, 3.13)(108, 2.10)(109, 4.05)(110, 3.72)(111, 3.35)(112, 2.39)(113, 3.30)(114, 3.54)(115, 2.98)(116, 2.29)(117, 2.78)(118, 2.58)(119, 2.86)(120, 2.72)(121, 2.12)(122, 2.83)(123, 1.96)(124, 1.40)(125, 2.59)(126, 1.78)(127, 1.96)(128, 1.62)(129, 3.19)(130, 2.31)(131, 2.57)(132, 2.41)(133, 2.84)(134, 2.37)(135, 2.62)(136, 3.21)(137, 3.32)(138, 2.68)(139, 2.06)(140, 2.48)(141, 2.36)(142, 2.29)(143, 2.56)(144, 2.17)(145, 2.16)(146, 2.38)(147, 2.90)(148, 2.63)(149, 2.62)(150, 3.44)(151, 2.34)(152, 1.87)(153, 2.65)(154, 2.83)(155, 2.20)(156, 3.07)(157, 2.41)(158, 2.99)(159, 2.51)(160, 2.67)(161, 2.78)(162, 2.72)(163, 2.32)(164, 2.15)(165, 2.52)(166, 3.51)(167, 2.41)(168, 3.38)(169, 2.44)(170, 2.96)(171, 2.30)(172, 3.16)(173, 3.65)(174, 1.95)(175, 1.86)(176, 2.55)(177, 1.64)(178, 1.98)(179, 3.01)(180, 2.59)(181, 1.67)(182, 2.76)(183, 2.50)(184, 2.32)(185, 2.71)(186, 2.74)(187, 1.75)(188, 3.15)(189, 1.91)(190, 2.76)(191, 2.00)(192, 2.26)(193, 2.84)(194, 2.35)(195, 2.38)(196, 1.88)(197, 3.04)(198, 2.03)(199, 2.48)(200, 2.53)(201, 2.60)(202, 2.07)(203, 2.71)(204, 2.51)(205, 2.35)(206, 2.05)(207, 2.06)(208, 2.78)(209, 2.56)(210, 2.24)(211, 2.71)(212, 2.67)(213, 2.53)(214, 2.99)(215, 2.28)(216, 2.75)(217, 1.89)(218, 2.18)(219, 2.26)(220, 1.80)(221, 2.38)(222, 2.59)(223, 1.33)(224, 2.63)(225, 2.68)(226, 2.21)(227, 1.16)(228, 1.84)(229, 3.10)(230, 1.18)(231, 1.91)(232, 2.16)(233, 1.69)(234, 1.89)(235, 1.89)(236, 1.90)(237, 1.93)(238, 2.33)(239, 1.80)(240, 1.55)(241, 1.18)(242, 1.51)(243, 0.78)(244, 1.81)(245, 0.68)(246, 1.28)(247, 1.02)(248, 1.00)(249, 0.27)(250, 1.25)(251, 0.83)(252, 0.85)(253, 0.35)(254, -0.28)(255, 0.02)(256, 0.74)(257, -0.24)(258, 0.41)(259, 0.29)(260, -0.23)(261, -0.42)(262, -0.51)(263, -0.35)(264, -0.34)(265, -0.16)(266, -0.55)(267, -1.04)(268, -1.48)(269, -1.47)(270, -0.82)(271, -1.31)(272, -1.49)(273, -1.79)(274, -1.33)(275, -1.18)(276, -0.44)(277, -0.74)(278, -1.80)(279, -2.17)(280, -1.39)(281, -1.87)(282, -1.80)(283, -1.75)(284, -1.18)(285, -2.15)(286, -1.62)(287, -1.81)(288, -1.45)(289, -1.16)(290, -2.11)(291, -1.85)(292, -2.42)(293, -2.20)(294, -2.70)(295, -2.77)(296, -2.14)(297, -2.00)(298, -2.81)(299, -2.35)(300, -2.51)(301, -2.49)(302, -2.24)(303, -2.47)(304, -2.47)(305, -1.86)(306, -2.39)(307, -2.08)(308, -2.45)(309, -3.00)(310, -3.29)(311, -2.69)(312, -2.80)(313, -2.85)(314, -3.11)(315, -1.99)(316, -3.21)(317, -3.08)(318, -1.96)(319, -2.89)(320, -3.54)(321, -2.98)(322, -3.66)(323, -2.87)(324, -2.92)(325, -3.42)(326, -3.23)(327, -2.73)(328, -3.28)(329, -3.90)(330, -3.84)(331, -3.59)(332, -4.14)(333, -3.11)(334, -3.18)(335, -4.94)(336, -3.80)(337, -3.20)(338, -3.81)(339, -3.14)(340, -4.01)(341, -3.94)(342, -3.65)(343, -4.08)(344, -4.05)(345, -4.04)(346, -4.05)(347, -3.97)(348, -3.93)(349, -4.65)(350, -4.05)(351, -4.32)(352, -4.11)(353, -3.92)(354, -5.01)(355, -3.97)(356, -3.59)(357, -4.03)(358, -3.84)(359, -3.83)(360, -2.89)(361, -4.32)(362, -4.09)(363, -4.89)(364, -3.54)(365, -3.98)(366, -3.47)(367, -3.76)(368, -4.17)(369, -3.47)(370, -4.93)(371, -4.64)(372, -3.88)(373, -3.99)(374, -4.02)(375, -4.54)(376, -4.02)(377, -4.14)(378, -3.70)(379, -3.45)(380, -3.46)(381, -3.55)(382, -3.25)(383, -3.80)(384, -3.80)(385, -3.27)(386, -3.13)(387, -3.90)(388, -3.60)(389, -3.52)(390, -4.04)(391, -3.79)(392, -4.60)(393, -3.36)(394, -4.94)(395, -4.70)(396, -4.05)(397, -3.96)(398, -3.65)(399, -4.33)(400, -3.76)(401, -3.01)(402, -3.48)(403, -3.84)(404, -3.94)(405, -4.27)(406, -4.10)(407, -3.59)(408, -3.54)(409, -3.06)(410, -3.07)(411, -3.22)(412, -2.75)(413, -3.77)(414, -3.60)(415, -3.60)(416, -3.67)(417, -3.75)(418, -4.13)(419, -3.30)(420, -3.38)(421, -3.88)(422, -3.66)(423, -3.07)(424, -3.70)(425, -4.16)(426, -2.75)(427, -3.44)(428, -3.62)(429, -3.17)(430, -3.16)(431, -3.17)(432, -3.20)(433, -3.69)(434, -2.98)(435, -3.84)(436, -3.03)(437, -2.52)(438, -3.14)(439, -3.24)(440, -3.97)(441, -3.36)(442, -2.95)(443, -2.83)(444, -3.15)(445, -2.80)(446, -3.62)(447, -2.97)(448, -2.37)(449, -3.15)(450, -2.89)(451, -3.35)(452, -3.24)(453, -2.40)(454, -2.85)(455, -3.16)(456, -3.28)(457, -3.52)(458, -2.52)(459, -3.51)(460, -2.70)(461, -3.38)(462, -3.51)(463, -3.97)(464, -2.91)(465, -3.30)(466, -2.66)(467, -3.49)(468, -3.03)(469, -2.09)(470, -3.23)(471, -2.52)(472, -2.21)(473, -2.68)(474, -2.81)(475, -2.78)(476, -2.96)(477, -3.08)(478, -2.69)(479, -2.10)(480, -2.46)(481, -2.89)(482, -2.77)(483, -2.03)(484, -2.10)(485, -1.94)(486, -1.85)(487, -2.95)(488, -2.18)(489, -2.24)(490, -1.60)(491, -1.34)(492, -2.29)(493, -2.05)(494, -0.88)(495, -2.18)(496, -1.70)(497, -1.94)(498, -2.20)(499, -1.19)(500, -0.78)(501, -0.52)(502, -0.75)(503, -1.27)(504, -0.93)(505, -1.10)(506, -0.61)(507, -1.35)(508, -0.49)(509, -0.69)(510, 0.01)(511, -0.72)(512, 0.04)}

\def\recbaa{(1, 2.78)(2, 2.78)(3, 2.79)(4, 2.79)(5, 2.78)(6, 2.78)(7, 2.78)(8, 2.79)(9, 2.78)(10, 2.79)(11, 2.78)(12, 2.79)(13, 2.78)(14, 2.78)(15, 2.79)(16, 2.78)(17, 2.78)(18, 2.78)(19, 2.78)(20, 2.78)(21, 2.78)(22, 2.79)(23, 2.79)(24, 2.78)(25, 2.78)(26, 2.79)(27, 2.79)(28, 2.78)(29, 2.78)(30, 2.79)(31, 2.79)(32, 2.79)(33, 2.75)(34, 2.75)(35, 2.76)(36, 2.75)(37, 2.76)(38, 2.76)(39, 2.76)(40, 2.77)(41, 2.76)(42, 2.77)(43, 2.77)(44, 2.76)(45, 2.76)(46, 2.77)(47, 2.77)(48, 2.76)(49, 2.82)(50, 2.81)(51, 2.81)(52, 2.82)(53, 2.82)(54, 2.82)(55, 2.81)(56, 2.81)(57, 2.82)(58, 2.81)(59, 2.81)(60, 2.83)(61, 2.81)(62, 2.82)(63, 2.80)(64, 2.83)(65, 3.37)(66, 3.37)(67, 3.37)(68, 3.37)(69, 3.37)(70, 3.36)(71, 3.36)(72, 3.38)(73, 3.38)(74, 3.36)(75, 3.37)(76, 3.38)(77, 3.38)(78, 3.37)(79, 3.37)(80, 3.37)(81, 3.37)(82, 3.38)(83, 3.37)(84, 3.38)(85, 3.38)(86, 3.39)(87, 3.37)(88, 3.38)(89, 3.39)(90, 3.37)(91, 3.37)(92, 3.38)(93, 3.38)(94, 3.37)(95, 3.38)(96, 3.37)(97, 3.37)(98, 3.38)(99, 3.38)(100, 3.37)(101, 3.38)(102, 3.37)(103, 3.38)(104, 3.37)(105, 3.37)(106, 3.37)(107, 3.37)(108, 3.37)(109, 3.38)(110, 3.37)(111, 3.37)(112, 3.37)(113, 3.37)(114, 3.38)(115, 3.37)(116, 3.38)(117, 3.37)(118, 3.37)(119, 3.38)(120, 3.37)(121, 3.37)(122, 3.38)(123, 3.37)(124, 3.37)(125, 3.38)(126, 3.37)(127, 3.37)(128, 3.37)(129, 2.90)(130, 2.90)(131, 2.90)(132, 2.90)(133, 2.90)(134, 2.90)(135, 2.90)(136, 2.90)(137, 2.90)(138, 2.91)(139, 2.89)(140, 2.90)(141, 2.90)(142, 2.90)(143, 2.89)(144, 2.90)(145, 2.89)(146, 2.90)(147, 2.89)(148, 2.90)(149, 2.90)(150, 2.89)(151, 2.90)(152, 2.89)(153, 2.89)(154, 2.89)(155, 2.90)(156, 2.89)(157, 2.89)(158, 2.89)(159, 2.88)(160, 2.89)(161, 2.88)(162, 2.88)(163, 2.88)(164, 2.89)(165, 2.88)(166, 2.88)(167, 2.88)(168, 2.89)(169, 2.89)(170, 2.89)(171, 2.89)(172, 2.89)(173, 2.88)(174, 2.88)(175, 2.88)(176, 2.89)(177, 2.88)(178, 2.88)(179, 2.88)(180, 2.88)(181, 2.88)(182, 2.89)(183, 2.88)(184, 2.89)(185, 2.89)(186, 2.89)(187, 2.88)(188, 2.88)(189, 2.88)(190, 2.89)(191, 2.88)(192, 2.90)(193, 2.89)(194, 2.88)(195, 2.88)(196, 2.90)(197, 2.88)(198, 2.89)(199, 2.89)(200, 2.90)(201, 2.88)(202, 2.89)(203, 2.88)(204, 2.89)(205, 2.89)(206, 2.89)(207, 2.89)(208, 2.89)(209, 2.89)(210, 2.89)(211, 2.90)(212, 2.89)(213, 2.89)(214, 2.90)(215, 2.89)(216, 2.89)(217, 2.89)(218, 2.89)(219, 2.89)(220, 2.89)(221, 2.89)(222, 2.89)(223, 2.89)(224, 2.90)(225, 2.90)(226, 2.90)(227, 2.90)(228, 2.89)(229, 2.89)(230, 2.89)(231, 2.89)(232, 2.90)(233, 2.90)(234, 2.90)(235, 2.89)(236, 2.89)(237, 2.89)(238, 2.89)(239, 2.89)(240, 2.89)(241, 2.89)(242, 2.90)(243, 2.89)(244, 2.89)(245, 2.89)(246, 2.89)(247, 2.88)(248, 2.88)(249, 2.88)(250, 2.89)(251, 2.88)(252, 2.89)(253, 2.88)(254, 2.89)(255, 2.88)(256, 2.89)(257, -3.98)(258, -3.98)(259, -3.98)(260, -3.98)(261, -3.99)(262, -3.99)(263, -3.99)(264, -3.99)(265, -3.99)(266, -3.98)(267, -3.99)(268, -3.98)(269, -3.98)(270, -3.98)(271, -3.99)(272, -3.99)(273, -3.99)(274, -3.99)(275, -3.99)(276, -3.98)(277, -3.98)(278, -3.99)(279, -3.97)(280, -3.99)(281, -3.98)(282, -3.98)(283, -3.98)(284, -3.98)(285, -3.99)(286, -3.98)(287, -3.99)(288, -3.99)(289, -4.02)(290, -4.03)(291, -4.02)(292, -4.03)(293, -4.03)(294, -4.02)(295, -4.03)(296, -4.03)(297, -4.02)(298, -4.02)(299, -4.02)(300, -4.02)(301, -4.03)(302, -4.03)(303, -4.03)(304, -4.03)(305, -4.03)(306, -4.02)(307, -4.03)(308, -4.03)(309, -4.03)(310, -4.02)(311, -4.02)(312, -4.03)(313, -4.03)(314, -4.03)(315, -4.03)(316, -4.02)(317, -4.02)(318, -4.02)(319, -4.02)(320, -4.03)(321, -4.19)(322, -4.19)(323, -4.21)(324, -4.20)(325, -4.19)(326, -4.20)(327, -4.20)(328, -4.20)(329, -4.20)(330, -4.20)(331, -4.20)(332, -4.20)(333, -4.21)(334, -4.20)(335, -4.20)(336, -4.20)(337, -4.20)(338, -4.20)(339, -4.20)(340, -4.21)(341, -4.19)(342, -4.21)(343, -4.20)(344, -4.20)(345, -4.20)(346, -4.20)(347, -4.20)(348, -4.19)(349, -4.20)(350, -4.20)(351, -4.21)(352, -4.20)(353, -4.20)(354, -4.20)(355, -4.20)(356, -4.20)(357, -4.20)(358, -4.21)(359, -4.20)(360, -4.19)(361, -4.19)(362, -4.20)(363, -4.19)(364, -4.20)(365, -4.20)(366, -4.20)(367, -4.20)(368, -4.20)(369, -4.20)(370, -4.20)(371, -4.20)(372, -4.21)(373, -4.21)(374, -4.20)(375, -4.20)(376, -4.20)(377, -4.20)(378, -4.20)(379, -4.20)(380, -4.19)(381, -4.20)(382, -4.20)(383, -4.19)(384, -4.19)(385, -1.97)(386, -1.97)(387, -1.97)(388, -1.97)(389, -1.96)(390, -1.96)(391, -1.96)(392, -1.97)(393, -1.97)(394, -1.97)(395, -1.96)(396, -1.97)(397, -1.97)(398, -1.96)(399, -1.97)(400, -1.96)(401, -1.94)(402, -1.94)(403, -1.94)(404, -1.95)(405, -1.94)(406, -1.95)(407, -1.95)(408, -1.94)(409, -1.94)(410, -1.94)(411, -1.93)(412, -1.94)(413, -1.94)(414, -1.95)(415, -1.94)(416, -1.95)(417, -1.93)(418, -1.93)(419, -1.93)(420, -1.93)(421, -1.93)(422, -1.94)(423, -1.93)(424, -1.93)(425, -1.94)(426, -1.93)(427, -1.94)(428, -1.92)(429, -1.93)(430, -1.93)(431, -1.93)(432, -1.93)(433, -1.93)(434, -1.93)(435, -1.92)(436, -1.93)(437, -1.91)(438, -1.93)(439, -1.92)(440, -1.93)(441, -1.93)(442, -1.92)(443, -1.93)(444, -1.92)(445, -1.93)(446, -1.93)(447, -1.93)(448, -1.93)(449, -1.94)(450, -1.94)(451, -1.94)(452, -1.94)(453, -1.94)(454, -1.94)(455, -1.95)(456, -1.94)(457, -1.94)(458, -1.94)(459, -1.94)(460, -1.95)(461, -1.95)(462, -1.94)(463, -1.94)(464, -1.94)(465, -1.95)(466, -1.94)(467, -1.94)(468, -1.94)(469, -1.95)(470, -1.95)(471, -1.93)(472, -1.94)(473, -1.93)(474, -1.95)(475, -1.93)(476, -1.94)(477, -1.93)(478, -1.95)(479, -1.93)(480, -1.95)(481, -1.94)(482, -1.94)(483, -1.94)(484, -1.94)(485, -1.93)(486, -1.95)(487, -1.94)(488, -1.93)(489, -1.94)(490, -1.94)(491, -1.95)(492, -1.94)(493, -1.95)(494, -1.94)(495, -1.94)(496, -1.94)(497, -1.94)(498, -1.95)(499, -1.94)(500, -1.94)(501, -1.94)(502, -1.94)(503, -1.94)(504, -1.93)(505, -1.94)(506, -1.94)(507, -1.95)(508, -1.93)(509, -1.94)(510, -1.94)(511, -1.94)(512, -1.95)}

\def\recbba{(1, 2.63)(2, 2.57)(3, 2.76)(4, 2.70)(5, 2.66)(6, 2.66)(7, 2.52)(8, 2.80)(9, 2.65)(10, 2.79)(11, 2.58)(12, 2.87)(13, 2.73)(14, 2.58)(15, 2.77)(16, 2.74)(17, 2.70)(18, 2.79)(19, 2.72)(20, 2.72)(21, 2.67)(22, 2.95)(23, 2.87)(24, 2.76)(25, 2.78)(26, 2.91)(27, 2.84)(28, 2.71)(29, 2.72)(30, 2.82)(31, 2.86)(32, 2.83)(33, 2.75)(34, 2.77)(35, 2.82)(36, 2.63)(37, 2.78)(38, 2.92)(39, 2.89)(40, 2.96)(41, 2.84)(42, 2.91)(43, 2.91)(44, 2.83)(45, 2.83)(46, 2.88)(47, 2.89)(48, 2.82)(49, 2.95)(50, 2.92)(51, 2.90)(52, 3.02)(53, 2.99)(54, 2.97)(55, 2.87)(56, 2.91)(57, 2.97)(58, 2.84)(59, 2.91)(60, 3.27)(61, 2.94)(62, 3.04)(63, 2.74)(64, 3.17)(65, 3.32)(66, 3.28)(67, 3.27)(68, 3.45)(69, 3.31)(70, 3.17)(71, 3.24)(72, 3.51)(73, 3.47)(74, 3.25)(75, 3.30)(76, 3.47)(77, 3.64)(78, 3.38)(79, 3.37)(80, 3.30)(81, 3.36)(82, 3.57)(83, 3.36)(84, 3.55)(85, 3.53)(86, 3.61)(87, 3.36)(88, 3.55)(89, 3.60)(90, 3.37)(91, 3.27)(92, 3.51)(93, 3.40)(94, 3.30)(95, 3.51)(96, 3.37)(97, 3.27)(98, 3.59)(99, 3.56)(100, 3.38)(101, 3.47)(102, 3.38)(103, 3.50)(104, 3.34)(105, 3.30)(106, 3.40)(107, 3.40)(108, 3.34)(109, 3.52)(110, 3.39)(111, 3.32)(112, 3.38)(113, 3.27)(114, 3.44)(115, 3.34)(116, 3.47)(117, 3.32)(118, 3.38)(119, 3.44)(120, 3.31)(121, 3.23)(122, 3.39)(123, 3.35)(124, 3.37)(125, 3.39)(126, 3.37)(127, 3.31)(128, 3.29)(129, 2.89)(130, 2.86)(131, 2.89)(132, 2.79)(133, 2.86)(134, 2.92)(135, 2.89)(136, 2.81)(137, 2.81)(138, 2.96)(139, 2.71)(140, 2.73)(141, 2.79)(142, 2.80)(143, 2.64)(144, 2.76)(145, 2.71)(146, 2.81)(147, 2.71)(148, 2.76)(149, 2.80)(150, 2.68)(151, 2.81)(152, 2.75)(153, 2.75)(154, 2.66)(155, 2.83)(156, 2.78)(157, 2.63)(158, 2.77)(159, 2.59)(160, 2.73)(161, 2.52)(162, 2.65)(163, 2.68)(164, 2.85)(165, 2.61)(166, 2.70)(167, 2.55)(168, 2.85)(169, 2.77)(170, 2.89)(171, 2.86)(172, 2.82)(173, 2.75)(174, 2.62)(175, 2.64)(176, 2.90)(177, 2.73)(178, 2.78)(179, 2.80)(180, 2.80)(181, 2.77)(182, 2.94)(183, 2.74)(184, 2.89)(185, 2.93)(186, 2.93)(187, 2.75)(188, 2.82)(189, 2.71)(190, 2.89)(191, 2.70)(192, 3.03)(193, 2.83)(194, 2.75)(195, 2.79)(196, 3.14)(197, 2.82)(198, 2.94)(199, 2.89)(200, 3.08)(201, 2.78)(202, 2.91)(203, 2.72)(204, 2.93)(205, 3.00)(206, 3.01)(207, 2.84)(208, 2.92)(209, 2.86)(210, 2.89)(211, 3.13)(212, 2.90)(213, 2.96)(214, 3.12)(215, 2.86)(216, 2.89)(217, 2.99)(218, 2.86)(219, 2.93)(220, 2.90)(221, 2.86)(222, 2.89)(223, 2.91)(224, 2.99)(225, 3.07)(226, 3.04)(227, 3.10)(228, 2.92)(229, 2.93)(230, 2.93)(231, 2.93)(232, 3.14)(233, 3.05)(234, 3.04)(235, 2.91)(236, 2.96)(237, 2.93)(238, 2.96)(239, 2.95)(240, 2.86)(241, 2.82)(242, 2.84)(243, 2.64)(244, 2.66)(245, 2.61)(246, 2.62)(247, 2.46)(248, 2.43)(249, 2.21)(250, 2.34)(251, 2.20)(252, 2.35)(253, 2.28)(254, 2.30)(255, 2.21)(256, 2.28)(257, -3.24)(258, -3.30)(259, -3.36)(260, -3.33)(261, -3.52)(262, -3.54)(263, -3.57)(264, -3.56)(265, -3.63)(266, -3.51)(267, -3.63)(268, -3.51)(269, -3.52)(270, -3.55)(271, -3.65)(272, -3.74)(273, -3.71)(274, -3.69)(275, -3.67)(276, -3.56)(277, -3.61)(278, -3.76)(279, -3.43)(280, -3.73)(281, -3.59)(282, -3.50)(283, -3.55)(284, -3.55)(285, -3.77)(286, -3.56)(287, -3.67)(288, -3.73)(289, -3.90)(290, -4.03)(291, -3.89)(292, -3.99)(293, -4.08)(294, -3.93)(295, -4.00)(296, -4.12)(297, -3.97)(298, -3.85)(299, -3.82)(300, -3.99)(301, -4.07)(302, -4.06)(303, -4.04)(304, -4.14)(305, -4.13)(306, -3.92)(307, -4.11)(308, -4.10)(309, -4.10)(310, -4.00)(311, -3.91)(312, -4.16)(313, -4.10)(314, -4.15)(315, -4.05)(316, -4.03)(317, -3.88)(318, -3.96)(319, -3.92)(320, -4.07)(321, -4.08)(322, -4.12)(323, -4.35)(324, -4.22)(325, -4.07)(326, -4.23)(327, -4.24)(328, -4.24)(329, -4.14)(330, -4.21)(331, -4.17)(332, -4.31)(333, -4.37)(334, -4.29)(335, -4.21)(336, -4.17)(337, -4.19)(338, -4.23)(339, -4.13)(340, -4.36)(341, -4.11)(342, -4.33)(343, -4.23)(344, -4.28)(345, -4.29)(346, -4.21)(347, -4.27)(348, -4.07)(349, -4.19)(350, -4.18)(351, -4.31)(352, -4.21)(353, -4.18)(354, -4.26)(355, -4.14)(356, -4.23)(357, -4.21)(358, -4.27)(359, -4.12)(360, -4.01)(361, -4.08)(362, -4.25)(363, -4.07)(364, -4.25)(365, -4.26)(366, -4.26)(367, -4.11)(368, -4.12)(369, -3.99)(370, -4.06)(371, -4.08)(372, -4.16)(373, -4.16)(374, -4.01)(375, -4.15)(376, -4.04)(377, -4.11)(378, -4.07)(379, -4.09)(380, -3.87)(381, -4.04)(382, -4.06)(383, -3.86)(384, -3.85)(385, -2.42)(386, -2.37)(387, -2.43)(388, -2.42)(389, -2.27)(390, -2.27)(391, -2.32)(392, -2.37)(393, -2.41)(394, -2.47)(395, -2.30)(396, -2.51)(397, -2.35)(398, -2.17)(399, -2.40)(400, -2.25)(401, -2.06)(402, -2.11)(403, -2.08)(404, -2.24)(405, -2.05)(406, -2.26)(407, -2.24)(408, -2.03)(409, -2.03)(410, -2.06)(411, -1.86)(412, -2.10)(413, -1.97)(414, -2.15)(415, -2.15)(416, -2.20)(417, -2.11)(418, -2.05)(419, -2.08)(420, -2.06)(421, -2.00)(422, -2.18)(423, -2.15)(424, -2.06)(425, -2.28)(426, -2.02)(427, -2.18)(428, -1.96)(429, -2.13)(430, -2.12)(431, -2.03)(432, -2.00)(433, -2.08)(434, -2.03)(435, -1.98)(436, -2.19)(437, -1.82)(438, -2.11)(439, -1.90)(440, -2.04)(441, -2.08)(442, -1.93)(443, -2.02)(444, -1.87)(445, -2.10)(446, -2.11)(447, -2.17)(448, -2.06)(449, -2.08)(450, -2.09)(451, -2.15)(452, -2.03)(453, -2.11)(454, -2.11)(455, -2.17)(456, -2.13)(457, -2.01)(458, -2.02)(459, -2.15)(460, -2.22)(461, -2.17)(462, -2.00)(463, -2.06)(464, -2.01)(465, -2.20)(466, -2.05)(467, -2.04)(468, -2.11)(469, -2.21)(470, -2.19)(471, -1.90)(472, -2.13)(473, -1.90)(474, -2.23)(475, -1.94)(476, -2.07)(477, -1.90)(478, -2.29)(479, -1.96)(480, -2.17)(481, -2.11)(482, -2.07)(483, -2.03)(484, -2.03)(485, -1.97)(486, -2.20)(487, -2.07)(488, -1.95)(489, -2.11)(490, -2.03)(491, -2.22)(492, -2.02)(493, -2.29)(494, -1.98)(495, -2.04)(496, -2.12)(497, -2.09)(498, -2.26)(499, -2.07)(500, -2.02)(501, -2.01)(502, -2.03)(503, -2.11)(504, -1.89)(505, -2.09)(506, -2.10)(507, -2.14)(508, -1.91)(509, -2.06)(510, -2.07)(511, -2.03)(512, -2.21)}

\def\recbca{(1, 1.52)(2, 1.31)(3, 1.99)(4, 1.95)(5, 1.78)(6, 2.01)(7, 1.72)(8, 2.55)(9, 2.26)(10, 2.86)(11, 2.33)(12, 3.35)(13, 2.96)(14, 2.64)(15, 3.32)(16, 3.19)(17, 3.03)(18, 3.30)(19, 3.16)(20, 3.26)(21, 3.01)(22, 3.98)(23, 3.59)(24, 3.27)(25, 3.32)(26, 3.83)(27, 3.53)(28, 2.99)(29, 3.16)(30, 3.39)(31, 3.61)(32, 3.50)(33, 2.99)(34, 3.01)(35, 3.12)(36, 2.35)(37, 2.90)(38, 3.33)(39, 3.25)(40, 3.43)(41, 3.15)(42, 3.42)(43, 3.45)(44, 3.03)(45, 3.00)(46, 3.17)(47, 3.17)(48, 2.93)(49, 3.21)(50, 3.10)(51, 3.12)(52, 3.66)(53, 3.38)(54, 3.24)(55, 2.77)(56, 2.95)(57, 3.20)(58, 2.70)(59, 2.92)(60, 4.11)(61, 2.91)(62, 3.21)(63, 2.12)(64, 3.42)(65, 2.85)(66, 2.71)(67, 2.51)(68, 3.09)(69, 2.62)(70, 2.24)(71, 2.40)(72, 3.42)(73, 3.22)(74, 2.41)(75, 2.72)(76, 3.32)(77, 4.06)(78, 3.18)(79, 3.09)(80, 2.94)(81, 3.14)(82, 3.91)(83, 3.15)(84, 3.78)(85, 3.68)(86, 4.07)(87, 3.10)(88, 3.79)(89, 3.81)(90, 2.97)(91, 2.56)(92, 3.23)(93, 2.95)(94, 2.59)(95, 3.17)(96, 2.68)(97, 2.40)(98, 3.46)(99, 3.23)(100, 2.59)(101, 2.99)(102, 2.72)(103, 3.32)(104, 2.84)(105, 2.72)(106, 3.16)(107, 3.10)(108, 2.93)(109, 3.60)(110, 3.13)(111, 2.98)(112, 3.34)(113, 2.97)(114, 3.58)(115, 3.29)(116, 3.76)(117, 3.19)(118, 3.42)(119, 3.71)(120, 3.19)(121, 2.85)(122, 3.34)(123, 3.20)(124, 3.32)(125, 3.36)(126, 3.26)(127, 3.04)(128, 3.00)(129, 3.03)(130, 3.09)(131, 3.18)(132, 2.92)(133, 3.12)(134, 3.39)(135, 3.35)(136, 2.97)(137, 2.97)(138, 3.52)(139, 2.75)(140, 2.81)(141, 2.88)(142, 2.83)(143, 2.34)(144, 2.78)(145, 2.64)(146, 3.08)(147, 2.72)(148, 2.83)(149, 2.86)(150, 2.45)(151, 2.90)(152, 2.70)(153, 2.67)(154, 2.45)(155, 3.05)(156, 2.84)(157, 2.25)(158, 2.70)(159, 2.15)(160, 2.52)(161, 1.85)(162, 2.38)(163, 2.49)(164, 3.06)(165, 2.38)(166, 2.73)(167, 2.34)(168, 3.40)(169, 3.10)(170, 3.55)(171, 3.37)(172, 3.27)(173, 3.02)(174, 2.47)(175, 2.58)(176, 3.49)(177, 2.82)(178, 2.98)(179, 3.08)(180, 3.02)(181, 2.80)(182, 3.31)(183, 2.51)(184, 3.13)(185, 3.23)(186, 3.17)(187, 2.51)(188, 2.71)(189, 2.41)(190, 2.97)(191, 2.22)(192, 3.42)(193, 2.70)(194, 2.34)(195, 2.62)(196, 3.83)(197, 2.91)(198, 3.26)(199, 3.12)(200, 3.72)(201, 2.63)(202, 3.05)(203, 2.42)(204, 3.21)(205, 3.44)(206, 3.46)(207, 2.83)(208, 3.10)(209, 2.77)(210, 2.85)(211, 3.66)(212, 2.74)(213, 3.01)(214, 3.49)(215, 2.71)(216, 2.85)(217, 3.20)(218, 2.62)(219, 3.01)(220, 2.83)(221, 2.61)(222, 2.73)(223, 2.88)(224, 3.11)(225, 3.21)(226, 3.11)(227, 3.27)(228, 2.67)(229, 2.76)(230, 2.73)(231, 2.80)(232, 3.67)(233, 3.36)(234, 3.21)(235, 2.86)(236, 3.00)(237, 2.84)(238, 2.92)(239, 2.74)(240, 2.30)(241, 2.80)(242, 2.65)(243, 1.90)(244, 1.84)(245, 1.71)(246, 1.57)(247, 1.00)(248, 0.74)(249, 0.48)(250, 0.78)(251, 0.01)(252, 0.38)(253, 0.01)(254, -0.08)(255, -0.58)(256, -0.29)(257, -0.23)(258, -0.62)(259, -0.93)(260, -0.99)(261, -1.62)(262, -1.75)(263, -1.94)(264, -2.08)(265, -2.34)(266, -2.10)(267, -2.71)(268, -2.35)(269, -2.45)(270, -2.69)(271, -3.02)(272, -3.32)(273, -3.21)(274, -3.22)(275, -3.11)(276, -2.79)(277, -3.02)(278, -3.49)(279, -2.36)(280, -3.40)(281, -2.82)(282, -2.44)(283, -2.69)(284, -2.70)(285, -3.38)(286, -2.74)(287, -3.14)(288, -3.28)(289, -2.90)(290, -3.36)(291, -2.98)(292, -3.28)(293, -3.64)(294, -3.21)(295, -3.46)(296, -3.89)(297, -3.47)(298, -3.15)(299, -3.09)(300, -3.63)(301, -3.88)(302, -3.88)(303, -3.74)(304, -4.19)(305, -4.21)(306, -3.53)(307, -4.31)(308, -4.25)(309, -4.19)(310, -3.84)(311, -3.70)(312, -4.53)(313, -4.36)(314, -4.48)(315, -4.17)(316, -4.09)(317, -3.58)(318, -3.75)(319, -3.64)(320, -4.32)(321, -4.11)(322, -4.13)(323, -5.00)(324, -4.63)(325, -4.29)(326, -4.90)(327, -4.99)(328, -4.99)(329, -4.53)(330, -4.78)(331, -4.70)(332, -5.20)(333, -5.45)(334, -5.16)(335, -4.84)(336, -4.76)(337, -4.89)(338, -5.02)(339, -4.69)(340, -5.60)(341, -4.66)(342, -5.43)(343, -5.00)(344, -5.06)(345, -5.03)(346, -4.69)(347, -4.97)(348, -4.21)(349, -4.67)(350, -4.61)(351, -5.02)(352, -4.75)(353, -4.82)(354, -5.11)(355, -4.81)(356, -5.11)(357, -5.05)(358, -5.24)(359, -4.69)(360, -4.34)(361, -4.45)(362, -5.03)(363, -4.38)(364, -5.01)(365, -4.93)(366, -4.89)(367, -4.40)(368, -4.36)(369, -3.94)(370, -4.17)(371, -4.32)(372, -4.56)(373, -4.50)(374, -4.02)(375, -4.39)(376, -3.93)(377, -4.18)(378, -3.96)(379, -3.94)(380, -3.10)(381, -3.63)(382, -3.65)(383, -2.96)(384, -2.83)(385, -3.12)(386, -2.95)(387, -3.11)(388, -2.97)(389, -2.44)(390, -2.43)(391, -2.55)(392, -2.69)(393, -2.73)(394, -2.98)(395, -2.26)(396, -2.94)(397, -2.43)(398, -1.81)(399, -2.55)(400, -1.94)(401, -1.73)(402, -1.94)(403, -1.78)(404, -2.33)(405, -1.63)(406, -2.36)(407, -2.34)(408, -1.56)(409, -1.63)(410, -1.74)(411, -1.01)(412, -1.88)(413, -1.47)(414, -2.14)(415, -2.10)(416, -2.19)(417, -2.01)(418, -1.85)(419, -1.97)(420, -1.99)(421, -1.79)(422, -2.39)(423, -2.44)(424, -2.15)(425, -2.81)(426, -1.85)(427, -2.45)(428, -1.58)(429, -2.05)(430, -2.01)(431, -1.70)(432, -1.52)(433, -1.83)(434, -1.66)(435, -1.56)(436, -2.27)(437, -0.81)(438, -1.76)(439, -1.07)(440, -1.68)(441, -1.73)(442, -1.17)(443, -1.58)(444, -1.15)(445, -1.98)(446, -1.99)(447, -2.26)(448, -1.90)(449, -1.96)(450, -1.98)(451, -2.30)(452, -1.87)(453, -2.22)(454, -2.04)(455, -2.30)(456, -2.19)(457, -1.82)(458, -1.88)(459, -2.38)(460, -2.61)(461, -2.41)(462, -1.79)(463, -2.03)(464, -1.86)(465, -2.53)(466, -2.03)(467, -1.99)(468, -2.28)(469, -2.66)(470, -2.62)(471, -1.61)(472, -2.40)(473, -1.60)(474, -2.73)(475, -1.70)(476, -2.14)(477, -1.60)(478, -3.09)(479, -1.96)(480, -2.80)(481, -2.63)(482, -2.44)(483, -2.32)(484, -2.28)(485, -2.11)(486, -2.98)(487, -2.50)(488, -2.03)(489, -2.59)(490, -2.34)(491, -2.93)(492, -2.12)(493, -3.09)(494, -1.96)(495, -2.17)(496, -2.37)(497, -2.14)(498, -2.65)(499, -1.92)(500, -1.73)(501, -1.61)(502, -1.53)(503, -1.73)(504, -0.83)(505, -1.47)(506, -1.35)(507, -1.43)(508, -0.54)(509, -0.97)(510, -0.81)(511, -0.51)(512, -1.01)}

\def\reccaa{(1, 2.86)(2, 2.86)(3, 2.86)(4, 2.87)(5, 2.87)(6, 2.86)(7, 2.86)(8, 2.86)(9, 2.87)(10, 2.86)(11, 2.86)(12, 2.86)(13, 2.86)(14, 2.87)(15, 2.86)(16, 2.86)(17, 2.86)(18, 2.87)(19, 2.86)(20, 2.86)(21, 2.87)(22, 2.86)(23, 2.87)(24, 2.87)(25, 2.87)(26, 2.86)(27, 2.86)(28, 2.87)(29, 2.87)(30, 2.86)(31, 2.86)(32, 2.86)(33, 2.86)(34, 2.86)(35, 2.86)(36, 2.86)(37, 2.87)(38, 2.86)(39, 2.86)(40, 2.88)(41, 2.86)(42, 2.86)(43, 2.85)(44, 2.87)(45, 2.87)(46, 2.86)(47, 2.86)(48, 2.86)(49, 2.86)(50, 2.86)(51, 2.86)(52, 2.86)(53, 2.87)(54, 2.85)(55, 2.85)(56, 2.86)(57, 2.87)(58, 2.87)(59, 2.87)(60, 2.86)(61, 2.85)(62, 2.87)(63, 2.87)(64, 2.88)(65, 3.27)(66, 3.27)(67, 3.26)(68, 3.26)(69, 3.26)(70, 3.26)(71, 3.26)(72, 3.26)(73, 3.27)(74, 3.27)(75, 3.26)(76, 3.26)(77, 3.26)(78, 3.26)(79, 3.25)(80, 3.26)(81, 3.27)(82, 3.26)(83, 3.27)(84, 3.27)(85, 3.26)(86, 3.27)(87, 3.26)(88, 3.27)(89, 3.26)(90, 3.25)(91, 3.27)(92, 3.27)(93, 3.27)(94, 3.26)(95, 3.27)(96, 3.27)(97, 3.25)(98, 3.25)(99, 3.25)(100, 3.24)(101, 3.25)(102, 3.24)(103, 3.24)(104, 3.24)(105, 3.24)(106, 3.25)(107, 3.25)(108, 3.24)(109, 3.24)(110, 3.25)(111, 3.25)(112, 3.24)(113, 3.26)(114, 3.26)(115, 3.25)(116, 3.25)(117, 3.25)(118, 3.25)(119, 3.25)(120, 3.26)(121, 3.24)(122, 3.24)(123, 3.25)(124, 3.24)(125, 3.23)(126, 3.24)(127, 3.23)(128, 3.23)(129, 2.86)(130, 2.86)(131, 2.86)(132, 2.86)(133, 2.86)(134, 2.86)(135, 2.85)(136, 2.86)(137, 2.85)(138, 2.84)(139, 2.85)(140, 2.85)(141, 2.86)(142, 2.85)(143, 2.85)(144, 2.85)(145, 2.83)(146, 2.84)(147, 2.84)(148, 2.82)(149, 2.83)(150, 2.82)(151, 2.83)(152, 2.83)(153, 2.83)(154, 2.84)(155, 2.83)(156, 2.83)(157, 2.83)(158, 2.82)(159, 2.83)(160, 2.83)(161, 2.83)(162, 2.83)(163, 2.83)(164, 2.84)(165, 2.83)(166, 2.83)(167, 2.83)(168, 2.84)(169, 2.83)(170, 2.84)(171, 2.83)(172, 2.84)(173, 2.84)(174, 2.84)(175, 2.83)(176, 2.83)(177, 2.84)(178, 2.83)(179, 2.83)(180, 2.83)(181, 2.83)(182, 2.84)(183, 2.83)(184, 2.84)(185, 2.83)(186, 2.84)(187, 2.84)(188, 2.84)(189, 2.84)(190, 2.83)(191, 2.83)(192, 2.83)(193, 3.06)(194, 3.06)(195, 3.05)(196, 3.06)(197, 3.05)(198, 3.06)(199, 3.06)(200, 3.06)(201, 3.06)(202, 3.06)(203, 3.07)(204, 3.07)(205, 3.06)(206, 3.06)(207, 3.06)(208, 3.07)(209, 3.06)(210, 3.06)(211, 3.07)(212, 3.06)(213, 3.07)(214, 3.06)(215, 3.07)(216, 3.06)(217, 3.06)(218, 3.06)(219, 3.07)(220, 3.07)(221, 3.06)(222, 3.06)(223, 3.07)(224, 3.07)(225, 3.06)(226, 3.06)(227, 3.06)(228, 3.06)(229, 3.06)(230, 3.06)(231, 3.07)(232, 3.05)(233, 3.07)(234, 3.06)(235, 3.06)(236, 3.07)(237, 3.06)(238, 3.06)(239, 3.06)(240, 3.07)(241, 3.07)(242, 3.06)(243, 3.07)(244, 3.06)(245, 3.06)(246, 3.07)(247, 3.06)(248, 3.05)(249, 3.06)(250, 3.05)(251, 3.07)(252, 3.05)(253, 3.06)(254, 3.06)(255, 3.05)(256, 3.06)(257, -3.76)(258, -3.77)(259, -3.77)(260, -3.77)(261, -3.77)(262, -3.77)(263, -3.77)(264, -3.77)(265, -3.77)(266, -3.77)(267, -3.77)(268, -3.77)(269, -3.77)(270, -3.76)(271, -3.76)(272, -3.77)(273, -3.78)(274, -3.77)(275, -3.78)(276, -3.78)(277, -3.77)(278, -3.78)(279, -3.77)(280, -3.78)(281, -3.77)(282, -3.78)(283, -3.78)(284, -3.78)(285, -3.78)(286, -3.78)(287, -3.78)(288, -3.78)(289, -3.88)(290, -3.87)(291, -3.87)(292, -3.87)(293, -3.87)(294, -3.86)(295, -3.87)(296, -3.87)(297, -3.87)(298, -3.88)(299, -3.86)(300, -3.87)(301, -3.87)(302, -3.87)(303, -3.87)(304, -3.87)(305, -3.88)(306, -3.88)(307, -3.88)(308, -3.88)(309, -3.88)(310, -3.88)(311, -3.88)(312, -3.88)(313, -3.87)(314, -3.88)(315, -3.88)(316, -3.88)(317, -3.88)(318, -3.88)(319, -3.89)(320, -3.88)(321, -4.73)(322, -4.72)(323, -4.73)(324, -4.73)(325, -4.72)(326, -4.72)(327, -4.73)(328, -4.72)(329, -4.73)(330, -4.73)(331, -4.73)(332, -4.73)(333, -4.72)(334, -4.73)(335, -4.73)(336, -4.73)(337, -4.74)(338, -4.74)(339, -4.74)(340, -4.73)(341, -4.73)(342, -4.73)(343, -4.73)(344, -4.73)(345, -4.74)(346, -4.73)(347, -4.74)(348, -4.74)(349, -4.74)(350, -4.74)(351, -4.74)(352, -4.73)(353, -4.73)(354, -4.74)(355, -4.73)(356, -4.72)(357, -4.74)(358, -4.73)(359, -4.74)(360, -4.74)(361, -4.73)(362, -4.73)(363, -4.73)(364, -4.74)(365, -4.73)(366, -4.74)(367, -4.74)(368, -4.74)(369, -4.73)(370, -4.73)(371, -4.73)(372, -4.73)(373, -4.72)(374, -4.72)(375, -4.74)(376, -4.74)(377, -4.73)(378, -4.72)(379, -4.73)(380, -4.73)(381, -4.74)(382, -4.73)(383, -4.72)(384, -4.72)(385, -1.80)(386, -1.80)(387, -1.81)(388, -1.79)(389, -1.80)(390, -1.79)(391, -1.79)(392, -1.80)(393, -1.80)(394, -1.80)(395, -1.80)(396, -1.80)(397, -1.79)(398, -1.80)(399, -1.79)(400, -1.79)(401, -1.79)(402, -1.78)(403, -1.78)(404, -1.78)(405, -1.78)(406, -1.78)(407, -1.79)(408, -1.77)(409, -1.80)(410, -1.78)(411, -1.79)(412, -1.79)(413, -1.78)(414, -1.77)(415, -1.78)(416, -1.79)(417, -1.79)(418, -1.80)(419, -1.78)(420, -1.79)(421, -1.79)(422, -1.79)(423, -1.79)(424, -1.79)(425, -1.79)(426, -1.79)(427, -1.80)(428, -1.78)(429, -1.79)(430, -1.79)(431, -1.80)(432, -1.79)(433, -1.79)(434, -1.79)(435, -1.79)(436, -1.79)(437, -1.80)(438, -1.79)(439, -1.79)(440, -1.79)(441, -1.79)(442, -1.79)(443, -1.79)(444, -1.79)(445, -1.79)(446, -1.78)(447, -1.79)(448, -1.80)(449, -1.76)(450, -1.75)(451, -1.76)(452, -1.75)(453, -1.75)(454, -1.74)(455, -1.75)(456, -1.75)(457, -1.74)(458, -1.76)(459, -1.75)(460, -1.75)(461, -1.75)(462, -1.76)(463, -1.76)(464, -1.75)(465, -1.75)(466, -1.75)(467, -1.76)(468, -1.74)(469, -1.76)(470, -1.75)(471, -1.75)(472, -1.75)(473, -1.75)(474, -1.74)(475, -1.76)(476, -1.75)(477, -1.75)(478, -1.76)(479, -1.75)(480, -1.75)(481, -1.75)(482, -1.75)(483, -1.75)(484, -1.75)(485, -1.75)(486, -1.75)(487, -1.75)(488, -1.74)(489, -1.76)(490, -1.74)(491, -1.75)(492, -1.75)(493, -1.75)(494, -1.75)(495, -1.75)(496, -1.75)(497, -1.75)(498, -1.76)(499, -1.74)(500, -1.76)(501, -1.75)(502, -1.75)(503, -1.75)(504, -1.75)(505, -1.76)(506, -1.74)(507, -1.76)(508, -1.75)(509, -1.75)(510, -1.75)(511, -1.75)(512, -1.76)}

\def\reccba{(1, 2.80)(2, 2.78)(3, 2.89)(4, 2.99)(5, 2.94)(6, 2.84)(7, 2.82)(8, 2.93)(9, 2.99)(10, 2.91)(11, 2.92)(12, 2.98)(13, 2.86)(14, 3.06)(15, 2.94)(16, 2.87)(17, 2.91)(18, 3.04)(19, 2.93)(20, 2.91)(21, 3.04)(22, 2.97)(23, 3.04)(24, 3.06)(25, 3.01)(26, 2.86)(27, 2.84)(28, 2.97)(29, 2.94)(30, 2.81)(31, 2.82)(32, 2.86)(33, 2.83)(34, 2.87)(35, 2.80)(36, 2.81)(37, 2.88)(38, 2.80)(39, 2.77)(40, 3.02)(41, 2.81)(42, 2.74)(43, 2.68)(44, 2.83)(45, 2.87)(46, 2.81)(47, 2.70)(48, 2.72)(49, 2.85)(50, 2.77)(51, 2.79)(52, 2.75)(53, 2.89)(54, 2.70)(55, 2.64)(56, 2.74)(57, 2.92)(58, 2.85)(59, 2.92)(60, 2.78)(61, 2.63)(62, 2.86)(63, 2.86)(64, 3.00)(65, 3.34)(66, 3.27)(67, 3.20)(68, 3.21)(69, 3.25)(70, 3.23)(71, 3.20)(72, 3.27)(73, 3.31)(74, 3.34)(75, 3.22)(76, 3.26)(77, 3.29)(78, 3.29)(79, 3.06)(80, 3.24)(81, 3.32)(82, 3.29)(83, 3.35)(84, 3.32)(85, 3.25)(86, 3.45)(87, 3.23)(88, 3.35)(89, 3.26)(90, 3.16)(91, 3.31)(92, 3.34)(93, 3.41)(94, 3.30)(95, 3.34)(96, 3.35)(97, 3.25)(98, 3.28)(99, 3.25)(100, 3.22)(101, 3.26)(102, 3.23)(103, 3.22)(104, 3.23)(105, 3.23)(106, 3.31)(107, 3.32)(108, 3.20)(109, 3.16)(110, 3.25)(111, 3.36)(112, 3.22)(113, 3.35)(114, 3.38)(115, 3.25)(116, 3.30)(117, 3.23)(118, 3.26)(119, 3.29)(120, 3.34)(121, 3.27)(122, 3.13)(123, 3.29)(124, 3.22)(125, 3.11)(126, 3.23)(127, 3.13)(128, 3.13)(129, 2.95)(130, 2.87)(131, 2.94)(132, 2.88)(133, 2.89)(134, 2.90)(135, 2.76)(136, 2.86)(137, 2.83)(138, 2.67)(139, 2.78)(140, 2.75)(141, 2.92)(142, 2.74)(143, 2.76)(144, 2.83)(145, 2.77)(146, 2.84)(147, 2.87)(148, 2.64)(149, 2.79)(150, 2.67)(151, 2.82)(152, 2.81)(153, 2.82)(154, 2.86)(155, 2.77)(156, 2.75)(157, 2.82)(158, 2.64)(159, 2.82)(160, 2.84)(161, 2.73)(162, 2.74)(163, 2.72)(164, 2.84)(165, 2.78)(166, 2.76)(167, 2.68)(168, 2.87)(169, 2.76)(170, 2.87)(171, 2.75)(172, 2.83)(173, 2.89)(174, 2.81)(175, 2.75)(176, 2.76)(177, 2.85)(178, 2.74)(179, 2.82)(180, 2.82)(181, 2.77)(182, 2.83)(183, 2.78)(184, 2.96)(185, 2.78)(186, 2.91)(187, 2.90)(188, 2.88)(189, 2.91)(190, 2.77)(191, 2.78)(192, 2.84)(193, 3.06)(194, 3.08)(195, 2.99)(196, 3.06)(197, 2.95)(198, 3.07)(199, 3.07)(200, 3.03)(201, 3.02)(202, 3.08)(203, 3.19)(204, 3.18)(205, 3.01)(206, 2.98)(207, 3.07)(208, 3.14)(209, 3.02)(210, 2.99)(211, 3.10)(212, 3.09)(213, 3.11)(214, 3.02)(215, 3.20)(216, 3.00)(217, 3.00)(218, 2.97)(219, 3.16)(220, 3.12)(221, 3.07)(222, 3.08)(223, 3.12)(224, 3.14)(225, 3.14)(226, 3.13)(227, 3.04)(228, 3.11)(229, 3.06)(230, 3.04)(231, 3.18)(232, 3.01)(233, 3.16)(234, 3.06)(235, 3.13)(236, 3.20)(237, 3.04)(238, 3.04)(239, 3.11)(240, 3.17)(241, 3.23)(242, 3.10)(243, 3.16)(244, 3.03)(245, 3.01)(246, 3.20)(247, 2.99)(248, 2.89)(249, 2.89)(250, 2.73)(251, 2.97)(252, 2.68)(253, 2.71)(254, 2.62)(255, 2.52)(256, 2.53)(257, -3.19)(258, -3.27)(259, -3.31)(260, -3.37)(261, -3.44)(262, -3.43)(263, -3.35)(264, -3.38)(265, -3.49)(266, -3.39)(267, -3.45)(268, -3.49)(269, -3.51)(270, -3.37)(271, -3.39)(272, -3.44)(273, -3.70)(274, -3.62)(275, -3.70)(276, -3.70)(277, -3.61)(278, -3.65)(279, -3.64)(280, -3.78)(281, -3.58)(282, -3.71)(283, -3.70)(284, -3.67)(285, -3.68)(286, -3.74)(287, -3.77)(288, -3.78)(289, -4.05)(290, -3.97)(291, -3.94)(292, -3.93)(293, -3.95)(294, -3.89)(295, -4.02)(296, -3.98)(297, -3.91)(298, -4.08)(299, -3.89)(300, -3.92)(301, -3.95)(302, -3.92)(303, -4.00)(304, -3.92)(305, -4.07)(306, -4.02)(307, -4.11)(308, -4.03)(309, -4.03)(310, -4.05)(311, -4.15)(312, -4.08)(313, -3.97)(314, -4.04)(315, -4.15)(316, -4.12)(317, -4.15)(318, -4.11)(319, -4.21)(320, -4.18)(321, -4.58)(322, -4.47)(323, -4.68)(324, -4.60)(325, -4.51)(326, -4.59)(327, -4.64)(328, -4.56)(329, -4.72)(330, -4.62)(331, -4.66)(332, -4.71)(333, -4.58)(334, -4.70)(335, -4.68)(336, -4.69)(337, -4.73)(338, -4.81)(339, -4.75)(340, -4.70)(341, -4.63)(342, -4.70)(343, -4.70)(344, -4.67)(345, -4.83)(346, -4.63)(347, -4.73)(348, -4.74)(349, -4.73)(350, -4.74)(351, -4.71)(352, -4.69)(353, -4.72)(354, -4.83)(355, -4.68)(356, -4.59)(357, -4.78)(358, -4.67)(359, -4.84)(360, -4.78)(361, -4.74)(362, -4.71)(363, -4.69)(364, -4.80)(365, -4.68)(366, -4.82)(367, -4.73)(368, -4.77)(369, -4.62)(370, -4.51)(371, -4.49)(372, -4.51)(373, -4.41)(374, -4.47)(375, -4.63)(376, -4.63)(377, -4.46)(378, -4.44)(379, -4.51)(380, -4.51)(381, -4.67)(382, -4.51)(383, -4.40)(384, -4.41)(385, -2.19)(386, -2.15)(387, -2.27)(388, -2.11)(389, -2.11)(390, -2.03)(391, -2.02)(392, -2.12)(393, -2.23)(394, -2.13)(395, -2.18)(396, -2.20)(397, -1.97)(398, -2.14)(399, -1.99)(400, -2.06)(401, -1.97)(402, -1.91)(403, -1.87)(404, -1.89)(405, -1.87)(406, -1.87)(407, -2.00)(408, -1.76)(409, -2.10)(410, -1.89)(411, -1.91)(412, -1.92)(413, -1.78)(414, -1.75)(415, -1.80)(416, -1.98)(417, -1.82)(418, -1.93)(419, -1.71)(420, -1.86)(421, -1.79)(422, -1.92)(423, -1.87)(424, -1.88)(425, -1.80)(426, -1.85)(427, -1.97)(428, -1.77)(429, -1.82)(430, -1.82)(431, -1.95)(432, -1.89)(433, -1.89)(434, -1.90)(435, -1.81)(436, -1.87)(437, -1.97)(438, -1.87)(439, -1.82)(440, -1.87)(441, -1.78)(442, -1.89)(443, -1.85)(444, -1.80)(445, -1.80)(446, -1.72)(447, -1.84)(448, -1.96)(449, -1.79)(450, -1.65)(451, -1.78)(452, -1.73)(453, -1.71)(454, -1.62)(455, -1.69)(456, -1.65)(457, -1.61)(458, -1.78)(459, -1.66)(460, -1.64)(461, -1.68)(462, -1.78)(463, -1.82)(464, -1.68)(465, -1.65)(466, -1.64)(467, -1.78)(468, -1.57)(469, -1.77)(470, -1.66)(471, -1.76)(472, -1.73)(473, -1.70)(474, -1.61)(475, -1.86)(476, -1.71)(477, -1.75)(478, -1.79)(479, -1.78)(480, -1.75)(481, -1.74)(482, -1.73)(483, -1.73)(484, -1.73)(485, -1.73)(486, -1.77)(487, -1.69)(488, -1.65)(489, -1.84)(490, -1.58)(491, -1.71)(492, -1.71)(493, -1.74)(494, -1.78)(495, -1.71)(496, -1.68)(497, -1.75)(498, -1.85)(499, -1.59)(500, -1.82)(501, -1.74)(502, -1.68)(503, -1.70)(504, -1.72)(505, -1.80)(506, -1.65)(507, -1.82)(508, -1.75)(509, -1.67)(510, -1.70)(511, -1.64)(512, -1.82)}

\def\reccca{(1, 1.78)(2, 1.75)(3, 2.12)(4, 2.61)(5, 2.83)(6, 2.55)(7, 2.41)(8, 3.01)(9, 3.11)(10, 2.89)(11, 3.09)(12, 3.32)(13, 2.99)(14, 3.51)(15, 3.34)(16, 3.04)(17, 3.18)(18, 3.65)(19, 3.22)(20, 3.20)(21, 3.70)(22, 3.47)(23, 3.58)(24, 3.78)(25, 3.45)(26, 3.19)(27, 2.97)(28, 3.39)(29, 3.32)(30, 2.87)(31, 2.93)(32, 3.06)(33, 3.05)(34, 3.20)(35, 2.87)(36, 2.93)(37, 3.21)(38, 2.92)(39, 2.70)(40, 3.71)(41, 2.93)(42, 2.76)(43, 2.35)(44, 2.95)(45, 2.97)(46, 2.99)(47, 2.44)(48, 2.60)(49, 3.14)(50, 2.84)(51, 2.71)(52, 2.42)(53, 2.87)(54, 2.18)(55, 2.11)(56, 2.31)(57, 2.98)(58, 2.68)(59, 3.03)(60, 2.61)(61, 2.06)(62, 2.96)(63, 2.86)(64, 3.49)(65, 3.69)(66, 3.40)(67, 2.92)(68, 3.13)(69, 3.28)(70, 3.23)(71, 3.10)(72, 3.27)(73, 3.57)(74, 3.48)(75, 3.14)(76, 3.27)(77, 3.25)(78, 3.23)(79, 2.46)(80, 3.07)(81, 3.20)(82, 3.17)(83, 3.19)(84, 3.08)(85, 2.84)(86, 3.66)(87, 2.84)(88, 3.31)(89, 3.00)(90, 2.58)(91, 3.05)(92, 3.08)(93, 3.47)(94, 3.03)(95, 3.33)(96, 3.18)(97, 3.00)(98, 2.98)(99, 2.95)(100, 2.94)(101, 3.05)(102, 2.90)(103, 2.99)(104, 3.01)(105, 2.93)(106, 3.27)(107, 3.26)(108, 2.91)(109, 2.79)(110, 3.20)(111, 3.65)(112, 3.20)(113, 3.53)(114, 3.60)(115, 3.29)(116, 3.35)(117, 3.23)(118, 3.27)(119, 3.25)(120, 3.46)(121, 3.12)(122, 2.70)(123, 3.28)(124, 3.08)(125, 2.73)(126, 3.35)(127, 2.83)(128, 2.87)(129, 3.23)(130, 2.98)(131, 3.31)(132, 3.05)(133, 3.08)(134, 2.92)(135, 2.51)(136, 2.87)(137, 3.03)(138, 2.38)(139, 2.68)(140, 2.64)(141, 3.39)(142, 2.80)(143, 2.99)(144, 3.27)(145, 2.99)(146, 3.24)(147, 3.29)(148, 2.41)(149, 2.88)(150, 2.49)(151, 2.90)(152, 2.89)(153, 2.95)(154, 3.00)(155, 2.50)(156, 2.60)(157, 2.77)(158, 2.34)(159, 3.00)(160, 3.07)(161, 2.68)(162, 2.70)(163, 2.63)(164, 3.09)(165, 2.79)(166, 2.70)(167, 2.38)(168, 3.17)(169, 2.67)(170, 3.09)(171, 2.71)(172, 2.96)(173, 3.31)(174, 2.98)(175, 2.79)(176, 2.73)(177, 3.00)(178, 2.49)(179, 2.89)(180, 2.86)(181, 2.69)(182, 2.91)(183, 2.68)(184, 3.47)(185, 2.80)(186, 3.42)(187, 3.26)(188, 3.29)(189, 3.36)(190, 2.85)(191, 2.79)(192, 3.11)(193, 3.29)(194, 3.25)(195, 2.80)(196, 2.97)(197, 2.41)(198, 2.80)(199, 2.91)(200, 2.74)(201, 2.74)(202, 2.93)(203, 3.26)(204, 3.39)(205, 2.72)(206, 2.68)(207, 2.96)(208, 3.29)(209, 2.79)(210, 2.72)(211, 3.11)(212, 3.10)(213, 3.15)(214, 2.87)(215, 3.69)(216, 2.78)(217, 2.91)(218, 2.70)(219, 3.41)(220, 3.31)(221, 3.16)(222, 3.20)(223, 3.35)(224, 3.47)(225, 3.27)(226, 3.16)(227, 2.92)(228, 3.10)(229, 2.98)(230, 2.93)(231, 3.35)(232, 2.77)(233, 3.33)(234, 2.94)(235, 3.22)(236, 3.41)(237, 2.88)(238, 2.70)(239, 2.89)(240, 3.22)(241, 3.31)(242, 2.81)(243, 2.94)(244, 2.50)(245, 2.39)(246, 3.04)(247, 2.28)(248, 1.88)(249, 2.10)(250, 1.41)(251, 2.03)(252, 0.86)(253, 0.97)(254, 0.30)(255, -0.01)(256, -0.36)(257, -0.24)(258, -0.80)(259, -1.23)(260, -1.76)(261, -2.10)(262, -2.32)(263, -1.92)(264, -2.35)(265, -2.82)(266, -2.64)(267, -3.02)(268, -3.09)(269, -3.23)(270, -2.82)(271, -2.87)(272, -3.03)(273, -3.36)(274, -2.96)(275, -3.22)(276, -3.30)(277, -3.07)(278, -3.11)(279, -3.08)(280, -3.60)(281, -2.87)(282, -3.40)(283, -3.24)(284, -3.25)(285, -3.33)(286, -3.55)(287, -3.73)(288, -3.73)(289, -3.96)(290, -3.64)(291, -3.57)(292, -3.52)(293, -3.58)(294, -3.36)(295, -3.73)(296, -3.66)(297, -3.47)(298, -4.16)(299, -3.34)(300, -3.51)(301, -3.67)(302, -3.52)(303, -3.74)(304, -3.56)(305, -3.99)(306, -3.68)(307, -4.07)(308, -3.90)(309, -3.95)(310, -3.98)(311, -4.26)(312, -4.01)(313, -3.70)(314, -4.00)(315, -4.40)(316, -4.25)(317, -4.52)(318, -4.38)(319, -4.78)(320, -4.79)(321, -4.60)(322, -4.25)(323, -5.06)(324, -4.86)(325, -4.51)(326, -4.73)(327, -4.90)(328, -4.56)(329, -5.03)(330, -4.58)(331, -4.80)(332, -4.97)(333, -4.58)(334, -4.99)(335, -4.94)(336, -5.12)(337, -5.16)(338, -5.46)(339, -5.28)(340, -5.15)(341, -4.85)(342, -5.01)(343, -5.12)(344, -4.95)(345, -5.60)(346, -4.75)(347, -5.21)(348, -5.31)(349, -5.27)(350, -5.24)(351, -5.14)(352, -5.16)(353, -5.14)(354, -5.41)(355, -4.84)(356, -4.66)(357, -5.28)(358, -4.95)(359, -5.61)(360, -5.44)(361, -5.27)(362, -5.11)(363, -5.01)(364, -5.49)(365, -5.04)(366, -5.62)(367, -5.09)(368, -5.22)(369, -5.11)(370, -4.57)(371, -4.46)(372, -4.58)(373, -3.99)(374, -4.14)(375, -4.63)(376, -4.75)(377, -3.95)(378, -3.79)(379, -4.08)(380, -3.96)(381, -4.41)(382, -3.74)(383, -3.28)(384, -3.16)(385, -2.90)(386, -2.68)(387, -3.10)(388, -2.49)(389, -2.63)(390, -2.30)(391, -2.17)(392, -2.61)(393, -2.85)(394, -2.39)(395, -2.59)(396, -2.70)(397, -1.99)(398, -2.58)(399, -2.02)(400, -2.16)(401, -2.20)(402, -1.99)(403, -1.73)(404, -1.83)(405, -1.79)(406, -1.84)(407, -2.26)(408, -1.33)(409, -2.53)(410, -1.77)(411, -1.83)(412, -1.90)(413, -1.41)(414, -1.39)(415, -1.54)(416, -2.22)(417, -1.68)(418, -2.05)(419, -1.18)(420, -1.77)(421, -1.38)(422, -1.81)(423, -1.62)(424, -1.81)(425, -1.50)(426, -1.67)(427, -2.16)(428, -1.33)(429, -1.56)(430, -1.55)(431, -2.03)(432, -1.87)(433, -1.80)(434, -1.83)(435, -1.70)(436, -1.94)(437, -2.35)(438, -1.96)(439, -1.86)(440, -1.99)(441, -1.62)(442, -1.96)(443, -1.91)(444, -1.77)(445, -1.71)(446, -1.39)(447, -1.86)(448, -2.24)(449, -1.81)(450, -1.20)(451, -1.67)(452, -1.44)(453, -1.50)(454, -1.19)(455, -1.33)(456, -1.21)(457, -0.97)(458, -1.57)(459, -1.25)(460, -1.33)(461, -1.38)(462, -1.78)(463, -1.90)(464, -1.42)(465, -1.23)(466, -1.21)(467, -1.80)(468, -1.05)(469, -1.72)(470, -1.28)(471, -1.79)(472, -1.75)(473, -1.65)(474, -1.16)(475, -2.17)(476, -1.65)(477, -1.79)(478, -1.98)(479, -2.01)(480, -1.90)(481, -1.92)(482, -1.94)(483, -2.03)(484, -2.15)(485, -2.18)(486, -2.31)(487, -2.09)(488, -2.04)(489, -2.73)(490, -1.71)(491, -2.18)(492, -2.19)(493, -2.22)(494, -2.41)(495, -2.20)(496, -2.02)(497, -2.30)(498, -2.74)(499, -1.74)(500, -2.47)(501, -2.23)(502, -1.90)(503, -1.88)(504, -1.89)(505, -2.03)(506, -1.32)(507, -1.87)(508, -1.49)(509, -0.97)(510, -0.76)(511, -0.35)(512, -0.84)}

\newcommand{\graphT}[1]{%
    \begin{tikzpicture}[xscale=4.4/512, yscale=1/6]
        \draw[ultra thin, black!20] (1,  0)--(512, 0);
        \draw[ultra thin, black!20] (1, -6)--(1, 8);
	\foreach \y in {-5, 0, 5}
	    \draw[ultra thin, black] (-2, \y) -- (4, \y);
        \draw[very thin, green!60!black] plot coordinates {\sol};
        \draw[very thin, blue]   plot coordinates {#1};
    \end{tikzpicture}%
}
\newcommand{\graphTl}[1]{%
    \begin{tikzpicture}[xscale=4.4/512, yscale=1/6]
        \draw[ultra thin, black!20] (1,  0)--(512, 0);
        \draw[ultra thin, black!20] (1, -6)--(1, 8);
	\foreach \y in {-5, 0, 5} {%
	  \node[left, inner sep=0pt] at (-5, \y) {{\tiny \y}};
	  \draw[ultra thin, black] (-2, \y) -- (4, \y);
	}
        \draw[very thin, green!60!black] plot coordinates {\sol};
        \draw[very thin, blue]   plot coordinates {#1};
    \end{tikzpicture}%
}

  \begin{figure}[tbp]
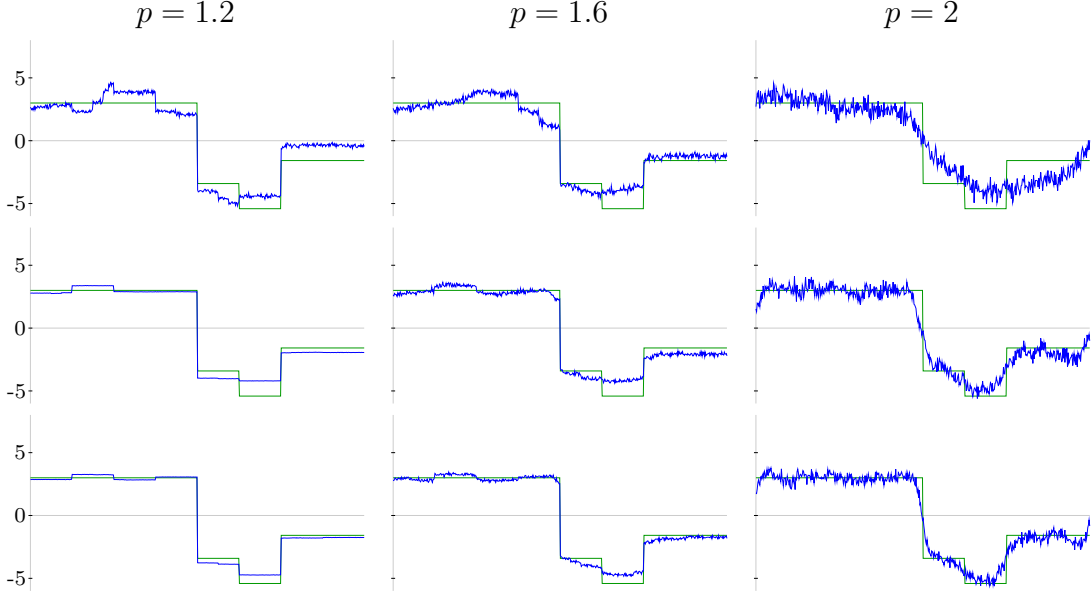

    \begin {center}
    \begin{tabular}{*{3}{c}}
	 $p=1.2$ & $p=1.6$ & $p=2$\\[1mm]
	 \graphTl{\recaaa} & \graphT{\recaba} & \graphT{\recaca}\\
	 \graphTl{\recbaa} & \graphT{\recbba} & \graphT{\recbca}\\
	 \graphTl{\reccaa} & \graphT{\reccba} & \graphT{\reccca}
    \end{tabular}
    \caption{Reconstructions obtained by the dual TIGRA method with $\|x_0\| = \start$ and $5\%$ (top), $1\%$ (middle), $0.5\%$ (bottom) noise.} \label{fig:dtigra}
    \end{center}
  \end{figure}

  \begin{table}[b!]
    \begin {center}
    \begin{tabular}{rr*{2}{|crrc}}
      \hline \noalign{\smallskip}
           &           &          \multicolumn{4}{c}{p = 1.2}                 &          \multicolumn{4}{c}{p = 1.6}                \\
  $\delta$ & $\|x_0\|$ &       $\alpha_{j^*}$ & $j^*$ & $k^*$ &     $e_{k^*}$ &       $\alpha_{j^*}$ & $j^*$ & $k^*$ &     $e_{k^*}$\\
  \hline \noalign{\smallskip}
       5\% &         1 &  $7.5 \cdot 10^{-2}$ &    47 &   290 &          0.58 &  $1.8 \cdot 10^{-2}$ &    51 &   349 &          0.33\\
       5\% &       500 &  $1.8 \cdot 10^{-2}$ &    51 &   199 &           0.6 &  $2.6 \cdot 10^{-2}$ &    50 &   214 &          0.42\\
       5\% &      1000 &  $1.5 \cdot 10^{-3}$ &    58 &  3906 &          0.43 &  $5.2 \cdot 10^{-2}$ &    48 &   311 &          0.52\\
       5\% &     10000 &  $4.3 \cdot 10^{-3}$ &    55 &   825 &          0.45 &  $1.1 \cdot 10^{-1}$ &    46 &   543 &           0.5\\[2mm]
 
       1\% &         1 &  $8.8 \cdot 10^{-3}$ &    53 &  1697 &           0.2 &  $4.3 \cdot 10^{-3}$ &    55 &  1503 &          0.22\\
       1\% &       500 &  $6.2 \cdot 10^{-3}$ &    54 &  2135 &          0.21 &  $4.3 \cdot 10^{-3}$ &    55 &  1618 &          0.23\\
       1\% &      1000 &  $8.8 \cdot 10^{-3}$ &    53 &   980 &          0.21 &  $6.2 \cdot 10^{-3}$ &    54 &  1571 &          0.23\\
       1\% &     10000 &  $2.1 \cdot 10^{-3}$ &    57 &  2841 &          0.22 &  $4.3 \cdot 10^{-3}$ &    55 &  2114 &          0.24\\[2mm]
 
     0.5\% &         1 &  $1.5 \cdot 10^{-3}$ &    58 &  5682 &          0.13 &  $1.5 \cdot 10^{-3}$ &    58 &  5006 &          0.15\\
     0.5\% &       500 &  $8.8 \cdot 10^{-3}$ &    53 &  2268 &         0.097 &  $1.5 \cdot 10^{-3}$ &    58 &  4577 &          0.18\\
     0.5\% &      1000 &  $7.3 \cdot 10^{-4}$ &    60 & 10694 &          0.14 &  $1.5 \cdot 10^{-3}$ &    58 &  6281 &          0.17\\
     0.5\% &     10000 &  $2.5 \cdot 10^{-4}$ &    63 & 11016 &         0.079 &  $1.5 \cdot 10^{-3}$ &    58 &  5155 &          0.17\\
  \hline
    \end{tabular}\\[5mm]
    \caption{Results of the dual TIGRA method with $p=1.2, 1.6$.} \label{tab:dtigra}
    \end{center}
  \end{table}

  Thus, the theoretically justified choices from Assumption \ref{ass:tigra} are not directly applicable to the problem at hand. Nevertheless, the dual TIGRA  method performed well in the numerical experiments and the results were obtained with the following parameters: The initial regularization parameter $\alpha_{0} = 10^{6}$ was updated in the outer iteration with a factor $\qa = 0.7$.
  Recall that $\alpha_0$ should be chosen large enough to ensure that the initial guess $x_0$ belongs to the region of convergence and $\qa$ should be sufficiently close to $1$ so that $\xjks$ can be used as initial guess for $\alpha_{j+1}$. In the inner iterations we used as stepsize selection
    \begin{equation} \label{eq:numbeta}
      \beta_{j, k} = \min \Big( \frac{1}{\| \nabla\Phi_{\alpha_j}(\xjk) \|}, 0.02 \Big)
    \end{equation}
  and for the stopping rule $C_{\alpha_j} = 1.5 \cdot \alpha_j$. In addition we also stopped each inner loop after at most 3000 iterations. Finally, for the discprepancy principle we chose $\tau = 2$.

 To verify the algorithm's global convergence behaviour, we have considered various randomly chosen initial guesses $x_0$ of different size $\| x_0 \|$. Some reconstructions for different values of $p$ and $\delta$ are shown in Figure \ref{fig:dtigra}. Further results are summarized in Table \ref{tab:dtigra}, where $j^*$ denotes the number of outer iterations and $k^*$ the total number of inner iterations combined for all values of $j$.
 Moreover, $e_{k^*}=\| \xjsks - \xd \| / \| \xd \|$ denotes the relative error corresponding to the final approximation $\xjsks$.

\bigskip

\subsection{Results for the dual modified Landweber method} \label{ssec:numdlwb}

\def\sol{(1, 3.00)(2, 3.00)(3, 3.00)(4, 3.00)(5, 3.00)(6, 3.00)(7, 3.00)(8, 3.00)(9, 3.00)(10, 3.00)(11, 3.00)(12, 3.00)(13, 3.00)(14, 3.00)(15, 3.00)(16, 3.00)(17, 3.00)(18, 3.00)(19, 3.00)(20, 3.00)(21, 3.00)(22, 3.00)(23, 3.00)(24, 3.00)(25, 3.00)(26, 3.00)(27, 3.00)(28, 3.00)(29, 3.00)(30, 3.00)(31, 3.00)(32, 3.00)(33, 3.00)(34, 3.00)(35, 3.00)(36, 3.00)(37, 3.00)(38, 3.00)(39, 3.00)(40, 3.00)(41, 3.00)(42, 3.00)(43, 3.00)(44, 3.00)(45, 3.00)(46, 3.00)(47, 3.00)(48, 3.00)(49, 3.00)(50, 3.00)(51, 3.00)(52, 3.00)(53, 3.00)(54, 3.00)(55, 3.00)(56, 3.00)(57, 3.00)(58, 3.00)(59, 3.00)(60, 3.00)(61, 3.00)(62, 3.00)(63, 3.00)(64, 3.00)(65, 3.00)(66, 3.00)(67, 3.00)(68, 3.00)(69, 3.00)(70, 3.00)(71, 3.00)(72, 3.00)(73, 3.00)(74, 3.00)(75, 3.00)(76, 3.00)(77, 3.00)(78, 3.00)(79, 3.00)(80, 3.00)(81, 3.00)(82, 3.00)(83, 3.00)(84, 3.00)(85, 3.00)(86, 3.00)(87, 3.00)(88, 3.00)(89, 3.00)(90, 3.00)(91, 3.00)(92, 3.00)(93, 3.00)(94, 3.00)(95, 3.00)(96, 3.00)(97, 3.00)(98, 3.00)(99, 3.00)(100, 3.00)(101, 3.00)(102, 3.00)(103, 3.00)(104, 3.00)(105, 3.00)(106, 3.00)(107, 3.00)(108, 3.00)(109, 3.00)(110, 3.00)(111, 3.00)(112, 3.00)(113, 3.00)(114, 3.00)(115, 3.00)(116, 3.00)(117, 3.00)(118, 3.00)(119, 3.00)(120, 3.00)(121, 3.00)(122, 3.00)(123, 3.00)(124, 3.00)(125, 3.00)(126, 3.00)(127, 3.00)(128, 3.00)(129, 3.00)(130, 3.00)(131, 3.00)(132, 3.00)(133, 3.00)(134, 3.00)(135, 3.00)(136, 3.00)(137, 3.00)(138, 3.00)(139, 3.00)(140, 3.00)(141, 3.00)(142, 3.00)(143, 3.00)(144, 3.00)(145, 3.00)(146, 3.00)(147, 3.00)(148, 3.00)(149, 3.00)(150, 3.00)(151, 3.00)(152, 3.00)(153, 3.00)(154, 3.00)(155, 3.00)(156, 3.00)(157, 3.00)(158, 3.00)(159, 3.00)(160, 3.00)(161, 3.00)(162, 3.00)(163, 3.00)(164, 3.00)(165, 3.00)(166, 3.00)(167, 3.00)(168, 3.00)(169, 3.00)(170, 3.00)(171, 3.00)(172, 3.00)(173, 3.00)(174, 3.00)(175, 3.00)(176, 3.00)(177, 3.00)(178, 3.00)(179, 3.00)(180, 3.00)(181, 3.00)(182, 3.00)(183, 3.00)(184, 3.00)(185, 3.00)(186, 3.00)(187, 3.00)(188, 3.00)(189, 3.00)(190, 3.00)(191, 3.00)(192, 3.00)(193, 3.00)(194, 3.00)(195, 3.00)(196, 3.00)(197, 3.00)(198, 3.00)(199, 3.00)(200, 3.00)(201, 3.00)(202, 3.00)(203, 3.00)(204, 3.00)(205, 3.00)(206, 3.00)(207, 3.00)(208, 3.00)(209, 3.00)(210, 3.00)(211, 3.00)(212, 3.00)(213, 3.00)(214, 3.00)(215, 3.00)(216, 3.00)(217, 3.00)(218, 3.00)(219, 3.00)(220, 3.00)(221, 3.00)(222, 3.00)(223, 3.00)(224, 3.00)(225, 3.00)(226, 3.00)(227, 3.00)(228, 3.00)(229, 3.00)(230, 3.00)(231, 3.00)(232, 3.00)(233, 3.00)(234, 3.00)(235, 3.00)(236, 3.00)(237, 3.00)(238, 3.00)(239, 3.00)(240, 3.00)(241, 3.00)(242, 3.00)(243, 3.00)(244, 3.00)(245, 3.00)(246, 3.00)(247, 3.00)(248, 3.00)(249, 3.00)(250, 3.00)(251, 3.00)(252, 3.00)(253, 3.00)(254, 3.00)(255, 3.00)(256, 3.00)(257, -3.41)(258, -3.41)(259, -3.41)(260, -3.41)(261, -3.41)(262, -3.41)(263, -3.41)(264, -3.41)(265, -3.41)(266, -3.41)(267, -3.41)(268, -3.41)(269, -3.41)(270, -3.41)(271, -3.41)(272, -3.41)(273, -3.41)(274, -3.41)(275, -3.41)(276, -3.41)(277, -3.41)(278, -3.41)(279, -3.41)(280, -3.41)(281, -3.41)(282, -3.41)(283, -3.41)(284, -3.41)(285, -3.41)(286, -3.41)(287, -3.41)(288, -3.41)(289, -3.41)(290, -3.41)(291, -3.41)(292, -3.41)(293, -3.41)(294, -3.41)(295, -3.41)(296, -3.41)(297, -3.41)(298, -3.41)(299, -3.41)(300, -3.41)(301, -3.41)(302, -3.41)(303, -3.41)(304, -3.41)(305, -3.41)(306, -3.41)(307, -3.41)(308, -3.41)(309, -3.41)(310, -3.41)(311, -3.41)(312, -3.41)(313, -3.41)(314, -3.41)(315, -3.41)(316, -3.41)(317, -3.41)(318, -3.41)(319, -3.41)(320, -3.41)(321, -5.41)(322, -5.41)(323, -5.41)(324, -5.41)(325, -5.41)(326, -5.41)(327, -5.41)(328, -5.41)(329, -5.41)(330, -5.41)(331, -5.41)(332, -5.41)(333, -5.41)(334, -5.41)(335, -5.41)(336, -5.41)(337, -5.41)(338, -5.41)(339, -5.41)(340, -5.41)(341, -5.41)(342, -5.41)(343, -5.41)(344, -5.41)(345, -5.41)(346, -5.41)(347, -5.41)(348, -5.41)(349, -5.41)(350, -5.41)(351, -5.41)(352, -5.41)(353, -5.41)(354, -5.41)(355, -5.41)(356, -5.41)(357, -5.41)(358, -5.41)(359, -5.41)(360, -5.41)(361, -5.41)(362, -5.41)(363, -5.41)(364, -5.41)(365, -5.41)(366, -5.41)(367, -5.41)(368, -5.41)(369, -5.41)(370, -5.41)(371, -5.41)(372, -5.41)(373, -5.41)(374, -5.41)(375, -5.41)(376, -5.41)(377, -5.41)(378, -5.41)(379, -5.41)(380, -5.41)(381, -5.41)(382, -5.41)(383, -5.41)(384, -5.41)(385, -1.58)(386, -1.58)(387, -1.58)(388, -1.58)(389, -1.58)(390, -1.58)(391, -1.58)(392, -1.58)(393, -1.58)(394, -1.58)(395, -1.58)(396, -1.58)(397, -1.58)(398, -1.58)(399, -1.58)(400, -1.58)(401, -1.58)(402, -1.58)(403, -1.58)(404, -1.58)(405, -1.58)(406, -1.58)(407, -1.58)(408, -1.58)(409, -1.58)(410, -1.58)(411, -1.58)(412, -1.58)(413, -1.58)(414, -1.58)(415, -1.58)(416, -1.58)(417, -1.58)(418, -1.58)(419, -1.58)(420, -1.58)(421, -1.58)(422, -1.58)(423, -1.58)(424, -1.58)(425, -1.58)(426, -1.58)(427, -1.58)(428, -1.58)(429, -1.58)(430, -1.58)(431, -1.58)(432, -1.58)(433, -1.58)(434, -1.58)(435, -1.58)(436, -1.58)(437, -1.58)(438, -1.58)(439, -1.58)(440, -1.58)(441, -1.58)(442, -1.58)(443, -1.58)(444, -1.58)(445, -1.58)(446, -1.58)(447, -1.58)(448, -1.58)(449, -1.58)(450, -1.58)(451, -1.58)(452, -1.58)(453, -1.58)(454, -1.58)(455, -1.58)(456, -1.58)(457, -1.58)(458, -1.58)(459, -1.58)(460, -1.58)(461, -1.58)(462, -1.58)(463, -1.58)(464, -1.58)(465, -1.58)(466, -1.58)(467, -1.58)(468, -1.58)(469, -1.58)(470, -1.58)(471, -1.58)(472, -1.58)(473, -1.58)(474, -1.58)(475, -1.58)(476, -1.58)(477, -1.58)(478, -1.58)(479, -1.58)(480, -1.58)(481, -1.58)(482, -1.58)(483, -1.58)(484, -1.58)(485, -1.58)(486, -1.58)(487, -1.58)(488, -1.58)(489, -1.58)(490, -1.58)(491, -1.58)(492, -1.58)(493, -1.58)(494, -1.58)(495, -1.58)(496, -1.58)(497, -1.58)(498, -1.58)(499, -1.58)(500, -1.58)(501, -1.58)(502, -1.58)(503, -1.58)(504, -1.58)(505, -1.58)(506, -1.58)(507, -1.58)(508, -1.58)(509, -1.58)(510, -1.58)(511, -1.58)(512, -1.58)}

\def\recaaa{(1, 3.17)(2, 2.10)(3, 1.07)(4, 1.95)(5, 0.64)(6, 2.82)(7, 3.88)(8, 0.91)(9, 3.14)(10, 2.22)(11, 2.13)(12, 3.85)(13, 3.58)(14, 2.52)(15, 2.17)(16, 3.81)(17, 3.70)(18, 2.59)(19, 2.67)(20, 1.90)(21, 2.95)(22, 2.93)(23, 2.04)(24, 4.42)(25, 4.37)(26, 1.97)(27, 3.93)(28, 0.78)(29, 1.34)(30, 3.91)(31, 5.06)(32, 1.83)(33, 2.41)(34, 2.32)(35, 2.71)(36, 3.92)(37, 3.23)(38, 4.38)(39, 3.48)(40, 3.35)(41, 4.28)(42, 2.82)(43, 1.28)(44, 1.25)(45, 2.23)(46, 3.04)(47, 2.76)(48, 3.27)(49, 2.20)(50, 3.45)(51, 2.48)(52, 2.64)(53, 2.27)(54, 1.67)(55, 5.03)(56, 2.00)(57, 4.30)(58, 3.53)(59, 3.06)(60, 4.40)(61, 3.16)(62, 1.20)(63, 3.64)(64, 4.16)(65, 2.35)(66, 1.51)(67, 3.63)(68, 2.47)(69, 2.73)(70, 1.93)(71, 0.82)(72, 0.43)(73, 1.88)(74, 2.82)(75, 1.31)(76, 1.13)(77, 1.76)(78, 1.77)(79, 2.15)(80, 2.92)(81, 1.56)(82, 2.66)(83, 1.81)(84, 0.82)(85, 0.86)(86, 2.27)(87, 0.93)(88, 0.88)(89, 0.75)(90, 2.04)(91, 1.48)(92, 2.99)(93, 1.15)(94, 1.80)(95, 2.32)(96, 3.61)(97, 2.26)(98, 0.63)(99, 1.54)(100, 0.62)(101, -0.24)(102, 1.93)(103, 1.29)(104, 0.42)(105, 1.65)(106, 2.06)(107, 1.44)(108, 3.67)(109, -0.67)(110, 0.04)(111, 0.90)(112, 3.09)(113, 2.91)(114, 2.18)(115, 3.52)(116, 5.18)(117, 3.81)(118, 4.38)(119, 3.84)(120, 3.57)(121, 8.90)(122, 7.30)(123, 9.48)(124, 10.82)(125, 7.90)(126, 10.04)(127, 9.64)(128, 10.48)(129, 2.53)(130, 4.42)(131, 3.72)(132, 4.32)(133, 3.24)(134, 4.48)(135, 3.81)(136, 2.52)(137, 2.23)(138, 3.55)(139, 4.88)(140, 3.83)(141, 4.03)(142, 4.21)(143, 3.42)(144, 4.31)(145, 4.70)(146, 4.12)(147, 2.93)(148, 3.53)(149, 3.80)(150, 1.99)(151, 4.45)(152, 5.38)(153, 3.48)(154, 3.36)(155, 4.71)(156, 2.64)(157, 4.20)(158, 2.81)(159, 3.97)(160, 3.47)(161, 3.21)(162, 3.31)(163, 4.11)(164, 4.53)(165, 3.63)(166, 1.51)(167, 3.93)(168, 1.82)(169, 3.85)(170, 2.49)(171, 4.04)(172, 2.26)(173, 1.10)(174, 4.87)(175, 5.16)(176, 3.60)(177, 5.71)(178, 5.08)(179, 2.71)(180, 3.73)(181, 5.66)(182, 3.18)(183, 3.77)(184, 4.01)(185, 3.19)(186, 3.13)(187, 5.40)(188, 2.30)(189, 5.11)(190, 3.28)(191, 5.09)(192, 4.38)(193, 2.59)(194, 3.48)(195, 3.40)(196, 4.65)(197, 2.02)(198, 4.28)(199, 3.39)(200, 3.28)(201, 3.19)(202, 4.26)(203, 2.61)(204, 3.25)(205, 3.48)(206, 3.99)(207, 4.22)(208, 2.65)(209, 2.75)(210, 3.55)(211, 2.32)(212, 2.48)(213, 2.70)(214, 1.58)(215, 3.16)(216, 1.96)(217, 3.94)(218, 3.22)(219, 2.74)(220, 3.68)(221, 2.33)(222, 1.71)(223, 4.52)(224, 1.45)(225, 0.48)(226, 1.60)(227, 3.79)(228, 2.18)(229, -0.68)(230, 3.46)(231, 1.88)(232, 1.11)(233, 2.22)(234, 1.67)(235, 1.35)(236, 1.29)(237, 1.05)(238, 0.07)(239, 1.17)(240, 1.73)(241, 2.24)(242, 1.24)(243, 2.72)(244, 0.24)(245, 2.38)(246, 1.04)(247, 1.27)(248, 1.41)(249, 2.86)(250, 0.46)(251, 1.31)(252, 0.93)(253, 1.79)(254, 3.01)(255, 2.10)(256, 0.32)(257, -2.03)(258, -3.62)(259, -3.61)(260, -2.66)(261, -2.54)(262, -2.51)(263, -3.00)(264, -3.11)(265, -3.77)(266, -2.97)(267, -2.11)(268, -1.27)(269, -1.44)(270, -3.05)(271, -2.25)(272, -1.97)(273, -1.53)(274, -2.74)(275, -3.26)(276, -4.90)(277, -4.37)(278, -2.18)(279, -1.39)(280, -3.30)(281, -2.53)(282, -2.58)(283, -2.87)(284, -4.39)(285, -2.38)(286, -3.76)(287, -3.37)(288, -4.43)(289, -6.76)(290, -4.66)(291, -5.49)(292, -4.28)(293, -4.92)(294, -3.96)(295, -3.86)(296, -5.42)(297, -5.95)(298, -4.36)(299, -5.24)(300, -5.02)(301, -5.02)(302, -5.71)(303, -5.23)(304, -5.33)(305, -9.12)(306, -8.08)(307, -8.97)(308, -8.15)(309, -7.20)(310, -6.93)(311, -8.34)(312, -8.09)(313, -7.93)(314, -7.41)(315, -10.05)(316, -7.60)(317, -7.74)(318, -10.56)(319, -8.58)(320, -7.52)(321, -5.52)(322, -4.24)(323, -6.15)(324, -6.08)(325, -5.15)(326, -5.60)(327, -6.96)(328, -5.78)(329, -4.52)(330, -4.73)(331, -5.30)(332, -4.12)(333, -6.54)(334, -6.49)(335, -2.61)(336, -5.20)(337, -6.66)(338, -5.31)(339, -7.04)(340, -5.20)(341, -5.24)(342, -6.00)(343, -5.00)(344, -5.05)(345, -5.20)(346, -5.14)(347, -5.41)(348, -5.29)(349, -3.86)(350, -5.23)(351, -4.75)(352, -5.26)(353, -5.67)(354, -3.18)(355, -5.54)(356, -6.27)(357, -5.42)(358, -5.73)(359, -5.82)(360, -7.89)(361, -4.55)(362, -5.12)(363, -3.44)(364, -6.48)(365, -5.38)(366, -6.59)(367, -5.88)(368, -4.95)(369, -6.47)(370, -3.14)(371, -3.80)(372, -5.47)(373, -5.38)(374, -5.21)(375, -4.01)(376, -5.08)(377, -4.67)(378, -5.48)(379, -6.05)(380, -5.96)(381, -5.57)(382, -6.34)(383, -5.04)(384, -5.05)(385, -1.87)(386, -2.13)(387, -0.13)(388, -0.92)(389, -0.45)(390, 0.79)(391, 0.30)(392, 2.39)(393, 0.90)(394, 4.64)(395, 4.04)(396, 2.72)(397, 2.60)(398, 1.93)(399, 3.51)(400, 2.36)(401, -0.37)(402, 0.75)(403, 1.67)(404, 2.05)(405, 2.89)(406, 2.37)(407, 1.37)(408, 1.31)(409, 0.23)(410, 0.34)(411, 0.81)(412, -0.43)(413, 2.01)(414, 1.69)(415, 1.70)(416, 2.03)(417, 2.15)(418, 2.88)(419, 1.09)(420, 1.37)(421, 2.42)(422, 1.93)(423, 0.64)(424, 1.94)(425, 3.09)(426, -0.11)(427, 1.50)(428, 1.95)(429, 1.02)(430, 1.11)(431, 1.15)(432, 1.21)(433, 2.36)(434, 0.75)(435, 2.66)(436, 0.89)(437, -0.23)(438, 1.17)(439, 1.34)(440, 3.00)(441, 1.52)(442, 0.69)(443, 0.46)(444, 1.28)(445, 0.41)(446, 2.24)(447, 0.86)(448, -0.60)(449, 1.30)(450, 0.64)(451, 1.59)(452, 1.48)(453, -0.39)(454, 0.49)(455, 1.33)(456, 1.55)(457, 2.16)(458, -0.01)(459, 2.19)(460, 0.38)(461, 2.05)(462, 2.38)(463, 3.46)(464, 1.14)(465, 2.01)(466, 0.67)(467, 2.57)(468, 1.56)(469, -0.56)(470, 2.06)(471, 0.69)(472, 0.10)(473, 1.18)(474, 1.48)(475, 1.44)(476, 1.85)(477, 2.23)(478, 1.36)(479, 0.10)(480, 1.08)(481, 2.04)(482, 1.93)(483, 0.36)(484, 0.52)(485, 0.32)(486, 0.51)(487, 2.94)(488, 1.36)(489, 1.55)(490, 0.17)(491, -0.28)(492, 1.93)(493, 1.63)(494, -0.92)(495, 2.15)(496, 1.21)(497, 2.01)(498, 2.72)(499, 0.62)(500, -0.10)(501, -0.46)(502, 0.21)(503, 1.59)(504, 1.05)(505, 1.49)(506, 0.66)(507, 2.44)(508, 0.71)(509, 1.58)(510, 0.30)(511, 2.29)(512, 0.81)}

\def\recaba{(1, 3.58)(2, 2.48)(3, 1.39)(4, 2.22)(5, 1.01)(6, 3.18)(7, 4.26)(8, 1.21)(9, 3.29)(10, 2.35)(11, 2.24)(12, 3.93)(13, 3.68)(14, 2.57)(15, 2.21)(16, 3.84)(17, 3.73)(18, 2.60)(19, 2.66)(20, 1.84)(21, 2.88)(22, 2.87)(23, 1.95)(24, 4.32)(25, 4.36)(26, 1.92)(27, 3.90)(28, 0.71)(29, 1.26)(30, 3.86)(31, 4.99)(32, 1.72)(33, 2.25)(34, 2.16)(35, 2.55)(36, 3.75)(37, 3.06)(38, 4.21)(39, 3.32)(40, 3.17)(41, 4.12)(42, 2.65)(43, 1.11)(44, 1.10)(45, 2.05)(46, 2.88)(47, 2.61)(48, 3.12)(49, 2.04)(50, 3.30)(51, 2.37)(52, 2.54)(53, 2.14)(54, 1.53)(55, 4.93)(56, 1.92)(57, 4.25)(58, 3.46)(59, 2.99)(60, 4.33)(61, 3.07)(62, 1.08)(63, 3.50)(64, 4.02)(65, 2.71)(66, 1.81)(67, 3.95)(68, 2.78)(69, 2.99)(70, 2.19)(71, 1.03)(72, 0.63)(73, 2.18)(74, 3.15)(75, 1.59)(76, 1.41)(77, 2.01)(78, 2.02)(79, 2.39)(80, 3.17)(81, 1.74)(82, 2.85)(83, 1.98)(84, 0.99)(85, 1.04)(86, 2.44)(87, 1.12)(88, 1.05)(89, 0.95)(90, 2.26)(91, 1.69)(92, 3.24)(93, 1.40)(94, 2.04)(95, 2.59)(96, 3.93)(97, 3.24)(98, 1.61)(99, 2.56)(100, 1.63)(101, 0.75)(102, 2.96)(103, 2.29)(104, 1.43)(105, 2.74)(106, 3.17)(107, 2.56)(108, 4.82)(109, 0.39)(110, 1.12)(111, 1.98)(112, 4.21)(113, 2.73)(114, 2.03)(115, 3.31)(116, 4.96)(117, 3.69)(118, 4.25)(119, 3.70)(120, 3.51)(121, 5.77)(122, 4.11)(123, 6.14)(124, 7.43)(125, 4.71)(126, 6.71)(127, 6.23)(128, 7.01)(129, 2.85)(130, 4.76)(131, 4.06)(132, 4.63)(133, 3.52)(134, 4.75)(135, 4.08)(136, 2.77)(137, 2.47)(138, 3.81)(139, 5.17)(140, 4.13)(141, 4.33)(142, 4.51)(143, 3.75)(144, 4.66)(145, 4.96)(146, 4.39)(147, 3.21)(148, 3.81)(149, 4.01)(150, 2.22)(151, 4.71)(152, 5.65)(153, 3.75)(154, 3.58)(155, 4.93)(156, 2.86)(157, 4.40)(158, 3.02)(159, 4.17)(160, 3.70)(161, 3.52)(162, 3.62)(163, 4.46)(164, 4.87)(165, 3.98)(166, 1.84)(167, 4.29)(168, 2.15)(169, 4.21)(170, 2.87)(171, 4.42)(172, 2.61)(173, 1.45)(174, 5.23)(175, 5.52)(176, 3.96)(177, 5.97)(178, 5.31)(179, 2.84)(180, 3.85)(181, 5.88)(182, 3.34)(183, 3.94)(184, 4.21)(185, 3.38)(186, 3.32)(187, 5.58)(188, 2.39)(189, 5.16)(190, 3.26)(191, 5.04)(192, 4.34)(193, 2.42)(194, 3.32)(195, 3.22)(196, 4.46)(197, 1.79)(198, 4.06)(199, 3.14)(200, 3.03)(201, 2.92)(202, 4.01)(203, 2.38)(204, 3.00)(205, 3.26)(206, 3.78)(207, 3.95)(208, 2.38)(209, 2.60)(210, 3.39)(211, 2.16)(212, 2.30)(213, 2.52)(214, 1.40)(215, 2.98)(216, 1.78)(217, 3.75)(218, 3.02)(219, 2.58)(220, 3.51)(221, 2.12)(222, 1.50)(223, 4.32)(224, 1.22)(225, 0.29)(226, 1.38)(227, 3.60)(228, 1.96)(229, -0.99)(230, 3.21)(231, 1.58)(232, 0.81)(233, 1.86)(234, 1.28)(235, 0.97)(236, 0.91)(237, 0.63)(238, -0.39)(239, 0.69)(240, 1.22)(241, 1.86)(242, 0.85)(243, 2.32)(244, -0.21)(245, 1.99)(246, 0.61)(247, 0.85)(248, 0.96)(249, 2.50)(250, 0.05)(251, 0.86)(252, 0.48)(253, 1.33)(254, 2.52)(255, 1.58)(256, -0.26)(257, -2.41)(258, -4.02)(259, -4.01)(260, -3.05)(261, -2.91)(262, -2.90)(263, -3.41)(264, -3.54)(265, -4.21)(266, -3.44)(267, -2.56)(268, -1.72)(269, -1.92)(270, -3.56)(271, -2.74)(272, -2.47)(273, -2.11)(274, -3.34)(275, -3.86)(276, -5.56)(277, -5.05)(278, -2.85)(279, -2.09)(280, -4.03)(281, -3.26)(282, -3.35)(283, -3.65)(284, -5.19)(285, -3.18)(286, -4.58)(287, -4.20)(288, -5.27)(289, -6.40)(290, -4.27)(291, -5.10)(292, -3.90)(293, -4.52)(294, -3.54)(295, -3.46)(296, -5.02)(297, -5.55)(298, -3.92)(299, -4.84)(300, -4.62)(301, -4.66)(302, -5.36)(303, -4.88)(304, -4.99)(305, -6.81)(306, -5.74)(307, -6.64)(308, -5.81)(309, -4.69)(310, -4.38)(311, -5.74)(312, -5.50)(313, -5.49)(314, -4.95)(315, -7.54)(316, -4.96)(317, -5.30)(318, -8.00)(319, -5.92)(320, -4.75)(321, -5.30)(322, -3.98)(323, -5.87)(324, -5.81)(325, -4.83)(326, -5.30)(327, -6.63)(328, -5.41)(329, -4.02)(330, -4.21)(331, -4.80)(332, -3.60)(333, -6.02)(334, -5.96)(335, -2.00)(336, -4.63)(337, -6.26)(338, -4.90)(339, -6.60)(340, -4.73)(341, -4.79)(342, -5.55)(343, -4.54)(344, -4.60)(345, -4.70)(346, -4.65)(347, -4.92)(348, -4.83)(349, -3.36)(350, -4.74)(351, -4.23)(352, -4.74)(353, -5.18)(354, -2.66)(355, -5.03)(356, -5.77)(357, -4.88)(358, -5.21)(359, -5.28)(360, -7.36)(361, -4.00)(362, -4.55)(363, -2.84)(364, -5.88)(365, -4.78)(366, -5.99)(367, -5.28)(368, -4.33)(369, -5.80)(370, -2.45)(371, -3.10)(372, -4.76)(373, -4.64)(374, -4.47)(375, -3.23)(376, -4.29)(377, -3.80)(378, -4.62)(379, -5.15)(380, -5.05)(381, -4.71)(382, -5.43)(383, -4.07)(384, -4.07)(385, -2.12)(386, -2.36)(387, -0.43)(388, -1.19)(389, -1.08)(390, 0.14)(391, -0.38)(392, 1.65)(393, -0.92)(394, 2.83)(395, 2.26)(396, 0.91)(397, 0.76)(398, 0.09)(399, 1.68)(400, 0.49)(401, -1.53)(402, -0.39)(403, 0.54)(404, 0.90)(405, 1.74)(406, 1.26)(407, 0.23)(408, 0.18)(409, -0.90)(410, -0.80)(411, -0.33)(412, -1.55)(413, 0.92)(414, 0.58)(415, 0.61)(416, 0.91)(417, 1.11)(418, 1.88)(419, 0.06)(420, 0.35)(421, 1.41)(422, 0.92)(423, -0.38)(424, 0.95)(425, 2.09)(426, -1.09)(427, 0.51)(428, 0.96)(429, 0.03)(430, 0.09)(431, 0.14)(432, 0.21)(433, 1.38)(434, -0.23)(435, 1.70)(436, -0.07)(437, -1.23)(438, 0.20)(439, 0.40)(440, 2.09)(441, 0.68)(442, -0.17)(443, -0.42)(444, 0.42)(445, -0.45)(446, 1.43)(447, 0.06)(448, -1.38)(449, 0.48)(450, -0.17)(451, 0.81)(452, 0.66)(453, -1.21)(454, -0.29)(455, 0.51)(456, 0.75)(457, 1.33)(458, -0.87)(459, 1.36)(460, -0.47)(461, 1.19)(462, 1.51)(463, 2.61)(464, 0.25)(465, 1.16)(466, -0.20)(467, 1.71)(468, 0.69)(469, -1.46)(470, 1.18)(471, -0.25)(472, -0.86)(473, 0.24)(474, 0.55)(475, 0.51)(476, 0.94)(477, 1.32)(478, 0.44)(479, -0.85)(480, 0.12)(481, 0.99)(482, 0.86)(483, -0.75)(484, -0.57)(485, -0.78)(486, -0.63)(487, 1.84)(488, 0.24)(489, 0.44)(490, -0.95)(491, -1.40)(492, 0.81)(493, 0.49)(494, -2.07)(495, 1.02)(496, 0.06)(497, 0.84)(498, 1.57)(499, -0.53)(500, -1.26)(501, -1.63)(502, -0.94)(503, 0.46)(504, -0.09)(505, 0.39)(506, -0.47)(507, 1.34)(508, -0.38)(509, 0.43)(510, -0.85)(511, 1.14)(512, -0.34)}

\def\recaca{(1, 3.79)(2, 4.86)(3, 5.92)(4, 5.30)(5, 6.25)(6, 4.01)(7, 2.93)(8, 5.93)(9, 3.81)(10, 4.70)(11, 4.96)(12, 3.31)(13, 3.38)(14, 4.47)(15, 5.07)(16, 3.49)(17, 3.28)(18, 4.43)(19, 4.32)(20, 5.16)(21, 4.21)(22, 4.11)(23, 5.01)(24, 2.66)(25, 2.63)(26, 5.00)(27, 3.02)(28, 6.16)(29, 5.59)(30, 3.01)(31, 1.83)(32, 5.11)(33, 4.49)(34, 4.45)(35, 4.03)(36, 2.92)(37, 3.50)(38, 2.46)(39, 3.17)(40, 3.42)(41, 2.34)(42, 3.77)(43, 5.45)(44, 5.17)(45, 4.41)(46, 3.46)(47, 3.65)(48, 3.23)(49, 4.18)(50, 2.99)(51, 3.69)(52, 3.39)(53, 3.81)(54, 4.33)(55, 0.89)(56, 3.93)(57, 1.47)(58, 2.34)(59, 2.71)(60, 1.32)(61, 2.59)(62, 4.51)(63, 2.01)(64, 1.42)(65, 2.41)(66, 3.42)(67, 1.17)(68, 2.32)(69, 2.14)(70, 2.83)(71, 4.05)(72, 4.42)(73, 2.81)(74, 1.65)(75, 3.28)(76, 3.43)(77, 2.92)(78, 2.96)(79, 2.54)(80, 1.73)(81, 3.28)(82, 2.11)(83, 3.03)(84, 3.96)(85, 3.86)(86, 2.61)(87, 3.71)(88, 3.90)(89, 3.93)(90, 2.67)(91, 3.28)(92, 1.60)(93, 3.44)(94, 2.93)(95, 2.35)(96, 0.88)(97, 1.68)(98, 3.30)(99, 2.29)(100, 3.30)(101, 4.12)(102, 1.99)(103, 2.97)(104, 3.82)(105, 2.44)(106, 2.05)(107, 2.73)(108, 0.47)(109, 4.91)(110, 4.19)(111, 3.37)(112, 1.21)(113, 3.25)(114, 3.86)(115, 2.62)(116, 1.10)(117, 2.22)(118, 1.80)(119, 2.47)(120, 2.23)(121, 0.89)(122, 2.51)(123, 0.57)(124, -0.66)(125, 2.01)(126, 0.20)(127, 0.62)(128, -0.17)(129, 3.37)(130, 1.41)(131, 2.01)(132, 1.63)(133, 2.65)(134, 1.59)(135, 2.15)(136, 3.49)(137, 3.76)(138, 2.35)(139, 0.98)(140, 1.93)(141, 1.68)(142, 1.54)(143, 2.16)(144, 1.28)(145, 1.25)(146, 1.75)(147, 2.91)(148, 2.28)(149, 2.30)(150, 4.12)(151, 1.64)(152, 0.61)(153, 2.39)(154, 2.77)(155, 1.37)(156, 3.32)(157, 1.88)(158, 3.19)(159, 2.13)(160, 2.52)(161, 2.77)(162, 2.66)(163, 1.78)(164, 1.43)(165, 2.28)(166, 4.50)(167, 2.03)(168, 4.22)(169, 2.11)(170, 3.31)(171, 1.84)(172, 3.77)(173, 4.90)(174, 1.10)(175, 0.89)(176, 2.48)(177, 0.43)(178, 1.23)(179, 3.60)(180, 2.65)(181, 0.62)(182, 3.10)(183, 2.57)(184, 2.18)(185, 3.09)(186, 3.16)(187, 0.93)(188, 4.13)(189, 1.36)(190, 3.29)(191, 1.58)(192, 2.21)(193, 3.52)(194, 2.46)(195, 2.56)(196, 1.40)(197, 4.03)(198, 1.80)(199, 2.82)(200, 2.94)(201, 3.15)(202, 1.99)(203, 3.44)(204, 3.02)(205, 2.66)(206, 2.02)(207, 2.06)(208, 3.70)(209, 3.21)(210, 2.51)(211, 3.60)(212, 3.55)(213, 3.25)(214, 4.32)(215, 2.73)(216, 3.86)(217, 1.98)(218, 2.65)(219, 2.87)(220, 1.93)(221, 3.27)(222, 3.80)(223, 1.00)(224, 4.01)(225, 4.16)(226, 3.12)(227, 0.80)(228, 2.41)(229, 5.33)(230, 1.03)(231, 2.74)(232, 3.36)(233, 2.36)(234, 2.88)(235, 2.95)(236, 3.04)(237, 3.18)(238, 4.13)(239, 3.01)(240, 2.54)(241, 1.80)(242, 2.65)(243, 1.11)(244, 3.53)(245, 1.08)(246, 2.51)(247, 2.03)(248, 2.06)(249, 0.50)(250, 2.79)(251, 1.96)(252, 2.10)(253, 1.10)(254, -0.23)(255, 0.54)(256, 2.27)(257, 0.15)(258, 1.66)(259, 1.52)(260, 0.45)(261, 0.12)(262, -0.01)(263, 0.46)(264, 0.56)(265, 1.03)(266, 0.25)(267, -0.78)(268, -1.72)(269, -1.60)(270, -0.05)(271, -1.08)(272, -1.44)(273, -2.04)(274, -0.94)(275, -0.55)(276, 1.15)(277, 0.55)(278, -1.80)(279, -2.57)(280, -0.76)(281, -1.80)(282, -1.60)(283, -1.43)(284, -0.07)(285, -2.22)(286, -0.97)(287, -1.37)(288, -0.47)(289, 0.17)(290, -1.96)(291, -1.30)(292, -2.54)(293, -2.00)(294, -3.11)(295, -3.22)(296, -1.74)(297, -1.40)(298, -3.18)(299, -2.12)(300, -2.47)(301, -2.41)(302, -1.79)(303, -2.30)(304, -2.25)(305, -0.83)(306, -1.98)(307, -1.25)(308, -2.06)(309, -3.25)(310, -3.84)(311, -2.46)(312, -2.67)(313, -2.76)(314, -3.33)(315, -0.78)(316, -3.50)(317, -3.17)(318, -0.59)(319, -2.66)(320, -4.08)(321, -2.81)(322, -4.27)(323, -2.44)(324, -2.53)(325, -3.62)(326, -3.17)(327, -1.98)(328, -3.19)(329, -4.56)(330, -4.42)(331, -3.84)(332, -5.04)(333, -2.69)(334, -2.80)(335, -6.77)(336, -4.19)(337, -2.80)(338, -4.17)(339, -2.62)(340, -4.60)(341, -4.45)(342, -3.76)(343, -4.76)(344, -4.68)(345, -4.65)(346, -4.67)(347, -4.47)(348, -4.38)(349, -6.00)(350, -4.62)(351, -5.21)(352, -4.74)(353, -4.33)(354, -6.81)(355, -4.46)(356, -3.63)(357, -4.64)(358, -4.21)(359, -4.20)(360, -2.08)(361, -5.34)(362, -4.83)(363, -6.63)(364, -3.60)(365, -4.60)(366, -3.45)(367, -4.12)(368, -5.07)(369, -3.47)(370, -6.80)(371, -6.15)(372, -4.46)(373, -4.69)(374, -4.77)(375, -5.98)(376, -4.83)(377, -5.13)(378, -4.18)(379, -3.64)(380, -3.69)(381, -3.92)(382, -3.27)(383, -4.54)(384, -4.56)(385, -3.37)(386, -3.05)(387, -4.83)(388, -4.14)(389, -3.96)(390, -5.11)(391, -4.56)(392, -6.42)(393, -3.59)(394, -7.19)(395, -6.66)(396, -5.20)(397, -4.98)(398, -4.30)(399, -5.87)(400, -4.61)(401, -2.92)(402, -3.99)(403, -4.87)(404, -5.11)(405, -5.87)(406, -5.47)(407, -4.33)(408, -4.25)(409, -3.16)(410, -3.20)(411, -3.57)(412, -2.49)(413, -4.81)(414, -4.45)(415, -4.46)(416, -4.63)(417, -4.81)(418, -5.67)(419, -3.77)(420, -3.99)(421, -5.14)(422, -4.63)(423, -3.31)(424, -4.76)(425, -5.80)(426, -2.62)(427, -4.21)(428, -4.63)(429, -3.64)(430, -3.63)(431, -3.65)(432, -3.73)(433, -4.87)(434, -3.26)(435, -5.24)(436, -3.41)(437, -2.24)(438, -3.65)(439, -3.90)(440, -5.58)(441, -4.21)(442, -3.31)(443, -3.03)(444, -3.79)(445, -3.01)(446, -4.89)(447, -3.41)(448, -2.06)(449, -3.83)(450, -3.24)(451, -4.29)(452, -4.06)(453, -2.17)(454, -3.18)(455, -3.90)(456, -4.16)(457, -4.72)(458, -2.47)(459, -4.72)(460, -2.90)(461, -4.49)(462, -4.79)(463, -5.86)(464, -3.46)(465, -4.40)(466, -2.99)(467, -4.91)(468, -3.91)(469, -1.78)(470, -4.43)(471, -2.86)(472, -2.18)(473, -3.26)(474, -3.60)(475, -3.54)(476, -4.00)(477, -4.31)(478, -3.45)(479, -2.14)(480, -3.04)(481, -4.05)(482, -3.84)(483, -2.20)(484, -2.41)(485, -2.10)(486, -2.01)(487, -4.55)(488, -2.87)(489, -3.06)(490, -1.65)(491, -1.13)(492, -3.34)(493, -2.89)(494, -0.28)(495, -3.31)(496, -2.28)(497, -2.91)(498, -3.57)(499, -1.35)(500, -0.51)(501, 0.00)(502, -0.62)(503, -1.90)(504, -1.21)(505, -1.68)(506, -0.66)(507, -2.44)(508, -0.58)(509, -1.13)(510, 0.33)(511, -1.45)(512, 0.19)}

\def\recbaa{(1, 3.63)(2, 2.84)(3, 4.40)(4, 3.65)(5, 2.97)(6, 2.85)(7, 1.74)(8, 4.03)(9, 2.62)(10, 3.84)(11, 1.86)(12, 4.41)(13, 3.11)(14, 1.44)(15, 3.27)(16, 2.82)(17, 2.19)(18, 3.04)(19, 2.27)(20, 2.30)(21, 1.77)(22, 4.47)(23, 3.70)(24, 2.68)(25, 2.89)(26, 4.09)(27, 3.30)(28, 2.05)(29, 2.22)(30, 3.20)(31, 3.56)(32, 3.32)(33, 1.79)(34, 1.96)(35, 2.42)(36, 0.47)(37, 1.91)(38, 3.31)(39, 2.99)(40, 3.77)(41, 2.54)(42, 3.25)(43, 3.19)(44, 2.37)(45, 2.46)(46, 2.97)(47, 3.05)(48, 2.32)(49, 3.59)(50, 3.25)(51, 3.04)(52, 4.17)(53, 3.92)(54, 3.70)(55, 2.65)(56, 3.01)(57, 3.51)(58, 2.16)(59, 2.99)(60, 6.39)(61, 3.12)(62, 4.02)(63, 1.11)(64, 4.98)(65, 2.73)(66, 2.15)(67, 2.06)(68, 3.83)(69, 2.43)(70, 0.94)(71, 1.61)(72, 4.13)(73, 3.72)(74, 1.43)(75, 1.95)(76, 3.64)(77, 5.35)(78, 2.79)(79, 2.65)(80, 1.83)(81, 2.45)(82, 4.52)(83, 2.50)(84, 4.25)(85, 4.10)(86, 4.99)(87, 2.49)(88, 4.33)(89, 4.90)(90, 2.50)(91, 1.66)(92, 3.91)(93, 2.93)(94, 1.86)(95, 3.96)(96, 2.37)(97, 1.52)(98, 4.75)(99, 4.48)(100, 2.61)(101, 3.49)(102, 2.58)(103, 3.78)(104, 2.24)(105, 1.86)(106, 2.83)(107, 2.94)(108, 2.31)(109, 4.19)(110, 2.97)(111, 2.29)(112, 2.91)(113, 1.86)(114, 3.53)(115, 2.69)(116, 4.03)(117, 2.47)(118, 3.18)(119, 3.83)(120, 2.61)(121, 1.86)(122, 3.46)(123, 3.07)(124, 3.32)(125, 3.59)(126, 3.34)(127, 2.87)(128, 2.79)(129, 3.12)(130, 2.92)(131, 3.33)(132, 2.36)(133, 3.18)(134, 3.79)(135, 3.53)(136, 2.84)(137, 3.10)(138, 4.71)(139, 2.43)(140, 2.66)(141, 3.29)(142, 3.47)(143, 1.89)(144, 3.21)(145, 2.76)(146, 3.87)(147, 2.83)(148, 3.37)(149, 3.75)(150, 2.52)(151, 3.86)(152, 3.28)(153, 3.27)(154, 2.37)(155, 4.04)(156, 3.51)(157, 2.00)(158, 3.38)(159, 1.48)(160, 2.90)(161, 1.00)(162, 2.26)(163, 2.51)(164, 4.16)(165, 1.74)(166, 2.61)(167, 1.18)(168, 4.10)(169, 3.24)(170, 4.49)(171, 4.11)(172, 3.68)(173, 2.96)(174, 1.57)(175, 1.79)(176, 4.30)(177, 2.27)(178, 2.66)(179, 2.90)(180, 2.86)(181, 2.60)(182, 4.20)(183, 2.09)(184, 3.63)(185, 3.99)(186, 4.04)(187, 2.14)(188, 2.77)(189, 1.67)(190, 3.45)(191, 1.54)(192, 4.79)(193, 2.43)(194, 1.55)(195, 1.88)(196, 5.32)(197, 2.13)(198, 3.30)(199, 2.69)(200, 4.64)(201, 1.64)(202, 2.91)(203, 0.98)(204, 3.09)(205, 3.79)(206, 3.84)(207, 2.15)(208, 2.88)(209, 2.27)(210, 2.58)(211, 5.06)(212, 2.58)(213, 3.32)(214, 4.84)(215, 2.20)(216, 2.50)(217, 3.55)(218, 2.11)(219, 2.91)(220, 2.62)(221, 2.18)(222, 2.46)(223, 2.78)(224, 3.71)(225, 3.80)(226, 3.55)(227, 4.13)(228, 2.39)(229, 2.52)(230, 2.51)(231, 2.63)(232, 4.84)(233, 3.79)(234, 3.77)(235, 2.51)(236, 3.11)(237, 2.88)(238, 3.27)(239, 3.23)(240, 2.38)(241, 3.97)(242, 4.27)(243, 2.55)(244, 2.77)(245, 2.78)(246, 2.93)(247, 1.79)(248, 1.59)(249, 1.29)(250, 2.90)(251, 1.43)(252, 3.22)(253, 2.69)(254, 3.00)(255, 2.24)(256, 3.37)(257, -2.13)(258, -2.67)(259, -3.16)(260, -2.71)(261, -4.04)(262, -4.13)(263, -4.32)(264, -4.17)(265, -4.29)(266, -2.91)(267, -4.15)(268, -2.85)(269, -2.87)(270, -3.13)(271, -3.95)(272, -4.82)(273, -4.61)(274, -4.35)(275, -4.02)(276, -2.87)(277, -3.37)(278, -4.86)(279, -1.39)(280, -4.49)(281, -2.93)(282, -2.00)(283, -2.49)(284, -2.55)(285, -4.51)(286, -2.33)(287, -3.53)(288, -4.03)(289, -3.17)(290, -4.46)(291, -3.03)(292, -4.01)(293, -4.74)(294, -3.21)(295, -3.93)(296, -5.06)(297, -3.72)(298, -2.45)(299, -2.09)(300, -3.87)(301, -4.49)(302, -4.45)(303, -4.24)(304, -5.20)(305, -5.09)(306, -2.91)(307, -4.81)(308, -4.71)(309, -4.73)(310, -3.66)(311, -2.66)(312, -5.11)(313, -4.49)(314, -5.02)(315, -3.93)(316, -3.67)(317, -2.02)(318, -2.89)(319, -2.38)(320, -3.85)(321, -3.45)(322, -3.87)(323, -6.07)(324, -4.81)(325, -3.29)(326, -4.96)(327, -4.98)(328, -4.93)(329, -3.82)(330, -4.56)(331, -4.13)(332, -5.66)(333, -6.37)(334, -5.50)(335, -4.63)(336, -4.22)(337, -4.48)(338, -4.93)(339, -3.84)(340, -6.21)(341, -3.60)(342, -5.95)(343, -4.88)(344, -5.45)(345, -5.57)(346, -4.72)(347, -5.35)(348, -3.33)(349, -4.59)(350, -4.47)(351, -5.84)(352, -4.82)(353, -4.66)(354, -5.53)(355, -4.36)(356, -5.37)(357, -5.06)(358, -5.77)(359, -4.37)(360, -3.29)(361, -3.78)(362, -5.56)(363, -3.71)(364, -5.59)(365, -5.82)(366, -5.82)(367, -4.35)(368, -4.54)(369, -3.64)(370, -4.45)(371, -4.78)(372, -5.77)(373, -5.82)(374, -4.35)(375, -5.76)(376, -4.74)(377, -5.71)(378, -5.31)(379, -5.67)(380, -3.52)(381, -5.31)(382, -5.55)(383, -3.76)(384, -3.62)(385, -2.42)(386, -2.04)(387, -2.79)(388, -2.69)(389, -1.43)(390, -1.42)(391, -2.01)(392, -2.67)(393, -2.71)(394, -3.40)(395, -1.74)(396, -3.96)(397, -2.48)(398, -0.77)(399, -3.17)(400, -1.68)(401, -0.79)(402, -1.39)(403, -1.05)(404, -2.72)(405, -0.87)(406, -3.00)(407, -2.83)(408, -0.64)(409, -0.76)(410, -1.08)(411, 1.00)(412, -1.55)(413, -0.22)(414, -2.14)(415, -2.15)(416, -2.73)(417, -2.06)(418, -1.36)(419, -1.69)(420, -1.45)(421, -0.87)(422, -2.81)(423, -2.45)(424, -1.49)(425, -3.84)(426, -1.15)(427, -2.81)(428, -0.52)(429, -2.30)(430, -2.24)(431, -1.28)(432, -0.98)(433, -1.89)(434, -1.28)(435, -0.84)(436, -2.97)(437, 0.89)(438, -2.08)(439, 0.01)(440, -1.47)(441, -1.84)(442, -0.34)(443, -1.29)(444, 0.35)(445, -2.09)(446, -2.18)(447, -2.82)(448, -1.64)(449, -1.76)(450, -1.88)(451, -2.49)(452, -1.17)(453, -2.03)(454, -2.01)(455, -2.67)(456, -2.18)(457, -0.94)(458, -1.05)(459, -2.35)(460, -3.05)(461, -2.51)(462, -0.79)(463, -1.47)(464, -0.89)(465, -2.88)(466, -1.31)(467, -1.18)(468, -1.95)(469, -2.96)(470, -2.71)(471, 0.25)(472, -2.12)(473, 0.28)(474, -3.12)(475, -0.13)(476, -1.52)(477, 0.31)(478, -3.80)(479, -0.29)(480, -2.63)(481, -1.94)(482, -1.45)(483, -1.14)(484, -1.07)(485, -0.48)(486, -2.91)(487, -1.49)(488, -0.30)(489, -1.94)(490, -1.19)(491, -3.21)(492, -1.13)(493, -3.98)(494, -0.68)(495, -1.36)(496, -2.24)(497, -1.64)(498, -3.40)(499, -1.51)(500, -0.98)(501, -0.98)(502, -1.28)(503, -2.12)(504, 0.11)(505, -2.09)(506, -2.25)(507, -2.70)(508, -0.43)(509, -2.09)(510, -2.26)(511, -2.00)(512, -4.02)}

\def\recbba{(1, 3.22)(2, 2.40)(3, 3.98)(4, 3.30)(5, 2.71)(6, 2.62)(7, 1.54)(8, 3.76)(9, 2.43)(10, 3.67)(11, 1.71)(12, 4.24)(13, 3.00)(14, 1.42)(15, 3.23)(16, 2.80)(17, 2.09)(18, 2.91)(19, 2.21)(20, 2.25)(21, 1.73)(22, 4.34)(23, 3.58)(24, 2.60)(25, 2.82)(26, 4.04)(27, 3.24)(28, 2.02)(29, 2.23)(30, 3.10)(31, 3.51)(32, 3.28)(33, 1.98)(34, 2.16)(35, 2.59)(36, 0.65)(37, 2.08)(38, 3.41)(39, 3.07)(40, 3.83)(41, 2.65)(42, 3.34)(43, 3.31)(44, 2.41)(45, 2.50)(46, 3.00)(47, 3.08)(48, 2.36)(49, 3.30)(50, 2.95)(51, 2.76)(52, 3.94)(53, 3.60)(54, 3.37)(55, 2.30)(56, 2.69)(57, 3.25)(58, 1.90)(59, 2.74)(60, 6.07)(61, 2.84)(62, 3.71)(63, 0.86)(64, 4.61)(65, 3.12)(66, 2.56)(67, 2.45)(68, 4.13)(69, 2.78)(70, 1.35)(71, 1.98)(72, 4.46)(73, 4.07)(74, 1.74)(75, 2.31)(76, 3.97)(77, 5.71)(78, 3.17)(79, 2.97)(80, 2.19)(81, 2.74)(82, 4.78)(83, 2.80)(84, 4.46)(85, 4.31)(86, 5.19)(87, 2.69)(88, 4.50)(89, 5.04)(90, 2.64)(91, 1.81)(92, 3.95)(93, 3.08)(94, 1.98)(95, 4.00)(96, 2.43)(97, 1.77)(98, 4.96)(99, 4.62)(100, 2.79)(101, 3.74)(102, 2.81)(103, 4.03)(104, 2.51)(105, 2.16)(106, 3.10)(107, 3.21)(108, 2.60)(109, 4.47)(110, 3.21)(111, 2.59)(112, 3.24)(113, 2.16)(114, 3.75)(115, 2.97)(116, 4.25)(117, 2.71)(118, 3.40)(119, 4.06)(120, 2.82)(121, 2.12)(122, 3.64)(123, 3.27)(124, 3.52)(125, 3.78)(126, 3.47)(127, 3.01)(128, 2.93)(129, 2.93)(130, 2.73)(131, 3.13)(132, 2.21)(133, 3.01)(134, 3.58)(135, 3.42)(136, 2.60)(137, 2.84)(138, 4.39)(139, 2.21)(140, 2.41)(141, 2.96)(142, 3.11)(143, 1.60)(144, 2.88)(145, 2.44)(146, 3.56)(147, 2.55)(148, 3.04)(149, 3.42)(150, 2.20)(151, 3.55)(152, 2.98)(153, 3.00)(154, 2.08)(155, 3.80)(156, 3.24)(157, 1.75)(158, 3.09)(159, 1.27)(160, 2.65)(161, 0.73)(162, 2.02)(163, 2.33)(164, 3.94)(165, 1.57)(166, 2.41)(167, 1.04)(168, 3.87)(169, 3.09)(170, 4.35)(171, 3.96)(172, 3.53)(173, 2.85)(174, 1.48)(175, 1.68)(176, 4.20)(177, 2.28)(178, 2.65)(179, 2.89)(180, 2.83)(181, 2.58)(182, 4.14)(183, 2.00)(184, 3.57)(185, 3.95)(186, 3.99)(187, 2.12)(188, 2.73)(189, 1.70)(190, 3.43)(191, 1.53)(192, 4.77)(193, 2.35)(194, 1.46)(195, 1.84)(196, 5.28)(197, 2.17)(198, 3.28)(199, 2.73)(200, 4.64)(201, 1.67)(202, 2.92)(203, 0.99)(204, 3.14)(205, 3.85)(206, 3.87)(207, 2.19)(208, 2.92)(209, 2.26)(210, 2.54)(211, 5.04)(212, 2.51)(213, 3.27)(214, 4.76)(215, 2.20)(216, 2.50)(217, 3.55)(218, 2.05)(219, 2.92)(220, 2.57)(221, 2.14)(222, 2.40)(223, 2.76)(224, 3.62)(225, 3.91)(226, 3.67)(227, 4.23)(228, 2.44)(229, 2.60)(230, 2.58)(231, 2.70)(232, 4.94)(233, 3.88)(234, 3.85)(235, 2.60)(236, 3.22)(237, 2.96)(238, 3.32)(239, 3.28)(240, 2.37)(241, 3.84)(242, 4.11)(243, 2.38)(244, 2.57)(245, 2.55)(246, 2.68)(247, 1.50)(248, 1.24)(249, 0.86)(250, 2.42)(251, 0.93)(252, 2.59)(253, 1.98)(254, 2.24)(255, 1.46)(256, 2.54)(257, -1.64)(258, -2.19)(259, -2.71)(260, -2.31)(261, -3.74)(262, -3.84)(263, -4.03)(264, -3.91)(265, -4.15)(266, -2.84)(267, -4.07)(268, -2.81)(269, -2.85)(270, -3.16)(271, -3.96)(272, -4.88)(273, -4.45)(274, -4.23)(275, -3.88)(276, -2.81)(277, -3.29)(278, -4.78)(279, -1.34)(280, -4.44)(281, -2.86)(282, -1.94)(283, -2.47)(284, -2.50)(285, -4.44)(286, -2.28)(287, -3.51)(288, -3.97)(289, -3.26)(290, -4.57)(291, -3.17)(292, -4.12)(293, -4.87)(294, -3.38)(295, -4.09)(296, -5.23)(297, -3.88)(298, -2.67)(299, -2.29)(300, -4.03)(301, -4.65)(302, -4.60)(303, -4.37)(304, -5.36)(305, -5.24)(306, -3.07)(307, -5.01)(308, -4.89)(309, -4.89)(310, -3.82)(311, -2.89)(312, -5.29)(313, -4.67)(314, -5.21)(315, -4.11)(316, -3.83)(317, -2.16)(318, -3.00)(319, -2.48)(320, -4.00)(321, -3.15)(322, -3.53)(323, -5.71)(324, -4.50)(325, -2.98)(326, -4.65)(327, -4.65)(328, -4.62)(329, -3.48)(330, -4.21)(331, -3.76)(332, -5.30)(333, -6.03)(334, -5.11)(335, -4.23)(336, -3.84)(337, -4.12)(338, -4.58)(339, -3.48)(340, -5.88)(341, -3.25)(342, -5.59)(343, -4.50)(344, -5.03)(345, -5.13)(346, -4.29)(347, -4.92)(348, -2.89)(349, -4.16)(350, -4.05)(351, -5.40)(352, -4.45)(353, -4.31)(354, -5.17)(355, -4.03)(356, -5.05)(357, -4.73)(358, -5.42)(359, -4.02)(360, -2.94)(361, -3.43)(362, -5.20)(363, -3.37)(364, -5.24)(365, -5.41)(366, -5.40)(367, -3.94)(368, -4.12)(369, -3.20)(370, -4.03)(371, -4.38)(372, -5.35)(373, -5.36)(374, -3.92)(375, -5.30)(376, -4.26)(377, -5.20)(378, -4.79)(379, -5.11)(380, -2.95)(381, -4.72)(382, -4.95)(383, -3.12)(384, -2.98)(385, -2.77)(386, -2.39)(387, -3.11)(388, -2.99)(389, -1.68)(390, -1.68)(391, -2.27)(392, -2.91)(393, -2.98)(394, -3.70)(395, -1.97)(396, -4.17)(397, -2.71)(398, -1.02)(399, -3.39)(400, -1.86)(401, -1.06)(402, -1.69)(403, -1.29)(404, -2.99)(405, -1.11)(406, -3.29)(407, -3.12)(408, -0.90)(409, -1.04)(410, -1.37)(411, 0.74)(412, -1.84)(413, -0.52)(414, -2.45)(415, -2.46)(416, -2.99)(417, -2.32)(418, -1.62)(419, -1.97)(420, -1.76)(421, -1.18)(422, -3.12)(423, -2.79)(424, -1.82)(425, -4.17)(426, -1.44)(427, -3.13)(428, -0.80)(429, -2.56)(430, -2.50)(431, -1.53)(432, -1.21)(433, -2.17)(434, -1.56)(435, -1.15)(436, -3.30)(437, 0.64)(438, -2.34)(439, -0.23)(440, -1.76)(441, -2.10)(442, -0.58)(443, -1.58)(444, 0.05)(445, -2.41)(446, -2.50)(447, -3.16)(448, -1.96)(449, -2.04)(450, -2.16)(451, -2.78)(452, -1.45)(453, -2.32)(454, -2.28)(455, -2.95)(456, -2.46)(457, -1.23)(458, -1.33)(459, -2.67)(460, -3.37)(461, -2.82)(462, -1.06)(463, -1.75)(464, -1.18)(465, -3.19)(466, -1.59)(467, -1.46)(468, -2.23)(469, -3.25)(470, -3.00)(471, -0.01)(472, -2.40)(473, 0.04)(474, -3.40)(475, -0.36)(476, -1.74)(477, 0.08)(478, -4.12)(479, -0.55)(480, -2.95)(481, -2.27)(482, -1.77)(483, -1.44)(484, -1.37)(485, -0.78)(486, -3.25)(487, -1.81)(488, -0.57)(489, -2.25)(490, -1.48)(491, -3.51)(492, -1.37)(493, -4.28)(494, -0.94)(495, -1.65)(496, -2.52)(497, -1.94)(498, -3.71)(499, -1.80)(500, -1.29)(501, -1.25)(502, -1.55)(503, -2.40)(504, -0.12)(505, -2.32)(506, -2.47)(507, -2.93)(508, -0.60)(509, -2.26)(510, -2.39)(511, -2.08)(512, -4.08)}

\def\recbca{(1, 2.49)(2, 1.57)(3, 3.11)(4, 2.63)(5, 2.04)(6, 2.22)(7, 1.31)(8, 3.23)(9, 2.24)(10, 3.52)(11, 1.72)(12, 4.15)(13, 3.15)(14, 1.88)(15, 3.59)(16, 3.18)(17, 2.66)(18, 3.30)(19, 2.79)(20, 2.83)(21, 2.30)(22, 4.66)(23, 3.72)(24, 2.86)(25, 3.05)(26, 4.33)(27, 3.51)(28, 2.32)(29, 2.75)(30, 3.30)(31, 3.89)(32, 3.67)(33, 2.38)(34, 2.55)(35, 2.85)(36, 0.90)(37, 2.49)(38, 3.68)(39, 3.39)(40, 3.97)(41, 3.15)(42, 3.76)(43, 3.88)(44, 2.76)(45, 2.84)(46, 3.28)(47, 3.33)(48, 2.73)(49, 3.52)(50, 3.19)(51, 3.15)(52, 4.50)(53, 3.82)(54, 3.56)(55, 2.39)(56, 2.89)(57, 3.47)(58, 2.12)(59, 2.94)(60, 6.05)(61, 2.89)(62, 3.65)(63, 0.89)(64, 4.33)(65, 2.89)(66, 2.39)(67, 2.10)(68, 3.61)(69, 2.33)(70, 1.20)(71, 1.71)(72, 4.21)(73, 3.74)(74, 1.42)(75, 2.17)(76, 3.76)(77, 5.65)(78, 3.22)(79, 2.91)(80, 2.30)(81, 2.82)(82, 4.80)(83, 2.90)(84, 4.32)(85, 4.15)(86, 5.10)(87, 2.66)(88, 4.39)(89, 4.75)(90, 2.34)(91, 1.47)(92, 3.23)(93, 2.61)(94, 1.53)(95, 3.28)(96, 1.85)(97, 1.27)(98, 4.26)(99, 3.67)(100, 1.97)(101, 3.02)(102, 2.11)(103, 3.69)(104, 2.28)(105, 1.99)(106, 2.91)(107, 2.94)(108, 2.43)(109, 4.26)(110, 2.95)(111, 2.50)(112, 3.32)(113, 2.32)(114, 3.76)(115, 3.20)(116, 4.39)(117, 2.91)(118, 3.51)(119, 4.23)(120, 2.98)(121, 2.11)(122, 3.42)(123, 3.11)(124, 3.41)(125, 3.64)(126, 3.17)(127, 2.77)(128, 2.74)(129, 2.92)(130, 2.89)(131, 3.36)(132, 2.60)(133, 3.23)(134, 3.82)(135, 3.89)(136, 2.81)(137, 3.04)(138, 4.44)(139, 2.54)(140, 2.64)(141, 3.04)(142, 2.94)(143, 1.65)(144, 2.75)(145, 2.48)(146, 3.54)(147, 2.72)(148, 3.02)(149, 3.23)(150, 2.03)(151, 3.34)(152, 2.82)(153, 2.87)(154, 2.02)(155, 3.77)(156, 3.17)(157, 1.65)(158, 2.85)(159, 1.24)(160, 2.36)(161, 0.60)(162, 1.88)(163, 2.18)(164, 3.68)(165, 1.70)(166, 2.50)(167, 1.37)(168, 4.08)(169, 3.35)(170, 4.54)(171, 4.02)(172, 3.63)(173, 2.99)(174, 1.53)(175, 1.71)(176, 4.16)(177, 2.54)(178, 2.87)(179, 3.12)(180, 2.99)(181, 2.64)(182, 4.09)(183, 1.90)(184, 3.50)(185, 3.92)(186, 3.80)(187, 2.01)(188, 2.52)(189, 1.75)(190, 3.29)(191, 1.40)(192, 4.61)(193, 2.51)(194, 1.61)(195, 2.14)(196, 5.55)(197, 2.83)(198, 3.75)(199, 3.38)(200, 5.03)(201, 2.10)(202, 3.20)(203, 1.35)(204, 3.55)(205, 4.25)(206, 4.09)(207, 2.47)(208, 3.19)(209, 2.37)(210, 2.54)(211, 4.96)(212, 2.36)(213, 3.16)(214, 4.53)(215, 2.20)(216, 2.54)(217, 3.59)(218, 1.96)(219, 3.06)(220, 2.49)(221, 2.00)(222, 2.19)(223, 2.68)(224, 3.36)(225, 3.49)(226, 3.26)(227, 3.71)(228, 1.99)(229, 2.27)(230, 2.11)(231, 2.38)(232, 4.76)(233, 3.82)(234, 3.61)(235, 2.54)(236, 3.11)(237, 2.80)(238, 3.09)(239, 2.98)(240, 1.92)(241, 3.49)(242, 3.40)(243, 1.70)(244, 1.66)(245, 1.66)(246, 1.64)(247, 0.49)(248, 0.02)(249, -0.35)(250, 1.03)(251, -0.74)(252, 0.60)(253, 0.06)(254, 0.18)(255, -0.80)(256, 0.33)(257, 0.89)(258, 0.26)(259, -0.34)(260, -0.17)(261, -1.66)(262, -1.81)(263, -2.02)(264, -2.08)(265, -2.58)(266, -1.51)(267, -2.92)(268, -1.79)(269, -1.93)(270, -2.44)(271, -3.14)(272, -4.10)(273, -3.69)(274, -3.64)(275, -3.16)(276, -2.33)(277, -2.80)(278, -4.22)(279, -1.00)(280, -3.98)(281, -2.23)(282, -1.29)(283, -2.04)(284, -1.95)(285, -3.85)(286, -1.85)(287, -3.17)(288, -3.40)(289, -2.41)(290, -3.69)(291, -2.45)(292, -3.31)(293, -4.15)(294, -2.83)(295, -3.53)(296, -4.69)(297, -3.41)(298, -2.40)(299, -2.08)(300, -3.58)(301, -4.26)(302, -4.19)(303, -3.88)(304, -4.99)(305, -4.94)(306, -2.90)(307, -5.01)(308, -4.78)(309, -4.66)(310, -3.60)(311, -3.06)(312, -5.29)(313, -4.78)(314, -5.24)(315, -4.21)(316, -3.86)(317, -2.33)(318, -2.91)(319, -2.48)(320, -4.14)(321, -3.37)(322, -3.48)(323, -5.75)(324, -4.70)(325, -3.47)(326, -5.19)(327, -5.20)(328, -5.31)(329, -3.93)(330, -4.61)(331, -4.25)(332, -5.72)(333, -6.53)(334, -5.53)(335, -4.66)(336, -4.37)(337, -4.75)(338, -5.16)(339, -4.18)(340, -6.67)(341, -4.02)(342, -6.27)(343, -5.11)(344, -5.44)(345, -5.43)(346, -4.55)(347, -5.22)(348, -3.22)(349, -4.50)(350, -4.38)(351, -5.66)(352, -4.94)(353, -4.92)(354, -5.78)(355, -4.87)(356, -5.87)(357, -5.61)(358, -6.18)(359, -4.75)(360, -3.75)(361, -4.11)(362, -5.80)(363, -4.10)(364, -5.96)(365, -5.92)(366, -5.85)(367, -4.46)(368, -4.55)(369, -3.33)(370, -4.18)(371, -4.63)(372, -5.49)(373, -5.46)(374, -4.09)(375, -5.31)(376, -4.20)(377, -4.99)(378, -4.54)(379, -4.70)(380, -2.49)(381, -4.18)(382, -4.36)(383, -2.48)(384, -2.27)(385, -3.39)(386, -3.02)(387, -3.65)(388, -3.36)(389, -1.95)(390, -2.07)(391, -2.55)(392, -3.04)(393, -3.23)(394, -3.98)(395, -2.04)(396, -4.07)(397, -2.75)(398, -1.06)(399, -3.25)(400, -1.63)(401, -1.00)(402, -1.72)(403, -1.20)(404, -2.91)(405, -0.95)(406, -3.15)(407, -3.04)(408, -0.78)(409, -0.97)(410, -1.33)(411, 0.82)(412, -1.77)(413, -0.47)(414, -2.40)(415, -2.39)(416, -2.61)(417, -2.11)(418, -1.45)(419, -1.88)(420, -1.84)(421, -1.25)(422, -3.15)(423, -3.08)(424, -2.18)(425, -4.34)(426, -1.55)(427, -3.29)(428, -0.82)(429, -2.38)(430, -2.33)(431, -1.35)(432, -0.92)(433, -1.95)(434, -1.39)(435, -1.13)(436, -3.24)(437, 0.93)(438, -1.94)(439, 0.08)(440, -1.59)(441, -1.83)(442, -0.22)(443, -1.41)(444, 0.07)(445, -2.41)(446, -2.43)(447, -3.19)(448, -2.00)(449, -2.11)(450, -2.17)(451, -2.96)(452, -1.63)(453, -2.58)(454, -2.19)(455, -2.93)(456, -2.53)(457, -1.33)(458, -1.45)(459, -2.88)(460, -3.55)(461, -2.95)(462, -1.10)(463, -1.82)(464, -1.27)(465, -3.28)(466, -1.67)(467, -1.56)(468, -2.32)(469, -3.42)(470, -3.18)(471, -0.20)(472, -2.57)(473, -0.11)(474, -3.52)(475, -0.44)(476, -1.75)(477, -0.06)(478, -4.43)(479, -0.92)(480, -3.44)(481, -2.83)(482, -2.27)(483, -1.93)(484, -1.85)(485, -1.33)(486, -3.89)(487, -2.45)(488, -1.08)(489, -2.82)(490, -2.06)(491, -4.02)(492, -1.72)(493, -4.72)(494, -1.37)(495, -2.09)(496, -2.87)(497, -2.25)(498, -3.97)(499, -1.94)(500, -1.50)(501, -1.34)(502, -1.45)(503, -2.24)(504, 0.19)(505, -1.99)(506, -1.95)(507, -2.40)(508, 0.03)(509, -1.56)(510, -1.42)(511, -0.87)(512, -2.72)}

\def\reccaa{(1, 3.24)(2, 2.75)(3, 3.22)(4, 3.98)(5, 3.87)(6, 2.57)(7, 2.15)(8, 3.37)(9, 3.54)(10, 2.67)(11, 2.75)(12, 3.37)(13, 2.23)(14, 3.79)(15, 2.86)(16, 2.10)(17, 2.52)(18, 3.90)(19, 2.54)(20, 2.33)(21, 3.69)(22, 2.93)(23, 3.73)(24, 3.95)(25, 3.61)(26, 2.48)(27, 2.16)(28, 3.59)(29, 3.19)(30, 2.03)(31, 2.06)(32, 2.61)(33, 2.65)(34, 3.02)(35, 2.23)(36, 2.42)(37, 3.21)(38, 2.50)(39, 2.40)(40, 4.96)(41, 2.92)(42, 2.26)(43, 1.68)(44, 3.10)(45, 3.65)(46, 3.09)(47, 2.24)(48, 2.46)(49, 3.82)(50, 2.75)(51, 2.93)(52, 2.30)(53, 3.73)(54, 1.91)(55, 1.25)(56, 2.36)(57, 4.06)(58, 3.18)(59, 4.03)(60, 2.24)(61, 0.77)(62, 2.79)(63, 2.60)(64, 3.85)(65, 4.52)(66, 3.77)(67, 2.83)(68, 2.89)(69, 3.35)(70, 3.07)(71, 2.49)(72, 3.32)(73, 3.85)(74, 4.13)(75, 2.72)(76, 3.24)(77, 3.56)(78, 3.57)(79, 1.18)(80, 2.88)(81, 3.53)(82, 3.15)(83, 3.85)(84, 3.43)(85, 2.66)(86, 4.75)(87, 2.37)(88, 3.90)(89, 2.96)(90, 1.82)(91, 3.22)(92, 3.48)(93, 4.26)(94, 3.28)(95, 3.68)(96, 3.69)(97, 3.00)(98, 3.32)(99, 3.06)(100, 2.59)(101, 2.99)(102, 2.63)(103, 2.61)(104, 2.77)(105, 2.82)(106, 3.54)(107, 3.76)(108, 2.45)(109, 2.05)(110, 3.21)(111, 4.16)(112, 2.67)(113, 3.71)(114, 4.17)(115, 2.85)(116, 3.36)(117, 2.54)(118, 3.00)(119, 3.26)(120, 4.04)(121, 3.25)(122, 2.06)(123, 3.62)(124, 2.88)(125, 1.95)(126, 3.36)(127, 2.34)(128, 2.34)(129, 3.64)(130, 2.89)(131, 3.74)(132, 3.19)(133, 3.61)(134, 3.69)(135, 2.26)(136, 3.18)(137, 3.28)(138, 1.54)(139, 2.78)(140, 2.48)(141, 4.11)(142, 2.45)(143, 2.43)(144, 3.13)(145, 2.59)(146, 3.53)(147, 3.77)(148, 1.55)(149, 2.95)(150, 1.73)(151, 3.16)(152, 3.06)(153, 3.29)(154, 3.70)(155, 2.66)(156, 2.42)(157, 3.21)(158, 1.54)(159, 3.20)(160, 3.48)(161, 2.25)(162, 2.42)(163, 2.27)(164, 3.49)(165, 2.98)(166, 2.66)(167, 1.64)(168, 3.90)(169, 2.43)(170, 3.81)(171, 2.30)(172, 2.95)(173, 3.82)(174, 2.86)(175, 2.24)(176, 2.45)(177, 3.20)(178, 1.91)(179, 2.82)(180, 2.79)(181, 2.18)(182, 2.92)(183, 2.11)(184, 4.22)(185, 1.97)(186, 3.72)(187, 3.29)(188, 3.09)(189, 3.48)(190, 1.93)(191, 1.99)(192, 2.58)(193, 3.36)(194, 3.53)(195, 2.35)(196, 3.33)(197, 1.98)(198, 3.33)(199, 3.62)(200, 2.97)(201, 2.79)(202, 3.40)(203, 4.45)(204, 4.36)(205, 2.54)(206, 2.23)(207, 3.00)(208, 3.84)(209, 2.93)(210, 2.50)(211, 3.61)(212, 3.46)(213, 3.66)(214, 2.58)(215, 4.62)(216, 2.24)(217, 2.45)(218, 2.02)(219, 4.15)(220, 3.59)(221, 3.03)(222, 3.17)(223, 3.41)(224, 3.86)(225, 3.57)(226, 3.41)(227, 2.44)(228, 3.25)(229, 2.75)(230, 2.45)(231, 3.69)(232, 2.08)(233, 3.54)(234, 2.55)(235, 3.23)(236, 4.08)(237, 2.48)(238, 2.35)(239, 3.08)(240, 3.97)(241, 4.58)(242, 3.19)(243, 3.67)(244, 2.39)(245, 2.55)(246, 4.67)(247, 2.99)(248, 1.97)(249, 3.59)(250, 1.93)(251, 4.41)(252, 2.10)(253, 3.39)(254, 2.65)(255, 2.23)(256, 2.54)(257, -2.25)(258, -2.98)(259, -3.31)(260, -3.86)(261, -4.24)(262, -4.06)(263, -2.83)(264, -3.04)(265, -4.12)(266, -2.94)(267, -3.46)(268, -3.93)(269, -4.05)(270, -2.20)(271, -2.49)(272, -2.93)(273, -4.02)(274, -3.02)(275, -3.86)(276, -3.93)(277, -2.78)(278, -3.25)(279, -3.20)(280, -4.60)(281, -2.38)(282, -3.91)(283, -3.68)(284, -3.33)(285, -3.23)(286, -4.02)(287, -4.11)(288, -4.33)(289, -5.28)(290, -4.28)(291, -3.98)(292, -3.77)(293, -4.09)(294, -3.34)(295, -4.86)(296, -4.28)(297, -3.34)(298, -5.24)(299, -2.91)(300, -3.26)(301, -3.70)(302, -3.33)(303, -4.21)(304, -3.27)(305, -4.39)(306, -3.56)(307, -4.66)(308, -3.64)(309, -3.64)(310, -3.92)(311, -4.98)(312, -4.14)(313, -2.89)(314, -3.62)(315, -4.78)(316, -4.20)(317, -4.57)(318, -3.96)(319, -5.01)(320, -4.62)(321, -4.52)(322, -3.23)(323, -5.70)(324, -4.57)(325, -3.42)(326, -4.28)(327, -4.84)(328, -3.81)(329, -5.38)(330, -4.15)(331, -4.66)(332, -5.24)(333, -3.75)(334, -5.06)(335, -4.80)(336, -4.89)(337, -5.20)(338, -6.18)(339, -5.21)(340, -4.68)(341, -3.95)(342, -4.85)(343, -4.85)(344, -4.49)(345, -6.47)(346, -3.84)(347, -5.12)(348, -5.27)(349, -5.19)(350, -5.35)(351, -4.94)(352, -4.68)(353, -4.47)(354, -5.78)(355, -4.16)(356, -3.03)(357, -5.38)(358, -4.17)(359, -6.23)(360, -5.60)(361, -5.27)(362, -4.92)(363, -4.80)(364, -6.16)(365, -4.72)(366, -6.44)(367, -5.36)(368, -5.96)(369, -5.66)(370, -4.41)(371, -4.38)(372, -4.63)(373, -3.67)(374, -4.47)(375, -6.22)(376, -6.23)(377, -4.23)(378, -4.09)(379, -5.09)(380, -5.17)(381, -7.25)(382, -5.48)(383, -4.51)(384, -4.67)(385, -1.95)(386, -1.63)(387, -3.10)(388, -1.37)(389, -1.89)(390, -0.96)(391, -0.92)(392, -2.12)(393, -3.09)(394, -1.99)(395, -2.70)(396, -3.01)(397, -0.45)(398, -2.74)(399, -0.90)(400, -1.82)(401, -2.50)(402, -1.75)(403, -1.38)(404, -1.67)(405, -1.47)(406, -1.44)(407, -3.06)(408, -0.15)(409, -4.21)(410, -1.70)(411, -1.94)(412, -2.11)(413, -0.61)(414, -0.34)(415, -0.71)(416, -3.02)(417, -1.19)(418, -2.76)(419, -0.04)(420, -1.94)(421, -1.12)(422, -2.66)(423, -1.98)(424, -2.19)(425, -1.18)(426, -1.87)(427, -3.30)(428, -0.87)(429, -1.54)(430, -1.50)(431, -3.18)(432, -2.54)(433, -2.38)(434, -2.50)(435, -1.19)(436, -2.07)(437, -3.38)(438, -2.15)(439, -1.52)(440, -2.09)(441, -1.17)(442, -2.48)(443, -1.91)(444, -1.34)(445, -1.35)(446, -0.34)(447, -1.80)(448, -3.42)(449, -2.44)(450, -0.68)(451, -2.32)(452, -1.73)(453, -1.45)(454, -0.26)(455, -1.23)(456, -0.77)(457, -0.37)(458, -2.37)(459, -0.82)(460, -0.59)(461, -1.15)(462, -2.41)(463, -2.86)(464, -1.12)(465, -0.81)(466, -0.61)(467, -2.41)(468, 0.23)(469, -2.34)(470, -0.78)(471, -2.10)(472, -1.60)(473, -1.18)(474, -0.04)(475, -3.24)(476, -1.21)(477, -1.81)(478, -2.19)(479, -2.08)(480, -1.69)(481, -1.65)(482, -1.54)(483, -1.51)(484, -1.53)(485, -1.51)(486, -1.97)(487, -0.82)(488, -0.27)(489, -2.79)(490, 0.60)(491, -1.19)(492, -1.18)(493, -1.49)(494, -1.99)(495, -1.01)(496, -0.65)(497, -1.58)(498, -2.90)(499, 0.68)(500, -2.48)(501, -1.50)(502, -0.79)(503, -1.09)(504, -1.35)(505, -2.38)(506, -0.50)(507, -2.81)(508, -1.94)(509, -1.11)(510, -1.50)(511, -0.95)(512, -3.50)}

\def\reccba{(1, 3.05)(2, 2.66)(3, 3.11)(4, 3.83)(5, 3.91)(6, 2.58)(7, 2.12)(8, 3.47)(9, 3.45)(10, 2.69)(11, 2.83)(12, 3.39)(13, 2.36)(14, 3.82)(15, 2.95)(16, 2.24)(17, 2.68)(18, 3.99)(19, 2.59)(20, 2.43)(21, 3.89)(22, 3.06)(23, 3.76)(24, 3.98)(25, 3.49)(26, 2.56)(27, 2.25)(28, 3.60)(29, 3.12)(30, 2.06)(31, 2.08)(32, 2.59)(33, 2.62)(34, 2.93)(35, 2.15)(36, 2.32)(37, 3.13)(38, 2.49)(39, 2.24)(40, 4.88)(41, 2.84)(42, 2.23)(43, 1.65)(44, 2.97)(45, 3.53)(46, 3.07)(47, 2.26)(48, 2.48)(49, 3.80)(50, 2.79)(51, 2.85)(52, 2.22)(53, 3.58)(54, 1.82)(55, 1.23)(56, 2.27)(57, 3.92)(58, 3.10)(59, 3.96)(60, 2.17)(61, 0.70)(62, 2.75)(63, 2.67)(64, 3.82)(65, 4.46)(66, 3.82)(67, 2.73)(68, 2.91)(69, 3.45)(70, 3.14)(71, 2.48)(72, 3.40)(73, 3.96)(74, 4.23)(75, 2.84)(76, 3.33)(77, 3.63)(78, 3.64)(79, 1.22)(80, 2.89)(81, 3.51)(82, 3.20)(83, 3.80)(84, 3.37)(85, 2.65)(86, 4.76)(87, 2.36)(88, 3.93)(89, 2.98)(90, 1.86)(91, 3.14)(92, 3.40)(93, 4.18)(94, 3.23)(95, 3.65)(96, 3.65)(97, 3.05)(98, 3.31)(99, 3.15)(100, 2.72)(101, 2.98)(102, 2.66)(103, 2.68)(104, 2.78)(105, 2.91)(106, 3.56)(107, 3.77)(108, 2.51)(109, 2.02)(110, 3.33)(111, 4.20)(112, 2.74)(113, 3.75)(114, 4.11)(115, 2.92)(116, 3.40)(117, 2.74)(118, 3.15)(119, 3.39)(120, 4.19)(121, 3.26)(122, 2.15)(123, 3.69)(124, 2.96)(125, 2.04)(126, 3.57)(127, 2.56)(128, 2.56)(129, 3.55)(130, 2.88)(131, 3.67)(132, 3.09)(133, 3.45)(134, 3.45)(135, 2.07)(136, 2.96)(137, 3.26)(138, 1.46)(139, 2.62)(140, 2.40)(141, 4.01)(142, 2.38)(143, 2.50)(144, 3.15)(145, 2.52)(146, 3.46)(147, 3.87)(148, 1.46)(149, 2.93)(150, 1.74)(151, 3.14)(152, 3.07)(153, 3.40)(154, 3.68)(155, 2.58)(156, 2.41)(157, 3.15)(158, 1.59)(159, 3.27)(160, 3.63)(161, 2.31)(162, 2.42)(163, 2.28)(164, 3.46)(165, 2.97)(166, 2.58)(167, 1.58)(168, 3.93)(169, 2.36)(170, 3.83)(171, 2.24)(172, 2.80)(173, 3.76)(174, 2.82)(175, 2.14)(176, 2.31)(177, 3.11)(178, 1.74)(179, 2.84)(180, 2.76)(181, 2.14)(182, 2.82)(183, 1.93)(184, 4.17)(185, 1.92)(186, 3.80)(187, 3.32)(188, 3.14)(189, 3.55)(190, 1.97)(191, 1.98)(192, 2.66)(193, 3.38)(194, 3.60)(195, 2.42)(196, 3.40)(197, 1.93)(198, 3.31)(199, 3.73)(200, 2.98)(201, 2.86)(202, 3.38)(203, 4.49)(204, 4.47)(205, 2.57)(206, 2.27)(207, 3.04)(208, 3.88)(209, 2.96)(210, 2.52)(211, 3.62)(212, 3.45)(213, 3.54)(214, 2.55)(215, 4.70)(216, 2.16)(217, 2.52)(218, 1.87)(219, 4.23)(220, 3.60)(221, 3.03)(222, 3.14)(223, 3.33)(224, 3.85)(225, 3.50)(226, 3.31)(227, 2.42)(228, 3.10)(229, 2.72)(230, 2.46)(231, 3.71)(232, 2.06)(233, 3.57)(234, 2.55)(235, 3.30)(236, 4.09)(237, 2.61)(238, 2.32)(239, 3.01)(240, 4.04)(241, 4.68)(242, 3.31)(243, 3.68)(244, 2.44)(245, 2.63)(246, 4.82)(247, 3.04)(248, 2.04)(249, 3.57)(250, 1.82)(251, 4.35)(252, 1.76)(253, 3.17)(254, 2.08)(255, 1.61)(256, 1.76)(257, -1.48)(258, -2.31)(259, -2.93)(260, -3.52)(261, -4.08)(262, -3.87)(263, -2.59)(264, -3.01)(265, -3.86)(266, -2.80)(267, -3.31)(268, -3.70)(269, -3.93)(270, -2.07)(271, -2.38)(272, -2.76)(273, -4.08)(274, -3.03)(275, -3.85)(276, -3.95)(277, -2.78)(278, -3.17)(279, -3.12)(280, -4.58)(281, -2.29)(282, -3.95)(283, -3.64)(284, -3.31)(285, -3.27)(286, -4.11)(287, -4.14)(288, -4.37)(289, -5.43)(290, -4.42)(291, -4.16)(292, -3.86)(293, -4.16)(294, -3.46)(295, -4.97)(296, -4.35)(297, -3.42)(298, -5.40)(299, -3.00)(300, -3.35)(301, -3.84)(302, -3.46)(303, -4.30)(304, -3.44)(305, -4.53)(306, -3.65)(307, -4.85)(308, -3.84)(309, -3.87)(310, -4.13)(311, -5.13)(312, -4.32)(313, -3.04)(314, -3.79)(315, -4.97)(316, -4.31)(317, -4.79)(318, -4.12)(319, -5.16)(320, -4.82)(321, -4.34)(322, -3.06)(323, -5.68)(324, -4.58)(325, -3.40)(326, -4.23)(327, -4.78)(328, -3.76)(329, -5.35)(330, -4.02)(331, -4.60)(332, -5.21)(333, -3.73)(334, -5.02)(335, -4.76)(336, -4.89)(337, -5.25)(338, -6.21)(339, -5.18)(340, -4.65)(341, -3.94)(342, -4.83)(343, -4.90)(344, -4.39)(345, -6.55)(346, -3.74)(347, -5.06)(348, -5.30)(349, -5.17)(350, -5.33)(351, -4.91)(352, -4.67)(353, -4.52)(354, -5.83)(355, -4.04)(356, -2.86)(357, -5.26)(358, -4.08)(359, -6.21)(360, -5.54)(361, -5.21)(362, -4.81)(363, -4.61)(364, -6.07)(365, -4.51)(366, -6.38)(367, -5.13)(368, -5.78)(369, -5.45)(370, -4.09)(371, -4.14)(372, -4.37)(373, -3.30)(374, -4.06)(375, -5.95)(376, -6.02)(377, -3.88)(378, -3.78)(379, -4.82)(380, -4.86)(381, -7.00)(382, -5.14)(383, -4.09)(384, -4.21)(385, -2.19)(386, -1.89)(387, -3.37)(388, -1.51)(389, -2.19)(390, -1.19)(391, -1.17)(392, -2.40)(393, -3.36)(394, -2.18)(395, -2.93)(396, -3.31)(397, -0.63)(398, -2.98)(399, -1.05)(400, -1.92)(401, -2.56)(402, -1.80)(403, -1.30)(404, -1.70)(405, -1.49)(406, -1.46)(407, -3.14)(408, -0.01)(409, -4.39)(410, -1.72)(411, -1.94)(412, -2.12)(413, -0.55)(414, -0.35)(415, -0.71)(416, -3.17)(417, -1.27)(418, -2.91)(419, -0.07)(420, -2.08)(421, -1.15)(422, -2.72)(423, -1.97)(424, -2.25)(425, -1.20)(426, -1.93)(427, -3.42)(428, -0.79)(429, -1.53)(430, -1.48)(431, -3.22)(432, -2.62)(433, -2.45)(434, -2.54)(435, -1.20)(436, -2.13)(437, -3.56)(438, -2.21)(439, -1.57)(440, -2.17)(441, -1.17)(442, -2.51)(443, -1.96)(444, -1.37)(445, -1.41)(446, -0.33)(447, -1.89)(448, -3.56)(449, -2.49)(450, -0.59)(451, -2.33)(452, -1.76)(453, -1.52)(454, -0.25)(455, -1.21)(456, -0.76)(457, -0.23)(458, -2.37)(459, -0.73)(460, -0.57)(461, -1.15)(462, -2.50)(463, -2.97)(464, -1.13)(465, -0.77)(466, -0.52)(467, -2.46)(468, 0.35)(469, -2.40)(470, -0.70)(471, -2.15)(472, -1.65)(473, -1.23)(474, 0.08)(475, -3.34)(476, -1.21)(477, -1.80)(478, -2.23)(479, -2.08)(480, -1.69)(481, -1.61)(482, -1.51)(483, -1.49)(484, -1.52)(485, -1.53)(486, -1.99)(487, -0.83)(488, -0.30)(489, -2.94)(490, 0.72)(491, -1.21)(492, -1.19)(493, -1.51)(494, -2.04)(495, -1.01)(496, -0.60)(497, -1.60)(498, -3.06)(499, 0.75)(500, -2.58)(501, -1.61)(502, -0.82)(503, -1.12)(504, -1.43)(505, -2.48)(506, -0.46)(507, -2.94)(508, -2.03)(509, -1.11)(510, -1.46)(511, -0.81)(512, -3.46)}

\def\reccca{(1, 3.01)(2, 2.58)(3, 2.93)(4, 3.41)(5, 4.12)(6, 2.73)(7, 2.11)(8, 3.57)(9, 3.00)(10, 2.48)(11, 3.16)(12, 3.49)(13, 2.46)(14, 3.61)(15, 2.90)(16, 2.38)(17, 2.62)(18, 3.73)(19, 2.61)(20, 2.49)(21, 4.05)(22, 3.18)(23, 3.59)(24, 3.97)(25, 3.21)(26, 2.93)(27, 2.57)(28, 3.70)(29, 3.17)(30, 2.44)(31, 2.35)(32, 2.84)(33, 2.90)(34, 3.23)(35, 2.62)(36, 2.62)(37, 3.43)(38, 2.95)(39, 2.40)(40, 4.92)(41, 3.05)(42, 2.58)(43, 1.86)(44, 2.89)(45, 3.32)(46, 3.15)(47, 2.31)(48, 2.51)(49, 3.90)(50, 3.11)(51, 2.63)(52, 1.96)(53, 2.84)(54, 1.39)(55, 1.09)(56, 1.74)(57, 3.13)(58, 2.47)(59, 3.36)(60, 1.92)(61, 0.79)(62, 2.72)(63, 2.86)(64, 3.84)(65, 4.41)(66, 3.97)(67, 2.65)(68, 3.21)(69, 3.74)(70, 3.40)(71, 2.66)(72, 3.64)(73, 4.13)(74, 4.10)(75, 2.98)(76, 3.45)(77, 3.60)(78, 3.69)(79, 1.47)(80, 2.93)(81, 3.31)(82, 3.22)(83, 3.44)(84, 3.05)(85, 2.68)(86, 4.56)(87, 2.41)(88, 3.86)(89, 3.01)(90, 1.96)(91, 2.98)(92, 3.06)(93, 3.94)(94, 3.09)(95, 3.60)(96, 3.37)(97, 2.95)(98, 2.86)(99, 2.89)(100, 2.72)(101, 2.66)(102, 2.41)(103, 2.79)(104, 2.56)(105, 2.71)(106, 3.40)(107, 3.39)(108, 2.41)(109, 1.92)(110, 3.40)(111, 4.25)(112, 3.00)(113, 3.86)(114, 3.93)(115, 3.14)(116, 3.46)(117, 3.31)(118, 3.50)(119, 3.60)(120, 4.18)(121, 2.96)(122, 2.20)(123, 3.61)(124, 3.13)(125, 2.23)(126, 3.94)(127, 2.87)(128, 2.74)(129, 3.68)(130, 3.06)(131, 3.81)(132, 3.20)(133, 3.47)(134, 3.00)(135, 1.81)(136, 2.84)(137, 3.33)(138, 1.61)(139, 2.40)(140, 2.48)(141, 3.97)(142, 2.55)(143, 3.07)(144, 3.62)(145, 2.87)(146, 3.68)(147, 4.10)(148, 1.39)(149, 2.85)(150, 1.80)(151, 2.83)(152, 2.80)(153, 3.26)(154, 3.22)(155, 1.96)(156, 2.16)(157, 2.73)(158, 1.77)(159, 3.48)(160, 3.81)(161, 2.58)(162, 2.50)(163, 2.39)(164, 3.45)(165, 2.84)(166, 2.29)(167, 1.64)(168, 4.02)(169, 2.26)(170, 3.71)(171, 2.39)(172, 2.84)(173, 3.89)(174, 3.12)(175, 2.42)(176, 2.29)(177, 3.11)(178, 1.60)(179, 3.00)(180, 2.76)(181, 2.38)(182, 2.95)(183, 2.04)(184, 4.24)(185, 2.26)(186, 4.17)(187, 3.63)(188, 3.53)(189, 3.92)(190, 2.32)(191, 2.06)(192, 3.12)(193, 3.61)(194, 3.70)(195, 2.51)(196, 3.10)(197, 1.36)(198, 2.69)(199, 3.29)(200, 2.41)(201, 2.53)(202, 2.67)(203, 3.77)(204, 4.15)(205, 2.24)(206, 2.11)(207, 2.91)(208, 3.82)(209, 2.82)(210, 2.47)(211, 3.48)(212, 3.34)(213, 3.30)(214, 2.68)(215, 4.88)(216, 2.20)(217, 2.83)(218, 1.95)(219, 4.28)(220, 3.65)(221, 3.20)(222, 3.20)(223, 3.51)(224, 3.99)(225, 3.74)(226, 3.43)(227, 2.87)(228, 3.11)(229, 3.08)(230, 2.88)(231, 3.93)(232, 2.22)(233, 3.66)(234, 2.60)(235, 3.46)(236, 4.05)(237, 2.86)(238, 2.13)(239, 2.62)(240, 3.89)(241, 4.07)(242, 2.97)(243, 3.12)(244, 2.11)(245, 2.21)(246, 4.31)(247, 2.44)(248, 1.57)(249, 2.91)(250, 1.16)(251, 3.45)(252, 0.52)(253, 1.87)(254, 0.34)(255, 0.07)(256, -0.15)(257, 1.01)(258, -0.04)(259, -1.15)(260, -1.86)(261, -2.69)(262, -2.76)(263, -1.28)(264, -2.36)(265, -3.36)(266, -2.69)(267, -3.40)(268, -3.48)(269, -3.94)(270, -2.32)(271, -2.50)(272, -2.67)(273, -3.92)(274, -2.81)(275, -3.53)(276, -3.72)(277, -2.84)(278, -2.87)(279, -2.86)(280, -4.32)(281, -2.18)(282, -3.95)(283, -3.41)(284, -3.18)(285, -3.27)(286, -4.09)(287, -4.03)(288, -4.15)(289, -5.01)(290, -4.11)(291, -4.00)(292, -3.57)(293, -3.87)(294, -3.37)(295, -4.66)(296, -4.10)(297, -3.36)(298, -5.37)(299, -2.99)(300, -3.40)(301, -3.90)(302, -3.44)(303, -4.00)(304, -3.52)(305, -4.52)(306, -3.58)(307, -4.79)(308, -3.99)(309, -4.11)(310, -4.23)(311, -5.00)(312, -4.41)(313, -3.03)(314, -3.82)(315, -4.93)(316, -4.16)(317, -4.99)(318, -4.29)(319, -5.21)(320, -5.06)(321, -4.25)(322, -3.06)(323, -5.70)(324, -4.92)(325, -3.71)(326, -4.40)(327, -4.78)(328, -3.82)(329, -5.32)(330, -3.87)(331, -4.62)(332, -5.15)(333, -3.82)(334, -4.96)(335, -4.68)(336, -5.20)(337, -5.50)(338, -6.39)(339, -5.50)(340, -5.03)(341, -4.39)(342, -5.12)(343, -5.34)(344, -4.57)(345, -6.98)(346, -4.00)(347, -5.39)(348, -5.88)(349, -5.69)(350, -5.75)(351, -5.38)(352, -5.29)(353, -5.18)(354, -6.28)(355, -4.23)(356, -3.10)(357, -5.30)(358, -4.32)(359, -6.54)(360, -5.73)(361, -5.35)(362, -4.92)(363, -4.60)(364, -6.18)(365, -4.79)(366, -6.86)(367, -4.90)(368, -5.51)(369, -5.45)(370, -3.85)(371, -3.96)(372, -4.14)(373, -2.85)(374, -3.46)(375, -5.20)(376, -5.65)(377, -3.25)(378, -3.21)(379, -4.46)(380, -4.17)(381, -6.27)(382, -4.35)(383, -3.19)(384, -3.11)(385, -2.64)(386, -2.44)(387, -3.90)(388, -1.89)(389, -2.76)(390, -1.75)(391, -1.59)(392, -2.90)(393, -3.78)(394, -2.40)(395, -3.12)(396, -3.70)(397, -1.03)(398, -3.19)(399, -1.28)(400, -1.87)(401, -2.37)(402, -1.72)(403, -0.87)(404, -1.44)(405, -1.35)(406, -1.48)(407, -2.93)(408, 0.28)(409, -4.30)(410, -1.56)(411, -1.74)(412, -2.06)(413, -0.42)(414, -0.47)(415, -0.80)(416, -3.37)(417, -1.28)(418, -2.81)(419, 0.01)(420, -2.08)(421, -0.95)(422, -2.47)(423, -1.54)(424, -2.17)(425, -1.11)(426, -1.81)(427, -3.31)(428, -0.50)(429, -1.29)(430, -1.18)(431, -2.83)(432, -2.50)(433, -2.32)(434, -2.32)(435, -1.35)(436, -2.31)(437, -3.83)(438, -2.39)(439, -1.85)(440, -2.32)(441, -1.15)(442, -2.43)(443, -2.06)(444, -1.52)(445, -1.60)(446, -0.42)(447, -2.06)(448, -3.63)(449, -2.27)(450, -0.22)(451, -1.97)(452, -1.48)(453, -1.61)(454, -0.39)(455, -1.00)(456, -0.64)(457, 0.02)(458, -2.26)(459, -0.72)(460, -0.97)(461, -1.27)(462, -2.69)(463, -3.19)(464, -1.37)(465, -0.76)(466, -0.44)(467, -2.50)(468, 0.38)(469, -2.35)(470, -0.53)(471, -2.19)(472, -1.77)(473, -1.34)(474, 0.31)(475, -3.31)(476, -1.19)(477, -1.63)(478, -2.16)(479, -2.02)(480, -1.67)(481, -1.63)(482, -1.61)(483, -1.63)(484, -1.89)(485, -1.89)(486, -2.23)(487, -1.11)(488, -0.75)(489, -3.42)(490, 0.47)(491, -1.46)(492, -1.43)(493, -1.61)(494, -2.27)(495, -1.37)(496, -0.85)(497, -1.97)(498, -3.64)(499, 0.19)(500, -3.03)(501, -2.23)(502, -1.29)(503, -1.47)(504, -1.83)(505, -2.83)(506, -0.64)(507, -3.17)(508, -2.17)(509, -0.88)(510, -0.82)(511, 0.07)(512, -2.43)}

\newcommand{\graphLl}[1]{%
    \begin{tikzpicture}[xscale=4.4/512, yscale=1/8]
        \draw[ultra thin, black!20] (1,  0)--(512, 0);
        \draw[ultra thin, black!20] (1,-10)--(1,  10);
	\foreach \y in {-5, 0, 5}
	    \node[left, inner sep=0pt] at (-5, \y) {{\tiny \y}};
	\foreach \y in {-10, -5, 0, 5, 10}
	    \draw[ultra thin, black] (-2, \y) -- (4, \y);
        \draw[very thin, green!60!black] plot coordinates {\sol};
        \draw[very thin, blue]   plot coordinates {#1};
    \end{tikzpicture}%
}

\newcommand{\graphL}[1]{%
    \begin{tikzpicture}[xscale=4.4/512, yscale=1/8]
        \draw[ultra thin, black!20] (1,  0)--(512, 0);
        \draw[ultra thin, black!20] (1,-10)--(1,  10);
	\foreach \y in {-10, -5, 0, 5, 10}
	    \draw[ultra thin, black] (-2, \y) -- (4, \y);
        \draw[very thin, green!60!black] plot coordinates {\sol};
        \draw[very thin, blue]   plot coordinates {#1};
    \end{tikzpicture}%
}

  \begin{figure}[tpb]
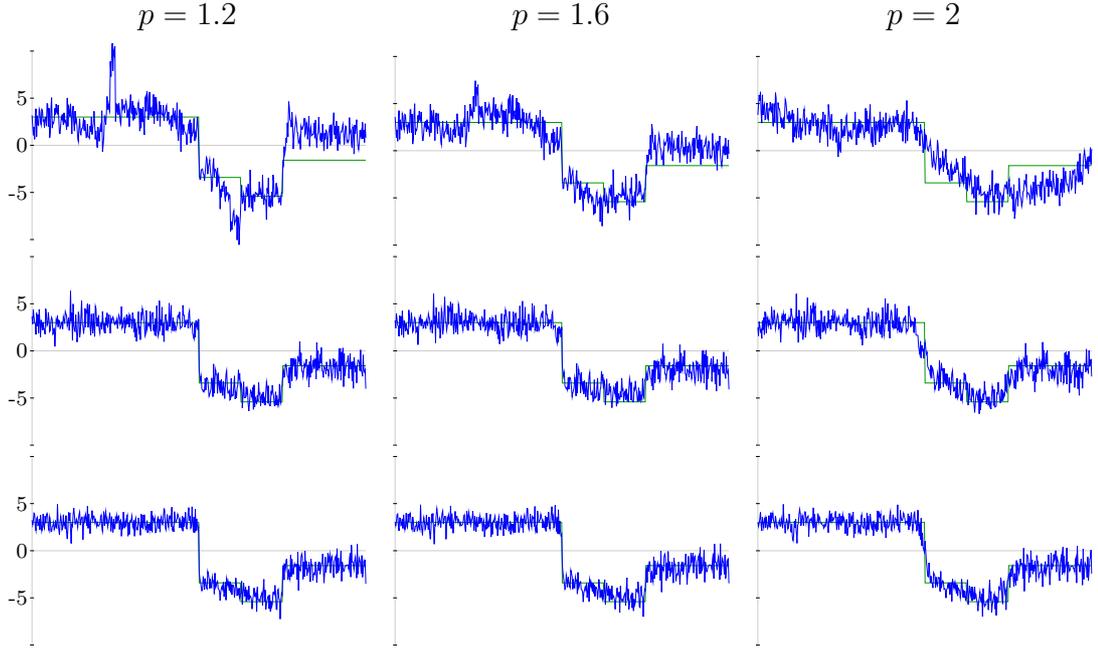

    \begin {center}
    \begin{tabular}{*{3}{c}}
	 $p=1.2$ & $p=1.6$ & $p=2$\\[1mm]
	 \graphLl{\recaaa} & \graphL{\recaba} & \graphL{\recaca}\\
	 \graphLl{\recbaa} & \graphL{\recbba} & \graphL{\recbca}\\
	 \graphLl{\reccaa} & \graphL{\reccba} & \graphL{\reccca}
    \end{tabular}
    \caption{Reconstructions obtained by the modified Landweber method with $\|x_0\| = \start$ and $5\%$ (top), $1\%$ (middle), $0.5\%$ (bottom) noise.}  \label{fig:modlwb}
    \end{center}
  \end{figure}

  In this example, we have adapted an algorithm suggested in \cite{s98} for the Hilbert space situation $p=2$ to $\ell^p$ with $1<p\leq 2$. 
  In the present notation the resulting \emph{dual modified Landweber} algorithm reads
  \begin{equation}\label{C:dual}
  \begin{aligned}
      J_p (x_{k+1}) & = J_p (x_k) - \beta_k \nabla \Phi_{\alpha_k} (x_k)\\
      x_{k+1} & = J_q (J_p (x_{k+1})).
  \end{aligned}
  \end{equation}
   In \cite{s98} no stepsize was used, i.e.~$\beta_k = 1$ for all $k$, but we found that using $\beta_k$ as in \eqref{eq:numbeta} significantly improved the performance. Also the choice of the regularization parameters $\alpha_k$ was slightly adapted introducing an additional factor $\|x_0\|$,
  \begin{equation*}
      \alpha_k = \frac{ \| x_0 \|}{2 ~(k+1000)^{0.99}}.
  \end{equation*}
  In particular for larger values of $\| x_0 \|$, omitting this factor results in underregularized solutions.
  Some reconstructions obtained with the dual modified Landweber method are shown in Figure \ref{fig:modlwb} and further results summarized in Table \ref{tab:modlwb}.
  As above $k^*$ denotes the total number of iterations until the discrepancy principle is satisfied with $\tau = 2$ and $e_{k^*}$ the relative error in the final approximation. Missing values in the table indicate that either the discrepancy principle was not fulfilled after $2 \cdot 10^5$ iterations or that the algorithm stopped in zero. In our experiments with $\| x_0 \| = 10^4$ the dual modified Landweber method never produced an approximation satisfying the discrepancy princple and the corresponding lines are thus omitted in Table \ref{tab:modlwb}.

  \begin{table}[hbpt]
    \begin {center}
    \begin{tabular}{rr*{2}{|ccc}}
      \hline \noalign{\smallskip}
           &           &          \multicolumn{3}{c}{p = 1.2}                 &          \multicolumn{3}{c}{p = 1.6}                \\
  $\delta$ & $\|x_0\|$ &       $\alpha_{k^*}$ &         $k^*$ &     $e_{k^*}$ &       $\alpha_{k^*}$ &         $k^*$ &     $e_{k^*}$\\
  \hline \noalign{\smallskip}
       5\% &         1 &  $1.2 \cdot 10^{-3}$ &           224 &           2.4 &  $1.1 \cdot 10^{-3}$ &           242 &          0.83\\
       5\% &       500 &  $3.1 \cdot 10^{-1}$ &          1380 &          0.36 &  $3.2 \cdot 10^{-1}$ &          1305 &          0.34\\
       5\% &      1000 &                   -- &            -- &            -- &  $3.2 \cdot 10^{-1}$ &          3749 &          0.34\\[2mm]
 
       1\% &         1 &  $8.6 \cdot 10^{-4}$ &           692 &           1.6 &  $6.6 \cdot 10^{-4}$ &          1234 &          0.59\\
       1\% &       500 &  $1.4 \cdot 10^{-2}$ &         59237 &          0.21 &  $1.8 \cdot 10^{-2}$ &         46437 &           0.2\\
       1\% &      1000 &                   -- &            -- &            -- &  $1.8 \cdot 10^{-2}$ &         96354 &           0.2\\[2mm]
 
     0.5\% &         1 &  $2.6 \cdot 10^{-4}$ &          4933 &           1.2 &  $1.9 \cdot 10^{-4}$ &          7092 &          0.48\\
     0.5\% &       500 &  $5.6 \cdot 10^{-3}$ &        160831 &          0.11 &  $6.4 \cdot 10^{-3}$ &        141508 &          0.11\\
     0.5\% &      1000 &                   -- &            -- &            -- &                   -- &            -- &            --\\
  \hline
    \end{tabular}\\[5mm]
    \caption{Results of the mod.~Landweber method with $p=1.2$, $1.6$.} \label{tab:modlwb}
    \end{center}
  \end{table}

  \bigskip

  Comparing the results of Subsections \ref{ssec:numdtigra} and \ref{ssec:numdlwb}, we note that in our experiments with small starting value ($\|x_0\| = 1$) the modified Landweber method tends to achieve the prescribed accuracy in the data misfit after fewer iterations than the dual TIGRA method. The reconstructions in Figure \ref{fig:dtigra} and Figure \ref{fig:modlwb} show, however, that this increase in speed comes at the cost of a lower reconstruction quality. As the initial value gets larger, the dual TIGRA method outperforms the modified Landweber method and proofs to be robust with respect to $\| x_0 \|$.

\bigskip

\section{Proofs of selected results} \label{sec:proofs}

  We collect here the proofs of several of the results presented throughout the previous sections in order to improve the flow of reading ibidem.
  To lighten our formulae, we recall the shorthand notations $R_{\Phi_\alpha} (z,x)$ from \eqref{eq:RPhi} and additionally introduce
  \begin{equation} \label{eq:RF}
    R_{F}(z,x) := F(z) - F(x) - F'(x)(z-x)
  \end{equation}
 for $x, z \in X$.

\subsection{Proof of Proposition~\ref{pro:upd}} \label{ssec:upd}

  First, observe that
    \begin{multline*}
      \frac{1}{2}\|F(\xbd)-F(\xad)\|^2\\
	= \frac{1}{2}\|F(\xbd)-\yd\|^2 - \frac{1}{2}\|F(\xad)-\yd\|^2 + \langle F(\xad)-F(\xbd), F(\xad)-\yd \rangle.
    \end{multline*}
  Then, using that $\xbd$ is a minimizer of $\Phi_{\bar \alpha} (x)$, we find
    \begin{multline*}
      \frac{1}{2}\|F(\xbd)-F(\xad)\|^2 + {\bar \alpha} D_{f_p}(\xbd, \xad)\\
	\begin{aligned}
	  & = \Phi_{\bar \alpha} (\xbd) - \Phi_{\bar \alpha} (\xad) + \langle F(\xad)-F(\xbd), F(\xad)-\yd \rangle  - {\bar \alpha} \langle J_p(\xad), \xbd-\xad \rangle\\
	  & \leq \langle F(\xad)-F(\xbd), F(\xad)-\yd \rangle - {\bar \alpha} \langle J_p(\xad), \xbd-\xad \rangle.
	\end{aligned}
    \end{multline*}
  On the other hand, $\xad$ is a minimizer of $\Phi_\alpha (x)$ and the first order optimality condition reads
  \begin{align*}
    \nabla \Phi_\alpha (\xad) = F'(\xad)^{*}(F(\xad) - \yd) + \alpha J_p (\xad) = 0.
  \end{align*}
  Using the resulting expression for $J_p (\xad)$ as well as Assumption \ref{assp_main} \ref{ass:nl} and Lem\-ma~\ref{lem:normbds} we further obtain
  \begin{multline*}
    \frac{1}{2}\|F(\xbd)-F(\xad)\|^{2}+{\bar \alpha} D_{f_p}( \xbd,\xad)\\
    \begin{aligned}
      & \leq \left\langle F(\xad)-F(\xbd), F(\xad)-\yd\right\rangle +\frac{{\bar \alpha}}\alpha\left\langle F'(\xad)^{*}(F(\xad)-\yd), \xbd-\xad \right\rangle\\
      & = - \left\langle R_F (\xbd, \xad), F(\xad)-\yd\right\rangle + (1-\frac{{\bar \alpha}}\alpha)\langle F'(\xad)(\xad-\xbd), F(\xad)-\yd\rangle\\
      & \leq cD_{f_p}( \xbd,\xad) \|F(\xad)-\yd\| + (1-\frac{{\bar \alpha}}\alpha) K\|\xad-\xbd\|\| F(\xad)-\yd\|.
    \end{aligned}
  \end{multline*}
  Moreover, for $\alpha \geq \astar$ Theorem~\ref{rate0} and Assumption \ref{assp_main} \ref{ass:src} yield
      \[ \|F(\xad)-\yd\| \leq 2 \|\omega\| \alpha + \delta \leq s \varrho \alpha, \]
  whence it follows that
  \begin{multline*}
    \frac{1}{2} \|F(\xbd)-F(\xad)\|^2 + {\bar \alpha} D_{f_p}( \xbd,\xad) \leq \nrho c \varrho \alpha D_{f_p}( \xbd,\xad) + \nrho  \varrho K (\alpha - \bar \alpha) \|\xad-\xbd\|.
  \end{multline*}
  Using $\alpha \leq \frac{{\bar \alpha}}{\bar q_0} =  \frac{1+\nrho c \varrho}{2 \nrho c \varrho} \bar \alpha$ we find
  \begin{equation} \label{eq:Fxadest}
    \frac{1}{2}\|F(\xbd)-F(\xad)\|^{2} + \frac{1 - \nrho c \varrho}{2} {\bar \alpha} D_{f_p}( \xbd,\xad)  \leq \nrho  \varrho K (\alpha - {\bar \alpha}) \|\xad-\xbd\|.
  \end{equation}
  Finally, due to Lemma~\ref{lem:normbds} we may apply \eqref{Dfl} in Lemma~\ref{lemma1} with constant $c_p = c_A = \frac{p-1}{2} (3 A)^{p-2}$ and since $\bar \alpha \geq \alpha^*$ we obtain
    \[ \|\xad-\xbd\|^2 \leq \frac{1}{c_A} D_{f_p}( \xbd,\xad) \leq \frac{2 \nrho \varrho K} {c_A (1 - \nrho c \varrho) \alpha^*} (\alpha - {\bar \alpha}) \|\xad-\xbd\|, \]
  which proves \eqref{eq:xadcont}. Thus, continuity from the right of the mapping $\alpha \mapsto \xad$ readily follows and with \eqref{eq:Fxadest} also of the mapping $\alpha \mapsto F(\xad)$.
\hfill \fbox{}

\bigskip

\subsection{Proof of Theorem~\ref{thm:convex}}  \label{ssec:convex}
  Let $\alpha \geq \astar$ and $h \in X$ with $\|h\|=1$ be fixed. For $t_1,t_2 \in \IR$, we define $x_i = \xad + t_i h$, $i=1,2$, and $\triangle t=t_2-t_1$. Note that $\nabla \Phi_\alpha (x) = F'(x)^* (F(x) - \yd) + \alpha J_p (x)$ and recall that $Y$ is a Hilbert space. One easily verifies,
  \begin{align*}
  \Phi_{\alpha}(x_{2}) = & ~ \frac{1}{2}\|F(x_1) + \triangle t \cdot F'(x_1)h+R_{F}(x_2,x_1)-\yd\|^{2} +\alpha f_p(x_2)\\
%     = & ~ \frac{1}{2}\|F(x_1)-\yd\|^{2}-\triangle t \langle \yd-F(x_1), F'(x_1)h \rangle + \frac{1}{2} \triangle t^{2}\|F'(x_1) h\|^{2}\\
%       & - \langle \yd-F(x_1),R_{F}(x_2,x_1)\rangle+\triangle t\langle F'(x_1)h,R_{F}(x_2,x_1)\rangle+\frac{1}{2}\|R_{F}(x_2,x_1)\|^{2}\\
%       & + \alpha(f_p(x_1)+\triangle t \cdot \langle f_p'(x_1), h \rangle + D_{f_p}(x_2,x_1)).
    = & ~ \Phi_\alpha (x_1) -\triangle t \cdot \langle \nabla \Phi_{\alpha}(x_1), h \rangle + \frac{1}{2} \triangle t^{2}\|F'(x_1) h\|^{2}+\frac{1}{2}\|R_{F}(x_2,x_1)\|^{2}\\
      & - \langle \yd-F(x_1),R_{F}(x_2,x_1)\rangle + \triangle t \cdot \langle F'(x_1)h,R_{F}(x_2,x_1)\rangle + \alpha  D_{f_p}(x_2,x_1).
  \end{align*}
  Thus,
  \begin{multline*}
    \begin{aligned}
    R_{\Phi_{\alpha}}(x_2,x_1) %& = \Phi_{\alpha}(x_2)-\Phi_{\alpha}(x_1)-\triangle t \cdot \langle \nabla \Phi_{\alpha}(x_1), h \rangle\\
      & = \frac{1}{2}\triangle t^{2} \|F'(x_{1})h\|^{2} + \frac{1}{2}\|R_{F}(x_2,x_1)\|^{2} + \langle F(x_1)-\yd,R_{F}(x_2,x_1)\rangle\\
    \end{aligned}\\
	+ \triangle t \cdot \langle F'(x_1)h,R_{F}(x_2,x_1) \rangle + \alpha D_{f_p}(x_2,x_1)
  \end{multline*}
  and it remains to establish bounds for the terms which may be negative. Due to Assumption \ref{assp_main} \ref{ass:lip} and Lemma~\ref{lem:normbds}, we have
  \begin{align*}
    \langle F'(x_1)h,R_{F}(x_2,x_1)\rangle & \geq - \l( \|F'(x_1) - F'(\xad)\| + \| F'(\xad) \| \r) cD_{f_p}(x_2,x_1)\\
	& \geq - ( L |t_1| + K) cD_{f_p}(x_2,x_1).
  \end{align*}
  and, using also Assumption \ref{assp_main} \ref{ass:nl} and \eqref{eq:lipnl},
  \begin{align*}
    \langle F(x_1)-\yd &, R_F (x_2, x_1) \rangle\\
	& \geq - \l( \| R_F (x_1, \xad) \| + \| F'(\xad) \| \, |t_1| + \| F(\xad) - \yd \|  \r) \| R_F (x_2, x_1) \|\\
	& \geq - \left(\nrho  \varrho \alpha + K |t_1| + \frac{L}{2} |t_1|^2 \right) c D_{f_p}(x_2,x_1).
  \end{align*}
  Collecting these estimates we obtain
  \begin{align*}
    R_{\Phi_{\alpha}}(x_2,x_1) & \geq \left( ( 1 - \nrho c\varrho ) \alpha - \frac{c( 2K + L |t_1|) }{2}|t_{1}|  - c( K + L |t_1| )  |\triangle t|\right)D_{f_p}(x_2,x_1).
  \end{align*}
  For $|t_{2}|,|t_{1}|\leq \cra$, we have $|\triangle t|\leq 2\cra$. Thus, using $2 \gamma = 1 - \nrho c\varrho$,
  \begin{equation*}
    R_{\Phi_{\alpha}}(x_2, x_1) - \gamma \alpha D_{f_p}(x_2,x_1) \geq p(\cra) D_{f_p}(x_2, x_1),
  \end{equation*}
  where $p(r) : = - \frac{5cL}{2} r^{2} - 3 c K r + \gamma \alpha$ has the zeros
  \begin{equation*}
    r_{1,2}(\alpha) = \frac{-3 K \pm \sqrt{9 K^2+10(L/c) \gamma \alpha}}{5L}. %=\frac{4 \gamma \alpha}{L(3 K+\sqrt{9K^2+20 \gamma \alpha})}
  \end{equation*}
  Now, if $\bar r_\alpha : = \min\{|r_1|, |r_2|\}$, then
    \begin{align*}
      \bar r_\alpha &= \frac{ \sqrt{9 K^2+10(L/c) \gamma \alpha} -3 K}{5L}  
          = \frac{ 2 \gamma \alpha}{c ~ (\sqrt{9 K^2+10(L/c) \gamma \alpha} +3 K)}\\
          & \geq \frac{2}{1+\sqrt{2}} ~ \cdot \Bigg\{ \begin{array}{ll}
           \frac{\gamma \alpha}{3 c K}  & \quad \mbox{if } ~ 10 \, L \gamma \alpha \leq 9 c K^2, \\[2mm]
           \sqrt{\frac{\gamma  \alpha}{10 c L}} & \quad \mbox{otherwise} \end{array}\\
          & \geq \frac{2}{1+\sqrt{2}} ~ \cdot \min\left\{ \frac{\gamma \alpha}{3 c K}, \sqrt{\frac{\gamma  \alpha}{10 c L}} \right\} = \cra.
    \end{align*}
 Thus,
  \begin{align*}
    R_{\Phi_{\alpha}}(x_2,x_1) -\gamma \alpha D_{f_p}(x_2,x_1) \geq 0
  \end{align*}
  holds for $|t_{2}|,|t_{1}|\leq \cra$ and according to Proposition~\ref{pro:convex} the function $\varphi_{\alpha,h}(t)$ is then convex on $[-\cra, \cra]$, which completes the proof.
\hfill \fbox{}

\bigskip

\subsection{Proof of Lemma~\ref{lem:Dfnorm}} \label{ssec:Dfnorm}

By virtue of Lemma~\ref{lemma1} we may estimate the Bregman distance $D_{f_p} (z,x)$ in terms of the norm $\| x-z \|_{\ell^p}$ not only from above, but also from below.
However, the lower bound involves a number $c_p$ which in fact depends on the elements $x, z$ under consideration or, more precisely, on estimates of the size of $\|x\|$ and on the distance $\|x-z\|$. In this respect the following coercivity result turns out to be particularly useful.

\begin{lemma} \label{lem:Dfcoercive}
 Let $z \in \ell^p$ be arbitrary but fixed, then to every $\brd > 0$ there exists a constant $C_\brd > \| z \|_{\ell^p}$ such that, if $D_{f_p} (z, x) \leq \brd$ holds for $x \in \ell^p$, then $\| x \| \leq C_\brd$. Moreover, the numbers $C_\brd$ may be chosen such that they depend on $\brd$ continuously and are strictly monotonically increasing on $(\| z \|_{\ell^p}, \infty)$.
\end{lemma}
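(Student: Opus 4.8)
The plan is to reduce the assertion to a one-dimensional monotonicity argument by bounding $D_{f_p}(z,x)$ from below by a function of $\|x\|$ alone. Using the second representation of the Bregman distance in \eqref{eq:Bregd}, namely $D_{f_p}(z,x) = \frac1q\|x\|^p + \frac1p\|z\|^p - \langle J_p(x),z\rangle$, together with the estimate $\langle J_p(x),z\rangle \le \|J_p(x)\|_{\ell^q}\|z\|_{\ell^p} = \|x\|^{p-1}\|z\|$ coming from \eqref{eq:duality}, I obtain
\[
  D_{f_p}(z,x) \;\ge\; \frac1q\|x\|^p - \|x\|^{p-1}\|z\| + \frac1p\|z\|^p \;=:\; g\bigl(\|x\|\bigr),
\]
where $g(t) := \frac1q t^p - \|z\|\, t^{p-1} + \frac1p\|z\|^p$ for $t \ge 0$.

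Next I would study the scalar function $g$ on $[0,\infty)$. Since $\frac1q = \frac{p-1}{p}$, a short computation gives $g'(t) = (p-1)\,t^{p-2}\,(t - \|z\|)$, so that $g$ is strictly decreasing on $(0,\|z\|)$ and strictly increasing on $(\|z\|,\infty)$, with $g(\|z\|) = 0$ (using $\frac1p+\frac1q=1$) and $g(t)\to\infty$ as $t\to\infty$ because $p > p-1$. Consequently $g$ restricted to $(\|z\|_{\ell^p},\infty)$ is a continuous strictly increasing bijection onto $(0,\infty)$. I then \emph{define} $C_\brd$ to be the unique $t > \|z\|_{\ell^p}$ with $g(t) = \brd$, i.e. $C_\brd := \bigl(g|_{(\|z\|_{\ell^p},\infty)}\bigr)^{-1}(\brd)$. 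By construction $C_\brd > \|z\|_{\ell^p}$, and the map $\brd \mapsto C_\brd$ is continuous and strictly increasing on $(0,\infty)$ as the inverse of a continuous strictly increasing function (in fact $C^\infty$, since $g$ is smooth with $g' > 0$ on $(\|z\|_{\ell^p},\infty)$).

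Finally I would verify the coercivity claim itself. Suppose $D_{f_p}(z,x) \le \brd$. Then $g(\|x\|) \le D_{f_p}(z,x) \le \brd = g(C_\brd)$. If $\|x\| \le \|z\|_{\ell^p}$, then trivially $\|x\| \le \|z\|_{\ell^p} < C_\brd$. Otherwise $\|x\| > \|z\|_{\ell^p}$, and since $g$ is strictly increasing on $(\|z\|_{\ell^p},\infty)$, the inequality $g(\|x\|) \le g(C_\brd)$ forces $\|x\| \le C_\brd$. In either case $\|x\| \le C_\brd$, which is the desired bound. I do not expect any serious obstacle here: the only point requiring a little care is bookkeeping of the exponents to confirm the sign of $g'$ and the coercivity of $g$, while the essential content is the pointwise lower bound $D_{f_p}(z,x) \ge g(\|x\|)$, which is immediate from \eqref{eq:Bregd} and the defining property of the duality mapping $J_p$.
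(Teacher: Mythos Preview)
Your proof is correct and follows essentially the same approach as the paper: both derive the scalar lower bound $D_{f_p}(z,x) \ge g(\|x\|)$ with $g(t)=\tfrac{1}{q}t^p - \|z\|\,t^{p-1}+\tfrac{1}{p}\|z\|^p$ from \eqref{eq:Bregd} and \eqref{eq:duality}, note that $g$ vanishes at $t=\|z\|$ and is strictly increasing to $+\infty$ on $(\|z\|,\infty)$, and then take $C_\brd$ as the inverse of $g$ restricted to that ray. Your version is slightly more explicit (you compute $g'$ and spell out the case split $\|x\|\le\|z\|$ versus $\|x\|>\|z\|$), but there is no substantive difference.
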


\begin{proof}
  Suppose that $x \in \ell^p$ satisfies $D_{f_p} (z, x) \leq \brd$. Using the first identity in \eqref{eq:Bregd} and $\| J_p (x) \| = \|x\|^{p-1}$ we obtain
    \[ 0 \leq \frac{1}{q} \|x\|^{p} + \frac{1}{p} \|z\|^{p} - \|x \|^{p-1} \| z\| \leq D_{f_p} (z,x) \leq \brd. \]
  Note that the function $f(t):= \frac{1}{q} t^p - \|z\| \cdot t^{p-1} + \frac{1}{p} \|z\|^{p}$ attains its minimum $f(t_0) = 0$ at $t_0 = \|z\|$ and that it is continuous and strictly increasing on $[t_0, \infty)$ with $f(t) \to \infty$ as $t \to \infty$. Thus, for every $\brd >0 $ there exists a number $C_\brd > t_0 = \|z\|$ such that $f(t) > \brd$ for all $t>C_\brd$. Clearly this choice of $C_\brd$ is continuous and strictly increasing and the proof is complete.
\end{proof}

\begin{remark} \label{rem:Dfcoercive}
 From the proof of Lemma~\ref{lem:Dfcoercive} it follows that the number $C_\brd$ only depends on $\| z \|$ (not $z$ itself) and that it increases with $\| z \|$. Thus, for each $R > 0$, $C_\brd = C_\brd(R)$ may be chosen uniformly for all $\| z \| \leq R$.
\end{remark}

\bigskip

{\it Proof of Lemma~\ref{lem:Dfnorm}.}
  If $\brd$ is any positive number and $x \in \ell^p$ satisfies  $D_{f_p} (\xad, x) \leq \brd$, then with $C_\brd$ from Lemma~\ref{lem:Dfcoercive} and $A$ from Lemma~\ref{lem:normbds} we obtain $\| x \| \leq C_\brd$ and $\| x - \xad \| \leq C_d + A$. Note that here the numbers $C_\brd$ are assumed to be independent of $\alpha \geq \astar$ (cf. Remark \ref{rem:Dfcoercive}). Thus, we may apply the lower bound for the Bregman distance from Lemma~\ref{lemma1},
      \[ \| x - \xad \|^2 \leq c_{p,\brd}^{-1}  D_{f_p} (\xad, x) \leq \brd c_{p,\brd}^{-1}, \]
  with constant $c_{p,\brd} := \frac{p-1}{2}(A + 2 C_\brd)^{p-2}$. Observe that, due to the monotonicity and continuity of $C_\brd$ in Lemma~\ref{lem:Dfcoercive}, the quantity $\brd c_{p,\brd}^{-1}$ is monotonically increasing and continuous, and that $\brd c_{p,\brd}^{-1} \to 0$ as $\brd \to +0$. This readily yields the existence of $\bra>0$ such that $D_{f_p} (\xad, x) \leq \bra$ implies
    \[ \| x - \xad \|^2 \leq \bra c_{p,\bra}^{-1} = \cra^2, \]
  and $\bra \to \infty$ follows from $\cra \to \infty$ as $\alpha \to \infty$.
\hfill \endproof

\bigskip

\subsection{Proof of Theorem~\ref{thm:convg_breg}}  \label{ssec:convg_breg}

  Throughout this subsection we assume the outer iteration index $j$ to be fixed. To shorten our formulae we neglect the dependence of the iterates $\xjk$, the step-sizes $\beta_{j,k}$ and the regularization parameter $\alpha_j$ on $j$, and simply write $x_k$, $\beta_k$ and $\alpha$ instead.

  The following Proposition already asserts that the Bregman distance between the minimizer $\xad$ and the iterates $x_k$ decays monotonically if each step-size $\beta_k$ remains below the threshold $\bar \beta_k$. However, these upper bounds $\bar \beta_k$ are given in terms of the searched-for minimizers $\xad$. This dependence is then removed in Theorem~\ref{thm:convg_breg} with the choice \eqref{eq:betak}.

\begin{proposition}\label{closer:0}
  Let Assumption \ref{assp_main} hold and suppose that $D_{f_p} (\xad,x_k) \leq \bra$ with $\bra$ as in Lemma~\ref{lem:Dfnorm} and that $\|\nabla\Phi_{\alpha}(x_{k})\| > 0$. Then the next iterate $x_{k+1}$ from \eqref{eq:lwb} is closer to $\xad$ than $x_{k}$ with respect to the Bregman distance, i.e.
    $$D_{f_p}(\xad,x_{k+1})\leq D_{f_p}(\xad,x_{k}),$$
  if the step-size satisfies $\beta_{k} \in I_k : = (0,\bar \beta_k]$. Here
    \begin{equation} \label{eq:beta0}
     \bar \beta_k : = \frac{\langle\nabla\Phi_{\alpha}(x_{k}),x_{k}-\xad\rangle}{\tilde c_q (\alpha) \| \nabla\Phi_{\alpha} (x_k) \|^2},
    \end{equation}
  where the constant $\tilde c_q (\alpha)$ is given by
    \begin{equation} \label{eq:cqa}
      \tilde c_q (\alpha) : = \max \left\{ 1, \frac{q-1}{2} \bigg( 2 (\cra+A)^{p-1} + \cra \bigg)^{q-2} \right\}
    \end{equation}
  with $\cra$ and $A$ from Theorem~\ref{thm:convex} and Lemma~\ref{lem:normbds}, respectively.
\end{proposition}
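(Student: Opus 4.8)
The plan is to derive a \emph{three-point identity} for the change of the Bregman distance along one dual step and then to control the single non-sign-definite term that appears in it. Throughout I abbreviate $\xi_k := J_p(x_k)$, so that $x_k = J_q(\xi_k)$ by \eqref{eq:duality} and the step \eqref{eq:lwb} reads $\xi_{k+1} = \xi_k - \beta_k \nabla\Phi_\alpha(x_k)$. Using the identity $D_{f_p}(z,x) = D_{f_q}(J_p(x),J_p(z))$ from \eqref{eq:Bregd}, together with the fact that $f_q$ ($q\geq 2$) is Fr\'echet differentiable with $\nabla f_q(\eta) = J_q(\eta)$ and $J_q(J_p(\xad)) = \xad$, I would expand
\begin{align*}
  D_{f_p}(\xad,x_{k+1}) - D_{f_p}(\xad,x_k) &= D_{f_q}(\xi_{k+1},J_p(\xad)) - D_{f_q}(\xi_k,J_p(\xad))\\
    &= f_q(\xi_{k+1}) - f_q(\xi_k) - \langle \xad, \xi_{k+1}-\xi_k\rangle.
\end{align*}
Writing $f_q(\xi_{k+1}) - f_q(\xi_k) = \langle J_q(\xi_k), \xi_{k+1}-\xi_k\rangle + D_{f_q}(\xi_{k+1},\xi_k) = \langle x_k, \xi_{k+1}-\xi_k\rangle + D_{f_q}(\xi_{k+1},\xi_k)$ and substituting $\xi_{k+1}-\xi_k = -\beta_k\nabla\Phi_\alpha(x_k)$ yields
\begin{equation*}
  D_{f_p}(\xad,x_{k+1}) - D_{f_p}(\xad,x_k) = -\beta_k\langle\nabla\Phi_\alpha(x_k), x_k-\xad\rangle + D_{f_q}(\xi_{k+1},\xi_k),
\end{equation*}
so the claim reduces to showing $D_{f_q}(\xi_{k+1},\xi_k) \le \beta_k\langle\nabla\Phi_\alpha(x_k), x_k-\xad\rangle$ for $\beta_k\in(0,\bar\beta_k]$.

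Next I would invoke the geometric hypotheses. From $D_{f_p}(\xad,x_k) \le \bra$ and Lemma~\ref{lem:Dfnorm} we have $x_k \in \Bb{\bra}{\xad}\subset B_{\cra}(\xad)$, so Proposition~\ref{pro:vphi} applies and gives $\langle\nabla\Phi_\alpha(x_k), x_k-\xad\rangle \ge \gamma\alpha D_{f_p}(\xad,x_k)$. Since $\xad$ is the minimizer of $\Phi_\alpha$, the assumption $\|\nabla\Phi_\alpha(x_k)\| > 0$ forces $x_k \ne \xad$, and the strict convexity of $f_p$ (cf.\ the discussion after Definition~\ref{dfn:bregman}) then gives $D_{f_p}(\xad,x_k) > 0$; hence $\langle\nabla\Phi_\alpha(x_k), x_k-\xad\rangle > 0$, so $\bar\beta_k$ in \eqref{eq:beta0} is a well-defined positive number and $I_k=(0,\bar\beta_k]\ne\emptyset$.

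It then remains to bound $D_{f_q}(\xi_{k+1},\xi_k)$ from above via \eqref{Dfsq} of Lemma~\ref{lemma2}, for which I must control $\|\xi_k\|$ and $\|\xi_{k+1}-\xi_k\|$. For the former, $\|\xi_k\| = \|J_p(x_k)\| = \|x_k\|^{p-1} \le (A+\cra)^{p-1}$ by Lemma~\ref{lem:normbds}, so I may take $c_1 = 2(\cra+A)^{p-1}$. For the step, for every admissible $\beta_k\in(0,\bar\beta_k]$ and using only $\tilde c_q(\alpha)\ge 1$ from \eqref{eq:cqa},
\begin{equation*}
  \|\xi_{k+1}-\xi_k\| = \beta_k\|\nabla\Phi_\alpha(x_k)\| \le \frac{\langle\nabla\Phi_\alpha(x_k), x_k-\xad\rangle}{\tilde c_q(\alpha)\|\nabla\Phi_\alpha(x_k)\|} \le \frac{\|x_k-\xad\|}{\tilde c_q(\alpha)} \le \cra,
\end{equation*}
so I take $c_2 = \cra$; then $c_1+c_2 = 2(\cra+A)^{p-1}+\cra$ is exactly the argument of $\tilde c_q(\alpha)$ in \eqref{eq:cqa}, and Lemma~\ref{lemma2} gives $D_{f_q}(\xi_{k+1},\xi_k) \le \tilde c_q(\alpha)\|\xi_{k+1}-\xi_k\|^2 = \tilde c_q(\alpha)\beta_k^2\|\nabla\Phi_\alpha(x_k)\|^2$. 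Finally, inserting $\beta_k \le \bar\beta_k$ from \eqref{eq:beta0} gives $\tilde c_q(\alpha)\beta_k^2\|\nabla\Phi_\alpha(x_k)\|^2 \le \beta_k\langle\nabla\Phi_\alpha(x_k), x_k-\xad\rangle$, which is precisely the inequality needed, and the proof is complete.

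The part requiring the most care is the three-point identity itself: applying $J_q(J_p(\cdot))=(\cdot)$ and \eqref{eq:Bregd} so that no cross-term or sign is lost when passing between the primal distance $D_{f_p}(\xad,\cdot)$ and the dual distance $D_{f_q}(\cdot,J_p(\xad))$. The second delicate point is to verify that the step-size estimate $\|\xi_{k+1}-\xi_k\| \le \cra$ holds for \emph{all} $\beta_k\in I_k$ using nothing but $\tilde c_q(\alpha)\ge 1$, so that the invocation of Lemma~\ref{lemma2} with the constants $c_1,c_2$ above is not circular with the definition \eqref{eq:cqa}.
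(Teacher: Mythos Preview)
Your proof is correct and follows essentially the same approach as the paper: both derive the identity $D_{f_p}(\xad,x_{k+1})-D_{f_p}(\xad,x_k)=D_{f_q}(\xi_{k+1},\xi_k)-\beta_k\langle\nabla\Phi_\alpha(x_k),x_k-\xad\rangle$ (the paper writes the first term as $D_{f_p}(x_k,x_{k+1})$ and then converts via \eqref{eq:Bregd}), bound $D_{f_q}(\xi_{k+1},\xi_k)$ by $\tilde c_q(\alpha)\beta_k^2\|\nabla\Phi_\alpha(x_k)\|^2$ using Lemma~\ref{lemma2} after establishing $\|\xi_{k+1}-\xi_k\|\le\cra$ from $\tilde c_q(\alpha)\ge 1$, and invoke Proposition~\ref{pro:vphi} for positivity of the inner product. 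Your explicit remark that only $\tilde c_q(\alpha)\ge 1$ is needed to obtain $c_2=\cra$ (so that the argument is not circular with \eqref{eq:cqa}) is a clarification the paper leaves implicit.
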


\begin{proof}
  By the definition of  Bregman distance $ D_{f_p}(z, x)$ and the dual iteration \eqref{eq:lwb},  we have
    \begin{align*}
      D_{f_p}( \xad, x_{k+1})-D_{f_p}(\xad,x_{k}) & = \frac{1}{p} \|x_k\|^{p}-\frac{1}{p} \|x_{k+1}\|^{p}-\langle J_{p}(x_{k+1})-J_{p}(x_{k}),\xad-x_{k}\rangle\\
	&=D_{f_p}(x_{k},x_{k+1}) - \beta_{k}\langle\nabla\Phi_{\alpha}(x_{k}),x_{k}-\xad\rangle.
    \end{align*}
  As in \eqref{eq:Bregd} we rewrite the first term in the dual space, i.e.
    \[ D_{f_p}(x_{k},x_{k+1}) = D_{f_q}(J_p (x_k), J_p (x_{k+1})). \]
  According to Lemma~\ref{lem:Dfnorm} we have $\| x_k - \xad \| \leq \cra$ due to $D_{f_p} (\xad,x_k) \leq \bra$ and, consequently,
    \[ \beta_k \leq \bar \beta_k \leq \frac{\cra}{\|\nabla\Phi_{\alpha}(x_{k})\|} \]
  holds for $\bar \beta_k$ from \eqref{eq:beta0}. Thus,
  \begin{align*}
    \| J_p (x_k) \| & = \| x_k \|^{p-1} \leq (\cra + A )^{p-1}\\
    \| J_p (x_{k+1}) \| & \leq \| J_p (x_k) \| + \beta_k \| \nabla \Phi_\alpha (x_k) \| \leq (\cra + A )^{p-1} + \cra
  \end{align*}
  and we may apply \eqref{Dfsq} with constant $\tilde c_q = \tilde c_q (\alpha)$ from \eqref{eq:cqa} to obtain
  \begin{equation}\label{Df-phi}
  \begin{aligned}
    D_{f_q}(J_p (x_k), J_p (x_{k+1})) & \leq \tilde{c}_q (\alpha) ~ \|J_{p}(x_{k+1})-J_{p}(x_{k})\|^{2}\\
	  & = \tilde c_q (\alpha) ~ \beta_{k}^{2} ~ \|\nabla\Phi_{\alpha}(x_{k})\|^{2}.
  \end{aligned}
  \end{equation}
  Altogether we have shown
    \[ D_{f_p}( \xad, x_{k+1})-D_{f_p}(\xad,x_{k}) \leq g(\beta_k), \]
  where
  \begin{equation*}
      g(\beta_{k}):=\tilde{c}_{q} (\alpha) \beta_{k}^{2}\|\nabla\Phi_{\alpha}(x_{k})\|^{2}-\beta_{k}\langle\nabla\Phi_{\alpha}(x_{k}),x_{k}-\xad\rangle
  \end{equation*}
  satisfies $g(\beta_{k})<0$ for small values of $\beta_{k}$ as according to Proposition~\ref{pro:vphi} we have $\langle \nabla \Phi_{\alpha}(x_{k}), x_{k}-\xad \rangle = -\varphi'_{1}(0) > 0$. Now, $g(\beta_{k})$ is minimized if
    $$\tilde{c}_{q} (\alpha) \bar \beta_k = \frac{\langle\nabla\Phi_{\alpha}(x_{k}),x_{k}-\xad\rangle}{\|\nabla\Phi_{\alpha}(x_{k})\|^2}$$
  and hence $x_{k+1}$ is closer to $\xad$ than $x_k$ with respect to the Bregman distance for $\beta_k\in (0, \bar \beta_k]$.
\end{proof}

\bigskip

{\it Proof of Theorem~\ref{thm:convg_breg}.}
 The idea of the proof is to establish
  \begin{equation} \label{eq:barck}
    \langle \nabla \Phi_\alpha (x_k), x_k - \xad \rangle \geq \gamma \alpha D_{f_p} (\xad, x_k) \geq \gamma \alpha \bar c_k^2,
  \end{equation}
  so that, consequently, the right hand side in \eqref{eq:beta1} is a lower bound for $\beta_0$ in \eqref{eq:beta0} that is independent of $\xad$. Once this is verified, we may appeal to Proposition~\ref{closer:0} to obtain the assertion. The left hand side inequality in \eqref{eq:barck} has already been proven in Proposition~\ref{pro:vphi} and it remains to show $D_{f_p} (\xad, x_k) \geq \bar c_k^2$. Note that, if $D_{f_p} (\xad, x_k) \geq 1$ this is evidently true. Therefore, we only consider the case $D_{f_p} (\xad, x_k) < 1$ from here on.

  Now, we use the minimizing property of $\xad$ and $\nabla \Phi_\alpha (x_k) \neq 0$ to obtain
  \begin{equation*} %\label{eq:propd1}
    0< \Phi_\alpha (x_k) - \phi_{j,k} \leq \Phi_\alpha (x_k) - \Phi_\alpha (\xad).
  \end{equation*}
  To bound the right hand side in terms of $D_{f_p} (x_k, \xad)$ we note that
  \begin{align*}
    \Phi_{\alpha}(x_k) & = \frac{1}{2} \langle F(x_k) - F(\xad), F(x_k) - \yd \rangle +  \frac{1}{2} \langle F(\xad) - \yd, F(x_k) - \yd \rangle + \alpha f_p (x_k)\\
    \Phi_{\alpha}(\xad) & = \frac{1}{2} \langle F(\xad) - F(x_k), F(\xad) - \yd \rangle +  \frac{1}{2} \langle F(x_k) - \yd, F(\xad) - \yd \rangle + \alpha f_p (\xad)
  \end{align*}
  and, for all $x \in \ell^p$,
    \[ \langle \nabla \Phi_{\alpha}(x), \xad - x_k \rangle = \langle F(x) - \yd, F'(x) (\xad - x_k) \rangle + \alpha \langle f_p'(x), \xad - x_k \rangle. \]
 Due to $\nabla \Phi_{\alpha}(\xad)=0$ for the minimizer $\xad$,  the latter yields
  \begin{multline*}
    \begin{aligned}
      \Phi_\alpha (x_k) - \Phi_\alpha & (\xad) = R_{\Phi_\alpha} (\xad, x_k)\\
	      = ~ & \frac{1}{2} \langle F(x_k) - F(\xad), F(x_k) + F(\xad) - 2\yd \rangle\\
	      & \qquad \qquad \qquad \quad + \langle F(\xad) - \yd, F'(\xad) (\xad - x_k) \rangle + \alpha D_{f_p} (x_k, \xad)\\
	      = ~ & \frac{1}{2} \| F(x_k) - F(\xad) \|^2 + \langle F(\xad) - \yd, R_F (x_k, \xad) \rangle + \alpha D_{f_p} (x_k, \xad)\\
        = ~ & \frac{1}{2}\|F'(\xad) (\xad- x_k)\|^{2} + \langle F(\xad)-\yd,R_{F}(x_k,\xad)\rangle +\alpha D_{f_p}(x_k,\xad)
    \end{aligned}\\
    -\langle F'(\xad) (\xad- x_k),R_{F}(x_k,\xad)\rangle + \frac{1}{2}\|R_{F}(x_k,\xad)\|^{2}.
  \end{multline*}
  The Bregman distance is not symmetric and to derive the required estimates, we will bound $D_{f_p}(x_k,\xad)$ in terms of $D_{f_p}(\xad, x_k)$. Using $\| \xad \| \leq A$ according to Lemma~\ref{lem:normbds} and $\| x_k - \xad \| \leq \cra$, this is possible as
    \[ D_{f_p} (x_k, \xad) \leq \tilde c_p \| \xad- x_k \|^p \quad \mbox{and} \quad \| \xad- x_k \|^2 \leq \frac{1}{\bar c_A} D_{f_p} (\xad, x_k)\]
  follows from Lemma~\ref{lemma1} with $\bar c_A:= \frac{p-1}{2} (2 \cra + A)^{p-2}$.
  In combination with the estimates for $\| R_F (x_k, \xad) \|$ and $\| F'(\xad)\|$ in Remark~\ref{rem:nl} and Lemma~\ref{lem:normbds}, respectively, we thus obtain by applying Cauchy's inequality
   \begin{multline*}
    \Phi_\alpha (x_k) - \phi_{j,k} \leq \frac{1}{2 \bar c_A} \left( K^2 + L \| F (\xad) - \yd \| + L K \cra + \frac{L^2 \cra^2}{4} \right) D_{f_p} (\xad, x_k) \\
      + \alpha \frac{ \tilde{c}_p}{\bar c_A^{p/2}}D_{f_p} (\xad, x_k)^{p/2}.
  \end{multline*}
  Finally, for $\alpha \geq \astar$ as in \eqref{eq:astar} we know from \eqref{eq:dpalpha} that $\| F(\xad) - \yd\| \leq \nrho  \varrho \alpha$ and as we only consider the case $D_{f_p} (\xad, x_k) <1$,
     \[ \Phi_\alpha (x_k) - \phi_{j,k} \leq \frac{1}{2 \bar c_A} \left( K^2 + L \nrho  \varrho \alpha + L K \cra + \frac{L^2 \cra^2}{4} + 2 \alpha \tilde{c}_p \bar c_A^{\frac{2-p}{2}} \right) D_{f_p}^{1/2} (\xad, x_k)\]
   holds. Collecting the above estimates,
   we have established
    \[ D_{f_p} (x_k, \xad) \geq \bar c_k^2. \]
  Hence, \eqref{eq:barck} holds true and by virtue of the inital arguments and Proposition~\ref{closer:0}, the proof is complete.
\hfill \endproof

\bigskip

\subsection{Proof of Proposition~\ref{pro:stop}} \label{ssec:stop}

  As in the previous section, we simply write $x_k$, $\beta_k$ and $\alpha$ neglecting the dependence on $j$, which is assumed to be fixed.

\begin{lemma} \label{lem:gradbd}
  If Assumption \ref{assp_main} holds, then to each $\alpha > \astar$ there exists $\kappa_\alpha > 0$ such that $\| \nabla \Phi_\alpha (x) \| \leq \kappa_\alpha$ holds
  for all $x \in X$ satisfying $D_{f_p} (\xad, x) \leq \bra$ with $\bra$ from Lemma~\ref{lem:Dfnorm}.
\end{lemma}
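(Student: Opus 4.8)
The plan is to bound each of the three ingredients of $\nabla\Phi_\alpha(x) = F'(x)^*(F(x)-\yd) + \alpha J_p(x)$ separately, uniformly over the region $\{ x : D_{f_p}(\xad,x)\leq\bra \}$, and then to combine them. First I would invoke Lemma~\ref{lem:Dfnorm}: the hypothesis $D_{f_p}(\xad,x)\leq\bra$ implies $x\in\Bb{\cra}{\xad}$, i.e. $\|x-\xad\|\leq\cra$, and hence $\|x\|\leq\cra+A$ by the triangle inequality and the bound $\|\xad\|\leq A$ from Lemma~\ref{lem:normbds}. Consequently, by \eqref{eq:duality},
\[ \|J_p(x)\| = \|x\|^{p-1} \leq (\cra+A)^{p-1}. \]

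Next I would estimate the operator norm $\|F'(x)\|$ and the residual $\|F(x)-\yd\|$. For the former, Assumption~\ref{assp_main}~\ref{ass:lip} together with $\|F'(\xad)\|\leq K$ from Lemma~\ref{lem:normbds} gives
\[ \|F'(x)\| \leq \|F'(x)-F'(\xad)\| + \|F'(\xad)\| \leq L\cra + K. \]
For the latter, I would split $F(x)-\yd = R_F(x,\xad) + F'(\xad)(x-\xad) + \big(F(\xad)-\yd\big)$ and estimate the three summands using \eqref{eq:lipnl}, the bounds $\|F'(\xad)\|\leq K$ and $\|x-\xad\|\leq\cra$, and \eqref{eq:dpalpha} (applicable since $\alpha>\astar$), respectively, which yields
\[ \|F(x)-\yd\| \leq \tfrac{L}{2}\cra^2 + K\cra + \nrho\varrho\alpha. \]

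Finally, since the operator norm of $F'(x)^*$ equals that of $F'(x)$, combining the above estimates gives
\[ \|\nabla\Phi_\alpha(x)\| \leq \|F'(x)\|\,\|F(x)-\yd\| + \alpha\|J_p(x)\| \leq (L\cra+K)\Big(\tfrac{L}{2}\cra^2 + K\cra + \nrho\varrho\alpha\Big) + \alpha(\cra+A)^{p-1} =: \kappa_\alpha, \]
which is the asserted uniform bound. There is no genuine obstacle here: every inequality is an immediate consequence of results already established (Lemma~\ref{lem:normbds}, Lemma~\ref{lem:Dfnorm}, Remark~\ref{rem:nl}, and \eqref{eq:dpalpha}). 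The only point to keep in mind is that $\cra$ depends on $\alpha$ (and the last two summands contain $\alpha$ explicitly), so the resulting constant $\kappa_\alpha$ is genuinely $\alpha$-dependent, as the statement permits.
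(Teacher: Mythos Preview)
Your proof is correct and follows essentially the same strategy as the paper: use Lemma~\ref{lem:Dfnorm} to pass from the Bregman ball to the norm ball, then bound $\|F'(x)\|$, $\|F(x)-\yd\|$, and $\|J_p(x)\|$ separately via Lemma~\ref{lem:normbds}, Assumption~\ref{assp_main}, and \eqref{eq:dpalpha}. The only minor difference is in the treatment of the residual term: you expand $F$ around $\xad$ and bound the Taylor remainder $R_F(x,\xad)$ via the Lipschitz estimate \eqref{eq:lipnl}, giving $\tfrac{L}{2}\cra^2$, whereas the paper expands around $x$ and uses Assumption~\ref{assp_main}~\ref{ass:nl} to bound $\|R_F(\xad,x)\|\leq c D_{f_p}(\xad,x)\leq c\,\bra$; this leads to a slightly different explicit value of $\kappa_\alpha$, but the argument is otherwise identical.
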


\begin{proof}
  Recall that
      \[ \nabla \Phi_\alpha (x) = F'(x)^* (F(x) - \yd) + \alpha J_p (x) \]
  and that $\| x - \xad \| \leq \cra$ holds with $\cra$ as defined in \eqref{eq:ralpha} due to Lemma~\ref{lem:Dfnorm}. Using Assumption \ref{assp_main}, Theorem~\ref{rate0} as well as Lemma~\ref{lem:normbds}, we obtain
  \begin{align*}
    \| F'(x) \| & \leq \| F'(x) - F'(\xad) \| + \| F'(\xad) \| \leq L \cra + K \\
    \| F(x) - \yd \| & \leq \| R_F (\xad, x) \| + \| F'(x) (\xad - x) \| + \| F(\xad) - \yd \| \\
	& \leq c D_{f_p} (\xad, x) + (L \cra + K) \cra + \nrho  \varrho \alpha\\
    \| x \| & \leq \cra + A,
  \end{align*}
  where $R_F(\xad, x)$ is as defined in \eqref{eq:RF}. Collecting the above estimates, it thus follows that
  \begin{align*}
    \| \nabla \Phi_\alpha (x) \| & \leq \| F'(x) \| \cdot \| F(x) - \yd \| + \alpha \| x \|^{p-1} \leq \kappa_\alpha,
  \end{align*}
  where
      \[ \kappa_\alpha := (L \cra + K) \big( c \bra + (L \cra + K) \cra + \nrho  \varrho \alpha \big) + \alpha \big( \cra + A \big)^{p-1} \]
  and the proof is complete.
\end{proof}

In preparation for the proof of Proposition~\ref{pro:stop} we need another result for the dual gradient descent method.

\begin{proposition} \label{pro:xkt}
  Let Assumption \ref{assp_main} hold and suppose that $D_{f_p} (\xad, x_k) \leq \bra$ holds with $\bra$ from Lemma~\ref{lem:Dfnorm}. Then there exists a constant $M_\alpha >0$, such that for all $\beta \in [0, T_\alpha]$, where
    \[ T_\alpha := \frac{\cra}{\tilde c_q (\alpha) C_\alpha} \]
  with $\cra$, $\tilde c_q (\alpha)$ and $C_\alpha$ as in \eqref{eq:ralpha}, \eqref{eq:cqa} and \eqref{eq:Cabd}, respectively,
  $x_k(\beta)$ defined by
  \begin{align*}
    J_{p}(x_k (\beta)) & =J_{p}(x_{k})-\beta ~ \nabla\Phi_{\alpha}(x_{k}),\\
    x_k (\beta) & =J_{q}(J_{p}(x_k (\beta)))
  \end{align*}
  satisfies
  \begin{equation*}
    R_{\Phi_{\alpha}}(x_k (\beta),x_k)\leq M_\alpha D_{f_p}(x_k(\beta), x_k).
  \end{equation*}
\end{proposition}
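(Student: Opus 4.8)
The plan is to expand $R_{\Phi_\alpha}\bigl(x_k(\beta),x_k\bigr)$ exactly as in the proof of Theorem~\ref{thm:convex}, using $F\bigl(x_k(\beta)\bigr) = F(x_k) + F'(x_k)\bigl(x_k(\beta)-x_k\bigr) + R_F\bigl(x_k(\beta),x_k\bigr)$ together with the first order form $\nabla\Phi_\alpha(x) = F'(x)^*\bigl(F(x)-\yd\bigr) + \alpha J_p(x)$, which yields
\begin{multline*}
  R_{\Phi_\alpha}\bigl(x_k(\beta),x_k\bigr) = \tfrac12\bigl\|F'(x_k)\bigl(x_k(\beta)-x_k\bigr)\bigr\|^2 + \tfrac12\bigl\|R_F\bigl(x_k(\beta),x_k\bigr)\bigr\|^2 + \bigl\langle F(x_k)-\yd, R_F\bigl(x_k(\beta),x_k\bigr)\bigr\rangle\\
  + \bigl\langle F'(x_k)\bigl(x_k(\beta)-x_k\bigr), R_F\bigl(x_k(\beta),x_k\bigr)\bigr\rangle + \alpha\, D_{f_p}\bigl(x_k(\beta),x_k\bigr).
\end{multline*}
The last summand already has the desired form, so it remains to dominate each of the first four terms by a fixed multiple of $D_{f_p}\bigl(x_k(\beta),x_k\bigr)$ with a constant depending only on $\alpha$.

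The key preliminary step is an a priori bound $\|x_k(\beta)-x_k\| \leq B_\alpha$ valid for all $\beta\in[0,T_\alpha]$. From $D_{f_p}(\xad,x_k)\leq\bra$ and Lemma~\ref{lem:Dfnorm} we get $\|x_k-\xad\|\leq\cra$, hence $\|x_k\|\leq\cra+A$ by Lemma~\ref{lem:normbds}, and also $\|\nabla\Phi_\alpha(x_k)\|\leq\kappa_\alpha$ by Lemma~\ref{lem:gradbd}. Using the norm identities \eqref{eq:duality}, $\|J_p\bigl(x_k(\beta)\bigr)\| \leq \|J_p(x_k)\| + \beta\|\nabla\Phi_\alpha(x_k)\| \leq (\cra+A)^{p-1} + T_\alpha\kappa_\alpha$, and since $\|x_k(\beta)\| = \|J_p\bigl(x_k(\beta)\bigr)\|^{q-1}$ this gives $\|x_k(\beta)-x_k\| \leq B_\alpha$ for a constant $B_\alpha$ depending only on $\alpha$. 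With $\|x_k\|\leq\cra+A$ and $\|x_k(\beta)-x_k\|\leq B_\alpha$ available, Lemma~\ref{lemma1} supplies the two estimates $\|x_k(\beta)-x_k\|^2 \leq c_{p,\alpha}^{-1}D_{f_p}\bigl(x_k(\beta),x_k\bigr)$ with $c_{p,\alpha} := \tfrac{p-1}{2}(\cra+A+B_\alpha)^{p-2}$ from \eqref{Dfl}, and $D_{f_p}\bigl(x_k(\beta),x_k\bigr) \leq \tilde c_p B_\alpha^p$ from \eqref{Dfsp}.

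With these in hand the four terms become routine. By Assumption~\ref{assp_main}~\ref{ass:lip} and Lemma~\ref{lem:normbds}, $\|F'(x_k)\| \leq L\cra + K$; by Assumption~\ref{assp_main}~\ref{ass:nl}, $\|R_F\bigl(x_k(\beta),x_k\bigr)\| \leq c\,D_{f_p}\bigl(x_k(\beta),x_k\bigr)$; and, arguing as in the proof of Lemma~\ref{lem:gradbd} and using \eqref{eq:dpalpha}, $\|F(x_k)-\yd\| \leq c\,\bra + (L\cra+K)\cra + \nrho\varrho\alpha$. The first term is then bounded via $\|x_k(\beta)-x_k\|^2 \leq c_{p,\alpha}^{-1}D_{f_p}$; the second via $\tfrac12\|R_F\|^2 \leq \tfrac12 c^2\tilde c_p B_\alpha^p\, D_{f_p}$; and the two cross terms by Cauchy's inequality together with the bounds on $\|F(x_k)-\yd\|$, $\|F'(x_k)\|$ and $\|x_k(\beta)-x_k\|\leq B_\alpha$. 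Adding everything gives $R_{\Phi_\alpha}\bigl(x_k(\beta),x_k\bigr) \leq M_\alpha D_{f_p}\bigl(x_k(\beta),x_k\bigr)$ with $M_\alpha$ the sum of the constants just produced; if convenient for the later use in Proposition~\ref{pro:stop} one simply enlarges $M_\alpha$ so that in addition $\tfrac{1}{2M_\alpha}\leq T_\alpha$.

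I expect the main obstacle to be the a priori bound $\|x_k(\beta)-x_k\|\leq B_\alpha$: both conversions downstream — passing from $\|x_k(\beta)-x_k\|^2$ to $D_{f_p}\bigl(x_k(\beta),x_k\bigr)$ via \eqref{Dfl}, and bounding $D_{f_p}\bigl(x_k(\beta),x_k\bigr)$ from above via \eqref{Dfsp} in order to absorb the genuinely quadratic term $\tfrac12\|R_F\|^2$ — rely on it, and it is precisely here that the cutoff $\beta\leq T_\alpha$ enters, through the gradient bound $\kappa_\alpha$ of Lemma~\ref{lem:gradbd} and the non-global nature of the equivalence of $D_{f_p}$ with $\|\cdot\|^2$. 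The remainder is bookkeeping of constants into $M_\alpha$.
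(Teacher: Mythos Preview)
Your proposal is correct and follows essentially the same route as the paper: expand $R_{\Phi_\alpha}(x_k(\beta),x_k)$ via the Taylor remainder identity, obtain a priori norm bounds on $x_k$, $x_k(\beta)$ and $x_k(\beta)-x_k$ from $\beta\leq T_\alpha$ together with Lemma~\ref{lem:gradbd}, then invoke \eqref{Dfl} to convert $\|x_k(\beta)-x_k\|^2$ into $D_{f_p}(x_k(\beta),x_k)$ and estimate each term. The only cosmetic difference is that the paper keeps $\tfrac12\|R_F+F'(x_k)(x_k(\beta)-x_k)\|^2$ unexpanded and, for the upper bound on $D_{f_p}(x_k(\beta),x_k)$ needed to absorb $\|R_F\|^2$, uses the symmetric identity $D_{f_p}(x_k(\beta),x_k)+D_{f_p}(x_k,x_k(\beta))=\langle J_p(x_k(\beta))-J_p(x_k),x_k(\beta)-x_k\rangle\leq\beta\|\nabla\Phi_\alpha(x_k)\|\|x_k(\beta)-x_k\|$ rather than your direct appeal to \eqref{Dfsp}; this yields a different explicit form for $M_\alpha$ but is logically equivalent.
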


% \begin{proof}
{\it Proof.}
  Arguing in a similar way as in the proof of Theorem~\ref{thm:convex}, we get
  \begin{multline*}
    R_{\Phi_{\alpha}}(x_k (\beta),x_k)= \frac{1}{2} \| R_{F} (x_k(\beta),x_k) + F'(x_k)(x_k(\beta) - x_k) \|^2\\
      - \langle R_F (x_k(\beta), x_k), F(x_k) - \yd)\rangle  + \alpha D_{f_p}(x_k(\beta), x_k),
  \end{multline*}
  with $R_F$ and $R_\Phi$ as defined in \eqref{eq:RF} and \eqref{eq:RPhi}, respectively.
  To apply \eqref{Dfl} in Lemma~\ref{lemma1}, we use $\| x_k(\beta) \| = \| J_p (x_k(\beta)) \|^{p-1}$ and $(a+b)^{p-1} \leq a^{p-1} + b^{p-1}$ for $a,b \geq 0$, $p \in (1, 2]$ as well as Lemmata \ref{lem:normbds}, \ref{lem:Dfnorm} and \ref{lem:gradbd} to estimate
  \begin{align*}
    \| x_k \| & \leq \|x_k - \xad \| + \| \xad \| \leq \cra + A\\
    \| x_k(\beta) \| & \leq \Big(\| J_p (x_k) \| + \beta \, \| \nabla \Phi_\alpha (x_k) \|\Big)^{p-1} \leq \cra + A + (T_\alpha \kappa_\alpha)^{p-1}\\
    \| x_k(\beta) -x_k \| & \leq 2 (\cra + A) + (T_\alpha \kappa_\alpha)^{p-1}.
  \end{align*}
  Hence, \eqref{Dfl} yields $c_p (\alpha) ~ \|x_k(\beta) - x_k\|^{2} \leq D_{f_p} (x_k(\beta), x_k)$, but also
  \begin{align*}
    2 c_p (\alpha) ~ \|x_k(\beta) - x_k\|^{2} & \leq D_{f_p} (x_k(\beta), x_k) + D_{f_p} (x_k, x_k(\beta))\\
      & = \langle J_p (x_k (\beta)) - J_{p}(x_k), x_k(\beta) - x_k \rangle \\
      & \leq \beta ~ \|\nabla \Phi_{\alpha} (x_k) \| ~ \| x_k(\beta) - x_k \|,
  \end{align*}
  with constant $c_p (\alpha)$ given by
      \[ c_p (\alpha) = \frac{p-1}{2} \Big( 3 (\cra + A) + 2 (T_\alpha \kappa_\alpha)^{p-1} \Big)^{p-2}. \]
  Due to Lemma~\ref{lem:gradbd}, this further implies
  \begin{align*}
    \|x_k (\beta)-x_{k}\| \leq \frac{T_\alpha \kappa_\alpha}{2 c_p (\alpha)} \qquad \mbox{and} \qquad D_{f_p} (x_k(\beta), x_k) \leq \frac{T_\alpha^2 \kappa_\alpha^2}{2 c_p (\alpha)}.
  \end{align*}
  Therefore, using Assumption \ref{assp_main} and
  \begin{align*}
    \| F'(x_k) \| & \leq  L \cra + K \\
    \| F(x_k) - \yd \| & \leq c \bra + (L \cra + K) \cra + \nrho  \varrho \alpha
  \end{align*}
   as obtained in the proof of Lemma~\ref{lem:gradbd}, we finally estimate
  \begin{align*}
    R_{\Phi_\alpha} (x_k(\beta), x_k) & \leq \| R_F (x_k(\beta), x_k) \|^2  + \| F'(x_k) \| \| x_k(\beta) - x_k \|^2\\
	  & \qquad \qquad + \| R_F (x_k(\beta), x_k) \|  \| F(x_k) - \yd \|  + \alpha D_{f_p}(x_k(\beta), x_k)\\
      & \leq M_\alpha D_{f_p} (x_k(\beta), x_k),
  \end{align*}
  where
    \[ M_\alpha := c^2 \frac{T_\alpha^2 \kappa_\alpha^2}{2 c_p (\alpha)} + \frac{L \cra + K}{c_p (\alpha)} + c (c \bra + (L \cra + K) \cra + \nrho  \varrho \alpha) + \alpha. \qquad \endproof \]
% \end{proof}

\bigskip

{\it Proof of Proposition~\ref{pro:stop}.}
  We first show that $\Phi_\alpha (x_k)$ is monotonically decreasing with limit $\Phi_0 \geq 0$.
  Let $x_k (\beta)$ be defined as in Proposition~\ref{pro:xkt}. Then $x_{k+1} = x_k (\beta_k)$ and, with $R_{\Phi_\alpha}$ from \eqref{eq:RPhi}, we have
  \begin{align*}
   \Phi_\alpha (x_k (\beta)) - \Phi_\alpha (x_k) & = R_{\Phi_\alpha} (x_k(\beta), x_k) + \langle \nabla \Phi_\alpha (x_k), x_k(\beta) - x_k \rangle\\
	& = R_{\Phi_\alpha} (x_k(\beta), x_k) - \frac{1}{\beta} \langle J_p (x_k(\beta)) - J_p (x_k), x_k(\beta) - x_k \rangle.
  \end{align*}
  Due the Proposition~\ref{pro:xkt} and
  \begin{align*}
    D_{f_p} (x_k (\beta), x_k) & \leq  D_{f_p}(x_k(\beta), x_k) + D_{f_p}(x_k, x_k(\beta))\\
       & = \langle J_p (x_k(\beta))-J_p (x_k),x_k (\beta)-x_k \rangle,
  \end{align*}
  we obtain for $\beta \leq T_\alpha$
  \begin{equation} \label{eq:dPhi}
    \Phi_\alpha (x_k (\beta)) - \Phi_\alpha (x_k) \leq \Big( M_\alpha - \frac{1}{\beta} \Big) D_{f_p}(x_{k}(\beta), x_{k}).
  \end{equation}
  Therefore, $\Phi_\alpha (x_{k+1}) < \Phi_\alpha (x_k)$ holds if $\beta_k < \min \{ T_\alpha, 1/M_\alpha \}$, which is satisfied for the choice $\beta_k$ in \eqref{eq:betak}.

  Moreover, the stepsizes $\beta_k$ are bounded from below, $\beta_k \geq \bar \beta > 0$ for all $k$. Indeed, due to Theorem~\ref{thm:convg_breg} we have
    \[ D_{f_p} (\xad, x_k) \leq D_{f_p} (\xad, x_0) \leq \bra \]
  and thus Lemma~\ref{lem:gradbd} yields
	\[ \beta_k \geq \bar \beta :=  \min \left\{ \frac{\gamma \, \bar c_k \, \alpha}{\kappa_\alpha^2}, \frac{1}{2M_\alpha} \right\} > 0. \]

   Finally, we show that $\| \nabla \Phi_\alpha (x_k) \| \to 0$. To this end, we proceed by contradiction and assume that there exists $\eps > 0$ such that $\| \nabla \Phi_\alpha (x_{k_l}) \| \geq \eps$ holds for some subsequence $\{ k_l \}_{l \in \IN} \subset \IN$.
  Now, we have shown above that the sequence $\{ \Phi_\alpha (x_k) \}_{k \in \IN_0}$ decreases and as it is also bounded from below, it is hence convergent from above to some $\Phi_0 \geq 0$. Thus, there exists $\bar l$ large enough such that $\Phi_\alpha (x_{k_l}) - \Phi_0 \leq  \frac{M_\alpha}{2} c_q \bar \beta^q \eps^q$ holds for all $l \geq \bar l$ with $c_q$ from Lemma~\ref{lemma2}. Using $\bar \beta \leq \beta_k \leq \frac{1}{2 M_\alpha}$ for the step-size selection \eqref{eq:betak}, we obtain from \eqref{eq:dPhi}, \eqref{eq:Bregd} and Lemma~\ref{lemma2}
    \begin{align*}
      \Phi_\alpha (x_{k_l +1}) - \Phi_\alpha (x_{k_l}) & \leq - M_\alpha  D_{f_p} (x_{k_l+1}, x_{k_l}) = - M_\alpha  D_{f_q}(J_p(x_{k_l}), J_p (x_{k_l+1}))\\
	& \leq - M_\alpha c_q \| J_p (x_{k_l+1} - J_p(x_{k_l}) \|^q \leq - M_\alpha c_q \beta_{k_l}^q \| \nabla \Phi_\alpha (x_{k_l}) \|^q \\
	& \leq - M_\alpha c_q \bar \beta^q \eps^q.
    \end{align*}
  This yields
    \begin{align*}
      \Phi_\alpha (x_{k_l +1}) - \Phi_0 & \leq \Phi_\alpha (x_{k_l +1}) - \Phi_\alpha (x_{k_l}) + \Phi_\alpha (x_{k_l}) - \Phi_0\\
	& \leq - \frac{M_\alpha}{2} c_q \bar \beta^q \eps^q,
    \end{align*}
  which contradicts the fact that $\Phi_\alpha (x_k)$ converges to $\Phi_0$ from above.
\hfill \endproof % \fbox{}

\bigskip

\section{Conclusions}
In this paper we have proposed a Banach space version of the TIGRA algorithm  to compute a global minimizer of the Tikhonov-functional with sparsity constraints for nonlinear ill-posed problem.  The new d-TIGRA method applies a dual gradient descent method at decreasing values of the regularization parameter. Using the discprepancy principle as a stopping rule, the algorithm terminates with a regularization parameter $\alpha_\js \geq \astar$, where $\astar$ results from a trade-off between optimal estimates with respect to the Bregman distance and the smallness assumption in the source condition. We have shown convergence of the algorithm under suitable step-size selection and stopping rules, and illustrated the theoretic results with numerical experiments for the autoconvolution problem.

\bigskip

\section*{Acknowledgments}

 \noindent W.~Wang was supported by Zhejiang Provincial NSFC (LQ14A010013), S.~Anzen\-gruber by the German Science Foundation DFG (HO~1454/8-1), R.~Ramlau by the Austrian Science Fund (W1214), and B.~Han by NSFC (91230119). 

\bigskip

\section*{List of constants}

For the convenience of the reader, we collect here values and references for several constants which appear repeatedly throughout the manuscript:

\subsection*{Lower case letters}
    \begin{align*}
	c & & \mbox{Asmp.~\ref{assp_main} \ref{ass:nl}}\\
	c_A & = \frac{p-1}{2} (3 A)^{p-2} & \mbox{Prop.~\ref{pro:upd}} \\
	\bar c_A & = \frac{p-1}{2} (2 \cra + A)^{p-2} & \mbox{Theorem \ref{thm:convg_breg}}\\
%     \end{align*}
%     \begin{align*}
	\bar c_{j, k} & = \min \l\{ 1, \frac{2 \bar c_A (\Phi_{\alpha_j}(\xjk)-\phi_{min,k})}{K^{2}+2c \sqrt{\bar c_A} K+c^2 \bar c_A + 4 \bar c_A \alpha_j} \r\} & \mbox{Theorem \ref{thm:convg_breg}}\\
	c_p, \tilde c_p & & \mbox{Lemma \ref{lemma1}} \\
	c_p (\alpha) & = \frac{p-1}{2} \Big( 3 (\cra + A) + 2 (T_\alpha \kappa_\alpha)^{p-1} \Big)^{p-2} & \mbox{Prop.~\ref{pro:xkt}}\\
	c_q, \tilde c_q & & \mbox{Lemma \ref{lemma2}}\\
	\tilde c_q (\alpha) & = \max \left\{ 1, \frac{q-1}{2} \bigg( 2 (\cra+A)^{p-1} + \cra \bigg)^{q-2} \right\} & \mbox{Prop.~\ref{closer:0}}\\
	\bra & & \mbox{Lemma \ref{lem:Dfnorm}} \\
	\qa & & \mbox{Lemma \ref{lem:qa}}\\
	\qa_0 & = \frac{2 \nrho c \varrho}{1 + \nrho c \varrho} & \mbox{Prop.~\ref{pro:upd}}\\
	\cra & = \frac{2}{\nrho c(1+\sqrt{2})} \min\left\{ \frac{2 \gamma \alpha}{3K}, \sqrt{\frac{9c}{10L} \gamma \alpha} \right\} & \mbox{Theorem \ref{thm:convex}} \\
	\nrho & & \mbox{Asmp.~\ref{assp_main} \ref{ass:src}}\\
    \end{align*}

\subsection*{Upper case letters}

    \begin{align*}
	A & = \l( \frac{p}{2 \astar} \r)^{1/p} \| F(0) - \yd \|^{2/p} & \mbox{Lemma \ref{lem:normbds}}\\
	C_{\alpha_j} & \leq \gamma \alpha_j \min \bigg\{ \frac{\br{\ajj}}{3 \crr{\alpha_j}}, \frac{\delta \, \bar c_A}{K + \crr{\alpha_j} (c \, \bar c_A + L)} \bigg\} & \mbox{Prop.~\ref{pro:outk}}\\
	K & = L A + \|F'(0)\| & \mbox{Lemma \ref{lem:normbds}}\\
	L & & \mbox{Asmp.~\ref{assp_main} \ref{ass:lip}}\\
	M_\alpha & = c^2 \frac{T_\alpha^2 \kappa_\alpha^2}{2 c_p (\alpha)} + \frac{L \cra + K}{c_p (\alpha)} + c (c \bra + (L \cra + K) \cra + \nrho  \varrho \alpha) + \alpha & \mbox{Prop.~\ref{pro:xkt}}\\
	T_\alpha & = \frac{\cra}{\tilde c_q (\alpha) C_\alpha} & \mbox{Prop.~\ref{pro:xkt}}\\
    \end{align*}

\subsection*{Greek letters}

    \begin{align*}
	\alpha_0 & & \mbox{Lemma \ref{lem:Dfnorm}}\\
	\astar & = \frac{\delta}{(\nrho - 2) \varrho} & \mbox{\eqref{eq:astar}}\\
	\beta_{j,k} & = \min \left\{\frac{\gamma \; \bar{c}_{j, k} \; \alpha_j}{\|\nabla\Phi_{\alpha_j}(\xjk)\|^2}, \frac{1}{2 M_{\alpha_j}} \right\} & \mbox{Prop.~\ref{pro:stop}}\\
	\gamma & = \frac{1 - sc \varrho}{2} & \mbox{Theorem \ref{thm:convex}}\\
	\phi_{j,k} & = \min\{\Phi_{\alpha_j}(J_q ( J_p (\xjk) + t \nabla\Phi_{\alpha_j} (\xjk)) ):t \in \IR^{+}\} & \mbox{Theorem \ref{thm:convg_breg}}\\
%     \end{align*}
	\kappa_\alpha & = (L \cra + K) \big( c \bra + (L \cra + K) \cra + \nrho  \varrho \alpha \big) + \alpha \big( \cra + A \big)^{p-1} & \mbox{Lemma \ref{lem:gradbd}}\\
	\varrho & & \mbox{Asmp.~\ref{assp_main} \ref{ass:src}}\\
	\rho & = \max \big\{ \tilde c_p^{1/p}, ~ 2 A^{p-1} + \crr{\alpha_0}^{p-1} \big\} & \mbox{Lemma \ref{lem:qa}}\\
	\sigma & = \frac{2 \nrho \varrho K} {c_A (1- \nrho c \varrho) \astar} & \mbox{Prop.~\ref{pro:upd}}\\
	\tau & = 2 + \frac{2}{\qa (s-2)} & \mbox{Asmp.~\ref{ass:tigra} \ref{ass:tau}}
    \end{align*}

\bibliography{refs}
\bibliographystyle{ip}

\end{document}